\crefname{subsection}{Subsection}{Subsections}
\crefname{subsubsection}{Subsection}{Subsections}
\theoremstyle{definition}
\newtheorem{theorem}{Theorem}[subsection]
\newtheorem{defn}[theorem]{Definition}
\newtheorem{ex}[theorem]{Example}
\newtheorem{lemma}[theorem]{Lemma}
\newtheorem{prop}[theorem]{Proposition}
\newtheorem{rmk}[theorem]{Remark}
\newtheorem{warning}[theorem]{Warning}
\newtheorem*{rmk*}{Remark}
\newtheorem*{ex*}{Example}
\newtheorem*{theorem*}{Theorem}
\newtheorem*{defn*}{Definition}
\newcommand{\bbZ}{\mathbb{Z}}
\newcommand{\bbG}{\mathbb{G}}
\newcommand{\bbH}{\mathbb{H}}
\newcommand{\bbF}{\mathbb{F}}
\newcommand{\bbN}{\mathbb{N}}
\newcommand{\bbP}{\mathbb{P}}
\newcommand{\bbE}{\mathbb{E}}
\newcommand{\bbQ}{\mathbb{Q}}
\newcommand{\bbR}{\mathbb{R}}
\newcommand{\bbS}{\mathbb{S}}
\newcommand{\bbA}{\mathbb{A}}
\newcommand{\bbT}{\mathbb{T}}
\newcommand{\bbC}{\mathbb{C}}
\newcommand{\bbL}{\mathbb{L}}
\newcommand{\bbone}{\mathbbm{1}}
\newcommand{\calM}{\mathcal{M}}
\newcommand{\calO}{\mathcal{O}}
\newcommand{\calC}{\mathcal{C}}
\newcommand{\calD}{\mathcal{D}}
\newcommand{\calF}{\mathcal{F}}
\newcommand{\calU}{\mathcal{U}}
\newcommand{\calA}{\mathcal{A}}
\newcommand{\calP}{\mathcal{P}}
\newcommand{\calB}{\mathcal{B}}
\newcommand{\calX}{\mathcal{X}}
\newcommand{\calH}{\mathcal{H}}
\newcommand{\calR}{\mathcal{R}}
\newcommand{\frakm}{\mathfrak{m}}
\newcommand{\Cat}{\mathcal{C}\mathrm{at}}
\newcommand{\Sp}{\mathcal{S}\mathrm{p}}
\newcommand{\Mod}{\mathcal{M}\mathrm{od}}
\newcommand{\Alg}{\mathcal{A}\mathrm{lg}}
\newcommand{\Gpd}{\mathcal{G}\mathrm{pd}}
\newcommand{\CAlg}{\mathcal{C}\mathrm{Alg}}
\newcommand{\Ab}{\mathcal{A}\mathrm{b}}
\newcommand{\Fun}{\mathrm{Fun}}
\newcommand{\Set}{\mathcal{S}\mathrm{et}}
\newcommand{\LMod}{\mathrm{L}\mathcal{M}\mathrm{od}}
\newcommand{\Fin}{\mathcal{F}\mathrm{in}}
\newcommand{\Model}{\mathcal{M}\mathrm{odel}}
\newcommand{\Ch}{\mathrm{Ch}}
\newcommand{\DL}{\mathrm{DL}}
\newcommand{\Ring}{\mathcal{R}\mathrm{ing}}
\newcommand{\CRing}{\mathrm{C}\mathcal{R}\mathrm{ing}}
\newcommand{\CoAlg}{\mathrm{Co}\mathcal{A}\mathrm{lg}}
\newcommand{\CMon}{\mathrm{C}\mathcal{M}\mathrm{on}}
\newcommand{\Grp}{\mathcal{G}\mathrm{rp}}
\newcommand{\coBiAlg}{\mathrm{co}\mathcal{B}\mathrm{i}\mathcal{A}\mathrm{lg}}
\newcommand{\Pleth}{\mathcal{P}\mathrm{leth}}
\newcommand{\Tamb}{\mathcal{T}\mathrm{amb}}
\newcommand{\Ind}{\operatorname{Ind}}
\newcommand{\Pro}{\operatorname{Pro}}
\newcommand{\QCoh}{\mathcal{QC}\mathrm{oh}}
\newcommand{\Map}{\operatorname{Map}}
\newcommand{\Hom}{\operatorname{Hom}}
\newcommand{\Fib}{\operatorname{Fib}}
\newcommand{\Tor}{\operatorname{Tor}}
\newcommand{\Psh}{\operatorname{Psh}}
\newcommand{\Ext}{\operatorname{Ext}}
\newcommand{\coker}{\operatorname{coker}}
\newcommand{\colim}{\operatorname*{colim}}
\newcommand{\im}{\operatorname{Im}}
\renewcommand{\ker}{\operatorname{Ker}}
\newcommand{\EXT}{\mathcal{E}\mathrm{xt}}
\newcommand{\ev}{\mathrm{ev}}
\newcommand{\gr}{\operatorname{gr}}
\newcommand{\Sq}{\mathrm{Sq}}
\newcommand{\tins}[2]{\mathop{{}_{#1}\otimes_{#2}}}
\newcommand{\tims}[2]{\mathop{{}_{#1}\times_{#2}}}
\newcommand{\TAQ}{\operatorname{TAQ}}
\newcommand{\cotimes}{\mathbin{\widehat{\otimes}}}
\newcommand{\Spec}{\operatorname{Spec}}
\newcommand{\Spf}{\operatorname{Spf}}
\newcommand{\sch}{\operatorname{Sp}^\vee}
\newcommand{\wTAQ}{\widehat{\TAQ}{}}
\newcommand{\wAQ}{\widehat{\bbT\mathrm{AQ}}{}}
\newcommand{\AQ}{\operatorname{\bbT AQ}}
\newcommand{\Def}{\operatorname{Def}}
\newcommand{\Sub}{\operatorname{Sub}}
\newcommand{\nul}{\mathrm{nul}}
\newcommand{\Coord}{\mathrm{Coord}}
\newcommand{\Level}{\operatorname{Level}}
\newcommand{\loc}{\mathrm{loc}}
\newcommand{\heart}{\heartsuit}
\newcommand{\h}{\mathrm{h}}
\newcommand{\op}{\mathrm{op}}
\newcommand{\cn}{\mathrm{cn}}
\newcommand{\free}{\mathrm{free}}
\newcommand{\Cpl}{\mathrm{Cpl}}
\newcommand{\aug}{\mathrm{aug}}
\newcommand{\un}{\mathrm{un}}
\newcommand{\bs}{{-}}
\newcommand{\bbs}{{=}} 
\newcommand{\ol}{\overline}
\def\hyp{{\hbox{-}}}
\newcommand\xqed[1]{%
  \leavevmode\unskip\penalty9999 \hbox{}\nobreak\hfill
  \quad\hbox{#1}}
\newcommand\tqed{\xqed{$\triangleleft$}}
\DeclareRobustCommand{\tvdots}{%
  \vbox{\baselineskip4\p@\lineskiplimit\z@\kern0\p@\hbox{.}\hbox{.}\hbox{.}}}
\begin{document}

\title{Algebraic theories of power operations}

\author{William Balderrama}

\subjclass[2020]{
16S37, 
18C10, 
55N22, 
55Q35, 
55S35. 
}

\begin{abstract}
We develop and exposit some general algebra useful for working with certain algebraic structures that arise in stable homotopy theory, such as those encoding well-behaved theories of power operations for $\bbE_\infty$ ring spectra.
In particular, we consider Quillen cohomology in the context of algebras over algebraic theories, plethories, and Koszul resolutions for algebras over additive theories.
By combining this general algebra with obstruction-theoretic machinery, we obtain tools for computing with $\bbE_\infty$ algebras over $\bbF_p$ and over Lubin-Tate spectra. 
As an application, we demonstrate the existence of $\bbE_\infty$ periodic complex orientations at heights $h\leq 2$.
\end{abstract}

\maketitle

\section{Introduction}

Let $R$ be an $\bbE_\infty$ ring spectrum. Then there is a theory of \textit{$R$-power operations}: for any $\bbE_\infty$ algebra $A$ over $R$, there are natural maps
\[
R_b(S^a)^{\wedge n}_{\h\Sigma_n}\times\pi_a A\rightarrow\pi_b A
\]
refining the $n$'th power map. As with all natural operations, these are immediately applicable to nonexistence theorems. For example, a map $\phi\colon \pi_\ast A\rightarrow \pi_\ast B$ that fails to be compatible with these operations must also fail to refine to a commutative $R$-algebra map $A\rightarrow B$. The converse is false, making existence theorems more subtle. As a general heuristic, by understanding the global structure of these operations, one can start to quantify the failure of the converse by introducing a suitable obstruction theory.

A collection of methods for realizing this heuristic was studied in \cite{balderrama2021deformations}, with obstruction groups given in terms of abstractly defined algebraic theories of operations. In some cases, these algebraic theories can be identified, and are sufficiently well-behaved that the corresponding obstruction groups can be described. Two important examples of this are the theories of power operations for $\bbE_\infty$ algebras over $\bbF_p$, and for $K(h)$-local $\bbE_\infty$ algebras over a Lubin-Tate spectrum of height $h$. These two examples turn out to share a number of properties, and the purpose of this paper is twofold: first, to study the general algebraic context in which they may be put on the same footing; second, to elaborate on the particulars of the resulting obstruction-theoretic tools available for these objects.

The tools we obtain for working with $K(h)$-local $\bbE_\infty$ algebras over a Lubin-Tate spectrum $E$ of height $h$ are new, and turn out to be very pleasant at heights $h\leq 2$: many obstruction groups vanish, and those which do not are fairly computable. A special case of this leads to the following theorem, which motivated much of this work.

\begin{theorem*}[\cref{thm:orientations}]
Let $E$ be a Lubin--Tate spectrum of height $h\leq 2$, and let $MUP$ be the $2$-periodic complex cobordism spectrum, defined as the Thom spectrum of the tautological bundle over $\bbZ\times BU$. Then every $H_\infty$ orientation $MUP\rightarrow E$ refines to an $E_\infty$ orientation.
\tqed
\end{theorem*}


\subsection*{Acknowledgements}

This paper was originally the second half of the author's thesis at the University of Illinois Urbana-Champaign, and benefited from discussions with many people over this period. Particular thanks are due to Charles Rezk for his support over the past several years, without which this work would not have been carried out.

\subsection{Preliminaries}

The rest of this section gives an overview of the main topics of the paper. The paper splits into two halves. The first half, consisting of \cref{sec:theories}, \cref{sec:koszul}, and \cref{sec:plethories}, consists of general theory, and is entirely algebraic in nature. The second half, consisting of \cref{sec:p} and \cref{sec:e}, combines the first half with work of \cite{balderrama2021deformations} to produce some tools for working with $\bbE_\infty$ algebras over $\bbF_p$ and with $K(h)$-local $\bbE_\infty$ algebras over Lubin-Tate spectra of height $h$.

The general algebraic context in which we work is that of algebraic theories, which we shall refer to just as \textit{theories}. We describe the particular variant that will be used in \cref{ssec:malcev}, recalling what we need from \cite{balderrama2021deformations}. In short, for us a theory will be a category $\calP$ admitting all small coproducts, satisfying some additional conditions which serve to make the infinitary aspects of $\calP$ well-behaved. The category $\Model_\calP^\heart$ of set-valued models of $\calP$ is then the category of presheaves of sets on $\calP$ that send coproducts in $\calP$ to products of sets.

We are interested in topics such as the Quillen cohomology of models of a theory $\calP$, and for this we find it most convenient to work $\infty$-categorically. Thus we shall freely use the language and theory of $\infty$-categories, as developed by Lurie \cite{lurie2017highertopos}, and by default all of our constructions should be interpreted in this manner. For example, we shall write $\Model_\calP$ for the $\infty$-category---we shall just say category---of $\infty$-groupoid-valued models of $\calP$. When $\calP$ is a discrete theory, i.e.\ a $1$-category, $\Model_\calP$ is the underlying $\infty$-category of Quillen's model category of simplicial set-valued models of $\calP$, and much of what we do could be easily translated to this language.

\begin{rmk}
The central notion studied in \cite{balderrama2021deformations} was a certain homotopical variant of an algebraic theory, called a \textit{loop theory}. For the most part, these play no role in the material developed in this paper. They do appear, at least implicitly, in \cref{sec:p} and \cref{sec:e}: in short, they constitute the technical machinery being used in the background to construct the various spectral sequences and obstruction theories to which we apply the algebra developed in this paper. See especially the proof of \cref{thm:pmaps} for further discussion. However, we have done our best to arrange things so that a deep knowledge of \cite{balderrama2021deformations} is not needed to understand these applications.
\tqed
\end{rmk}

\subsection{Algebra}

We now give an overview of the general algebraic topics we will cover. In all the following, $\calP$ will refer to a discrete theory.

\subsubsection{Algebras over theories and other topics}(\cref{sec:theories}).\label{sssec:algebrasintro}

There are two fundamental facts which serve to make the theory of power operations for $\bbE_\infty$ algebras in positive characteristic, or for $K(h)$-local $\bbE_\infty$ algebras over a Lubin-Tate spectrum of height $h$, well-behaved from our perspective. By way of example, let $\DL$ be the theory of power operations for $\bbE_\infty$ algebras over $\bbF_p$; abstractly, this theory may be defined as $\DL = \h\CAlg_{\bbF_p}^\free$, the homotopy category of the category of free $\bbE_\infty$ algebras over $\bbF_p$. In addition, write $\Ring_\DL = \Model_\DL$. The two key properties of this theory are the following:
\begin{enumerate}
\item The forgetful functor $\Ring_\DL^\heart\rightarrow\CRing_{\bbF_{p\ast}}^\heart$ is both monadic and comonadic;
\item The free models of $\DL$ have discrete and projective Andr\'e-Quillen homology.
\end{enumerate}
The first fact implies that $\Ring_\DL^\heart$ is built in a simple way from the more familiar category $\CRing_{\bbF_{p\ast}}^\heart$ of graded commutative $\bbF_p$-rings, and the second fact implies that this continues to hold at the level of derived categories. Using these one may reduce the Quillen cohomology of $\DL$-rings into a purely classical part, determined by the ordinary Andr\'e-Quillen homology of $\bbF_{p\ast}$-rings, and a purely linear part, governed by a suitable algebra of Dyer-Lashof operations with instability conditions. The use of commutative rings in particular is not necessary here, and instead we are led to the more general notion of an \textit{algebra over a theory}, due to Freyd \cite{freyd1966algebra} and Wraith \cite{wraith1971algebras}. 

\begin{defn*}[\cref{def:algebras}]
Let $\calP$ be a theory. A (discrete) \textit{$\calP$-algebra} $F$ consists of either of the following equivalent pieces of data:
\begin{enumerate}
\item A colimit-preserving monad $F$ on $\Model_\calP^\heart$;
\item A limit-preserving comonad $F^\vee$ on $\Model_\calP^\heart$.
\end{enumerate}
In this context, there is an adjunction $F\dashv F^\vee$, and $\Alg_F\simeq\CoAlg_{F^\vee}$.
\tqed
\end{defn*}

This definition may be understood as follows. Let $F$ be a monad on $\Model_\calP^\heart$, and let $\Model_F^\heart$ be the category of models for the theory $F\calP\subset\Alg_F$ obtained as the essential image of $\calP$ under $F$. Suppose moreover that $F$ preserves reflexive coequalizers; this is necessary to ensure $\Model_F^\heart\simeq\Alg_F$. Given $P,P'\in\calP$, the sets $F(P)(P')$ describe \textit{natural operations} on $F$-models. Explicitly, where $\ev_P$ denotes evaluation at $P$, there is a natural isomorphism
\[
F(P)(P')\cong\Hom_{\Fun(\Model_F^\heart,\Set)}(\ev_P,\ev_{P'});
\]
this is a consequence of the Yoneda lemma (cf.\ \cref{prop:yoneda}).

As $F$ preserves reflexive coequalizers, for $F$ to be an algebra it is sufficient that $F$ preserves coproducts. As
\[
\Hom_{\Fun(\Model_F^\heart,\Set)}\left(\prod_{i\in I}\ev_{P_i},\ev_{P'}\right)\cong \left(\coprod_{i\in I}F(P_i)\right)(P),
\]
we find that if $F$ preserves coproducts then natural operations $\prod_{i\in I}\ev_{P_i}\rightarrow\ev_{P'}$ on $F$-models are generated by operations $\ev_{P_i}\rightarrow\ev_{P'}$ together with operations already defined for models of $\calP$. In general, one may heuristically view $\calP$-algebras as those theories obtained from $\calP$ by adjoining only additional \textit{unary} operations and relations.

All of this is useful for understanding the ordinary algebra of $\Model_F^\heart$. However, our primary interest is in derived invariants; for instance, we would like to compute the Quillen cohomology of $F$-models. The first step in carrying out these computations is to understand abelianization for $F$-models, and here it is the right adjoint $F^\vee$ that makes algebras useful. This right adjoint may be identified explicitly as $F^\vee(X)(P) = \Hom_\calP(F(P),X)$; informally, $F^\vee$ is representable with representing object $F$. The key observation is that because $F^\vee$ preserves limits, it preserves all kinds of algebraic structure.

This plays out as follows. Write $D$ for abelianization, so that $\Ab(\Model_\calP^\heart)\simeq\LMod_{D\calP}^\heart$ and $\Ab(\Model_F^\heart)\simeq\LMod_{DF}^\heart$. Then $DF$ is an algebra over $D\calP$, informally obtained by linearizing the unary operations used to form $F$. If $B$ is an $F$-model, then we may consider the abelianizaton $DB$ of its underlying $\calP$-model; the action of $F$ on $B$ then linearizes to an action of $DF$ on $DB$, and this provides a model for the abelianization of $B$ as an $F$-model (\cref{prop:abelianizecomonad}). If $F$ satisfies an additional smoothness condition, then all of this may be made to work for derived abelianization as well (\cref{prop:smoothab}). In this case, Quillen cohomology in $\Model_F$ is built in a simple way from Quillen homology in $\Model_\calP$ and the ordinary homological algebra of $\LMod_{DF}$. See \cref{sssec:pex} below for an example of this general recipe.

\cref{sec:theories} also covers some additional topics useful for working with theories of operations. Here we will just highlight one: in \cref{ssec:compositions}, we recall the concept of a \textit{distributive law}, as discovered by Beck \cite{beck1969distributive}. These turn out to be the key to answering a number of basic questions that come up when working with theories. As one simple example, if $\calP$ is a theory, $F$ is a $\calP$-algebra, and $A$ is an $F$-model, then distributive laws allow one to understand the manner in which $\Ab(A/\Model_F^\heart)$ is built from $A$ and $\Ab(\Model_F^\heart)$.

\subsubsection{Plethories}(\cref{sec:plethories}).

\cref{sec:koszul} and \cref{sec:plethories} are essentially independent of each other, and we will begin by describing the latter, where we consider the special case of algebras over a theory of commutative rings. Let $\calP$ be an additive symmetric monoidal theory (cf.\ \cref{ssec:monoidalt}), and let $S$ be the free commutative monoid monad on $\LMod_\calP^\heart$, so that $\Model_{S\calP}^\heart\simeq\CMon(\LMod_\calP^\heart)$. A \textit{$\calP$-plethory} is equivalent to the data of an $S\calP$-algebra (\cref{def:pleth}). These generalize the biring triples of Tall-Wraith \cite{tallwraith1968representable} and plethories of Borger-Wieland \cite{borgerwieland2005plethystic}. Abelianization for rings over a plethory fits into the general story of abelianization for models over an algebra over a theory, and we unravel this more explicitly in \cref{ssec:plethaq}.

One of the features that distinguishes rings over a plethory from more familiar algebraic structures is the presence of \textit{nonlinear} structure. To successfully work with such plethories is to avoid dealing with this nonlinear structure by any means possible. It turns out that many plethories of interest are determined by their additive operations; here the classic example is the plethory of $\theta$-rings, also known as $\delta$-rings \cite{joyal1985delta}. In short, a $\theta$-ring is an ordinary commutative ring $R$ equipped with an operation $\theta\colon R\rightarrow R$ satisfying the identities that ensure $\psi(x) = x^p+p\theta(x)$ is generically a ring homomorphism. The operation $\theta$ is nonlinear, but it is determined by $\psi$ if $R$ is $p$-torsion free; as free $\theta$-rings are $p$-torsion free, it follows that the entire concept of a $\theta$-ring is encoded by the operation $\psi$ together with knowledge that $\psi(x)\equiv x^p\pmod{p}$. This is a $p$-typical analogue of the Wilkerson criterion for lambda rings \cite[Proposition 1.2]{wilkerson1982lambda}. Work of Rezk \cite{rezk2009congruence} shows that the theory of power operations for $K(h)$-local $\bbE_\infty$ algebras over a Lubin-Tate spectrum of height $h$ is similar in form to the theory of $\theta$-rings, and this motivates studying the general situation.

Under a minor flatness assumption, it is possible to package together the additive operations of a plethory into a single algebraic object, using a generalization of the additive bialgebras considered in \cite[Section 10]{borgerwieland2005plethystic}, as follows. Let $\Lambda$ be a $\calP$-plethory, and write $\Ring_\Lambda^\heart$ for the category of $\Lambda$-models, which we call $\Lambda$-rings. Given $P,P'\in\calP$, write
\[
\Gamma_{P,P'} = \Hom_{\Fun(\Ring_\Lambda^\heart,\Ab)}(\ev_P,\ev_{P'}).
\]
This is the set of natural additive operations $\ev_P\rightarrow\ev_{P'}$ on $\Lambda$-rings, and defines the morphisms for a theory $\Gamma = \Gamma(\Lambda)$ with the same objects as $\calP$. There is more structure present on $\Gamma$, encoding how these additive operations interact with the underlying $\calP$-ring structure of $\Lambda$-rings. This extra structure may be summarized by saying that the forgetful functor $\LMod_\Gamma^\heart\rightarrow\LMod_\calP^\heart$ is equipped with the structure of a strong symmetric monoidal functor (\cref{thm:plethcobi}), and we say that $\Gamma$ is a \textit{$\calP$-cobialgebroid} (\cref{def:cobialgebroid}). When $\calP$ is the theory of ordinary commutative rings over a commutative ring $R$, then $\calP$-cobialgebroids, i.e.\ strong symmetric monoidal functors to $\Mod_R^\heart$ which are both monadic and comonadic, are equivalent to the twisted $R$-bialgebras used in \cite{borgerwieland2005plethystic} (\cref{ex:twistedbialgebra}). The abstract definition in terms of symmetric monoidal functors is more convenient when working in multisorted contexts.

\subsubsection{Koszul algebras}(\cref{sec:koszul}).

The material described above serves to reduce various calculations to purely linear calculations. In \cref{sec:koszul}, we turn to the topic of \textit{additive theories}. The primary goal of this section is to develop a robust theory of \textit{Koszul algebras}, in particular to gain access to their associated Koszul resolutions and Koszul complexes. Koszul algebras were first introduced by Priddy \cite{priddy1970koszul}, motivated by the example of the Steenrod algebra. A more general notion of Koszul algebra is necessary to incorporate examples of interest in homotopy theory. For example, many of the algebras derived from homotopy operations fail to contain their coefficient rings centrally; others fail to admit an augmentation; others may not even be algebras in the classical sense, perhaps due to the presence of instability conditions; and still others may be built out of richer algebraic objects, such as Mackey functors. In fact, all of these cases are encountered even when just considering analogues of the mod $2$ Steenrod algebra.

These can all be encoded as algebras over an additive theory, and so we are left with developing a theory of Koszul algebras over additive theories. Fortunately, once the correct definition is known, the development is not significantly more difficult than the classical case. Let $\calP$ be an additive theory, and let $F = \colim_{m\rightarrow\infty}F_{\leq m}$ be a filtered $\calP$-algebra, in the sense that $F_{\leq 0}$ is the initial algebra and multiplication on $F$ restricts to $F_{\leq n}\circ F_{\leq m}\rightarrow F_{\leq n+m}$. To any filtered algebra $F$ we may associate its associated graded algebra $\gr F$, and for any $F$-module $M$ the monadic bar resolution $C(F,F,M)$ may be filtered in such a way that $\gr C(F,F,M)\cong C(\gr F,\gr F,\ol{M})$; here $\ol{M}$ is $M$ considered as a $\gr F$-module via the augmentation on $\gr F$. 

\begin{defn*}[\cref{def:koszul}]
A filtered $\calP$-algebra $F$ is \textit{Koszul} if:
\begin{enumerate}
\item $F$ and $\gr F$ are projective, in the sense that they restrict to functors $\calP\rightarrow\calP$;
\item $H_n C(\gr F,\gr F,\ol{P})[m] = 0$ for $P\in\calP$ and $n\neq m$.
\tqed
\end{enumerate}
\end{defn*}

This definiton may be understood as follows. As $\gr F$ is augmented, one can define its homology $H_\ast(\gr F)$ and cohomology $H^\ast(\gr F)$ (\cref{ssec:homologycohomology}). Any time that $F$ is filtered and $M$ is projective over $\calP$, we may identify a subcomplex of $C(F,F,M)$ of the form $K(F,F,M) = F H_\ast(\gr F)(M)\subset C(F,F,M)$; the condition that $F$ is Koszul implies that this inclusion is a quasiisomorphism (\cref{thm:koszulres}). Thus $K(F,F,M)$ is a small projective resolution of $M$. In particular, given any $F$-model $N$, one may form the Koszul complexes $K_F(M,N) = \LMod_F(K(F,F,M),N) \cong\LMod_\calP(M,H^\ast(\gr F)(N))$, and these provide small models for $\EXT_F(M,N)$.

We describe these Koszul complexes explicitly in \cref{ssec:quadraticduality} and \cref{ssec:koszulcomplexes}. As in the classical story, $\gr F$ is a quadratic algebra, and $H^\ast(\gr F)$ is its quadratic dual (\cref{thm:quadraticduality}). This describes the graded objects $K_F(M,N)$ and pairings between these, and the differential admits a simple description in terms of this (\cref{thm:homogkoszuldifferential}, \cref{thm:nonhomogkoszul}).

\subsection{Applications}\label{ssec:examplesintro}

\cref{sec:p} and \cref{sec:e} serve both as applications and extended examples of all the above machinery. 

\subsubsection{\texorpdfstring{$\bbE_\infty$}{E-infinity} algebras over \texorpdfstring{$\bbF_p$}{F_p}}(\cref{sec:p}).\label{sssec:pex}

Let $R$ be an $\bbE_\infty$ ring with $p = 0$ in $\pi_0 R$, and let $\bbP\calR = \CAlg_R^\free$ be the essential image of the category $\Mod_R^\free$ of free $R$-modules, including suspensions and desuspensions, under the free functor $\bbP_R\colon\Mod_R\rightarrow\CAlg_R$. Then $\h\bbP\calR$ is a theory of power operations for $\bbE_\infty$ algebras over $R$. There is a forgetful functor $\Model_{\h\bbP\calR}^\heart\rightarrow\CRing_{R_\ast}^\heart$; this preserves colimits, and so realizes $\Model_{\h\bbP\calR}^\heart = \Ring_{\h\bbP\calR}^\heart$ as the category of rings over an $R_\ast$-plethory. This is a consequence of the K\"unneth isomorphisms $\pi_\ast\bbP_R(F\oplus F')\cong\pi_\ast\bbP_R F\otimes_{R_\ast}\pi_\ast\bbP_R F'$, which hold for $F,F'\in\Mod_R^\free$ due to a stronger fact: each $\pi_\ast\bbP_R\Sigma^a R$ is free as an $R_\ast$-ring. This is well-known when $R = \bbF_p$, and in general the Hopkins-Mahowald  Thom spectrum theorem may be used to produce an unstructured isomorphism $\pi_\ast\bbP_R\Sigma^a R\cong R_\ast\otimes\pi_\ast\bbP_{\bbF_p}\Sigma^a\bbF_p$ of rings.

Thus we are in a position to apply our machinery. By way of example, the recipe discussed above in \cref{sssec:algebrasintro} amounts to the following (cf.\ \cref{ssec:plethaq}). First, there are composition maps
\[
\pi_c \bbP_R\Sigma^b R\times\pi_b \bbP_R\Sigma^a R\rightarrow \pi_c\bbP_R\Sigma^a R;
\]
given $\alpha\colon \Sigma^c R\rightarrow\bbP_R\Sigma^b R$ and $\beta\colon\Sigma^b R\rightarrow\bbP_R\Sigma^a R$ in $\Mod_R$, their composition is the composite
\[
m\circ\bbP_R\beta\circ\alpha\colon \Sigma^cR\rightarrow\bbP_R\Sigma^b R\rightarrow\bbP_R\bbP_R\Sigma^a R\rightarrow\bbP_R\Sigma^a R.
\]
Each $\pi_\ast\bbP_R\Sigma^a R$ is augmented over $R_\ast$, and on indecomposables these compositions pass to
\[
Q(\pi_\ast\bbP_R\Sigma^b R)_c\otimes Q(\pi_\ast \bbP_R\Sigma^a R)_b\rightarrow Q(\pi_\ast\bbP_R \Sigma^a R)_c.
\]
Let $\LMod_{\Delta(R)}^\heart$ be the category of $R_\ast$-modules $M_\ast$ equipped with maps
\[
Q(\pi_\ast\bbP_R\Sigma^a R)_b\otimes M_a\rightarrow M_b
\]
satisfying the evident associativity and unitality conditions. Then there is an equivalence of categories $\Ab(\Ring_{\h\bbP\calR}^{\heart,\aug})\simeq\LMod_{\Delta(R)}^\heart$. More generally, given $C_\ast\in\Ring_{\h\bbP\calR}^\heart$, there is an equivalence $\Ab(\Ring_{\h\bbP\calR}^\heart/C_\ast)\simeq\LMod_{C_\ast\otimes_{R_\ast}\Delta(R)}^\heart$, where $C_\ast\otimes_{R_\ast}\Delta(R)$ is an object obtained from $C_\ast$ and $\Delta(R)$ via a certain distributive law. This extends to an explicit description of the Quillen cohomology of $\h\bbP\calR$-rings: given $A_\ast\rightarrow C_\ast$ in $\Ring_{\h\bbP\calR}^\heart$ and $M_\ast\in\LMod_{C_\ast\otimes_{R_\ast}\Delta(R)}^\heart$, there is an equivalence
\[
\calH_{\h\bbP\calR/C_\ast}(A_\ast;M_\ast)\simeq\EXT_{C_\ast\otimes_{R_\ast}\Delta(R)}(C_\ast\otimes_{A_\ast}^\bbL\bbL\Omega_{A_\ast|R_\ast},M_\ast),
\]
where the left side is a Quillen cohomology space and the right side an $\Ext$ space (cf.\ \cref{ssec:additivenotation}, \cref{ssec:qcohomology}).

Except for the homotopical input that $\pi_\ast\bbP_R\Sigma^a R$ is free as an $R_\ast$-ring, all of this is just a specialization of the general algebraic material that has already been discussed.

In \cref{sec:p}, we consider the particular case $R = \bbF_p$; here the structure of power operations is well-understood, and so there is more that can be said.

Write $\DL = \h\bbP\bbF_p$ for the plethory of power operations for $\bbE_\infty$ algebras over $\bbF_p$. We recall the structure of these power operations in \cref{ssec:dl}; for instance, when $p=2$, a $\DL$-ring amounts to an $\bbF_{2\ast}$-ring $A_\ast$ equipped with additive maps $Q^s\colon A_n\rightarrow A_{n+s}$ for $s\in\bbZ$, which are subject to certain explicitly describable Adem relations, Cartan formulas, and instability conditions. The category of $\DL$-rings is closely related to the category of unstable rings over the Steenrod algebra, and we give the precise relation in \cref{ssec:unstablea}.

As above, one may form an object $\Delta$ with $\Ab(\Ring_\DL^\heart/C_\ast)\simeq\LMod_{C_\ast\otimes\Delta}^\heart$, and in this case $\Delta$ admits an explicit description in terms of generators and relations. The presence of instability conditions implies that $\Delta$ is not merely a $\bbZ$-graded $\bbF_p$-algebra, but it is an algebra for the theory of $\bbF_{p\ast}$-modules. The algebra $\Delta$ turns out to be Koszul, and we compute its cohomology in \cref{ssec:cohomologydl}. This gives access to Koszul complexes for computing with $\Delta$-modules, which can be regarded as nonconnective analogues of the unstable Koszul complexes considered by Miller \cite{miller1978spectral}.

In \cref{ssec:pmaps}, we elaborate on the mapping space obstruction theory of \cite{balderrama2021deformations} in this context, and give some examples. In \cref{ssec:paq}, we similarly describe a generalization of the Basterra spectral sequence for computing the Andr\'e-Quillen-Goodwillie towers of $\bbE_\infty$ rings in this context.

\subsubsection{Lubin-Tate spectra}(\cref{sec:e}).

We finally return to our original motivation in \cref{sec:e}, where we give applications to $K(h)$-local $\bbE_\infty$ algebras over a Lubin-Tate spectrum $E$ of height $h$. This ultimately takes the same form as for $\bbE_\infty$ algebras in positive characteristic: there is an $E_\ast$-plethory $\bbT$ satisfying all the niceness properties one could hope for, and there are obstruction theories for working with $K(h)$-local $\bbE_\infty$ algebras over $E$ with obstruction groups built from derived invariants of $\bbT$-rings. This turns out to not be obvious, due to some subtleties that arise in this context, but the framework has been set up to handle this.

In the end, this works out as follows. Building on work of Strickland \cite{strickland1998morava}, which in turn builds on calculations of Kashiwabara \cite{kashiwabara2001brownpeterson}, Rezk \cite{rezk2009congruence} has produced what is in our language an $E_\ast$-plethory $\bbT$ encoding the structure of $E$-power operations. In short, a $\bbT$-ring is very much like a $\theta$-ring, only with more additive operations, which satisfy more complicated relations; moreover, $\bbT$-rings admit a conceptual interpretation in terms of the deformation theory of isogenies of formal groups. We recall all of this in \cref{ssec:epow}. There are some additional properties special to $\bbT$; most notably, $\bbT$-rings are very close to being rings over an ungraded plethory. This is captured in \cite{rezk2009congruence} using the notion of a twisted $\bbZ/(2)$-graded category, and in \cref{ssec:evenperiodic} we give a different packaging in terms of even-periodic plethories. All of these facts together reduce many computations with $\bbT$-rings to computations over an ungraded cobialgebroid $\Gamma$ associated to $\bbT$; ungraded cobialgebroids are the coalgebraic analogue of formal category schemes, and we review this in \cref{ssec:quasicoherent}.

Now let $\calP = \CAlg_E^{\loc,\free}$ be the category of $K(h)$-localizations of free $\bbE_\infty$ algebras over $E$. Then $\h\calP$ is a theory of power operations for such $E$-algebras, but $\Model_{\h\calP}^\heart$ is not equivalent to the category of $\bbT$-rings: the $K(h)$-local condition enforces a completeness condition on homotopy groups, and this is not reflected in $\bbT$. This issue was first noted by Rezk \cite[Section 1.6]{rezk2009congruence}, and approaches to handling it have been studied by Barthel-Frankland \cite{barthelfrankland2015completed} and Brantner \cite{brantner2017lubin}, the latter also making use of algebraic theories. We give another approach to completing $\bbT$ in \cref{prop:dfcpl}, and describe how this interacts with Quillen cohomology in \cref{ssec:completions}; in the end, this goes as follows. Write $\frakm\subset E_0$ for the maximal ideal of $E_0$. Then there is a category $\Mod_{E_\ast}^{\Cpl(\frakm)}$ of $\frakm$-complete objects in the derived category of $E_\ast$-modules, and this is a localization of the dervied category of $E_\ast$-modules. Let $\Ring_\bbT^{\Cpl(\frakm)}$ be the homotopy theory of simplicial $\bbT$-rings whose underlying object of $\Mod_{E_\ast}$ is $\frakm$-complete; this is a localization of $\Ring_\bbT$. Then there is an equivalence of homotopy theories $\Model_{\h\calP}\simeq\Ring_\bbT^{\Cpl(\frakm)}$ (\cref{lem:derivedcompleterefl}). In particular, the Quillen cohomology of a model of $\h\calP$ agrees with the Quillen cohomology of its underlying $\bbT$-ring.

From this it follows that the work of \cite{balderrama2021deformations} provides obstruction theories for working with $\CAlg_E^\loc$ with obstruction groups built from derived invariants of $\bbT$-rings. These turn out to be very pleasant at heights $h\leq 2$, as a consequence of the following. Let $\Delta$ be the $E_\ast$-algebra such that $\Ab(\Ring_\bbT^{\aug,\heart})\simeq\LMod_\Delta$. A theorem of Rezk \cite{rezk2017rings} shows that $\Delta$ is a Koszul algebra, and moreover $H^n(\Delta) = 0$ for $n>h$. The theory of Koszul resolutions then implies that $\Ext^n_\Delta(M,N) = 0$ for $n>h$ whenever $M$ is a $\Delta$-module which is projective as an $E_\ast$-module. With a bit more work, all of this may be made to work for other slices of $\Ring_\bbT$, and to play well with completions; as a sample of the sort of subtlety that must be dealt with, observe that a $\Delta$-module whose underlying $E_\ast$-module is the completion of a projective $E_\ast$-module need not be the completion of a $\Delta$-module whose underlying $E_\ast$-module is projective.

Once all the details are dealt with, we are placed in a good position to give applications. In \cref{ssec:emaps}, we unravel what the mapping space obstruction theory of \cite{balderrama2021deformations} says in this context, and in \cref{ssec:etaq}, we discuss the topological Andr\'e-Quillen homology and cohomology of $K(h)$-local $\bbE_\infty$ algebras over $E$. One easily stated special case of the mapping space obstruction theory is the following.

\begin{theorem*}[\cref{thm:emapss}]
Let $E$ be a Lubin-Tate spectrum of height $h\leq 2$, and fix $A,B\in\CAlg_E^\loc$. Suppose:
\begin{enumerate}
\item $A_\ast$ and $B_\ast$ are concentrated in even degrees;
\item $A_0$ is the completion of a localization of a polynomial ring over $E_0$.
\end{enumerate}
Then every $\bbT$-ring map $A_\ast\rightarrow B_\ast$ lifts to a map $A\rightarrow B$ in $\CAlg_E$. Moreover, this lift is unique when $h=1$.
\tqed
\end{theorem*}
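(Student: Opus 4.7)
The approach is to apply the mapping space obstruction theory developed in \cref{ssec:emaps} and verify that its obstruction groups vanish under the given hypotheses. Since $A_\ast,B_\ast$ are viewed as models of $\h\calP$, and by \cref{lem:derivedcompleterefl} the Quillen cohomology of such models coincides with the Quillen cohomology of the underlying $\bbT$-rings in $\Ring_\bbT^{\Cpl(\frakm)}$, the obstruction groups in question are Quillen cohomology groups of the $\bbT$-ring $A_\ast$ with coefficients in shifts of $B_\ast$. Applying the plethystic recipe described in \cref{sssec:pex}, these become Ext groups
\[
\calH^s_{\bbT/B_\ast}(A_\ast;M_\ast)\;\cong\;\Ext^s_{B_\ast\otimes_{E_\ast}\Delta}\!\bigl(B_\ast\otimes^\bbL_{A_\ast}\bbL\Omega_{A_\ast\mid E_\ast},\,M_\ast\bigr)
\]
over (the completed version of) the cobialgebroid associated to $\bbT$.

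The next step is to analyze the derived cotangent module under hypothesis (2). Localizations of polynomial $E_0$-algebras are smooth, and their $\frakm$-adic completions inherit enough smoothness that $\bbL\Omega_{A_\ast|E_\ast}$ is concentrated in degree zero; together with the even-concentration in hypothesis (1) this makes $B_\ast\otimes^\bbL_{A_\ast}\bbL\Omega_{A_\ast|E_\ast}$ a $B_\ast\otimes_{E_\ast}\Delta$-module whose underlying object is a completion of a free $B_\ast$-module sitting in even internal degrees. Here one must navigate the warning recalled in \cref{sec:e}: a $\Delta$-module whose underlying $E_\ast$-module is the completion of a projective need not itself be the completion of a $\Delta$-module with projective underlying module. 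The content of hypothesis (2) is precisely what is needed to ensure that the cotangent module does lie in the smaller class for which the Koszul resolutions of \cref{sec:koszul} produce a finite projective resolution.

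The final step is to apply Koszul vanishing. By Rezk's theorem recalled in \cref{sec:e}, $\Delta$ is Koszul with $H^n(\Delta)=0$ for $n>h$, and the same holds for $B_\ast\otimes_{E_\ast}\Delta$; the Koszul resolution of \cref{sec:koszul} therefore gives a projective resolution of length $h$ of the cotangent module, so that $\Ext^s$ vanishes for $s>h$. The remaining obstruction and indeterminacy groups placed by the obstruction theory in low cohomological degrees have coefficient modules $\Omega^s B_\ast$ of internal parity $s$, whereas the cotangent module is in even internal degree; since the Koszul complex $K_F(M,N)$ of \cref{sec:koszul} has graded pieces controlled by the internal gradings of $M$ and $N$, this parity mismatch forces the surviving Ext groups to vanish. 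For $h\le 2$ every obstruction group is killed, giving existence of the lift; for $h=1$ the same analysis applied to the indeterminacy groups gives uniqueness.

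The main difficulty is the second step: reconciling the cotangent complex of $A_\ast$ produced from hypothesis (2) with the precise class of $\Delta$-modules admitting finite Koszul resolutions, given the subtle interplay between projectivity and $\frakm$-completion for modules over the cobialgebroid. Once the cotangent module is placed in the correct class, the rest of the argument reduces to Koszul dimension-counting and parity bookkeeping.
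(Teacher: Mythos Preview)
Your proposal is correct and follows the same approach as the paper: apply the obstruction theory of \cref{thm:emapss}, use smoothness from hypothesis (2) to reduce Quillen cohomology to $\Ext$ over $\Delta$, invoke Rezk's Koszul bound $H^n(\Delta)=0$ for $n>h$, and use parity to kill the remaining degrees. One small correction: the completion subtlety you flag is handled in the paper not by placing the cotangent module in a ``smaller class'', but by \cref{lem:compkoszul}, which shows via a distributive-square argument and \cref{prop:koszulcomposition} that the completed category $\LMod_{S\otimes\Delta}^{\Cpl(\frakm)}$ is itself governed by a Koszul algebra over the theory $\Mod_S^{\Cpl(\frakm)}$, so length-$h$ Koszul resolutions exist for any module that is projective over $S$ in the completed sense---hypothesis (2) is only needed to ensure the cotangent complex is such a module.
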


This follows quickly from our general machinery combined with bounds on obstruction groups implied by Rezk's theorem on $\Delta$, and the existence of $\bbE_\infty$ orientations at heights $h\leq 2$ is an immediate corollary (\cref{thm:orientations}).

\section{Algebraic theories}\label{sec:theories}

This section is largely a compilation of a number of definitions which are useful for understanding the algebraic structures we are interested in, together with a discussion of Quillen cohomology in certain special contexts.

\subsection{Review}\label{ssec:malcev}

We begin by reviewing some basic definitions and facts regarding algebraic theories that we will need.

\begin{defn}[{cf.\ \cite[Section 2]{balderrama2021deformations}}]
\hphantom{blank}
\begin{enumerate}
\item An \textit{algebraic theory} is a category $\calP$ which admits all small coproducts, and we say that $\calP$ is a \textit{discrete theory} if $\calP$ is a $1$-category.
\item A theory $\calP$ is \textit{$\kappa$-bounded} for a regular cardinal $\kappa$ if there exists a small full subcategory $\calP_0\subset\calP$ closed under $\kappa$-small coproducts and satisfying the following $\kappa$-compactness condition: for every $P_0\in\calP_0$ and set of objects $\{P_i:i\in I\}$ of $\calP$, the canonical map $\colim_{F\subset I,|F|<\kappa}\Map_\calP(P_0,\coprod_{i\in F}P_i)\rightarrow\Map_\calP(P_0,\coprod_{i\in I}P_i)$ is an equivalence. We say that $\calP$ is \textit{bounded} if $\calP$ is $\kappa$-bounded for some $\kappa$.
\item The category of \textit{models} of an algebraic theory $\calP$ is the full subcategory $\Model_\calP\subset\Psh(\calP)$ of small presheaves $X$ on $\calP$ such that for any set $\{P_i:i\in I\}$ of objects of $\calP$, the canonical map $X(\coprod_{i\in I}P_i)\rightarrow\prod_{i\in I}X(P_i)$ is an equivalence. The category of \textit{discrete models} of $\calP$ is the full subcategory $\Model_\calP^\heart\subset\Model_\calP$ of models whose underlying presheaf takes values in sets.
\item An algebraic theory $\calP$ is \textit{Mal'cev} provided it satisfies a certain additional condition, equivalent when $\calP$ is a discrete theory to the following: for every simplicial object $X\colon\Delta^\op\rightarrow\Model_\calP^\heart$ and every $P\in\calP$, the simplicial set $X(P)$ is a Kan complex.
\end{enumerate}
By \textit{theory} we will always refer to a bounded Mal'cev theory.
\tqed
\end{defn}

Throughout the paper, $\calP$ will always refer to some theory, possibly satisfying additional assumptions. We will abuse notation by implicitly identifying $\calP$ as a full subcategory of $\Model_\calP$. Except when giving some definitions and basic facts, $\calP$ will be a discrete theory. We will describe some additional notation in \cref{ssec:additivenotation} for models of $\calP$ in the case where $\calP$ is additive. 

The structure of the category of models of $\calP$ can be summarized as follows.

\begin{lemma}[{\cite[Section 2]{balderrama2021deformations}}]\label{prop:models}
\hphantom{blank}
\begin{enumerate}
\item $\Model_\calP$ is the free cocompletion of $\calP$ under geometric realizations, and these are preserved by the embedding $\Model_\calP\subset\Psh(\calP)$. In particular, if $X\in\Model_\calP$, then $\Map_\calP(X,\bs)$ preserves geometric realizations if and only if $X$ is a retract of some object of $\calP$.
\item Say $\calP$ is $\kappa$-bounded, and fix $\calP_0\subset\calP$ realizing this. Then $\Model_\calP$ is equivalent to the category of presheaves on $\calP_0$ which preserve $\kappa$-small coproducts. In particular, it is a $\kappa$-compactly generated presentable category.
\item If $\calP$ is discrete, then $\Model_\calP^\heart$ is the free $1$-categorical cocompletion of $\calP$ under reflexive coequalizers. In this case $\Model_\calP$ is the underlying $\infty$-category of Quillen's homotopy theory of simplicial objects in $\Model_\calP^\heart$, with localization realized by geometric realization.
\qed
\end{enumerate}
\end{lemma}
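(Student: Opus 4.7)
The plan is to handle the three parts sequentially, since (2) refines (1) by adding a presentability assertion and (3) specializes to the $1$-categorical setting.

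For (1), I would first verify that the full subcategory $\Model_\calP \subset \Psh(\calP)$ is closed under geometric realizations. Given a simplicial object $X_\bullet$ in $\Model_\calP$ and a coproduct $\coprod_{i\in I} P_i$ in $\calP$, the value of the pointwise geometric realization $|X_\bullet|$ on $\coprod_i P_i$ is $|X_\bullet(\coprod_i P_i)| = |\prod_i X_\bullet(P_i)|$, and one must show this agrees with $\prod_i |X_\bullet(P_i)|$. This is where the Mal'cev condition does its work: it guarantees that each simplicial space $X_\bullet(P_i)$ is a Kan complex, which is exactly the hypothesis under which geometric realization commutes with small products. With closure in hand, the free-cocompletion universal property is established by resolving each $X \in \Model_\calP$ as a geometric realization of a simplicial diagram in $\calP$ via an iterated bar construction associated to the (forgetful/free) adjunction to spaces. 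The retract characterization then follows formally: each $\Map_\calP(P, \bs) = \ev_P$ for $P \in \calP$ preserves all pointwise colimits, yielding the ``if'' direction; for ``only if,'' applying the hypothesis to a simplicial resolution $P_\bullet \to X$ with terms in $\calP$ produces a splitting of $X$ off of some $P_n$.

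For (2), the aim is to identify $\Model_\calP$ with the category of presheaves on $\calP_0$ preserving $\kappa$-small coproducts via the restriction functor. Every object of $\calP$ is a $\kappa$-filtered colimit of $\kappa$-small coproducts of objects of $\calP_0$, and the $\kappa$-compactness condition built into the definition of $\kappa$-boundedness is precisely what is needed to guarantee that the left Kan extension along $\calP_0 \hookrightarrow \calP$ of a $\kappa$-small-coproduct-preserving presheaf sends arbitrary coproducts to products. Since the category of $\kappa$-small-coproduct-preserving presheaves on a small category is a standard example of a $\kappa$-compactly generated presentable category, the conclusion follows.

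For (3), when $\calP$ is discrete the first statement reduces to the classical Lawvere--Linton identification of set-valued models of a theory with the free reflexive-coequalizer cocompletion, noting that sifted colimits in $\Set$ are generated by filtered colimits and reflexive coequalizers. For the comparison with Quillen's model category, the Mal'cev hypothesis ensures that every simplicial object of $\Model_\calP^\heart$ is fibrant, so the simplicial model structure on $\s\Model_\calP^\heart$ has all objects fibrant; its underlying $\infty$-category, obtained by inverting pointwise weak equivalences, then matches the description of $\Model_\calP$ from part (1) as the free cocompletion of $\calP$ under geometric realizations, with the localization concretely realized by the geometric realization functor $\s\Model_\calP^\heart \to \Model_\calP$. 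The main obstacle throughout is the closure of $\Model_\calP$ under geometric realizations in (1): this is the technical crux where the Mal'cev hypothesis is essential, and once it is established the remaining assertions are either formal consequences of free-cocompletion universal properties or appeals to standard results on presentable categories and model categories of simplicial objects.
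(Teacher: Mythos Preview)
The paper gives no proof of this lemma; it is stated with a terminal \qed and attributed to \cite[Section 2]{balderrama2021deformations}. So there is nothing in the paper to compare your argument against.

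That said, your sketch is a reasonable outline of how the cited proof goes, with one imprecision worth noting. In part (1) you write that the Mal'cev condition ``guarantees that each simplicial space $X_\bullet(P_i)$ is a Kan complex.'' Being Kan is a property of simplicial \emph{sets}, not simplicial spaces; the paper's definition only gives the Kan formulation when $\calP$ is discrete, and for general $\calP$ refers to ``a certain additional condition'' left unspecified here. In the discrete case your explanation is correct: each $X_\bullet(P_i)$ is then a Kan simplicial set, and for Kan simplicial sets the geometric realization (as a homotopy type) is the simplicial set itself, so realization commutes with arbitrary products trivially. In the non-discrete case one must appeal to whatever the actual Mal'cev condition is in the cited reference, and your sentence as written does not apply. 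The bar-resolution argument and the retract characterization are standard and correctly described, as are parts (2) and (3).
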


We think of a theory $\calP$ as encoding, and encoded by, natural operations on its models. This manifests as follows.

For $P\in\calP$, write $\ev_P\colon \Model_\calP\rightarrow\Gpd_\infty$ for the functor $\ev_P(X) = X(P)$.

\begin{prop}\label{prop:yoneda}
For $P,P'\in\calP$, there is a natural isomorphism
\[
\Hom_{\Fun(\Model_\calP,\,\Set)}(\pi_0\ev_{P},\pi_0\ev_{P'})\cong\pi_0\Map_\calP(P',P).
\]
\end{prop}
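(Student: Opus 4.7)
The plan is to reduce this to a standard Yoneda computation in the homotopy $1$-category of $\Model_\calP$. First I would observe that, by the embedding $\Model_\calP \subset \Psh(\calP)$ and the Yoneda lemma, each $P \in \calP$ is identified with the presheaf $\Map_\calP(-, P)$, which does land in $\Model_\calP$ because representable presheaves automatically send coproducts to products. Under this identification, $\ev_P$ is corepresented by $P$: for any $X \in \Model_\calP$,
\[
\ev_P(X) = X(P) \simeq \Map_{\Model_\calP}(P, X),
\]
and the embedding $\calP \hookrightarrow \Model_\calP$ is fully faithful, so $\Map_{\Model_\calP}(P', P) \simeq \Map_\calP(P', P)$.

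Next I would use that $\Set$ is a $1$-category with only identity $2$-morphisms, so any functor from an $\infty$-category $\Model_\calP$ to $\Set$ factors uniquely through the homotopy category $\h\Model_\calP$, and $\Fun(\Model_\calP, \Set) = \Fun(\h\Model_\calP, \Set)$ as ordinary categories. Under this equivalence, $\pi_0 \ev_P$ corresponds to the functor $\Hom_{\h\Model_\calP}(P, -) \colon \h\Model_\calP \to \Set$, since $\pi_0 \Map_{\Model_\calP}(P, X) = \Hom_{\h\Model_\calP}(P, X)$ by definition of the homotopy category.

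Finally, the $1$-categorical Yoneda lemma applied in $\h\Model_\calP$ gives
\[
\Hom_{\Fun(\h\Model_\calP, \Set)}(\Hom_{\h\Model_\calP}(P, -), \Hom_{\h\Model_\calP}(P', -)) \cong \Hom_{\h\Model_\calP}(P', P),
\]
and the right-hand side is exactly $\pi_0 \Map_{\Model_\calP}(P', P) \cong \pi_0 \Map_\calP(P', P)$ by the fully faithful embedding. Chaining these identifications produces the desired natural isomorphism, and naturality in $P, P'$ is automatic from the construction.

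There is no real obstacle here; the only subtle point to articulate carefully is that mapping into the $1$-category $\Set$ collapses all higher-categorical data, so a natural transformation between $\pi_0 \ev_P$ and $\pi_0 \ev_{P'}$ really is no more than a natural transformation of ordinary set-valued functors on the homotopy category, at which point classical Yoneda applies directly.
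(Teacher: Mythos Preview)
Your proposal is correct and follows essentially the same approach as the paper: identify $\pi_0\ev_P$ with the functor corepresented by $P$ on $\h\Model_\calP$, then apply the ordinary Yoneda lemma there. The paper's proof is slightly terser in passing from $\Fun(\Model_\calP,\Set)$ to $\Fun(\h\Model_\calP,\Set)$, but the content is the same.
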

\begin{proof}
The Yoneda lemma gives a natural isomorphism
\[
\ev_P(X) \simeq \Map_{\calP}(P,X),
\]
and thus
\[
\pi_0\ev_P(X) \cong \pi_0\Map_{\calP}(P,X) \cong \Map_{\h\calP}(P,X).
\]
In other words, $\pi_0\ev_P$ is corepresented by $P$ as a functor on the homotopy category of $\Model_\calP$. We conclude with another application of the Yoneda lemma, yielding
\begin{align*}
\Hom_{\Fun(\Model_\calP,\,\Set)}(\pi_0\ev_P,\pi_0\ev_{P'})&\cong\Hom_{\Fun(\h\Model_\calP,\,\Set)}(\pi_0\ev_P,\pi_0\ev_{P'})\\
&\cong\Hom_{\h\calP}(P',P)\cong\pi_0\Map_\calP(P',P).
\qedhere
\end{align*}
\end{proof}

We are interested in theories primarily as a tool for accessing their models.

\begin{prop}[{\cite[Proposition 3.3.3]{balderrama2021deformations}}]\label{prop:additiveabelian}
\hphantom{blank}
\begin{enumerate}
\item If $\calP$ is a discrete additive theory, then $\Model_\calP^\heart$ is a complete and cocomplete abelian category with enough projectives;
\item If $\calA$ is a cocomplete abelian category and $\calP\subset\calA$ is a full subcategory consisting of projective objects and closed under coproducts such that every $M\in\calA$ is resolved by objects of $\calP$, then $\calA\simeq\Model_\calP^\heart$.
\qed
\end{enumerate}
\end{prop}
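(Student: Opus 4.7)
The plan is to treat the two parts in sequence, using \cref{prop:models} as the main tool.

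For part (1), I would begin by observing that the additivity of $\calP$ endows $\Model_\calP^\heart$ with an enrichment in abelian groups: since finite coproducts in $\calP$ are biproducts, any product-preserving set-valued presheaf on $\calP$ lifts canonically to an abelian-group-valued presheaf, and morphisms of models are automatically $\Ab$-linear. Completeness is immediate since limits of models are computed pointwise in $\Psh(\calP)$ (limits commute with products, so they stay in $\Model_\calP^\heart$), and cocompleteness is part of \cref{prop:models} combined with the fact that reflexive coequalizers and small coproducts together generate all small colimits. For the abelian structure, I would verify that kernels and cokernels of every morphism exist --- the former computed pointwise, the latter via reflection from $\Psh(\calP)$ --- and that the canonical comparison from coimage to image is an isomorphism; since $\ev_P$ is additive and exact on such comparison maps, this reduces to the corresponding statement in $\Ab$. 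Finally, enough projectives is immediate: each $P \in \calP \subset \Model_\calP^\heart$ is projective because $\Hom(P, -) = \ev_P$ preserves regular epis (epis in $\Model_\calP^\heart$ are realizable as reflexive coequalizers, which $\ev_P$ preserves), and every model $X$ admits a canonical surjection $\coprod_{P \in \calP,\, x \in X(P)} P \twoheadrightarrow X$.

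For part (2), the strategy is to produce a mutual equivalence from the universal property of $\Model_\calP^\heart$. The inclusion $\iota\colon \calP \hookrightarrow \calA$ preserves coproducts by hypothesis and lands in a category with reflexive coequalizers, so by \cref{prop:models} it extends uniquely to a colimit-preserving functor $L\colon \Model_\calP^\heart \to \calA$. This admits a right adjoint $R$ defined by $R(M) = \Hom_\calA(-, M)\big|_\calP$, which lies in $\Model_\calP^\heart$ precisely because $\calP$ is closed under coproducts in $\calA$, so $\Hom_\calA(-, M)$ carries coproducts in $\calP$ to products.

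It then remains to show the unit and counit are isomorphisms. For the counit $\epsilon\colon LR(M) \to M$: for $P \in \calP$, the adjunction and the identity $LP = P$ give that $\Hom_\calA(P, \epsilon)$ is the identity of $\Hom_\calA(P, M)$, hence an iso; the resolution hypothesis implies that $\calP$ is a projective generating family in $\calA$, so both the kernel and cokernel of $\epsilon$ receive epimorphisms from $\calP$-objects which must be zero, forcing $\epsilon$ to be an iso. For the unit $\eta\colon X \to RL(X)$: evaluating at $P \in \calP$, both sides preserve reflexive coequalizers of objects of $\calP$ --- the left because $P$ is projective in $\Model_\calP^\heart$ (by part (1)), the right because $P$ is projective in $\calA$ --- and the two functors agree on $\calP$; since every $X \in \Model_\calP^\heart$ is a reflexive coequalizer of $\calP$-objects (\cref{prop:models}), $\eta(P)$ is an iso for all $P$, so $\eta$ is an iso.

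The main obstacle is part (2), specifically arranging the counit argument so that the resolution hypothesis is used cleanly: the role of the hypothesis is precisely to ensure that $\calP$ generates $\calA$, so that vanishing of $\Hom_\calA(P, \ker \epsilon)$ and $\Hom_\calA(P, \coker \epsilon)$ for all $P \in \calP$ actually forces these to be zero. Part (1) is largely a formal verification once the additive enrichment is in place.
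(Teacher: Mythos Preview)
The paper does not prove this proposition: it is stated with a citation to \cite[Proposition 3.3.3]{balderrama2021deformations} and a \qed, so there is no argument in the paper to compare against. Your proposal is a correct outline of a proof of the cited result.

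One comment on streamlining: in part (1), once you observe that product-preserving set-valued presheaves on an additive $\calP$ coincide with additive $\Ab$-valued presheaves, colimits in $\Model_\calP^\heart$ are computed pointwise in $\Ab$. This immediately gives cocompleteness, the abelian axioms, and exactness of each $\ev_P$, so projectivity of $P\in\calP$ is direct. Your route through reflexive coequalizers and reflection from $\Psh(\calP)$ works (epis are coequalizers of their kernel pairs, which are reflexive, and $\ev_P$ preserves reflexive coequalizers by \cref{prop:models}), but it is more delicate than necessary in the additive case. Part (2) is handled cleanly; the adjoint pair $L\dashv R$ with the counit checked via projective generation and the unit via reduction to $\calP$ is the standard argument.
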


\begin{ex}\label{ex:additivetheories}
(1)~~If $R$ is an ordinary associative ring and $\calR$ is the category of free left $R$-modules, then $\calR$ is a theory, $\Model_\calR^\heart\simeq\LMod_R^\heart$ is equivalent to the category of ordinary left $R$-modules, and $\Model_\calR\simeq\LMod_R^\cn$ is equivalent to the category of connective modules over the Eilenberg-MacLane spectrum $HR$. This justifies calling $\calR$ the \textit{theory of (left) $R$-modules}.
  
(2)~~If $G$ is a finite group and $\calB_G$ is the Burnside category of finite $G$-sets, i.e.\ the additive completion of the category of finite $G$-sets and spans thereof, then $\calB_G$ is a finitary theory and $\Model_{\calB_G}^\heart$ is equivalent to the category of $G$-Mackey functors.

(3)~~Let $p$ be a prime and $\calP$ be the category of $p$-completions of free abelian groups. Then $\calP$ is an $\aleph_1$-bounded theory which is not $\omega$-bounded, i.e.\ is not associated to a finitary theory. The category $\Model_\calP^\heart$ is equivalent to the category of ${\Ext}\hyp p$-complete abelian groups in the sense of \cite[Section VI.2.1]{bousfieldkan1972monster}, and $\Model_\calP$ is equivalent to the category of connective $\bbZ$-modules which are $p$-complete in the sense of \cite{greenleesmay1995completions} or \cite[Chapter 7]{lurie2018spectral}. This example has also been observed in \cite[Propositions 4.1.2, 4.2.4]{brantner2017lubin}.
\tqed
\end{ex}

\begin{rmk}
At least up to Morita equivalence, finitary additive theories are equivalent to \textit{ringoids}, i.e.\ small $\Ab$-enriched categories: if $\calC$ is a finitary additive theory and $\calA\subset\calC$ is a subcategory generating $\calC$ under finite sums and retracts, then $\Model_\calC^\heart$ is equivalent to the category of left $\calA$-modules in the sense of \cite{mitchell1972rings}. For example, if $\calC$ is the theory of left modules over a ring $R$, then we may take $\calA\subset\calC$ to be the full subcategory on the single object $R$; here $\calA$ is equivalent to $R^\op$ viewed as an $\Ab$-enriched category with one object and $\LMod_R^\heart$ is equivalent to the category of additive functors $R\rightarrow\Ab$.
\tqed
\end{rmk}

Call a functor $U\colon\calD\rightarrow\calC$ \textit{strongly monadic} if $U$ preserves geometric realizations and is the forgetful functor of a monad adjunction. At least when $\calC$ itself admits geometric realizations, it is equivalent to ask that $\calD\simeq\Alg_T$ for a monad $T$ on $\calC$ which preserves geometric realizations. The monads that we encounter will generally be of this form, as these are the monads which play well with theories.

\begin{lemma}\label{prop:monadictheory}
If $T$ is an accessible monad on $\Model_\calP$ which preserves geometric realizations, and $T\calP\subset\Alg_T$ is the full subcategory spanned by the image of $\calP$ under $T$, then $T\calP$ is a theory and $\Model_{T\calP}\simeq\Alg_T$. Moreover, every conservative accessible functor $U\colon \calD\rightarrow\Model_\calP$ which preserves limits and geometric realizations arises this way.
\end{lemma}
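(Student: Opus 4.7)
The plan is to break the lemma into three separable tasks: (a) verify $T\calP$ is a theory, (b) identify $\Model_{T\calP}$ with $\Alg_T$, and (c) establish the converse.

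For (a), the closure of $T\calP$ under small coproducts comes for free: $T$ is a left adjoint (from $\Model_\calP$ to $\Alg_T$), so $\coprod_i TP_i\simeq T\coprod_i P_i$, and the coproduct on the right lives in $\calP$ by assumption. Boundedness of $T\calP$ follows from boundedness of $\calP$ together with accessibility of $T$ (and hence of the forgetful $\Alg_T\to\Model_\calP$): if $\calP$ is $\kappa$-bounded with witness $\calP_0$, then after enlarging $\kappa$ so that $T$ is $\kappa$-accessible, $T\calP_0$ will witness $\kappa$-boundedness of $T\calP$. For the Mal'cev condition, I would invoke the equivalent formulation in \cref{prop:models}: a simplicial object $X_\bullet\colon \Delta^\op\to\Alg_T$ evaluates at any $TP\in T\calP$ via $\Map_{\Alg_T}(TP,X_\bullet)\simeq\Map_{\Model_\calP}(P,UX_\bullet)$, which is a Kan complex by the Mal'cev property of $\calP$ applied to $UX_\bullet$.

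For (b), I would use the universal property of $\Model_{T\calP}$ from \cref{prop:models}(1): it is the free cocompletion of $T\calP$ under geometric realizations preserved by the embedding. So I need to check that $\Alg_T$ is such a cocompletion. Geometric realizations exist in $\Alg_T$ because $T$ preserves them, by standard monadic-colimit arguments. The essential surjectivity onto $\Alg_T$ uses a two-step bar argument: every $A\in\Alg_T$ is the geometric realization of its monadic bar resolution $T^{\bullet+1}UA$, and each $T^nUA\in T\Model_\calP$ is in turn the geometric realization of a simplicial diagram in $T\calP$, obtained by applying $T$ to a simplicial resolution of $UA$ by objects of $\calP$. The universal property then follows from the fact that $T$, viewed as a functor $\calP\to T\calP\hookrightarrow\Alg_T$, induces by left Kan extension a geometric-realization-preserving equivalence onto $\Alg_T$; the matching cocompletion property ensures uniqueness.

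For (c), given conservative accessible $U\colon\calD\to\Model_\calP$ preserving limits and geometric realizations, the adjoint functor theorem (in accessible form) produces a left adjoint $F$. The hypotheses are exactly what is needed to apply the $\infty$-categorical Barr-Beck theorem: conservativity, plus preservation of the relevant $U$-split geometric realizations (which follows from preservation of all geometric realizations), yields an equivalence $\calD\simeq\Alg_T$ for $T=UF$. The monad $T$ is accessible (composite of two accessible functors) and preserves geometric realizations ($F$ preserves all colimits and $U$ preserves geometric realizations).

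The main obstacle I anticipate is nailing down (b) precisely: while the two-step bar resolution argument is standard in spirit, ensuring that the embedding $T\calP\hookrightarrow\Alg_T$ genuinely exhibits the latter as the free cocompletion under geometric realizations — rather than under some larger class of colimits — requires showing that the maps used in the universal property land in $T\calP$ and not merely in a larger category. This is essentially handled by reducing every $T$-algebra to a geometric realization as above, but I would want to check carefully that this presentation is compatible with the embedding's preservation of geometric realizations, so that the induced functor out of $\Model_{T\calP}$ is fully faithful as well as essentially surjective.
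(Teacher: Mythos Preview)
Your proposal is correct and follows essentially the same approach as the paper, which dispatches (a) as ``clear'', invokes \cref{prop:models} for (b), and cites the crude monadicity theorem \cite[Theorem 4.7.0.3]{lurie2017higheralgebra} for (c). You have simply unpacked in detail what the paper leaves implicit; in particular your two-step bar resolution for (b) is the standard way to verify the free-cocompletion property, and your Barr--Beck argument for (c) is exactly the content of the cited monadicity theorem.
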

\begin{proof}
That $T\calP$ is a theory is clear, and the equivalence $\Model_{T\calP}\simeq\Alg_T$ follows quickly from \cref{prop:models}. The final statement follows from the crude monadicity theorem \cite[Theorem 4.7.0.3]{lurie2017higheralgebra}.
\end{proof}

\begin{ex}\label{ex:polytheory}
Let $R$ be a commutative ring, and let $S\calR$ the category of polynomial $R$-rings. This is the essential image of the theory of $R$-modules under the free functor $S\colon\Mod_R^\heart\rightarrow\CRing_R^\heart$, so we can identify $\Model_{S\calR}^\heart\simeq\CRing_R^\heart$, and $\Model_{S\calR}\simeq\CRing_R$ is the homotopy theory of simplicial (commutative) $R$-rings.
\tqed
\end{ex}

\subsection{Bimodels}\label{ssec:bimodels}

Fix theories $\calP$ and $\calP'$.

\begin{lemma}\label{lem:bimodels}
The following concepts are equivalent:
\begin{enumerate}
\item Models of $\calP$ in $\Model_{\calP'}^\op$;
\item Left adjoint, or colimit-preserving, functors $H\colon\Model_\calP\rightarrow\Model_{\calP'}$, or equivalently, coproduct-preserving functors $H\colon\calP\rightarrow\Model_{\calP'}$;
\item Right adjoint, or limit-preserving accessible, or pointwise corepresentable, functors $H^\vee\colon\Model_{\calP'}\rightarrow\Model_\calP$.
\end{enumerate}
\end{lemma}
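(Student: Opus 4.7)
The plan is to cycle through the three items and their alternative formulations using two main tools: the universal property of $\Model_\calP$ as a free cocompletion of $\calP$, supplied by \cref{prop:models}, and the adjoint functor theorem for presentable $\infty$-categories. A natural order is
\begin{align*}
(1)&\Leftrightarrow(\text{coproduct-preserving }\calP\to\Model_{\calP'})\\
&\Leftrightarrow(\text{colimit-preserving }\Model_\calP\to\Model_{\calP'})\\
&\Leftrightarrow(\text{left adjoint})\\
&\Leftrightarrow(\text{right adjoint }\Model_{\calP'}\to\Model_\calP)\\
&\Leftrightarrow(\text{limit-preserving accessible})\\
&\Leftrightarrow(\text{pointwise corepresentable}).
\end{align*}

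First, unfolding (1) is a definition chase: a model of $\calP$ in $\Model_{\calP'}^\op$ is a presheaf $\calP^\op\to\Model_{\calP'}^\op$ sending coproducts in $\calP$ to products in $\Model_{\calP'}^\op$, i.e., a functor $\calP\to\Model_{\calP'}$ preserving coproducts. To promote such a functor to a colimit-preserving functor on $\Model_\calP$, I would invoke \cref{prop:models}(1,2): $\Model_\calP$ is obtained from $\calP$ by freely adjoining geometric realizations (together with filtered colimits of appropriate size from the $\kappa$-bounded hypothesis) while retaining the coproducts already present. Since sifted colimits and coproducts together generate all small colimits and $\Model_{\calP'}$ is cocomplete, every coproduct-preserving $\calP\to\Model_{\calP'}$ extends uniquely to a colimit-preserving $\Model_\calP\to\Model_{\calP'}$, and conversely.

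The remaining equivalences follow from the adjoint functor theorem for presentable $\infty$-categories, which applies because both $\Model_\calP$ and $\Model_{\calP'}$ are presentable by \cref{prop:models}(2): colimit-preserving functors out of $\Model_\calP$ are exactly the left adjoints, limit-preserving accessible functors into $\Model_\calP$ are exactly the right adjoints, and these determine each other across an adjunction. Finally, the pointwise corepresentable description reduces to this one via the observation that $\Model_\calP\subset\Psh(\calP)$ is a reflective embedding cut out by a limit condition, so limits (and, using the $\kappa$-bounded structure, accessibility) in $\Model_\calP$ are detected pointwise. Hence $H^\vee$ is limit-preserving and accessible iff each $\ev_P\circ H^\vee\colon\Model_{\calP'}\to\Gpd_\infty$ is, and the adjoint functor theorem applied to $\Gpd_\infty$ identifies such functors with the corepresentable ones.

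The main obstacle is the extension step in the first paragraph: one needs to confirm that freely adjoining geometric realizations to the existing coproducts in $\calP$ really does generate all small colimits in $\Model_\calP$, so that coproduct preservation upgrades to full colimit preservation and the extension lands in $\Model_{\calP'}$ rather than some larger ambient cocompletion. This is standard---every small colimit factors through sifted colimits and coproducts---but it is the place where the precise shape of \cref{prop:models} is used. Once this is handled, the rest is routine adjoint functor theorem bookkeeping, with the cardinal-size issues in the pointwise-corepresentable step controlled by the bounded Mal'cev assumption fixed in \cref{ssec:malcev}.
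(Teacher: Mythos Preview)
Your proposal is correct and follows essentially the same approach as the paper: invoke \cref{prop:models} for the equivalence between (1), (2), and the coproduct-preserving description, and the adjoint functor theorems for presentable categories for the rest. The paper's proof is a two-line sketch citing exactly these ingredients.

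One small point of contrast worth noting: for the pointwise-corepresentable clause in (3), the paper does not argue abstractly via the adjoint functor theorem for $\Gpd_\infty$ as you do, but instead records the explicit formula $H^\vee(M)(P)\simeq\Map_{\calP'}(H(P),M)$, which follows immediately from the adjunction $H\dashv H^\vee$ and Yoneda. This is both shorter and more informative, since it names the corepresenting object; the paper uses this identification repeatedly downstream (e.g.\ in \cref{ex:bimodules}). Your abstract route is fine, but the direct formula is the more useful takeaway.
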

\begin{proof}
These follow directly from either \cref{prop:models} or the adjoint functor theorems for presentable categories \cite[Proposition 5.5.2.2, Corollary 5.5.2.9]{lurie2017higheralgebra}. In addition, we can make the corepresentability condition of (3) explicit: $H^\vee(M)(P) \simeq \Map_\calP(H(P),M)$.
\end{proof}

We call the concept encoded in \cref{lem:bimodels} that of a \textit{$\calP\hyp\calP'$-bimodel}; when $\calP = \calP'$, we will just call these $\calP$-bimodels. We refer the reader to Wraith \cite{wraith1969algebraic} and Freyd \cite{freyd1966algebra} for classical treatments of bimodels, as well as of algebras, defined below. We will consistently adhere to the convention that by bimodel we refer to the underlying left adjoint $H$, and that in this case $H^\vee$ is written for its right adjoint. For $P\in\calP$ and $P'\in\calP'$, we may at times write $H_{P,P'} = H(P)(P')$ and $H^\vee_{P,P'} = H^\vee(P)(P')$; note that these are covariant in the first variable and contravariant in the second. 

\begin{ex}
Let $\calP$ be the theory of groups, $R$ a commutative ring, and $\calP'$ the theory of commutative $R$-rings. To a commutative Hopf algebra $H$ over $R$, one may associate the functor
\[
\CRing_R(H,\bs)\colon\CRing_R\rightarrow\Grp.
\]
This is a $\calP'\hyp\calP$-bimodel, and all discrete $\calP'\hyp\calP$-bimodels arise this way.
\tqed
\end{ex}

Call a $\calP'\hyp\calP$-bimodel $H$ \textit{projective} if $H(P)$ is projective for all $P\in\calP$; equivalently, if $H$ restricts to a functor $H\colon\calP\rightarrow\calP'$, at least up to idempotent completion of $\calP'$.

If $\calP''$ is another theory and $H'$ is a $\calP''\hyp\calP'$-bimodel, then we can compose to obtain the $\calP''\hyp\calP$-bimodel $H'\circ H$. This has right adjoint $(H'\circ H)^\vee\simeq H^\vee\circ H'^\vee$.

\begin{rmk}
Although we are primarily interested in discrete bimodels, we are still interested in derived aspects of these. Even supposing that the theories in question are discrete, there are two possible ambiguities that arise:
\begin{enumerate}
\item Coproduct-preserving functors $H\colon\calP\rightarrow\Model_{\calP'}^\heart$ are not equivalent to coproduct-preserving functors $H\colon\calP\rightarrow \Model_{\calP'}$ such that $H(P)$ is discrete for all $P\in\calP$;
\item Even if $H\colon\Model_{\calP}\rightarrow\Model_{\calP'}$ and $H'\colon \Model_{\calP'}\rightarrow\Model_{\calP''}$ deserve to be called discrete bimodels, the same need not hold for the composite $H'\circ H$.
\end{enumerate}

The second ambiguity is not major, being no different than the ambiguity between a derived tensor product and a non-derived tensor product. The first ambiguity is more subtle, amounting to the observation that discrete models need not be closed under coproducts in the category of all models. When $\calP$ is additive, this amounts to the observation that infinite sums need not be exact in a general abelian category with enough projectives.

Neither of these will be major issues for us. In practice, where they might otherwise cause problems, we will simply assume that our bimodels are projective, at which point both of these ambiguities vanish. However, for the sake of avoiding projectivity assumptions where they are not relevant, we take the following convention. When we are dealing with the purely discrete aspects of bimodels, we take as our discrete bimodels those which correspond to coproduct-preserving functors $H\colon \calP\rightarrow\Model_{\calP'}^\heart$. When we are dealing with homotopical aspects of bimodels, we take as our discrete bimodels those which correspond to coproduct-preserving functors $H\colon\calP\rightarrow\Model_{\calP'}$ that land in $\Model_{\calP'}^\heart$.
\tqed
\end{rmk}

\begin{ex}\label{ex:bimodules}
Let $A$ and $B$ be ordinary associative algebras with theories $\calA$ and $\calB$ of left modules. Then discrete $\calB\hyp\calA$-bimodels are equivalent to $B\hyp A$-bimodules. It is worth spelling out some aspects of this example explicitly to indicate the conventions that arise for working with bimodules. We only consider discrete bimodels in this example, although similar observations hold in the derived setting (where general $\calB\hyp\calA$-bimodels are equivalent to connective modules over the ring spectrum $B\otimes_\bbS A^\op$). To a discrete $B\hyp A$-bimodule $H$, one can associate the bimodel
\begin{align*}
H&\colon\LMod_A^\heart\rightarrow\LMod_B^\heart,\qquad H(M)= H\otimes_A M;\\
H^\vee&\colon\LMod_B^\heart\rightarrow\LMod_A^\heart,\qquad H^\vee(M)=\Hom_B(H,M).
\end{align*}
Here, $B$ acts on $H\otimes_A M$ by
\[
b\cdot (h\otimes m) = (bh)\otimes m,
\]
and $A$ acts on $\Hom_B(H,M)$ by
\[
(a\cdot f)(h) = f(ha).
\]
The bimodel $H$ is projective precisely when the bimodule $H$ is projective as a left $B$-module. The dual functor $H^\vee$ encodes more information than the ordinary dual bimodule $\Hom_B(H,B)$, and the latter can be recovered from the former by considering the restriction of $H^\vee$ to the category of finitely generated free $B$-modules. On the other hand, if $H$ is projective, then we can equip $\Hom_B(H,B)$ with a natural topology as a right $B$-module such that $H^\vee(M)\simeq \Hom_B(H,B)\cotimes_B M$, where $B$ acts on $\Hom_B(H,B)$ on the right by $(f\cdot b)(h) = f(bh)$. 

If $C$ is another ordinary associative algebra, $\calC$ is its theory of left modules, and $H'$ is a discrete $\calC\hyp\calB$-bimodel, then under the correspondence between bimodules and bimodels we identify
\[
H'\circ H \simeq H'\otimes_B H.
\]
The isomorphism $(H'\circ H)^\vee\cong H^\vee\circ H'^\vee$ is given by the maps
\begin{gather*}
\theta\colon \Hom_B(H,\Hom_C(H',M))\rightarrow\Hom_C(H'\otimes_B H,M),\\
\theta(f)(h'\otimes h) = f(h)(h').
\end{gather*}
Taking $A=B=C$, this is an enhancement of the duality pairing
\begin{gather*}
\theta\colon\Hom_A(H,A)\otimes_A\Hom_A(H',A)\rightarrow\Hom_A(H'\otimes_A H,A),\\
\theta(f\otimes f')(h'\otimes h) = f'(h'f(h))
\end{gather*}
of bimodules.
\tqed
\end{ex}

\subsection{Algebras}\label{ssec:algebras}

\begin{defn}\label{def:algebras}
A \textit{$\calP$-algebra} consists of a $\calP$-bimodel $F$ together with the additional structure of a monad on $F$, or equivalently, of a comonad on $F^\vee$. An \textit{$F$-model} is an algebra for the monad $F$, or equivalently, coalgebra for the comonad $F^\vee$.
\tqed
\end{defn}
If $F$ is a $\calP$-algebra, then $\Alg_F\simeq\Model_{F\calP}$ by \cref{prop:monadictheory}; we will abbreviate this to $\Model_F$. The forgetful functor $\Model_F\rightarrow \Model_\calP$ is \textit{plethystic}: it is both monadic and comonadic. Conversely, every category plethystic over $\Model_\calP$ arises from a $\calP$-algebra. Heuristically, $\calP$-algebras are those theories that can be obtained from $\calP$ by adjoining sufficiently well-behaved unary operations and relations.

\begin{ex}\label{ex:algebras}
The following are examples of discrete algebras.

(1)~~Let $R$ be an ordinary associative algebra and $\calR$ be the theory of left $R$-modules. Then a discrete $\calR$-algebra is equivalent to a discrete $R$-bimodule $A$ equipped with the structure of a monoid in the category of $R$-bimodules. Thus discrete $\calR$-algebras are equivalent to ordinary associative algebras equipped with an algebra map from $R$; we will just call these \textit{ordinary $R$-algebras}. In particular, even when $R$ is commutative, it need not be central in its algebras.

(2)~~Let $R$ be a commutative ring and $S\calR$ be the the category of polynomial $R$-algebras, as in \cref{ex:polytheory}. Then discrete $S\calR$-algebras were studied by Tall-Wraith \cite{tallwraith1968representable} under the name of biring triples, and more recently by Borger-Wieland \cite{borgerwieland2005plethystic} under the name of $R$-plethories. Our main examples of algebras over nonadditive theories are essentially of this form. We will study the relevant context in \cref{sec:plethories}, allowing for bases more general than just commutative rings.

(3)~~Let $G$ be a finite group, $\calB_G$ be the Burnside category of finite $G$-sets, as in \cref{ex:additivetheories}, and $S\calB_G$ be the category of commutative green functors free on objects of $\calB_G$, so that $\Model_{S\calB_G}^\heart$ is the category of commutative green functors. Let $\Tamb_G^\heart$ be the category of $G$-Tambara functors. Then $\Tamb_G^\heart\rightarrow \Model_{S\calB_G}^\heart$ preserves limits and colimits, and so realizes $\Tamb_G^\heart$ as the category of models for an $S\calB_G$-algebra. Thus $G$-tambara functors are $\calB_G$-plethories in the sense that we will study in \cref{sec:plethories}, although we do not expect that they satisfy the various niceness properties introduced there. See \cite{blumberghill2018right} for more on this context.
\tqed
\end{ex}

\begin{rmk}\label{ex:salgebras}
Plethystic functors also arise in more homotopical contexts.

(1)~~Let $R$ be a commutative ring, $\calR$ be the theory of $R$-modules, and $\bbP \calR$ be the category of $\bbE_\infty$ algebras over $R$ which are free on a discrete free $R$-module, i.e.\ of the form $\bbP (R^{\oplus I})$ where $\bbP\colon\Mod_R\rightarrow\CAlg_R$ is the free $\bbE_\infty$ algebra functor. Then $\Model_{\bbP R}\simeq\CAlg_R^\cn$ is equivalent to the category of connective $\bbE_\infty$ algebras over $R$. The homotopy category $\h(\bbP\calR)\simeq S\calR$ is equivalent to the category of polynomial $R$-rings, and restriction along the truncation $\bbP\calR\rightarrow S\calR$ gives a forgetful functor $U\colon\CRing_R\rightarrow\CAlg_R^\cn$. The functor $U$ automatically preserves limits and geometric realizations, and it preserves coproducts as these are given by $\otimes_R$ in either category. Thus $U$ is plethystic, and realizes $S\calR$ as a $\bbP\calR$-algebra. We refer the reader to \cite[Chapter 25]{lurie2018spectral} for a more detailed discussion of the relation between $\CRing_R$ and $\CAlg_R^\cn$. We will briefly revisit this example in \cref{ex:affineline}.

(2)~~Let $G$ be a finite group and $\calO'\subset\calO$ be $G$-coefficient systems in the sense of \cite{blumberghill2015operadic}. Then the forgetful functor $\Alg_{\calO}\rightarrow\Alg_{\calO'}$ is plethystic, where $\Alg_\calO$ is the category of algebras over the $N_\infty$-operad associated to $\calO$.

These two examples point to a possible theory of ``spectral plethories'' encoding various refinements of the basic notion of commutative multiplication encoded by the $\bbE_\infty$ operad, but we shall refrain from further speculation on this.
\tqed
\end{rmk}

\subsection{Monoidal products}\label{ssec:monoidalt}

Suppose that $\Model_\calP$ has been equipped with some form of monoidal product $\otimes$ which preserves colimits in each variable. If moreover the monoidal product preserves the full subcategory $\calP\subset\Model_\calP$, then it is determined by its restriction to $\calP$, from which it can be recovered by Day convolution. In this case, one might call $\calP$ a \textit{monoidal theory}. In this case, if $X',X''\in\Model_\calP$, then $X'\otimes X''$ can be identified as the left Kan extension of the functor $(P',P'')\mapsto X'(P')\times X''(P'')$ along the product $\otimes\colon \calP\times\calP\rightarrow\calP$.

Note in particular the following: if $\calP$ is a monoidal theory, then for any $P',P''\in\calP$, there is a natural pairing
\[
\ev_{P'}\times\ev_{P''}\rightarrow\ev_{P'\otimes P''}
\]
satisfying all the coherences one would expect coming from $\otimes$. This is an advantage of working with algebraic theories without specified sorts, as the presence of automorphisms of objects of $\calP$ has not been hidden.

\subsection{Compositions}\label{ssec:compositions}

If $k$ is an ordinary commutative ring, and $A$ and $B$ are ordinary $k$-algebras in which $k$ is central, then the tensor product $A\otimes_k B$ naturally carries the structure of a $k$-algebra, with product
\[
m\otimes m \circ A\otimes \tau\otimes B\colon A\otimes_k B\otimes_k A\otimes_k B\cong A\otimes_k A\otimes_k B\otimes_k B\rightarrow A\otimes_k  B.
\]
This is not true for general $k$-algebras, or for $k$ noncommutative: we have relied on centrality in order to use the switch map $\tau\colon A\otimes_k B\simeq B\otimes_k A$. Axiomatizing this leads to the notion of a distributive law, discovered by Beck \cite{beck1969distributive}. We summarize the relevant definitions here.

\begin{defn}\label{def:dlaw}
Let $\calC$ be a $1$-category and $F$ and $T$ be monads on $\calC$.
\begin{enumerate}
\item A \textit{composition} of $T$ with $F$ is the structure of a monad on the composite functor $TF$ satisfying the following conditions:
\begin{enumerate}
\item Both $T\eta_F\colon T\rightarrow TF$ and $\eta_TF\colon F\rightarrow TF$ are maps of monads;
\item The composite 
\[
m_{TF}\circ T \eta_F\eta_T F\colon TF\rightarrow TFTF\rightarrow TF
\]
is the identity.
\end{enumerate}
\item A \textit{distributive law} of $F$ across $T$ is a natural transformation $c\colon FT\rightarrow TF$ such that the diagrams
\begin{center}\begin{tikzcd}
&T\ar[dl,"\eta_F T"']\ar[dr,"T\eta_F"]\\
FT\ar[rr,"c"]&&TF
\end{tikzcd}
\begin{tikzcd}
&F\ar[dl,"F\eta_T"']\ar[dr,"\eta_TF"]\\
FT\ar[rr,"c"]&&TF
\end{tikzcd}\end{center}
\begin{center}\begin{tikzcd}
FTT\ar[r,"cT"]\ar[d,"Fm_T"]&TFT\ar[r,"Tc"]&TTF\ar[d,"m_TF"]\\
FT\ar[rr,"c"]&&TF
\end{tikzcd}\begin{tikzcd}
FFT\ar[r,"Fc"]\ar[d,"m_FT"]&FTF\ar[r,"cF"]&TFF\ar[d,"Tm_F"]\\
FT\ar[rr,"c"]&&TF
\end{tikzcd}\end{center}
commute.
\item A \textit{distributive square} is a diagram of categories
\begin{center}\begin{tikzcd}
\calD'\ar[d,"U'",shift left=1mm]\ar[r,"V'"]&\calD\ar[d,"U",shift left=1mm]\\
\calC'\ar[r,"V"]\ar[u,"T'",shift left=1mm]&\calC\ar[u,"T",shift left=1mm]
\end{tikzcd}\end{center}
such that
\begin{enumerate}
\item The diagram commutes with $T$ and $T'$ omitted;
\item The pairs $T'\dashv U'$ and $T\dashv U$ are adjoint;
\item The mate $TV\rightarrow V'T'$ is an isomorphism.
\end{enumerate}
\item A \textit{monadic distributive square} is a distributive square as above such that moreover
\begin{enumerate}
\item There are further left adjoints $F'\dashv V'$ and $F\dashv V$;
\item Each of these adjunctions are monadic adjunctions.
\tqed
\end{enumerate}
\end{enumerate}
\end{defn}

We extend the definitions of distributive squares and monadic distributive squares to allow for the categories involved to not necessarily be $1$-categories; these are essentially the left adjointable squares of \cite[Definition 4.7.4.13]{lurie2017higheralgebra}. Heuristically, a monadic distributive square is a square
\begin{center}\begin{tikzcd}
\calD'\ar[d,"U'",shift left=1mm]\ar[r,"V'",shift left=1mm]&\calD\ar[l,"F'",shift left=1mm]\ar[d,"U",shift left=1mm]\\
\calC'\ar[r,"V",shift left=1mm]\ar[u,"T'",shift left=1mm]&\calC\ar[u,"T",shift left=1mm]\ar[l,"F",shift left=1mm]
\end{tikzcd}\end{center}
of monadic adjunctions such that ``$T = T'$''. This is of course dependent on the orientation of the square.

\begin{lemma}\label{lem:distributivelaw}
Let $\calC$ be a $1$-category and $F$ and $T$ be monads on $\calC$. The following concepts are equivalent:
\begin{enumerate}
\item Compositions of $T$ with $F$;
\item Distributive laws of $F$ across $T$;
\item Monadic distributive squares of the form
\begin{center}\begin{tikzcd}
\calD'\ar[d,"U'",shift left=1mm]\ar[r,"V'",shift left=1mm]&\Alg_T\ar[l,"F'",shift left=1mm]\ar[d,"U",shift left=1mm]\\
\Alg_F\ar[r,"V",shift left=1mm]\ar[u,"T'",shift left=1mm]&\calC\ar[u,"T",shift left=1mm]\ar[l,"F",shift left=1mm]
\end{tikzcd}\end{center}
\end{enumerate}
\end{lemma}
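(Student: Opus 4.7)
This is Beck's theorem on distributive laws, augmented by the reformulation in terms of monadic distributive squares. The plan is to establish (1) $\Leftrightarrow$ (2) directly via explicit mutually inverse formulas, and then identify (3) with either side by taking $\calD' = \Alg_{TF}$. The main obstacle will be checking the mate-isomorphism condition for the distributive square and, in one direction, the implicit requirement that $\calC$ admit enough coequalizers for the left adjoints $T'$ and $F'$ to exist.

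For (1) $\Leftrightarrow$ (2), I would send a distributive law $c\colon FT \to TF$ to the composition on $TF$ with unit $\eta_T \eta_F$ and multiplication $m_{TF}\colon TFTF \xrightarrow{TcF} TTFF \xrightarrow{m_T m_F} TF$, and in the reverse direction extract from a composition the ``swap'' arrow $c\colon FT \xrightarrow{\eta_T FT} TFT \xrightarrow{TFT\eta_F} TFTF \xrightarrow{m_{TF}} TF$. The four coherence diagrams for $c$ should translate directly into associativity and unitality of $m_{TF}$, together with the conditions that $T\eta_F$ and $\eta_T F$ are monad maps; the identity $m_{TF} \circ T\eta_F \eta_T F = \mathrm{id}_{TF}$ in (1) corresponds to the two unit diagrams for $c$. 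A routine diagram chase should then show that the two constructions are mutually inverse.

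For (1) $\Rightarrow$ (3), I would take $\calD' = \Alg_{TF}$, with $V'\colon \Alg_{TF} \to \Alg_T$ and $U'\colon \Alg_{TF} \to \Alg_F$ the restrictions along the monad maps $T\eta_F\colon T \to TF$ and $\eta_T F\colon F \to TF$. To construct $T' \dashv U'$, the distributive law $c$ would be used to lift $T$ to a monad $\tilde T$ on $\Alg_F$ via $\tilde T(X, b) = (TX, Tb \circ c_X)$, so that $\Alg_{\tilde T} \simeq \Alg_{TF}$ and $T'$ can be taken as the free $\tilde T$-algebra functor; the left adjoint $F' \dashv V'$ would be constructed analogously via a coequalizer presentation of free $TF$-algebras on $T$-algebras. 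The square then commutes because both $VU'$ and $UV'$ compute the forgetful $\Alg_{TF} \to \calC$, and the mate $TV \to V'T'$ should be verified to be an isomorphism by direct computation, as both sides send an $F$-algebra $(X, b)$ to the $T$-algebra underlying $\tilde T(X, b)$.

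For (3) $\Rightarrow$ (1), the composite adjunction $T'F \dashv VU'$ yields a monad on $\calC$ whose underlying endofunctor is $VU'T'F$; using $VU' = UV'$ from commutativity of the square and the mate isomorphism $V'T' \cong TV$, this reduces to $UV'T'F \cong UTVF = TF$, and the resulting monad structure provides the composition on $TF$. The monad maps $T \to TF$ and $F \to TF$ arise from the restriction functors $V'$ and $U'$, and the swap condition (1)(b) emerges from the triangle identities of the two adjunctions, completing the equivalence.
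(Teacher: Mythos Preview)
Your proposal is correct and follows essentially the same route as the paper: both invoke Beck, record the same explicit formulas for (1)$\Leftrightarrow$(2), and take $\calD' = \Alg_{TF}$ for the square. The only minor differences are that the paper closes the loop as (3)$\Rightarrow$(2) via the mate $FU \to U'F'$ rather than your (3)$\Rightarrow$(1) via the composite adjunction $T'F \dashv VU'$, and that the paper does not pause over the existence of $F'$; your flagged coequalizer concern is a fair point that the paper simply elides by asserting the square is one of monadic functors.
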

\begin{proof}
The equivalence of these notions is proved in \cite{beck1969distributive}; we just recall here the method of translation between the three. Given a composition of $T$ with $F$, we obtain a distributive law of $F$ across $T$ by the composite
\[
c = m_{TF}\circ \eta_T FT\eta_F\colon FT\rightarrow TFTF\rightarrow TF.
\]
Conversely, given a distributive law $c\colon FT\rightarrow TF$, we can construct a composition of $T$ with $F$ via
\begin{align*}
\eta_{TF} = \eta_T\eta_F&\colon I\rightarrow TF,\\
m_{TF}=m_T m_F\circ TcF&\colon TFTF\rightarrow TTFF\rightarrow TF.
\end{align*}
Given a composition of $T$ with $F$, we obtain a diagram
\begin{center}\begin{tikzcd}
\Alg_{TF}\ar[d,"U'",shift left=1mm]\ar[r,"V'",shift left=1mm]&\Alg_T\ar[l,"F'",shift left=1mm]\ar[d,"U",shift left=1mm]\\
\Alg_F\ar[r,"V",shift left=1mm]\ar[u,"T'",shift left=1mm]&\calC\ar[u,"T",shift left=1mm]\ar[l,"F",shift left=1mm]
\end{tikzcd}\end{center}
of monadic functors. We claim this is distributive, i.e.\ $TV\cong V'T'$ where $T'\dashv U'$. Indeed, it is sufficient to verify this on free $F$-algebras and after forgetting to $\calC$, where this is just the identification $UTVF\cong UV'T'F$. Conversely, given a monadic distributive square as in the statement of the lemma, we obtain a distributive law of $F$ across $T$ by the composite
\[
FT = VFUT \rightarrow VU'F'T\cong UTVF = TF,
\]
the arrow being obtained from the mate $FU\rightarrow U'F'$.
\end{proof}

\begin{ex}
Let $k$ be an ordinary associative algebra and $A$ and $B$ be ordinary $k$-algebras. Then
\begin{enumerate}
\item Compositions of $A$ with $B$ are algebra structures on $A\otimes_k B$ such that
\begin{enumerate}
\item $(a'\otimes 1)\cdot (a''\otimes 1) = a'a''\otimes 1$ and $(1\otimes b')\cdot (1\otimes b'') = 1\otimes b'b''$;
\item $(a\otimes 1)\cdot (1\otimes b) = a\otimes b$.
\end{enumerate}
\item Distributive laws of $B$ across $A$ are maps $c\colon B\otimes_k A\rightarrow A\otimes_k B$ of $k$-bimodules such that
\begin{enumerate}
\item $c(1\otimes a) = a\otimes 1$ and $c(b\otimes 1) = 1\otimes b$;
\item If we write $c(b\otimes a) = \sum a_{(i)}\otimes b_{(i)}$ for a placeholder symbol $i$,  then $\sum a_{(1)}'a_{(2)}' \otimes b_{(1)(2)} = \sum (a'a'')_{(3)} \otimes b_{(3)}$ and $\sum a_{(1)(2)}\otimes b_{(2)}'b_{(1)}' = \sum a_{(3)}\otimes (b'b'')_{(3)}$ for any $a,a',a''\in A$ and $b,b',b''\in B$.
\end{enumerate}
\item Given a commutative diagram
\begin{center}\begin{tikzcd}
C&A\ar[l]\\
B\ar[u]&k\ar[u]\ar[l]
\end{tikzcd}\end{center}
of algebra maps, there is an associated commutative diagram
\begin{center}\begin{tikzcd}
\LMod_C^\heart\ar[r]\ar[d]&\LMod_A^\heart\ar[d]\\
\LMod_B^\heart\ar[r]&\LMod_k^\heart
\end{tikzcd}\end{center}
of monadic forgetful functors. The mate of this diagram is given by maps $A\otimes_k M\rightarrow C\otimes_B M$ defined for left $B$-modules $M$, and is a natural isomorphism when it evaluates on $B$ to an isomorphism $A\otimes_k B\cong C$.
\end{enumerate}
Even when each of $k$, $A$, and $B$ are commutative, these notions do not collapse. For example, if $\bbH$ is the ring of quaternions, then
\begin{center}\begin{tikzcd}
\bbH&\bbC\ar[l,"f"']\\
\bbC\ar[u,"g"]&\bbR\ar[l]\ar[u]
\end{tikzcd}\end{center}
satisfies the conditions of (3), where $f(i) = j$ and $g(i) = k$. The distributive law is the map $c\colon \bbC\otimes_\bbR\bbC\rightarrow\bbC\otimes_\bbR\bbC$ given by $c(i\otimes i) = -i\otimes i$, and otherwise by the standard symmetry.
\tqed
\end{ex}

For the most part, we will encounter distributive laws in the form of monadic distributive squares, and the theory of distributive laws then provides a method for understanding the categories involved. In general, this is an instance of the following philosophy: it is often easier to construct the category of algebras over a monad than it is to construct the monad itself. The theory of distributive laws gives a way of accessing the monad associated to categories constructed by such indirect methods; the following are some typical examples.

\begin{ex}\label{ex:semitensor}
Let $k$ be an ordinary commutative ring, $B$ be an ordinary $k$-bialgebra, and $A$ be a monoid in the monoidal category $(\LMod_B^\heart,\otimes_k)$, with resulting category $\LMod_A(\LMod_B^\heart)$ of modules therein. Then
\begin{center}\begin{tikzcd}
\LMod_A(\LMod_B^\heart)\ar[r]\ar[d]&\LMod_A^\heart\ar[d]\\
\LMod_B^\heart\ar[r]&\Mod_k^\heart
\end{tikzcd}\end{center}
is a monadic distributive square, and thus $\LMod_A(\LMod_B^\heart)\simeq\LMod_{A\otimes_k B}^\heart$ for some $k$-algebra structure on $A\otimes_k B$. Using \cref{lem:distributivelaw}, one can compute that this $k$-algebra structure is the ``semi-tensor product'' \cite{masseypeterson1964cohomology}, given by
\[
(a'\otimes b')\cdot (a''\otimes b'') = \sum a'(b'_{(1)}\cdot a'')\otimes b'_{(2)}b'',
\]
where we have written $\Delta(b) = \sum b_{(1)}\otimes b_{(2)}$ for the coproduct on $B$. Equivalently, the distributive law is given by $c(b\otimes a) = \sum (b_{(1)}\cdot a )\otimes b_{(2)}$.
\tqed
\end{ex}

\begin{ex}\label{ex:slicedist}
Let $\calP$ be a discrete theory, $F$ a discrete $\calP$-algebra, and $B\in\Model_F^\heart$. Then
\begin{center}\begin{tikzcd}
B/\Model_F^\heart\ar[r]\ar[d]&B/\Model_\calP^\heart\ar[d]\\
\Model_F^\heart\ar[r]&\Model_\calP^\heart
\end{tikzcd}\end{center}
is a monadic distributive square. The distributive law is just the map
\[
F(B\coprod\bs)\simeq F(B)\coprod F(\bs)\rightarrow B\coprod F(\bs)
\]
obtained from the fact that $F$ preserves coproducts and the $F$-model structure of $B$. 
\tqed
\end{ex}

See \cref{ex:theta} for a more explicit instance of the preceding examples.

\subsection{Left-derived functors}\label{ssec:lder}

Fix two discrete theories $\calP$ and $\calP'$.

\begin{defn}
Fix an arbitrary functor $F\colon \Model_{\calP'}^\heart\rightarrow\Model_\calP^\heart$, and let $f$ denote the composite
\[
f\colon \calP'\subset\Model_{\calP'}^\heart\rightarrow\Model_\calP^\heart\subset\Model_\calP.
\]
The \textit{total left-derived functor} of $F$ is the functor
\[
\bbL F = f_!\colon \Model_{\calP'}\rightarrow\Model_\calP
\]
obtained from $f$ by left Kan extension. When $\calP$ is pointed, we abbreviate $\bbL_n F = \pi_n \bbL F$.
\tqed
\end{defn}

Total left-derived functors can be computed in the usual way, by taking projective resolutions. Their identification with a left Kan extension is a situation where the use of infinitary theories simplifies the story.

\begin{ex}\label{ex:extcompl}
Let $F\colon\Mod_{\bbZ_p}^\heart\rightarrow\Mod_{\bbZ_p}^\heart$ denote the functor of $p$-adic completion. Then $F$ is neither left nor right exact in general. Nonetheless, we may consider the total left-derived functor $\bbL F$. This has the following properties:
\begin{enumerate}
\item $\bbL F$ gives the correct notion of $p$-completion for the homotopy theory $\Mod_{\bbZ_p}^\cn$;
\item If $M\in\Mod_{\bbZ_p}^\heart$, then $\bbL FM$ is $1$-truncated, $\bbL_0 F M$ is the ${\Ext}\hyp p$-completion of $M$, and $\bbL_1 F$ is the ${\Hom}\hyp p$-completion of $M$ in the sense of \cite[Section VI.2.1]{bousfieldkan1972monster}.
\end{enumerate}
This is a purely infinitary construction, as $F$ restricts to the identity on the category of finitely generated $\bbZ_p$-modules.
\tqed
\end{ex}

\subsection{Unbounded derived categories}\label{ssec:additivenotation}

If $\calP$ is an additive theory, then we will write
\[
\LMod_\calP^\heart = \Model_\calP^\heart,\qquad \LMod_\calP^\cn = \Model_\calP,
\]
and further define $\LMod_\calP$ to be the category of $\Sp$-valued models of $\calP$. There are then fully faithful embeddings $\LMod_\calP^\heart\subset\LMod_\calP^\cn\subset\LMod_\calP$, and $\LMod_\calP$ is the stabilization of $\LMod_\calP^\cn$.

In particular, for $X,Y\in\LMod_\calP$, there is a mapping spectrum $\EXT_\calP(X,Y)$ with 
\[
\Omega^{\infty-n}\EXT_\calP(X,Y)\simeq\Map_\calP(X,\Sigma^n Y),
\]
and we write
\[
\Ext^n_\calP(X,Y) = \pi_{-n}\EXT_\calP(X,Y) .
\]
When $\calP$, $X$ and $Y$ are discrete, these are the usual $\Ext$ groups defined for the abelian category $\LMod_\calP^\heart$.

\subsection{Quillen cohomology}\label{ssec:qcohomology}

Fix a discrete theory $\calP$. Then $\Ab(\Model_\calP^\heart)$ is equivalent to the category of $\Ab$-valued models of $\calP$, and this category is strongly monadic over $\Model_\calP^\heart$. Write the left adjoint as $D\colon \Model_\calP^\heart\rightarrow\Ab(\Model_\calP^\heart)$, so that $\Ab(\Model_\calP^\heart)\simeq\LMod_{D\calP}^\heart$. Here, $D\calP$ is an additive theory, so the notation of \cref{ssec:additivenotation} applies.

\begin{defn}\label{def:qcohomology}
For $A\in\Model_\calP$ and $M\in\LMod_{D\calP}$, define
\[
\calH_\calP(A;M) = \EXT_{D\calP}(\bbL DA,M),\quad H^n_\calP(A;M) = \pi_{-n}\calH_\calP(A;M)= \Ext^n_{D\calP}(\bbL DA,M).
\]
Equivalently,
\[
\calH^n_\calP(A;M) = \Omega^{\infty-n}\calH_\calP(A;M) = \Map_\calP(A,B^nM),\qquad H^n_\calP(A;M) = \pi_0 \calH^n_\calP(A;M) .
\]
These are the \textit{Quillen cohomology of $A$ with coefficients in $M$}.
\tqed
\end{defn}

Often the theory at hand is instead of the form $\calP/B$ for some theory $\calP$ and $B\in\Model_\calP^\heart$, as in this case $\Model_{\calP/B} \simeq \Model_\calP/B$. Write $D_B$ for the relevant functor of abelianization. Call $B\in\Model_\calP^\heart$ \textit{smooth} if $\bbL D_B B$ is discrete and projective. When $\calP$ is the theory of $R$-rings for some commutative ring $R$, this is not quite the standard notion of smoothness, as we have imposed no finiteness conditions.

\begin{lemma}
Given $f\colon B\rightarrow C$, there is an equivalence $\bbL D_C B \simeq f_! \bbL D_B B$, where $f_!$ is the total derived functor of the left adjoint to pullback $f^\ast\colon\Ab(\Model_\calP^\heart/C)\rightarrow\Ab(\Model_\calP^\heart/B)$. In particular, if $B$ is smooth, then $\bbL D_C B$ is discrete and projective for any map $f$.
\end{lemma}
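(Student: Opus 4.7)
The plan is to derive the equivalence from its underived counterpart by transporting through a projective resolution. First, observe that the square of right adjoints
\[
\begin{tikzcd}
\Ab(\Model_\calP^\heart/C) \ar[r, "f^\ast"] \ar[d, "U_C"'] & \Ab(\Model_\calP^\heart/B) \ar[d, "U_B"] \\
\Model_\calP^\heart/C \ar[r, "f^\ast"'] & \Model_\calP^\heart/B
\end{tikzcd}
\]
commutes on the nose, since the underlying $\calP$-model of the pullback of an abelian model over $C$ agrees with the pullback of its underlying $\calP$-model. Passing to left adjoints yields a natural isomorphism $f_! \circ D_B \cong D_C \circ f_\ast$, where $f_\ast\colon \Model_\calP^\heart/B \to \Model_\calP^\heart/C$ denotes postcomposition with $f$.

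Next, choose a simplicial resolution $P_\bullet \to B$ of $B$ by objects of $\calP$, and regard it as a simplicial object of $\Model_\calP^\heart/B$ via the augmentation. Then $f_\ast P_\bullet \to (B \xrightarrow{f} C)$ is a corresponding resolution of $f_\ast B$ in $\Model_\calP^\heart/C$, so $\bbL D_C B$ is computed as the geometric realization of $D_C f_\ast P_\bullet$. By the underived identity just established, this coincides with the realization of $f_! D_B P_\bullet$. Now $D_B P_\bullet$ is a simplicial object of projectives in $\Ab(\Model_\calP^\heart/B)$, and the pullback $f^\ast$ on abelian models is a restriction-of-scalars functor between categories of modules over additive theories, hence exact; so its left adjoint $f_!$ preserves projectives. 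It follows that $f_! D_B P_\bullet$ also computes the derived functor $f_!$ applied to $\bbL D_B B$, and we conclude $\bbL D_C B \simeq f_! \bbL D_B B$.

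The final claim is then immediate: if $B$ is smooth, then $\bbL D_B B$ is discrete and projective, and both properties are preserved under $f_!$ (projectivity by the above, discreteness because $f_!$ is then computed without higher derived terms). I expect the only point requiring any care is the exactness of $f^\ast$ at the level of abelian models, which comes down to recognizing $\Ab(\Model_\calP^\heart/B)$ as modules over a suitable additive theory so that $f^\ast$ is literally restriction of scalars along a map of theories; once this is in hand the rest is formal.
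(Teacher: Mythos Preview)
Your argument is correct and follows essentially the same route as the paper. The paper's proof simply notes that the square of right adjoints you wrote down commutes and ``continues to commute after passing to derived categories,'' then takes left adjoints; your resolution-based computation is exactly what unpacks that phrase, so the two proofs differ only in how much is spelled out.
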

\begin{proof}
Observe that the diagram
\begin{center}\begin{tikzcd}
\Ab(\Model_\calP^\heart/C)\ar[r,"f^\ast"]\ar[d]&\Ab(\Model_\calP^\heart/B)\ar[d]\\
\Model_\calP^\heart/C\ar[r,"f^\ast"]&\Model_\calP^\heart/B
\end{tikzcd}\end{center}
of right adjoints commutes, and continues to commute after passing to derived categories. The lemma follows upon taking left adjoints.
\end{proof}

\subsection{Cohomology over an algebra}\label{ssec:abalg}

Fix a discrete theory $\calP$ and $\calP$-algebra $F$. We would like to be able to compute the Quillen cohomology of $F$-models.

\begin{lemma}
Suppose $U\colon\Model_{\calP'}^\heart\rightarrow\Model_\calP^\heart$ is strongly monadic. Then the induced map $V\colon\Model_{D\calP'}^\heart\rightarrow\Model_{D\calP}^\heart$ is strongly monadic, and is plethystic whenever $U$ is.
\end{lemma}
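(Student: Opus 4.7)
The plan is to verify the hypotheses of the crude monadicity theorem for $V$ (cf.\ \cref{prop:monadictheory}), and then its dual for the plethystic assertion.

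First I would unravel the definition of $V$. Since $U$ is a right adjoint it preserves products, and so sends an abelian group object of $\Model_{\calP'}^\heart$ to one of $\Model_\calP^\heart$; this defines $V$, with underlying object $V(A) = U(A)$. The vertical forgetful functors $\Model_{D\calP'}^\heart \to \Model_{\calP'}^\heart$ and $\Model_{D\calP}^\heart \to \Model_\calP^\heart$ are right adjoints to abelianization and so preserve limits, which means limits in $\Model_{D\calP'}^\heart$ and $\Model_{D\calP}^\heart$ are computed on underlying objects. Combined with $U$ preserving limits, $V$ preserves all limits. It is accessible (inherited from $U$) and conservative (a morphism of abelian group objects is invertible iff its underlying morphism is), and the adjoint functor theorem then produces a left adjoint to $V$.

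The main obstacle is preservation of reflexive coequalizers by $V$, and for this I would invoke the Mal'cev hypothesis on $\calP$ and $\calP'$. A standard feature of Mal'cev theories is that the forgetful functor from internal abelian group objects to the ambient category preserves (indeed creates) reflexive coequalizers, so a reflexive coequalizer in $\Model_{D\calP'}^\heart$ is computed as the reflexive coequalizer of the underlying pair in $\Model_{\calP'}^\heart$. The latter is then preserved by $U$ by strong monadicity, and applying the Mal'cev property once more to $\calP$ identifies the result with the reflexive coequalizer in $\Model_{D\calP}^\heart$ of the image pair. Thus $V$ preserves reflexive coequalizers, and the crude monadicity theorem yields that $V$ is strongly monadic.

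For the plethystic claim, suppose further that $U$ has a right adjoint $R$. Since $R$ also preserves products it induces a functor $R' \colon \Model_{D\calP}^\heart \to \Model_{D\calP'}^\heart$, and the natural isomorphism realizing $U \dashv R$ restricts to an adjunction $V \dashv R'$ because compatibility with the abelian structures is a condition natural in both variables. Hence $V$ is a left adjoint and preserves all colimits; together with its preservation of all limits and conservativity established above, the dual of the crude monadicity theorem yields that $V$ is also comonadic. So $V$ is plethystic whenever $U$ is.
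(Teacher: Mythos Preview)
Your argument is correct. For the strongly monadic claim the paper says only ``It is easily seen that $V$ is strongly monadic'', so your appeal to the Mal'cev hypothesis---to verify that reflexive coequalizers in $\Ab(\Model_{\calP'}^\heart)$ and $\Ab(\Model_{\calP}^\heart)$ are created by the forgetful functors---is exactly the sort of detail being suppressed, and your invocation of the crude monadicity theorem matches the paper's general setup in \cref{prop:monadictheory}.

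For the plethystic claim the two arguments genuinely diverge. The paper exploits that $V$ is an additive functor between abelian categories: since $V$ already preserves finite biproducts and reflexive coequalizers, it preserves all colimits as soon as it preserves filtered colimits, and the latter is inherited from $U$ (which, being plethystic, is a left adjoint). The right adjoint to $V$ then comes from the adjoint functor theorem. Your route instead lifts the right adjoint $R$ of $U$ directly to abelian group objects, using that $R$ preserves products, and checks that the adjunction $U\dashv R$ restricts to $V\dashv R'$. This is arguably more direct: it makes the right adjoint explicit and sidesteps the implicit verification that filtered colimits in $\Ab(\Model_\calP^\heart)$ are computed in $\Model_\calP^\heart$. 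The paper's argument, on the other hand, foregrounds the principle that in the additive setting plethysticity reduces to a colimit-preservation condition, which is the viewpoint used throughout the surrounding section.
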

\begin{proof}
It is easily seen that $V$ is strongly monadic. As $V$ is additive, to be plethystic it is sufficient for $V$ to preserve filtered colimits, for which it is sufficient that $U$ preserves filtered colimits, which holds if $U$ is plethystic.
\end{proof}

What makes an algebra $F$ special is the existence of the limit-preserving comonad $F^\vee$. Heuristically, this is because $F^\vee$ preserves algebraic structure. In particular we can identify abelian group objects in $\Model_F^\heart\simeq\CoAlg_{F^\vee}^\heart$ using the following.

\begin{prop}\label{prop:abelianizecomonad}
Let $\calC$ be a $1$-category with finite products, and $G$ be a comonad on $\calC$ which preserves these. Then
\begin{enumerate}
\item $\CoAlg_G\rightarrow\calC$ creates finite products;
\item The resulting forgetful functor $\Ab(\CoAlg_G)\rightarrow\Ab(\calC)$ is comonadic;
\item The diagram
\begin{center}\begin{tikzcd}
\Ab(\CoAlg_G)\ar[r]\ar[d,"U'"]&\Ab(\calC)\ar[d,"U"]\\
\CoAlg_G\ar[r]&\calC
\end{tikzcd}\end{center}
of forgetful functors is Cartesian whenever $U$ is fully faithful;
\item The natural transformation $U'\circ G'\rightarrow G\circ U$ fitting in the diagram
\begin{center}\begin{tikzcd}
\Ab(\CoAlg_G)\ar[d,"U'"]&\Ab(\calC)\ar[d,"U"]\ar[l,"G'"']\\
\CoAlg_G&\calC\ar[l,"G"']
\end{tikzcd}\end{center}
is an isomorphism;
\item If $U$ admits a left adjoint $D$, then $D$ lifts to a left adjoint $D'\colon \CoAlg_G\rightarrow\Ab(\CoAlg_G)$ making the diagram in (3) distributive.
\end{enumerate}
\end{prop}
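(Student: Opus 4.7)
The unifying device for the whole statement is the observation that since $G$ preserves finite products, it restricts to a comonad $G_0$ on $\Ab(\calC)$ satisfying $UG_0 = GU$, and there is a natural identification $\Ab(\CoAlg_G)\simeq\CoAlg_{G_0}$ under which $U'$ corresponds to the functor on coalgebra categories induced by $U$. With this in hand, items (1)--(4) are essentially bookkeeping and (5) is the main content.

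For (1), I would equip the product $X\times Y$ of two $G$-coalgebras with structure map $\alpha\times\beta\colon X\times Y\to GX\times GY\cong G(X\times Y)$ and verify directly that this realizes the product in $\CoAlg_G$. An abelian group object in $\CoAlg_G$ is then an abelian group object in $\calC$ whose operations are coalgebra morphisms, and because $G$ preserves products this is exactly the data of a $G_0$-coalgebra in $\Ab(\calC)$, establishing the identification $\Ab(\CoAlg_G)\simeq\CoAlg_{G_0}$. Part (2) is then immediate, since the forgetful functor out of the coalgebras over a comonad is always comonadic. For (3), an object of $\Ab(\calC)\times_\calC\CoAlg_G$ is a $G$-coalgebra $(X,\alpha)$ together with a lift of $X$ to $\Ab(\calC)$; full faithfulness of $U$ forces the lift to be unique, puts $GX$ in $\Ab(\calC)$, and makes $\alpha$ automatically a morphism there. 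For (4), $G'(M)$ is by construction $G_0M$ with the cofree $G_0$-coalgebra structure, and forgetting the abelian structure returns $GM$ as a cofree $G$-coalgebra, which tautologically matches $GU(M)$.

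For (5), I would construct $D'$ explicitly via the mate $c\colon DG\to G_0 D$ of the identity $UG_0 = GU$. For $(X,\alpha)\in\CoAlg_G$, set $D'(X) = DX$ with $G_0$-coalgebra structure
\[
DX \xrightarrow{D\alpha} DGX \xrightarrow{c_X} G_0 DX.
\]
Coassociativity and counitality of this structure reduce to those of $\alpha$ together with the formal compatibility of the mate $c$ with the comonad data on $G$, which is standard. The adjunction $D'\dashv U'$ is verified by transposing a morphism $DX\to M$ in $\Ab(\calC)$ along $D\dashv U$ to obtain a morphism $X\to UM$ in $\calC$, and checking that the coalgebra-compatibility conditions on the two sides correspond under this bijection; this is a triangle-identity chase using precisely the defining property of $c$ as a mate. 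The distributivity assertion is the statement that the mate $DV\to V'D'$ of left adjoints is an isomorphism, and by construction both sides take $(X,\alpha)$ to $DX$ with the mate given by the identity.

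The main obstacle is the adjunction verification in (5). All the structure is formally determined once $c$ is introduced, but the interaction between the Beck--Chevalley data and the coalgebra structure maps must be tracked carefully; the essential step is verifying that the unit and counit of $D\dashv U$ refine to coalgebra morphisms under $D'$ and $U'$. Once this is done, the remainder of the proposition is organized cleanly around the central identification $\Ab(\CoAlg_G)\simeq\CoAlg_{G_0}$.
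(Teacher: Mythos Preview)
Your proposal is correct and follows essentially the same route as the paper: both hinge on the identification $\Ab(\CoAlg_G)\simeq\CoAlg_{G_0}$ arising from the lift of $G$ to $\Ab(\calC)$, after which (1)--(4) are bookkeeping. For (5) the paper constructs the coaction on $DX$ directly as the unique fill-in of the square with sides $\alpha$, $\eta_X$, $G\eta_X$ using the universal property of $D$, whereas you phrase the same map as $c_X\circ D\alpha$ via the mate; these are the same construction, yours just making the Beck--Chevalley data explicit.
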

\begin{proof}
(1)~~ This is clear.

(2, 4)~~ As $G$ preserves finite products, it lifts to a comonad $G'$ on $\Ab(\calC)$. A $G'$-coalgebra consists of some $A\in\Ab(\calC)$ together with a coaction $A\rightarrow GA$ which is a map of abelian group objects, i.e.\ such that the diagram 
\begin{center}\begin{tikzcd}
A\times A\ar[d]\ar[r]&GA\times GA\ar[d]\\
A\ar[r]&GA
\end{tikzcd}\end{center}
commutes. Looking at it a different way, this is the same as asking for $A$ to be an abelian group object in $\CoAlg_G$, so $\CoAlg_{G'}\simeq\Ab(\CoAlg_G)$. 

(3)~~ If $\Ab(\calC)\rightarrow\calC$ is fully faithful, then the above diagram automatically commutes for any choice of multiplication $A\times A\rightarrow A$ and coaction $A\rightarrow GA$, and the claim quickly follows. 

(5)~~ Given the left adjoint $D$, there is a left adjoint $D'\colon\CoAlg_G\rightarrow\Ab(\CoAlg_G)$ sending a $G$-coalgebra $A\rightarrow GA$ to the $G'$-coalgebra with coaction the unique dashed arrow filling in
\begin{center}\begin{tikzcd}
A\ar[d]\ar[r]&GA\ar[d]\\
DA\ar[r,dashed]&GDA
\end{tikzcd}\end{center}
as a map of abelian group objects in $\calC$. This has the desired properties.
\end{proof}

By \cref{prop:abelianizecomonad}, if $B\in\Model_F^\heart$ and we are treating $B$ as a model of $\calP$ equipped with extra structure, then the notation $D(B)$ is unambiguous, for the abelianization of $B$ is the same when computed in $\Model_F^\heart$ or $\Model_\calP^\heart$. However, the notation $\bbL D(B)$ is ambiguous; thus for the moment we write $\bbL D'$ for the derived abelianization of $F$-models.

Call the algebra $F$ \textit{smooth} if $F(P)$ is smooth for all $P\in\calP$.

\begin{prop}\label{prop:smoothab}
If $F$ is smooth, then the diagram
\begin{center}\begin{tikzcd}
\LMod_{DF}^\cn\ar[r,"V'"]\ar[d]&\LMod_{D\calP}^\cn\ar[d]\\
\Model_F\ar[r,"V"]&\Model_\calP
\end{tikzcd}\end{center}
is distributive.
\end{prop}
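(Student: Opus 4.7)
The plan is to verify that the mate natural transformation $\bbL D \circ V \to V' \circ \bbL D'$ is an equivalence. Both $V$ and $V'$ preserve all small colimits, being forgetful functors of plethystic adjunctions (each has both a left and a right adjoint), and $\bbL D, \bbL D'$ are cocontinuous as left adjoints. Consequently, the mate is a natural transformation between cocontinuous functors $\Model_F \to \LMod_{D\calP}^\cn$, and by \cref{prop:models} it suffices to check that it is an equivalence on each free $F$-model $F(P)$ for $P \in \calP$.

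On such a generator, I would compute $\bbL D' F(P)$ by a Yoneda / adjunction chase: combining the free-model adjunction $F\dashv V$ with the abelianization adjunctions $\bbL D\dashv U$ and $\bbL D'\dashv U'$, together with the commutativity $V\circ U'\simeq U\circ V'$, produces a natural equivalence $\bbL D' F(P) \simeq {V'}^L \bbL D P$, where ${V'}^L$ denotes the left adjoint to $V'$ (extension of scalars along the map of additive theories $D\calP \to DF$). Since $P$ is projective in $\Model_\calP^\heart$, $\bbL DP \simeq DP$ discretely, so no derived extension of scalars is incurred and $\bbL D' F(P)$ is itself discrete, identifying with $DF(DP) \cong D(F(P))$ by the very construction of the algebra $DF$. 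In particular $V'\bbL D' F(P) \simeq D(F(P))$ discretely.

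The smoothness hypothesis on $F$ is exactly what forces the matching computation on the $\calP$-model side: it guarantees that $\bbL D F(P) = \bbL D \circ V F(P)$ is discrete, and the canonical comparison map from there into $D(F(P))$ is an equivalence. The mate on $F(P)$ then becomes the canonical self-identification of $D(F(P))$, and hence an equivalence. The main obstacle is aligning these two computations: the $F$-model side requires $P$ to be projective so the derived extension of scalars collapses to the ordinary one, while the $\calP$-model side requires smoothness of $F(P)$ to collapse the absolute derived abelianization to its discrete truncation. Once both sides have been identified with the discrete object $D(F(P))$, the mate is manifestly the identity, so that distributivity holds on generators and hence everywhere by cocontinuity.
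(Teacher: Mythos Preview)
Your proposal is correct and follows essentially the same approach as the paper. Both arguments reduce to checking the mate $\bbL D \circ V \to V' \circ \bbL D'$ on the generators $F(P)$ via preservation of geometric realizations, and both invoke smoothness to identify $\bbL D(F(P))$ with its discrete truncation $D(F(P))$; the only difference is that the paper cites \cref{prop:abelianizecomonad} for the discrete-level distributivity $D \circ V \cong V' \circ D'$, whereas you rederive this identification directly via the Yoneda/adjunction chase computing $\bbL D' F(P) \simeq {V'}^L DP$.
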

\begin{proof}
As both $\bbL D\circ V$ and $V'\circ \bbL D'$ preserve geometric realizations, it is sufficient to verify that the map $\bbL D\circ V\rightarrow V'\circ\bbL D'$ is an equivalence when restricted to $F\calP$. Here, it follows from  smoothness and \cref{prop:abelianizecomonad}.
\end{proof}

Thus $\bbL D$ is unambiguous provided $F$ is smooth, for in this case the derived abelianization of an $F$-model is the same as computed with respect to $F$ or $\calP$. In practice we will assume that our algebras are smooth when we consider the Quillen cohomology of their algebras.

We end by noting the following, illustrating the purpose of smooth algebras.

\begin{prop}\label{thm:grothss}
Fix a smooth algebra $F$. For $B\in\Model_F$ and $M\in\LMod_{DF}$, there is a spectral sequence
\[
E_1^{p,q} = \Ext^{q-p}_{DF}(\bbL_p D(B),M)\Rightarrow H^q_F(B;M),\qquad d_r^{p,q}\colon E_r^{p,q}\rightarrow E_r^{p-r,q+1},
\]
which is convergent, for instance, if $\bbL D(B)$ is truncated. In particular, if $B$ is smooth as an object of $\Model_\calP^\heart$, then
\[
H^q_F(B;M) \cong \Ext^q_{DF}(D(B);M).
\]
\end{prop}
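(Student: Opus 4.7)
The plan is to read the spectral sequence off the Postnikov tower of $\bbL D(B)$ inside $\LMod_{DF}^\cn$, to which the contravariant functor $\EXT_{DF}(-,M)$ is applied. Smoothness of $F$ is exactly what is needed to ensure that this tower has homotopy sheaves that match the $\bbL_p D(B)$ of the statement, with the correct $DF$-action.

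First I would appeal to \cref{prop:smoothab} to view $\bbL D(B)$ as an object of $\LMod_{DF}^\cn$ whose underlying $D\calP$-module is the classical derived $\calP$-abelianization of $B$. The forgetful functor $V'\colon\LMod_{DF}^\cn\rightarrow\LMod_{D\calP}^\cn$ is the forgetful functor of an additive plethystic adjunction, hence exact and conservative, and therefore preserves and detects the t-structure. Consequently $\pi_p\bbL D(B)\in\LMod_{DF}^\heart$ has underlying $D\calP$-module $\bbL_p D(B)$, and the $DF$-module structure it acquires is exactly the one appearing in the $E_1$-page.

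Next I would consider the Postnikov fiber sequences $\bbL_p D(B)[p]\rightarrow\tau_{\leq p}\bbL D(B)\rightarrow\tau_{\leq p-1}\bbL D(B)$ in $\LMod_{DF}^\cn$ and apply $\EXT_{DF}(-,M)$. The resulting exact couple has $E_1^{p,q} = \Ext^{q-p}_{DF}(\bbL_p D(B),M)$, and the connecting maps in the long exact sequences give differentials $d_r\colon E_r^{p,q}\rightarrow E_r^{p-r,q+1}$, abutting to $\Ext^q_{DF}(\bbL D(B),M) = H^q_F(B;M)$. When $\bbL D(B)$ is truncated the tower stabilizes after finitely many stages, so the spectral sequence is finite and converges strongly. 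The ``in particular'' clause is then immediate: smoothness of $B$ makes $\bbL D(B)$ discrete, only the $p=0$ column of the $E_1$-page survives, and one reads off $H^q_F(B;M)\cong \Ext^q_{DF}(D(B),M)$. The only delicate point, and the reason smoothness of $F$ enters as a hypothesis, is the identification of $\pi_p\bbL D(B)$ as a $DF$-module in the second paragraph: without \cref{prop:smoothab}, the derived $F$-abelianization and derived $\calP$-abelianization of $B$ would be distinct invariants and the $E_1$-page as written would not arise from a Postnikov filtration inside $\LMod_{DF}^\cn$.
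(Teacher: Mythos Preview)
Your proposal is correct and matches the paper's approach: both obtain the spectral sequence from the Postnikov/Whitehead filtration of $\bbL D(B)$ in $\LMod_{DF}^\cn$, after invoking smoothness of $F$ (via \cref{prop:smoothab}) to make $\bbL D(B)$ unambiguous as a $DF$-module. The paper phrases this as ``the filtration of $\EXT_{DF}(\bbL D(B),M)$ by the Whitehead tower of $\bbL D(B)$'', which is the same exact couple you write down from the Postnikov fiber sequences.
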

\begin{proof}
By definition, $\calH_F(B;M) = \EXT_{DF}(\bbL D(B),M).$ Smoothness of $F$ ensures that the notation $\bbL D(B)$ is unambiguous. The spectral sequence is then associated to the filtration of $\EXT_{DF}(\bbL D(B),M)$ by the Whitehead tower of $\bbL D(B)$.
\end{proof}

More explicit examples of these ideas will be given in \cref{ssec:plethaq}. 

\section{Koszul algebras}\label{sec:koszul}

This section is concerned with additive theories, and in particular with the notion of a \textit{Koszul algebra} over an additive theory.

\subsection{Coalgebras}\label{ssec:additivebimodules}

Fix additive theories $\calP$ and $\calP'$. To emphasize that we are working in the additive setting, we will refer to $\calP'\hyp\calP$-bimodels as \textit{$\calP'\hyp\calP$-bimodules}; see \cref{ex:bimodules}. In addition, we can extend a $\calP'\hyp\calP$-bimodule $H\colon \LMod_{\calP}^\cn\rightarrow\LMod_{\calP'}^\cn$ to a colimit-preserving functor $\LMod_{\calP}\rightarrow\LMod_{\calP'}$, and will do so without change of notation.

It happens on occasion that a bimodule $H$ has the property that its right adjoint $H^\vee$ preserves colimits; in this case, $H^\vee$ is also a bimodule, with further right adjoint $H^{\vee\vee}$.

\begin{ex}\label{ex:dualbimodule}
Let $A$ and $B$ be ordinary associative algebras and $H$ a discrete $B\hyp A$-bimodule. As mentioned in \cref{ex:bimodules}, we can recover the ordinary dual bimodule $\LMod_B(H,B)$ of $H$ by considering the restriction of $H^\vee$ to the category of finitely generated free $B$-modules; its left Kan extension to the category of all free $B$-modules is then the functor $H^\vee_0(M) = \Hom_B(H,B)\otimes_B M$. There is a comparison map
\[
H^\vee_0\rightarrow H^\vee,\quad\theta\colon\Hom_B(H,B)\otimes_B M\rightarrow\Hom_B(H,M),\quad\theta(f\otimes m)(h) = f(h)m,
\]
which is an isomorphism when $H$ is finitely presented and projective as a left $B$-module.
\tqed
\end{ex}

It is not necessary for $H^\vee$ to preserve colimits to talk about monad structures on $H^\vee$. These are equivalently comonad structures on $H$, and thus deserve to be called \textit{$\calP$-coalgebras}. In this case, $H^\vee$-modules are the analogues of \textit{$H$-contramodules} in the sense of \cite[Section III.5]{eilenbergmoore1965foundations}, but we will not use this name.

\subsection{Cobar complexes}\label{ssec:barcon}

In this subsection we review bar resolutions and cobar complexes in some detail, largely to make our conventions explicit; in particular, this material is not novel, the generalization from ordinary algebras to additive monads being primarily a matter of notation. Fix for the moment an arbitrary category $\calM$---not necessarily a $1$-category---and a monad $T$ on $\calM$. For $M\in\Alg_T$, we may form the \textit{bar construction} $B(T,T,M)$. This is the simplicial object augmented over $M$ with
\[
B_n(T,T,M) = T^{1+n}M,\qquad d_i = T^{i}mT^{n-i}\colon T^{1+1+n}M\rightarrow T^{1+n}M,\quad 0\leq i \leq n+1.
\]
Here, $d_{n+1}$ is to be understood as given by the $T$-module structure on $M$.

\begin{lemma}\label{lem:bares}
The simplicial object $B(T,T,M)$ is a resolution of $M$, in the sense that the augmentation extends to an equivalence
\[
\colim_{n\in\Delta^\op}B_n(T,T,M)\simeq M
\]
in $\Alg_T$.
\end{lemma}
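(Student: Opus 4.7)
The plan is to exhibit an extra degeneracy on the augmented simplicial object $B(T,T,M)\to M$ after forgetting along $U\colon\Alg_T\to\calM$, and then invoke the standard fact that $U$-split simplicial objects have absolute colimits preserved and created by $U$.

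First I would construct the splitting in $\calM$. The extra degeneracies $s_{-1}\colon UB_n(T,T,M)\to UB_{n+1}(T,T,M)$ are given by
\[
s_{-1}=\eta T^{1+n}\colon T^{1+n}M\to T\circ T^{1+n}M = T^{2+n}M,
\]
where $\eta\colon\mathrm{id}\to T$ is the unit of the monad, and the extra degeneracy past the augmentation $U(M)\to UB_0(T,T,M)=UTM$ is given by $\eta M$. The monad unit axioms, together with the identity $d_i s_{-1}=s_{-1}d_{i-1}$ for $i\geq 1$ and $d_0 s_{-1}=\mathrm{id}$ imposed on such extra degeneracies, follow directly from the unit axioms $m\circ\eta T=\mathrm{id}=m\circ T\eta$ and the definition of the face maps.

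Next I would invoke the fact (cf.\ \cite[Lemma 4.7.2.4, Proposition 4.7.3.14]{lurie2017higheralgebra}) that a split augmented simplicial object in any $\infty$-category is a colimit diagram, and that this colimit is preserved by every functor; moreover, for a monad $T$ on $\calM$ with forgetful functor $U\colon\Alg_T\to\calM$, an augmented simplicial object $X_\bullet\to X_{-1}$ in $\Alg_T$ whose image under $U$ is split is itself a colimit diagram in $\Alg_T$, with $UX_{-1}\simeq\colim_{n}UX_n$. Applied to $B(T,T,M)\to M$, whose $U$-image is split by the construction above, this yields the equivalence $\colim_{n\in\Delta^\op}B_n(T,T,M)\simeq M$ in $\Alg_T$.

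The only step requiring care is the verification of the simplicial identities for the extra degeneracy, which is routine, and the appeal to the $\infty$-categorical version of $U$-split descent, which is standard; no genuine obstacle should arise.
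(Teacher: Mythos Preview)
Your proposal is correct and follows essentially the same approach as the paper: the paper simply asserts that the augmented simplicial object is $U$-split (or ``$T$-split'' in its terminology) and invokes \cite[Theorem 4.7.3.5]{lurie2017higheralgebra} to conclude that $\Alg_T\to\calM$ creates such geometric realizations. You have spelled out the extra degeneracy explicitly via the unit $\eta$, which the paper leaves implicit, but the underlying argument is the same.
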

\begin{proof}
The augmented simplicial object $M\leftarrow B(T,T,M)$ is $T$-split, so the claim follows as $\Alg_T\rightarrow\calM$ creates $T$-split geometric realizations \cite[Theorem 4.7.3.5]{lurie2017higheralgebra}.
\end{proof}

We now restrict ourselves to the case where $\calM = \Model_\calP^\heart$ for a discrete additive theory $\calP$ and $T=F$ is a discrete $\calP$-algebra; the rest of this subsection takes place in a $1$-category. Assuming that $F$ is a colimit-preserving monad on a category of the form $\Model_\calP^\heart$ is much more than is necessary, as most of the following is just an explicit comparison of finite formulas; the assumption is made purely for notational convenience.

In this additive setting, we may produce from $B(F,F,M)$ the \textit{unreduced bar resolution} $C^\un(F,F,M)$, which is a chain complex of $F$-modules of the form
\[
C_n^\un(F,F,M) = F^{1+n} M,\qquad d = \sum_{0\leq i \leq n+1}(-1)^id_i\colon F^{1+1+n}M\rightarrow F^{1+n}M,
\]
as well as the \textit{reduced bar resolution} $C(F,F,M)$, which is the quotient chain complex of $C^\un(F,F,M)$ with
\[
C_n(F,F,M) = FF_+^nM,\qquad F_+ = \coker(I\rightarrow F).
\]

Given $M,M'\in\LMod_F^\heart$, define
\[
B_F(M,M') = \Hom_{F\calP}(B(F,F,M),M'),
\]
so that
\[
B_F^{n}(M,M')= \Hom_{F\calP}(F^{1+n}M,M') \cong \Hom_\calP(F^nM,M').
\]
This is a cosimplicial abelian group modeling $\EXT_F(M,M')$ provided that $B(F,F,M)$ consists of projective $F$-modules. From this we extract the \textit{unreduced cobar complex} $C_F^\un(M,M')$ and \textit{reduced cobar complex} $C_F(M,M')\subset C_F^\un(M,M')$; the differential on $C_F^\un(M,M')$ is given
\begin{align*}
\delta&\colon\Hom_\calP(F^nM,M')\rightarrow\Hom_\calP(F^{1+n}M,M'),\\
\delta &= \delta_0 + \sum_{1\leq i \leq n} (-1)^i \delta_i + (-1)^{n+1}\delta_{n+1},\\
\delta_0(f) &= m \circ Ff\colon F^{1+n}M\rightarrow FM'\rightarrow M',\\
\delta_i(f) &= f\circ F^{i-1}mF^{n-i}\colon F^{1+n}M\rightarrow F^nM\rightarrow M',\\
\delta_{n+1}(f) &= f\circ F^n m\colon F^{1+n}M\rightarrow F^nM\rightarrow M'.
\end{align*}

\begin{lemma}\label{prop:cobarcomposition}
Fix $M,M',M''\in\LMod_F^\heart$. Define 
\[
\wr\colon C_F^{\un,n}(M,M')\otimes C_F^{\un,n'}(M',M'')\rightarrow C_F^{\un,n'+n}(M,M'')
\]
as follows: given $f\colon F^nM\rightarrow M'$ and $f'\colon F^{n'}M'\rightarrow M''$, set
\[
(-1)^{nn'}f\wr f' = f'\circ F^{n'}f\colon F^{n'+n}M\rightarrow F^{n'}M'\rightarrow M''.
\]
Then $\wr$ has the following properties:
\begin{enumerate}
\item If $f\in C^n_F(M,M')$ and $f'\in C^{n'}_F(M',M'')$, then $f\wr f'\in C^{n'+n}_F(M,M'')$.
\item $\delta(f\wr f') = \delta(f)\wr f' + (-1)^n f\wr \delta(f')$, and thus $\wr$ passes to pairings
\begin{align*}
C_F^\un(M,M')\otimes C_F^\un(M',M'')&\rightarrow C_F^\un(M,M''),\\
 C_F(M,M')\otimes C_F(M',M'')&\rightarrow C_F(M,M'')
\end{align*}
of cochain complexes.
\item Suppose that $C(F,F,M)$ and $C(F,F,M')$ are projective resolutions of $M$ and $M'$. Then the induced pairing
\[
\wr\colon \Ext^n_F(M,M')\otimes\Ext^{n'}_F(M',M'')\rightarrow\Ext^{n'+n}_F(M,M'')
\]
is the graded opposite of the standard Yoneda composition:
\[
f\wr f' = (-1)^{nn'} f'\circ f.
\]
Here, to be explicit, we take the Yoneda composition as defined in \cite[Section III.5, Theorem III.6.4]{maclane1975homology} as the standard.
\end{enumerate}
\end{lemma}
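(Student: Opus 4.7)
The plan is to handle each of the three parts by direct computation with the explicit formulas in $\LMod_F^\heart$; no input beyond the definitions of the (un)reduced cobar complex and the monadic structure of $F$ is needed.

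For (1), I would use the identification $C_F^n(M,M') \cong \Hom_\calP(F_+^n M, M')$, i.e.\ that an element of the reduced cobar complex is a map $F^n M \to M'$ which vanishes whenever any of the $n$ consecutive factors of $F$ is restricted along the unit $I \to F$. Given reduced cochains $f$ and $f'$, I would then check that $f' \circ F^{n'} f\colon F^{n'+n}M \to M''$ kills any such unit insertion: a unit in one of the outer $n$ positions is killed by $f$ itself, hence by $F^{n'} f$; a unit in one of the inner $n'$ positions produces a unit insertion in $F^{n'} M'$ after applying $F^{n'}f$, which is then killed by the reducedness of $f'$. This uses only that $F$ preserves units and that the two factors are reduced.

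For (2), I would expand $\delta(f \wr f')$ as a signed sum of $n+n'+2$ terms indexed by $0 \le k \le n+n'+1$ and sort them by where the multiplication $m$ lands relative to the two blocks. The terms with $1 \le k \le n-1$ arise from multiplications internal to the outer $F^n$ block and reproduce, after reindexing, the middle terms of $\delta(f) \wr f'$; the terms with $n+1 \le k \le n+n'$ arise from multiplications internal to the inner $F^{n'}$ block and reproduce the middle terms of $(-1)^n f \wr \delta(f')$; and the two boundary terms $\delta_0$ and $\delta_{n+n'+1}$ account for $\delta_0(f) \wr f'$ and $(-1)^n f \wr \delta_{n'+1}(f')$ respectively. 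The one delicate term is at $k = n$: it must be rewritten using the associativity of $m$ so that it contributes simultaneously to $\delta_{n+1}(f) \wr f'$ and to $(-1)^n f \wr \delta_0(f')$, with the prefactor $(-1)^{nn'}$ built into the definition of $\wr$ conspiring with the index shift $k \mapsto k - n$ to give the correct sign. That $\wr$ descends from the unreduced complexes to the reduced complexes is part (1) together with the fact that $\delta$ preserves reducedness.

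For (3), I would realize the Yoneda composition at the level of the bar resolution. When $C(F,F,M)$ and $C(F,F,M')$ are projective resolutions of $M$ and $M'$, any cocycle $f\colon F^n M \to M'$ lifts, uniquely up to chain homotopy, to a chain map of resolutions refining $f$ along the augmentation; a natural choice is given in degree $n'$ by $F^{n'} f\colon F^{n'} F^n M \to F^{n'} M'$. The Yoneda product of the class of $f'$ with the class of $f$ is then represented by $f' \circ F^{n'} f$ in $C_F^{n+n'}(M,M'')$, which by definition equals $(-1)^{nn'} f \wr f'$. Comparing with the normalization of \cite[Section III.5]{maclane1975homology} yields the claimed sign. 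The main obstacle will be precisely this sign bookkeeping, in both (2) and especially (3): different standard sources apply different Koszul conventions to Hom-complexes, to cochain shifts, and to the preferred lift $\tilde f$, and the stated formula depends on fixing these consistently against the chosen reference. Once the normalizations are pinned down, the calculation is mechanical.
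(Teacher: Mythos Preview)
Your proposal is correct and follows essentially the same approach as the paper: part (1) is declared clear, part (2) is a direct signed expansion of $\delta(f'\circ F^{n'}f)$ with the key identity $\delta_{n'+1}(f')\circ F^{n'+1}f = f'\circ F^{n'}\delta_0(f)$ governing the boundary between the two blocks, and part (3) is handled by explicitly lifting $f$ to a chain map on bar resolutions via $f^+_{k+n} = (-1)^{nk} F^{1+k}f$ and reading off the Yoneda product. The only refinement to note is that the paper's lift carries the sign $(-1)^{nk}$ (needed for it to be a chain map with the convention $d^{C[p]} = (-1)^p d^C$), which is exactly the sign bookkeeping you anticipated.
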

\begin{proof}
(1)~~ This is clear.

(2)~~ Fix $f\colon F^nM\rightarrow M'$ and $f'\colon F^{n'}M'\rightarrow M''$. The main point is that
\[
\delta_{n+1}(f')\circ F^{n'+1}f = f'\circ F^{n'}\delta_0(f);
\]
this allows us to compute
\begin{align*}
\delta(f'\circ F^{n'}f) &= \sum_{i=0}^{n'+1+n}(-1)^i \delta_i(f'\circ F^{n'}f)\\
&=\sum_{i=0}^{n'}\delta_i(f'\circ F^{n'}f) + (-1)^{n'}\sum_{i = n'+1}^{n'+1+n}(-1)^i \delta_i(f'\circ F^{n'}f)\\
&=\sum_{i=0}^{n'}\delta_i(f')\circ F^{n'}f + (-1)^{n'}\sum_{i=1}^{1+n}(-1)^i f'\circ F^{n'}\delta_i(f)\\
&=\sum_{i=0}^{n'+1}\delta_i(f')\circ F^{n'}f + (-1)^{n'}\sum_{i=0}^{1+n}(-1)^i f'\circ F^{n'}\delta_i(f)\\
&=\delta(f')\circ F^{n'}f + (-1)^{n'}f'\circ F^{n'}\delta(f),
\end{align*}
which yields
\begin{align*}
\delta(f\wr f') &= (-1)^{nn'}\delta(f'\circ F^{n'}f)\\
&=(-1)^{nn'}\delta(f')\circ F^n f + (-1)^{nn'+n'}f'\circ F^{n'}\delta(f)\\
&=(-1)^{nn'+n(n'+1)} f \wr \delta(f') + (-1)^{nn'+n'+(n+1)n'}\delta(f)\wr f' \\
&= \delta(f)\wr f' + (-1)^n f \wr \delta(f')
\end{align*}
as claimed. 

(3)~~ We first introduce a bit of local notation. If $\calA$ is an additive category, there are shift functors
\[
\Ch(\calA)\rightarrow\Ch(\calA),\qquad C \mapsto C[p]
\]
for $p\in\bbZ$ defined on objects by
\[
C[p]_n = C_{n-p},\qquad d^{C[p]}_n = (-1)^p d^C_{n-p},
\]
and on morphisms by
\[
f[p]_n = f_{n-p}.
\]

Now fix a map $f\colon F^{n}M\rightarrow M'$ in $\LMod_\calP^\heart$. This lifts to a map
\[
f^+\colon C^\un(F,F,M)\rightarrow C^\un(F,F,M')[n]
\]
of graded objects by
\[
f^+_{<n} = 0,\qquad f^+_{k+n} = (-1)^{nk} F^{1+k}f.
\]
This satisfies
\[
f_{k+n}^+ \circ d - d \circ f_{1+k+n}^+ = (-1)^{(n+1)k}F^{1+k}\delta(f),
\]
and is thus a chain map whenever $f$ is a cocycle. If $f'\colon F^{n'}M'\rightarrow M''$ is another map in $\LMod_\calP^\heart$, then $f'\circ f^+_{n'+n} = f \wr f'$. We conclude by observing that if $C(F,F,M)$ and $C(F,F,M')$ are projective resolutions of $M$ and $M'$, and $f$ and $f'$ are cocycles, then $f'\circ f^+_{n'+n}$ is a cocycle representing the standard Yoneda composition of the classes represented by $f$ and $f'$, only twisted by $(-1)^{nn'}$.
\end{proof}

When $\calP$ is the theory of $\bbZ$-graded modules over an ordinary $\bbZ$-graded algebra, it is standard practice to insert additional signs in various places when developing the homological algebra of $\LMod_\calP^\heart$, where these additional signs are dependent on the internal degrees of elements involved. The internal degrees of elements of a graded module cannot be defined in a Morita-invariant way, so these signs cannot be incorporated at the present level of generality. In practice one can simply modify the constructions of this section to be compatible with whatever conventions are most convenient for a given category. Here is an example indicating the effect of this in the standard case.

\begin{ex}\label{ex:signs}
Let $k$ be a commutative ring and $A$ be an ordinary projective $\bbZ$-graded augmented $k$-algebra in which $k$ is central. Let $H$ be the cohomology algebra of $A$ defined with conventions from \cite{priddy1970koszul}. Write $s^q$ for $q$-fold shift, so that $(s^qM)_n = M_{n-q}$ for a $\bbZ$-graded object $M$. Then $H$ is a bigraded object with
\[
H^{p,q} = \Ext^p(k,s^qk).
\]
Write $\smile$ for the product on $H$ and $\circ$ for the Yoneda composition on $\Ext$. Then for $x\in H^{p,q}$ and $x'\in H^{p',q'}$ these pairings satisfy
\[
x'\smile x = (-1)^{q(q'-p')}s^q x'\circ x.
\]
Closer to our conventions is the bigraded opposite algebra $H^\op$. This is the algebra with $(H^\op)^p_q = H^{p,-q}$ and product given for $x\in (H^\op)^p_q$ and $x'\in (H^\op)^{p'}_{q'}$ by
\[
x\smile^\op x' = (-1)^{(q-p)(q'-p')} x'\smile x.
\]
If we identify $(H^\op)^p_q = \Ext^p(s^qk,k)$, then this satisfies
\[
x\smile^\op x' = (-1)^{pq'}s^{q'}x\wr x'.
\]
\tqed
\end{ex}

\subsection{Cohomology of augmented algebras}\label{ssec:homologycohomology}

Fix an additive theory $\calP$ and $\calP$-algebra $F$. Suppose that $F$ is \textit{augmented}; that is, that we have chosen a map $\epsilon\colon F\rightarrow I$ of algebras, where $I$ is the initial algebra, given by the identity functor. Restriction along the augmentation gives a functor
\[
\epsilon^\ast\colon \LMod_\calP\rightarrow\LMod_F,\qquad \epsilon^\ast(M) = \ol{M}.
\]
As $\epsilon^\ast$ preserves limits and colimits, it is part of an adjoint triple $\epsilon_!\dashv \epsilon^\ast\dashv\epsilon_\ast$, giving thus a (non-discrete) $\calP$-coalgebra $\epsilon_!\epsilon^\ast$ with right adjoint monad $\epsilon_\ast \epsilon^\ast$. The functor $\epsilon_!\epsilon^\ast$ can be identified as arising from a bar construction: for $M\in\LMod_\calP$, there is an equivalence
\[
\epsilon_!\epsilon^\ast M \simeq \epsilon_! \colim_{n\in\Delta^\op}B_n(F,F,\ol{M})\simeq \colim_{n\in\Delta^\op}B_n(I,F,\ol{M}),
\]
where by definition $B(I,F,\bs) = \epsilon_! B(F,F,\bs)$. Likewise, $\epsilon_\ast\epsilon^\ast$ can be identified as
\[
(\epsilon_\ast\epsilon^\ast M)(P) = \EXT_\calP(P,\epsilon_\ast\epsilon^\ast M) = \EXT_F(\ol{P},\ol{M}),
\]
which may be computed via a cobar construction.

We will make minimal use of these homotopical objects directly, but will make use of their algebraic shadows. Specialize now to the case where $\calP$ is discrete and $F$ is projective. Under these assumptions, $C(I,F,\bs)$ is a chain complex of discrete $\calP$-bimodules modeling $\epsilon_!\epsilon^\ast$. Define
\begin{align*}
H_n(F) &= \pi_n \epsilon_! \epsilon^\ast\colon\calP\rightarrow\LMod_\calP^\heart,\qquad H_n(F)_{P,P'} = H_n C(I,F,\ol{P})_{P'};\\
H^n(F) &= \pi_{-n} \epsilon_\ast\epsilon^\ast\colon\calP\rightarrow\LMod_\calP^\heart,\qquad H^n(F)_{P,P'} = \Ext^n_F(\ol{P}',\ol{P}).
\end{align*}
These extend to endofunctors of $\LMod_\calP^\heart$, and if each $H_n(F)_P$ is projective, then $H_n(F)$ is a bimodule with $H_n(F)^\vee \cong H^n(F)$. The products of \cref{prop:cobarcomposition} give pairings
\[
\wr\colon H^{n'}(F)_{P',P''}\otimes H^n(F)_{P,P'}\rightarrow H^{n'+n}(F)_{P,P''},
\]
and these make $H^\ast(F)$ into a graded monad. When each $H_n(F)$ is projective, the identification $H_n(F)^\vee \cong H^n(F)$ implies that $H_\ast(F)$ can be considered as a graded comonad, although we will not make use of this.

\begin{ex}\label{ex:cohord}
Let $k$ be an ordinary algebra and $A$ an ordinary augmented projective $k$-algebra. Then treating $A$ as an algebra for the theory of left $k$-modules, the above definitions give
\[
H_\ast(A)_{k,k} = \Tor_\ast^A(k,k),\qquad H^\ast(A)_{k,k} = \Ext^\ast_A(k,k).
\]
Here $H^\ast(A)_{k,k}$ is itself an augmented $k$-algebra, with multiplication given by the graded opposite of Yoneda composition. On the other hand, suppose instead that $A$ is an ordinary $\bbZ$-graded augmented projective $k$-algebra. Then still we have
\[
H^\ast(A)_{e_p,e_q} = \Ext^\ast_A(e_q,e_p),
\]
where $e_a$ denotes a copy of $k$ in degree $a$. In particular, we can extract from this a left $k$-module
\[
H^\ast(A)_{e_0} = \Ext^\ast_A(e_\ast,e_0).
\]
Heuristically, this is the ordinary cohomology algebra of $A$. However, note the following subtlety: to make $H^\ast(A)_{e_0}$ into an ordinary algebra requires the additional structure of the isomorphisms $\Ext^\ast_A(e_{a+b},e_b)\cong\Ext^\ast_A(e_a,e_0)$.
\tqed
\end{ex}

\subsection{Homology and compositions}\label{ssec:homologycompositions}

Classically, the homology of ordinary augmented algebras is well-behaved with respect to base change, and there are K\"unneth isomorphisms describing the homology of a tensor product under suitable flatness conditions. We will need the analogues of these where tensor products are replaced with compositions in the sense of \cref{ssec:compositions}.

\begin{lemma}\label{lem:basechangebar}
Suppose given a square
\begin{center}\begin{tikzcd}
\calD'\ar[d,"U'",shift left=1mm]\ar[r,"V'",shift left=1mm]&\calD\ar[l,"F'",shift left=1mm]\ar[d,"U",shift left=1mm]\\
\calC'\ar[r,"V",shift left=1mm]\ar[u,"T'",shift left=1mm]&\calC\ar[u,"T",shift left=1mm]\ar[l,"F",shift left=1mm]
\end{tikzcd}\end{center}
of adjunctions between $1$-categories. Then there is a natural simplicial map
\[
T'B(F,F,C')\rightarrow B(F',F',T'C')
\]
defined for $C'\in\calC'$, which is an isomorphism if the square is distributive.
\end{lemma}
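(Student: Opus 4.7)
The plan is to identify the bar constructions concretely, to define the natural map by iteratively transporting $T'$ through one copy of $FV$ at a time using the canonical mate transformations associated to the square, and then to verify simplicial naturality. Following the conventions of \cref{ssec:barcon}, the comonadic bar resolution for the adjunction $F\dashv V$ applied to $C'\in\calC'$ is $B_n(F,F,C') = (FV)^{n+1}C'\in\calC'$, with face maps $d_i = (FV)^i\,\epsilon_F\,(FV)^{n-i}$ coming from the counit $\epsilon_F\colon FV\to\mathrm{id}_{\calC'}$ and degeneracies built from $\eta_F\colon\mathrm{id}_\calC\to VF$; similarly $B_n(F',F',T'C') = (F'V')^{n+1}T'C'\in\calD'$, and $T'B(F,F,C')$ is formed by applying $T'$ levelwise.

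From the four adjunctions, together with the compatibility $2$-cell $VU'\to UV'$ coming from the square, the standard mate construction yields natural transformations
\[
\alpha\colon T'F\longrightarrow F'T,\qquad \beta\colon TV\longrightarrow V'T',
\]
each of which becomes an isomorphism precisely when the square is distributive. The desired map $\phi_n\colon T'(FV)^{n+1}C'\to (F'V')^{n+1}T'C'$ is then obtained by iterating the single swap
\[
T'FV \xrightarrow{\alpha V} F'TV \xrightarrow{F'\beta} F'V'T',
\]
applied successively within $T'(FV)^{n+1}C'$ to push $T'$ past one factor $FV$ at a time until it emerges on the right as $T'C'$, producing in $n+1$ steps the desired comparison to $(F'V')^{n+1}T'C'$.

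What remains is simplicial naturality and the iso claim. The key identity is the commutativity of the two composites $T'FV\to T'$ given by $T'\epsilon_F$ and by the swap followed by $\epsilon_{F'}T'$; this is a direct unit--counit calculation using the triangular identities of the four adjunctions together with the definitions of $\alpha$ and $\beta$. The face identities $\phi_{n-1}\circ T'(d_i) = d_i\circ \phi_n$ then follow from this together with naturality of $\alpha,\beta$, and the degeneracy identities are the dual statement using $\eta_F$ and $\eta_{F'}$. In the distributive case both $\alpha$ and $\beta$ are isomorphisms, so every instance of the swap is iso and $\phi_n$ is therefore levelwise an isomorphism. The main obstacle is purely bookkeeping for the iterated shuffles in the simplicial naturality check, which is conceptually routine but the only step requiring genuine care.
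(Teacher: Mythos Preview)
Your proposal is correct and follows essentially the same approach as the paper: construct the levelwise map by iterating the single swap $T'FV\to F'V'T'$ built from the mates associated to the square, and observe that in the distributive case each instance is an isomorphism. You are somewhat more careful than the paper about the simplicial identities and about phrasing the compatibility as a $2$-cell rather than a strict equality, but the underlying construction is identical; one small quibble is that in the paper's convention the square of right adjoints commutes on the nose, so $\alpha\colon T'F\to F'T$ is automatically an isomorphism by uniqueness of adjoints and only $\beta\colon TV\to V'T'$ genuinely characterizes distributivity.
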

\begin{proof}
In degree $n$, this is the map
\[
T'F(VF)^nVC'\rightarrow F'(V'F')^nV'T'C'
\]
obtained by repeated application of the mate
\[
T'FV\simeq F'TV\rightarrow F'V'T'.
\]
That this is an isomorphism when the square is distributive is straightforward.
\end{proof}

\begin{prop}\label{lem:tensoraug}
Let $\calC$ be a $1$-category, let $T$ and $F$ be monads on $\calC$ together with a distributive law $c\colon FT\rightarrow TF$ allowing us to form the composition monad $TF$, and let
\begin{center}\begin{tikzcd}
\calD'\ar[d,"U'",shift left=1mm]\ar[r,"V'",shift left=1mm]&\calD\ar[l,"F'",shift left=1mm]\ar[d,"U",shift left=1mm]\\
\calC'\ar[r,"V",shift left=1mm]\ar[u,"T'",shift left=1mm]&\calC\ar[u,"T",shift left=1mm]\ar[l,"F",shift left=1mm]
\end{tikzcd}\end{center}
be the associated monadic distributive square. Suppose that $F$ is equipped with an augmentation $\epsilon$ such that $T\epsilon\circ c = \epsilon T$. Then $\epsilon$ lifts to an augmentation on $F'$, and moreover
\[
TB(I,F,\epsilon_F^\ast C)\cong B(I,F',\epsilon_{F'}^\ast TC)
\]
for $C\in\calC$.
\end{prop}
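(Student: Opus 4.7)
The plan is to first construct the lifted augmentation $\epsilon_{F'}$ explicitly, then deduce the simplicial identity from the base change Lemma \ref{lem:basechangebar}.

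For the lift, I would describe $\epsilon_{F'}$ via the equivalent data of its restriction functor $\epsilon_{F'}^\ast\colon\Alg_T\to\Alg_{TF}$: given $(Y,\alpha)\in\Alg_T$, equip $Y$ with the $TF$-action $\gamma_Y = \alpha\circ T\epsilon_Y\colon TFY\to TY\to Y$. Unitality is immediate from $\epsilon\circ\eta_F = \mathrm{id}$ together with the $T$-algebra unit. Associativity $\gamma_Y\circ m_{TF}Y = \gamma_Y\circ TF\gamma_Y$ is the more delicate point; after expanding $m_{TF} = Tm_F\circ m_T FF\circ TcF$, both sides can be reduced to the common expression $\alpha\circ T\alpha\circ T(\epsilon_{TY}\circ FT\epsilon_Y)$ by successively applying naturality of $m_T$ past $\epsilon$, the monad-map identity $\epsilon\circ m_F = \epsilon\circ F\epsilon$, associativity of $\alpha$, naturality of $c$ applied to $\epsilon_Y$, and finally the hypothesis $T\epsilon\circ c = \epsilon T$. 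Since $V'\epsilon_{F'}^\ast = \mathrm{id}_{\Alg_T}$ is built in, this corresponds exactly to an augmentation $\epsilon_{F'}\colon F'\to I_{\Alg_T}$ of the monad $F' = V'F'$ on $\Alg_T$.

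For the simplicial isomorphism, the pivotal step is a natural isomorphism $T'\epsilon_F^\ast\cong \epsilon_{F'}^\ast T$ of functors $\calC\to\Alg_{TF}$: both send $C$ to $TC$ equipped with $T$-action $m_T C$ and $F$-action $\epsilon_{TC}\colon FTC\to TC$, where on the left side $T'$ supplies the $F$-action $T\epsilon_C\circ c_C$, which coincides with $\epsilon_{TC}$ precisely by the hypothesis. Passing to right adjoints gives $U'\epsilon_{F'}^\ast = \epsilon_F^\ast U\colon\Alg_T\to\Alg_F$, as both send $Y\mapsto(UY,\epsilon_{UY})$, so taking left adjoints yields $T\epsilon_!\cong(\epsilon_{F'})_!T'\colon \Alg_F\to\Alg_T$. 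Lemma \ref{lem:basechangebar} applied to the monadic distributive square of Lemma \ref{lem:distributivelaw} provides a natural isomorphism $T'B(F,F,M)\cong B(F',F',T'M)$ for $M\in\Alg_F$; specializing $M = \epsilon_F^\ast C$ and applying $(\epsilon_{F'})_!$ degreewise, we chain
\[
TB(I,F,\epsilon_F^\ast C) = T\epsilon_! B(F,F,\epsilon_F^\ast C)\cong(\epsilon_{F'})_!T'B(F,F,\epsilon_F^\ast C)\cong(\epsilon_{F'})_!B(F',F',\epsilon_{F'}^\ast TC) = B(I,F',\epsilon_{F'}^\ast TC).
\]

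The main obstacle is the identification $T'\epsilon_F^\ast\cong\epsilon_{F'}^\ast T$, together with the associativity verification in the lift: these are the only points where the compatibility hypothesis $T\epsilon\circ c = \epsilon T$ enters essentially. Once they are in place, everything else is formal manipulation of adjoints combined with a single invocation of the base change lemma.
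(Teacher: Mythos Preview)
Your proposal is correct and follows essentially the same route as the paper: construct the lifted augmentation, establish the identifications $T'\epsilon_F^\ast\cong\epsilon_{F'}^\ast T$ and $T\epsilon_{F!}\cong(\epsilon_{F'})_! T'$, and then invoke \cref{lem:basechangebar}. The only packaging difference is that the paper obtains the lift by observing that the hypothesis $T\epsilon\circ c=\epsilon T$ makes $T\epsilon\colon TF\to T$ a map of monads on $\calC$, which immediately yields the restriction functor $\Alg_T\to\Alg_{TF}$ you construct by hand; this sidesteps your explicit associativity check but is the same content.
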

\begin{proof}
The given assumption on the augmentation of $F$ implies that $T\epsilon\colon TF\rightarrow T$ is a map of monads, and this gives rise to the augmentation on $F'$.  The claimed isomorphism of bar constructions follows from \cref{lem:basechangebar} and the isomorphisms $T'\epsilon_F^\ast\simeq \epsilon_{F'}^\ast T$ and $\epsilon_{F'!}T'\simeq T\epsilon_{F!}$.
\end{proof}

\begin{prop}\label{prop:homologydistributive}
Let $\calP$ be a discrete additive theory, and fix discrete projective augmented $\calP$-algebras $T$ and $F$. Suppose that we have chosen a distributive law $c\colon FT\rightarrow TF$ such that $\epsilon_T\epsilon_F\circ c = \epsilon_F\epsilon_T$. Then the composite monad $T\circ F$ is augmented, and if each $H_n(T)$ is projective, then $H_n(T\circ F)\cong \bigoplus_{i+j=n}H_i(F)\circ H_j(T)$.
\end{prop}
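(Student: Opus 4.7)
The plan has two main components: verifying the augmentation and computing the homology via a Grothendieck-style composition spectral sequence. For the augmentation, I define $\epsilon_{TF} := \epsilon_T \cdot \epsilon_F : TF \to I$. Unitality is automatic; multiplicativity $\epsilon_{TF} \circ m_{TF} = \epsilon_{TF} \cdot \epsilon_{TF}$ is checked by expanding $m_{TF} = m_T m_F \circ TcF$ (from \cref{lem:distributivelaw}) and applying the individual monad-map identities for $\epsilon_T$ and $\epsilon_F$; the residual identity to verify is precisely the hypothesis $\epsilon_T\epsilon_F \circ c = \epsilon_F\epsilon_T$. A parallel check, using the same ingredients, shows that $\epsilon_T F : TF \to F$ is itself a monad map, yielding a factorization $\epsilon_{TF} = \epsilon_F \circ (\epsilon_T F)$ through monad maps.

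Passing to derived left adjoints gives $\bbL(\epsilon_{TF})_! \simeq \bbL(\epsilon_F)_! \circ \bbL(\epsilon_T F)_!$. The monadic distributive square of \cref{lem:distributivelaw} identifies $\Alg_{TF} \simeq \Alg_{T^F}$, where $T^F$ is the lift of $T$ to $\Alg_F$ defined so that $T^F N := TN$ for $N \in \Alg_F$ carries the $F$-action $FTN \xrightarrow{c_N} TFN \xrightarrow{Tb_N} TN$ (with $b_N$ the $F$-action on $N$). Under this identification $(\epsilon_T F)_! = (\epsilon_{T^F})_!$. The Grothendieck composition spectral sequence then reads
\[
E^2_{p,q} = \bbL_p(\epsilon_F)_!\bigl(\bbL_q(\epsilon_{T^F})_!\,\ol{P}\bigr) \Rightarrow H_{p+q}(TF)(P).
\]

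The object $\bbL_q(\epsilon_{T^F})_!\,\ol{P} \in \Alg_F$ has underlying $\calC$-object $H_q(T)(P)$, since the forgetful $\Alg_F \to \calC$ preserves the bar differentials of \cref{ssec:barcon} and hence homology. The hypothesis implies that the induced $F$-action on this homology is trivial: at each bar level, the distributive-law-induced $F$-action on $T^q P$ differs from the trivial augmentation action by terms whose projections through $\epsilon_T$ vanish, and this discrepancy washes out after passing to homology over $T$. Consequently $\bbL_q(\epsilon_{T^F})_!\,\ol{P} \simeq \ol{H_q(T)(P)}$ in $\Alg_F$, so $E^2_{p,q} = H_p(F)(H_q(T)(P)) = (H_p(F) \circ H_q(T))(P)$. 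The projectivity assumption on $H_n(T)$ ensures these compositions are well-defined on projective objects of $\calP$, and the spectral sequence degenerates at $E^2$ by inspection of bidegrees and the standard projectivity argument, yielding the claimed decomposition. The main obstacle is the triviality of the $F$-action on $\bbL_q(\epsilon_{T^F})_!\,\ol{P}$, which requires a careful chase using the hypothesis to control how the distributive law interacts with augmentations at the level of bar homology.
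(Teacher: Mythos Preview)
Your strategy is essentially the paper's: factor $\epsilon_{TF!}\epsilon_{TF}^\ast$ through the intermediate category $\Alg_F$ using the lift $T' = T^F$ of $T$, and then identify $\bbL(\epsilon_{T'})_!\,\ol{P}$ with $\epsilon_F^\ast(\epsilon_{T!}\epsilon_T^\ast P)$, i.e.\ with $H_\ast(T)(P)$ carrying the trivial $F$-action. The paper then splits using projectivity of $H_n(T)$ rather than running a Grothendieck spectral sequence, but that is cosmetic.

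The genuine gap is your argument for the triviality of the $F$-action. You assert that at bar level $n$ the twisted action $\alpha_n = T^n\epsilon_F\circ c^{(n)}$ and the trivial action $\beta_n = \epsilon_F T^n$ differ by terms killed by $\epsilon_T^n$, and that this discrepancy ``washes out'' on homology. The first part is correct and follows from the hypothesis by induction. The second part is not: in the reduced bar complex $C_n(I,T,\ol{P}) = T_+^n P$, the map $\epsilon_T^n$ already vanishes identically for $n\geq 1$, so the condition ``$\alpha_n-\beta_n$ factors through $\ker(\epsilon_T^n)$'' is vacuous there and cannot by itself force anything on homology. You would need an actual chain homotopy between the two $F$-actions, and none is produced.

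The paper avoids this entirely. Instead of analysing the $F$-action on the bar complex, it constructs the Beck--Chevalley comparison map $\epsilon_{T'!}\epsilon_{F'}^\ast\rightarrow\epsilon_F^\ast\epsilon_{T!}$ of derived functors $\Alg_T\rightarrow\Alg_F$. Both sides preserve geometric realizations (the $\epsilon_!$'s as left adjoints, the $\epsilon^\ast$'s because $F$ and $F'$ are algebras and so restriction has a further right adjoint), so it suffices to check on free $T$-modules $TP$. There one uses $\epsilon_{F'}^\ast TP \simeq T'\epsilon_F^\ast P$ and freeness to see both sides are $\epsilon_F^\ast P$. This is the step you should substitute for your chain-level argument; once you have the equivalence $\bbL(\epsilon_{T'})_!\,\ol{P}\simeq \epsilon_F^\ast(\epsilon_{T!}\epsilon_T^\ast P)$, projectivity of each $H_n(T)$ lets you split $\epsilon_{T!}\epsilon_T^\ast P\simeq\bigoplus_n\Sigma^n H_n(T)_P$ and finish directly, with no spectral sequence needed.
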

\begin{proof}
It is easily verified that $\epsilon_T\epsilon_F$ makes $TF$ into an augmented monad. Now write the monadic distributive square associated to the composite $TF$ as
\begin{center}\begin{tikzcd}
\calD'\ar[d,"U'",shift left=1mm]\ar[r,"V'",shift left=1mm]&\calD\ar[l,"F'",shift left=1mm]\ar[d,"U",shift left=1mm]\\
\calC'\ar[r,"V",shift left=1mm]\ar[u,"T'",shift left=1mm]&\calC\ar[u,"T",shift left=1mm]\ar[l,"F",shift left=1mm]
\end{tikzcd}.\end{center}
Then there is a natural map
\[
\epsilon_{TF!}\epsilon_{TF}^\ast\simeq \epsilon_{F!}\epsilon_{T'!}\epsilon_{F'}^\ast\epsilon_T^\ast\rightarrow\epsilon_{F!}\epsilon_F^\ast\epsilon_{T!}\epsilon_T^\ast
\]
which we claim is an isomorphism; here, these functors are to be interpreted in the derived sense. It is sufficient to verify that this map induces
\[
V\epsilon_{T'!}\epsilon_{F'}^\ast TP\simeq V \epsilon_F^\ast \epsilon_{T!} TP
\]
for $P\in\calP$. The right hand side is simply $P$, and we compute the left hand side to be
\[
V\epsilon_{T'!}\epsilon_{F'^\ast}TP\simeq V\epsilon_{T'!}T'\epsilon_F^\ast P\simeq V\epsilon_F^\ast P\simeq P.
\]
When each $H_n(T)$ is projective, we can split $\epsilon_{T!}\epsilon_T^\ast P\simeq \bigoplus_{n\geq 0}\Sigma^n H_n(T)_P$ for $P\in\calP$. Thus in this case we have
\begin{align*}
H_n(T\circ F)_P &= \pi_n \epsilon_{TF!}\epsilon_{TF}^\ast P\cong \pi_n \epsilon_{F!}\epsilon_F^\ast\epsilon_{T!}\epsilon_T^\ast P\\
&\cong \pi_n \bigoplus_{k\geq 0}\Sigma^k \epsilon_{F!}\epsilon_F^\ast H_k(T)_P\cong \bigoplus_{i+j=n} (H_i(F)\circ H_j(T))_P
\end{align*}
as claimed.
\end{proof}

\subsection{Koszul resolutions}\label{ssec:koszulresolutions}

Our goal for the rest of this section is to generalize Priddy's theory of Koszul algebras and Koszul resolutions \cite{priddy1970koszul} to the setting of algebras over additive theories. The approach we take is strongly influenced by the approach taken in \cite[Section 4]{rezk2017rings}. For us, the purpose of this theory is to give concrete tools for certain homological computations, and so everything that follows should be interpreted as taking place within a $1$-category; in particular all of our theories and algebras are discrete.
Fix an additive theory $\calP$ and $\calP$-algebra $F$.

\begin{defn}
\hphantom{blank}
\begin{enumerate}
\item $F$ is a \textit{filtered algebra} if we have chosen a filtration $F = \colim_{n\rightarrow\infty} F_{\leq n}$ of $F$ by subbimodules such that
\begin{enumerate}
\item $I = F_{\leq 0}\subset F$ is the unit;
\item The product on $F$ restricts to maps $F_{\leq n}\circ F_{\leq m}\rightarrow F_{\leq n+m}$.
\end{enumerate}
\item $F$ is a \textit{graded algebra} if we have chosen a decomposition $F = \oplus_{n\geq 0}F[n]$ of bimodules such that
\begin{enumerate}
\item $I = F[0]\subset F[n]$ is the unit;
\item The product on $F$ restricts to maps $F[n]\circ F[m]\rightarrow F[n+m]$.
\end{enumerate}
In particular, if $F$ is graded then $F$ is augmented.
\item The \textit{associated graded algebra} of a filtered algebra $F$ is the graded algebra $\gr F$ given by
\[
\gr F = \bigoplus_{m\geq 0}F[m],\qquad F[m] = \coker(F_{\leq m-1}\rightarrow F_{\leq m}),
\]
with multiplication induced by that on $F$.
\item $F$ is a \textit{projective filtered algebra} if both $F$ and $\gr F$ are projective.
\tqed
\end{enumerate}
\end{defn}

Suppose now that $F$ is a projective filtered algebra. For $M\in\LMod_\calP^\heart$, there is a filtration $C^\un(F,F,M) = \colim_{m\rightarrow\infty}C^\un(F,F,M)[\leq\! m]$ obtained by declaring
\[
C^\un_n(F,F,M)[\leq\! m]=\im\left(\bigoplus_{m_1+\cdots+m_n=m}FF_{\leq m_1}\cdots F_{\leq m_n}\rightarrow C^\un_n(F,F,M)\right),
\]
and this induces a filtration on $C(F,F,M)$. In particular, we obtain the associated graded complex
\[
\gr C(F,F,M) = \bigoplus_{m\geq 0}C(F,F,M)[m],
\]
where by definition $C(F,F,M)[m]$ fits into a short exact sequence
\[
0\rightarrow C(F,F,M)[\leq\! m-1]\rightarrow C(F,F,M)[\leq\! m]\rightarrow C(F,F,M)[m]\rightarrow 0.
\]

\begin{lemma}\label{lem:barfiltration}
Fix notation as above. Then
\begin{enumerate}
\item $\gr C(F,F,M) = F C(I,\gr F,\ol{M})$;
\item Explicitly,
\[
C_n(F,F,M)[m] = \bigoplus_{\substack{m_1+\cdots+m_n=m\\ m_1,\ldots,m_n\geq 1}}F F[m_1]\cdots F[m_n] M;
\]
\item In particular,
\begin{enumerate}
\item $C_n(F,F,M)[\leq\! m] = 0$ for $n>m$;
\item $C_n(F,F,M)[\leq\! n] = C_n(F,F,M)[n] = F[1]^{\circ n}(M)$.
\end{enumerate}
\end{enumerate}
\end{lemma}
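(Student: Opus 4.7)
The strategy is to first establish (2) directly, then observe that the resulting formula is exactly $FC(I,\gr F,\ol M)$ to obtain (1); part (3) is then read off from (2) by inspection. Everything is set up so that once the bookkeeping is carefully organized, the proof is a matter of tracking how the filtration interacts with composition of bimodules and with the bar differential.

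For (2), the essential input is projectivity: since each $F_{\leq m}$ and each $F[m]$ is a projective $\calP$-bimodule, composing with a projective bimodule on either side preserves exactness of short exact sequences of bimodules. In particular, the short exact sequence $0 \to F_{\leq m-1} \to F_{\leq m} \to F[m] \to 0$ induces, by induction on $n$,
\[
\gr\bigl(F_{\leq m_1}\circ\cdots\circ F_{\leq m_n}\bigr) \;\cong\; \bigoplus_{m'_i \leq m_i} F[m'_1]\circ\cdots\circ F[m'_n],
\]
and hence after composing with $F$ on the left and $M$ on the right,
\[
\gr_m C^\un_n(F,F,M) \;\cong\; \bigoplus_{m_1+\cdots+m_n = m} FF[m_1]\cdots F[m_n]M.
\]
Passing to the reduced complex kills exactly those summands in which some $m_i = 0$, since $F_+ = \coker(I \to F)$ has $\gr F_+ = \bigoplus_{m \geq 1} F[m]$; this yields the formula in (2) with the constraint $m_i \geq 1$.

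For (1), note that $(\gr F)_+ = \bigoplus_{m \geq 1} F[m]$, so
\[
C_n(I,\gr F,\ol M) \;=\; (\gr F)_+^n\ol M \;=\; \bigoplus_{m_i \geq 1} F[m_1]\cdots F[m_n]M,
\]
and hence $FC_n(I,\gr F,\ol M)$ agrees termwise with $\gr C_n(F,F,M)$. To promote this to an isomorphism of chain complexes, I would check each face map $d_i$ restricted to the summand indexed by $(m_1,\ldots,m_n)$. The inner multiplications, $d_i$ for $1 \leq i \leq n-1$, factor through $F[m_i]\circ F[m_{i+1}]\to F[m_i+m_{i+1}]$ in $\gr F$ and preserve the total filtration degree, matching the bar differential on $C(I,\gr F,\ol M)$. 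The boundary pieces $d_0$ (outer multiplication absorbing $F[m_1]$) and $d_{n+1}$ (action on $M$ via $F[m_n]$) both strictly lower the filtration by $m_1$ or $m_n \geq 1$, so they vanish on the associated graded; this matches the fact that on $FC(I,\gr F,\ol M)$ these face maps are zero, as the outer factor is $I$ and $\ol M$ carries the trivial $\gr F$-action via augmentation.

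Part (3) is then immediate from (2): the constraints $m_i \geq 1$ and $\sum m_i \leq m$ force $n \leq m$ for nonvanishing, giving (a), and when $n = m$ the only possibility is $m_i = 1$ for all $i$, giving (b). The main obstacle I expect is the first step: being careful with the projectivity argument that identifies the associated graded of a composition of filtered bimodules with the direct sum of compositions of the graded pieces, and tracking this through the passage from the unreduced to the reduced complex. Once that is in place, the identification in (1) reduces to the routine observation that the only face maps surviving on $\gr$ are precisely those appearing in the bar complex of the augmented algebra $\gr F$ with trivial coefficients $\ol M$.
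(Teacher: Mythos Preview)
Your proposal is correct and is precisely an unpacking of the paper's own proof, which reads in its entirety: ``Immediate from the definitions.'' You have spelled out the bookkeeping that the paper leaves implicit---the projectivity argument giving the associated graded of a composition of filtered bimodules, the passage from unreduced to reduced, and the check that $d_0$ and $d_{n+1}$ strictly lower filtration while the inner $d_i$ preserve it---and there is nothing to add.
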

\begin{proof}
Immediate from the definitions.
\end{proof}

If $F$ is augmented, then the above filtration on $C(F,F,M)$ induces a filtration on $C(I,F,M)$. When $F$ is graded, this filtration is split on $C(I,F,\ol{P})$, yielding gradings $H_\ast (F)_P = H_\ast \bigoplus_{n\geq 0}C(I,F,\ol{P})[n]$ and $H^\ast(F) = \prod_{n\geq 0}H^\ast(F)[n]$.

\begin{defn}\label{def:koszul}
Fix a $\calP$-algebra $F$. Say that $F$ is a \textit{homogeneous Koszul $\calP$-algebra} if
\begin{enumerate}
\item $F$ is projective and has been equipped with a grading;
\item $H_n(F)[m]=0$ for $n\neq m$.
\end{enumerate}
Say that $F$ is a \textit{Koszul $\calP$-algebra} if
\begin{enumerate}
\item[(1$'$)] $F$ has been equipped with a projective filtration;
\item[(2$'$)] $\gr F$ is a homogeneous Koszul $\calP$-algebra.
\tqed
\end{enumerate}
\end{defn}

Now suppose that $F$ is a projective filtered algebra, and fix a $\calP$-projective $F$-module $M$. The filtration $C(F,F,M)\simeq\colim_{m\rightarrow\infty} C(F,F,M)[\leq\! m]$ gives rise to a spectral sequence of signature
\[
E^1_{p,q} = F H_q(\gr F)[p](M)\Rightarrow H_q C(F,F,M),\qquad d^r_{p,q}\colon E^r_{p,q}\rightarrow E^r_{p-r,q-1}.
\]

\begin{lemma}\label{lem:rezcvge}
Suppose either of the following is satisfied:
\begin{enumerate}
\item Filtered colimits in $\LMod_\calP^\heart$ are exact;
\item The connectivity of $C(F,F,M)[m]$ goes to $\infty$ as $m$ goes to $\infty$.
\end{enumerate}
Then the above spectral sequence converges. In particular, this holds if $F$ is Koszul.
\end{lemma}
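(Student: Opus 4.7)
The spectral sequence arises from the exhaustive increasing filtration $F_m := C(F,F,M)[\leq m]$ on $C := C(F,F,M)$; I would verify convergence under each hypothesis separately, and then show that Koszulness forces (2). Under hypothesis (1), $\LMod_\calP^\heart$ is a Grothendieck abelian category (it is abelian, complete, cocomplete, and the remaining axiom is exactness of filtered colimits), and the spectral sequence of an exhaustive filtration on a chain complex in such a category converges strongly to the homology of the complex by a classical theorem; I would invoke this directly.

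Under hypothesis (2), fix a total degree $t$. By hypothesis there is some $m_0$ with $H_t(C[m]) = H_{t+1}(C[m]) = 0$ for all $m \geq m_0$, so the long exact sequence of $0 \to F_{m-1} \to F_m \to C[m] \to 0$ gives $H_t(F_{m-1}) \xrightarrow{\cong} H_t(F_m)$ for $m \geq m_0$, and thus $\{H_t(F_m)\}$ stabilizes. To identify the stable value with $H_t(C)$, I would analyze $C/F_m$: its induced filtration has graded pieces $C[m+1], C[m+2], \ldots$, each with vanishing $H_t$ and $H_{t-1}$ once $m \geq m_0$, and an inductive long-exact-sequence argument along this filtration yields $H_t(C/F_m) = 0$; the long exact sequence of $0 \to F_m \to C \to C/F_m \to 0$ then gives $H_t(F_m) \cong H_t(C)$. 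The subtle step is precisely this identification of the stable value, which must be obtained inductively rather than by directly commuting $\colim_{m'} F_{m'}/F_m$ with $H_\ast$, since hypothesis (1) is not assumed here.

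For the final statement, \cref{lem:barfiltration}(1) identifies $\gr_m C \cong F \cdot C(I, \gr F, \ol M)[m]$ with $F$ applied termwise. Taking $M$ to be projective over $\calP$ (the regime relevant to Koszul resolutions), all terms of $C(I, \gr F, \ol M)[m]$ lie in $\calP$ by projectivity of $\gr F$, and projectivity of $F$ (i.e.\ that $F$ restricts to a functor $\calP \to \calP$) allows one to commute $F$ with the homology of this complex of projectives; the homogeneous Koszul hypothesis on $\gr F$ (\cref{def:koszul}) then gives $H_q(\gr_m C) = 0$ for $q \neq m$, so the connectivity of $C[m]$ is at least $m-1$ and tends to $\infty$, verifying (2).
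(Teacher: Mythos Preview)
Your treatment of case (1) and of the final ``Koszul implies (2)'' step is fine, and in fact more detailed than the paper's (which simply asserts the Koszul case). The argument that each $C(F,F,M)[m]$ has homology concentrated in degree $m$ when $\gr F$ is homogeneous Koszul is correct; the splitting argument you sketch (using that a bounded complex of projectives with homology in a single degree is split exact below that degree) is the right way to see that applying $F$ preserves this.

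There is, however, a genuine gap in your case (2) argument. Your inductive long-exact-sequence argument on the filtration of $C/F_m$ establishes $H_t(F_{m'}/F_m) = 0$ for each \emph{finite} $m' > m$, but to conclude $H_t(C/F_m) = 0$ you must pass to the colimit $C/F_m = \colim_{m'} F_{m'}/F_m$, and this passage is precisely a commutation of $H_t$ with a filtered colimit---exactly what hypothesis (1) would supply and what you say you wish to avoid. You flag this step as subtle, but the inductive argument as written does not actually circumvent the colimit; induction only reaches finite stages. The same issue arises if you try to identify $H_t(C)$ with the stable value of $H_t(F_m)$ directly: you still need some control over $H_t(\colim F_m)$.

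The paper handles case (2) differently: it invokes a packaged convergence criterion (Proposition A.1.2 of the companion paper \cite{balderrama2021deformations}) whose hypotheses are formulated so that it suffices to check stabilization of the towers $H_n C[p,p+r]$ in $r$, which follows immediately from the connectivity assumption on the associated graded pieces. The point is that the criterion itself absorbs the delicate identification of the abutment; your direct approach would need to reproduce that part of the argument, and the sketch you give does not.
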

\begin{proof}
Abbreviate $C = C(F,F,M)$. We apply the convergence criteria of \cite[Proposition A.1.2]{balderrama2021deformations}, itself just a variant of standard theorems. Case (1) is essentially standard, so let us just consider case (2). Write $C[p,q] = \coker({C[\leq \!p-1]}\rightarrow C[\leq \! q])$. We must verify the following conditions:

\begin{enumerate}
\item[(a)]The connectivity of $C[\leq\! m]$ goes to $\infty$ as $m$ goes to $-\infty$. 
\item[(b)]$\colim_{r\rightarrow\infty}H_\ast C[p,p+r]\cong H_\ast \colim_{r\rightarrow\infty}C[p,p+r]$ for $p\in\bbZ\cup\{-\infty\}$;
\item[(c$'$)]For all $q\in\bbZ$, the map $H_q C[\leq\! p]\rightarrow H_q C[\leq\! p+1]$ is an isomorphism for all but finitely many $p$.
\end{enumerate}
Condition (a) is clear, as $C[\leq\!-1]=0$. To verify (b) and (c$'$), it is sufficient to verify that for any fixed $n$ and $p$, the map $H_n C[p,p+r]\rightarrow H_n C[p,p+r+1]$ is an isomorphism for all sufficiently large $r$. As $\coker(C[p,p+r]\rightarrow C[p,p+r+1]) = C[p+r+1]$, this follows from the given connectivity assumption.
\end{proof}

Define the chain complex $K(F,F,M)$ by $K_p(F,F,M) = E^1_{p,p}$ as above, with differential obtained from the $d^1$ differential of this spectral sequence. Then $K(F,F,M)$ is a chain complex of the form
\[
F H_0(\gr F)(M)\leftarrow F H_1(\gr F)(M)\leftarrow F H_2(\gr F)(M)\leftarrow\cdots,
\]
or more memorably,
\[
K(F,F,M) = F H_\ast(\gr F)(M),
\]
and this sits as a subcomplex of the bar resolution of $M$.

\begin{theorem}\label{thm:koszulres}
Let $F$ be a Koszul $\calP$-algebra, and let $M$ be an $F$-module which is projective over $\calP$. Then there is a splitting $C(F,F,M)\cong K(F,F,M)\oplus C'$, where $C'$ is a contractible chain complex. In particular,
\[
M\leftarrow K(F,F,M)
\]
is a projective resolution of $M$.
\end{theorem}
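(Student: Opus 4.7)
The plan is to apply the filtered spectral sequence $E^1_{p,q}=FH_q(\gr F)[p](M)\Rightarrow H_q C(F,F,M)$ introduced just before the statement: first to show that $K(F,F,M)\hookrightarrow C(F,F,M)$ is a quasi-isomorphism, then to upgrade this to a genuine direct-sum decomposition by a perturbation argument up the filtration.

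For the quasi-isomorphism step, the homogeneous Koszul condition on $\gr F$ gives $H_q(\gr F)[p]=0$ unless $p=q$, so $E^1$ is concentrated on the diagonal. Any differential $d^r\colon E^r_{p,q}\to E^r_{p-r,q-1}$ with $r\geq 2$ sends a diagonal entry $(p,p)$ to $(p-r,p-1)$, which is off-diagonal and hence zero. Therefore $E^2=E^\infty$, and by construction $E^2_{p,p}=H_p K(F,F,M)$. For convergence I would apply case (2) of \cref{lem:rezcvge}: combining \cref{lem:barfiltration} with $\calP$-flatness of $F$ gives $H_n C(F,F,M)[m]\cong FH_n(\gr F)[m](M)$, which by Koszul vanishes for $n\neq m$ and in particular for $m>n$, so the needed connectivity goes to infinity. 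Since $C(F,F,M)$ resolves $M$ by \cref{lem:bares}, the spectral sequence converges to $M$ concentrated in bidegree $(0,0)$, yielding $H_0 K(F,F,M)=M$ and $H_p K(F,F,M)=0$ for $p\geq 1$.

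To upgrade the quasi-isomorphism to an actual splitting, I would lift inductively up the filtration. The associated graded $\gr C(F,F,M)=F\cdot C(I,\gr F,\ol M)$ from \cref{lem:barfiltration} visibly splits as the diagonal part $K(F,F,M)$ plus an acyclic complement that is degreewise $F$-projective, using the Koszul condition together with $\calP$-projectivity of $\gr F$. At each stage $m$, the short exact sequence $0\to C(F,F,M)[\leq m-1]\to C(F,F,M)[\leq m]\to C(F,F,M)[m]\to 0$ is degreewise $F$-split, and an inductive perturbation of the chosen splittings---exploiting that chain maps from bounded-below acyclic complexes of projectives into $F$-projective complexes split up to homotopy---produces a compatible decomposition $C(F,F,M)[\leq m]\cong K(F,F,M)[\leq m]\oplus C'_{\leq m}$ with $C'_{\leq m}$ acyclic and degreewise $F$-projective. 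Passing to the filtered colimit then yields $C(F,F,M)\cong K(F,F,M)\oplus C'$ where $C'$ is a bounded-below acyclic complex of $F$-projectives, hence contractible. I expect the main obstacle to be the bookkeeping needed to make the inductive perturbations at successive filtration stages genuinely compatible in the colimit; once this is handled, each $K_p(F,F,M)$ is automatically $F$-projective as a direct summand of $C_p(F,F,M)=FF_+^p M$, which completes the verification that $M\leftarrow K(F,F,M)$ is a projective resolution.
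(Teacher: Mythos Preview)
Your argument for the quasi-isomorphism is correct and matches the paper: the Koszul condition forces $E^1$ onto the diagonal, all higher differentials vanish for degree reasons, and \cref{lem:rezcvge} gives convergence, so $H_q K(F,F,M)\cong H_q C(F,F,M)$.

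Where you diverge from the paper is in obtaining the splitting. You propose an inductive perturbation up the filtration, modifying degreewise splittings stage by stage. The paper instead dispatches this in one line: both $K(F,F,M)$ and $C(F,F,M)$ are bounded-below complexes of projective $F$-modules (each term has the form $F(\text{something})$, hence is free over $F$), and a quasi-isomorphism between such complexes is a chain homotopy equivalence, from which the decomposition $C\cong K\oplus C'$ with $C'$ contractible follows by standard homological algebra. No filtration bookkeeping is needed. Your approach would work, but the obstacle you identify—making the stagewise perturbations compatible in the colimit—is exactly the labor the paper's argument avoids.

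One small point: your closing remark that ``each $K_p(F,F,M)$ is automatically $F$-projective as a direct summand of $C_p(F,F,M)$'' has the logic reversed. You do not need the splitting to see this: $K_p=F H_p(\gr F)(M)$ is free over $F$ directly, since it is $F$ applied to a $\calP$-module. This is what lets the paper invoke the general fact about complexes of projectives in the first place.
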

\begin{proof}
As $F$ is Koszul, the spectral sequence $F H_q(\gr F)[p](M)\Rightarrow H_q C(F,F,M)$ collapses into a projective resolution $K(F,F,M)\rightarrow M$. The inclusion $K(F,F,M)\rightarrow C(F,F,M)$ is a quasiisomorphism of bounded below complexes of projectives, allowing for the indicated splitting.
\end{proof}

Fix a filtered $\calP$-algebra $F$ and $F$-modules $M$ and $M'$ with $M$ projective over $\calP$. Then there is a \textit{Koszul complex}
\[
K_F(M,M') = \LMod_F(K(F,F,M),M').
\]
This is a quotient of the cobar complex $C_F(M,M')$, and models $\EXT_F(M,M')$ when $F$ is Koszul. We will describe these complexes more explicitly in \cref{ssec:koszulcomplexes}.

\begin{ex}\label{ex:steenrod}
(1)~~The motivating example of a Koszul algebra is the Steenrod algebra \cite{priddy1970koszul}. For simplicity, take $\calA$ to be the mod $2$ Steenrod algebra; then $\calA$ is Koszul with respect to the length filtration on $\calA$. The Koszul complex $K_\calA(\bbF_2,\bbF_2)$ can be enriched to a complex of graded vector spaces, and is known as the \textit{lambda algebra}. See \cref{ex:lambdaalgebra} for more on this example.

(2)~~More generally, there are Steenrod algebras for other forms of mod $p$ cohomology, such as in unstable, motivic, equivariant, synthetic, or other flavors of homotopy theories, and examples suggest that one can expect these to be Koszul as well. These examples require a fairly general notion of Koszul algebra: they are not generally augmented, their coefficients rings do not generally live in their center, and they need not be ordinary graded algebras at all, such as in the equivariant setting where one has additional Mackey functor structure, or in unstable settings where one must account for instability conditions. We will give the unstable example more explicitly in \cref{ex:unstablesteenrod}.
\tqed
\end{ex}

We end by noting the following stability properties of Koszulity.

\begin{lemma}\label{prop:koszulcomposition}
Fix a monadic distributive square
\begin{center}\begin{tikzcd}
\LMod_{F'}^\heart\ar[d,"U'",shift left=1mm]\ar[r,"V'",shift left=1mm]&\LMod_{\calP'}^\heart\ar[l,"F'",shift left=1mm]\ar[d,"U",shift left=1mm]\\
\LMod_F^\heart\ar[r,"V",shift left=1mm]\ar[u,"T'",shift left=1mm]&\LMod_\calP^\heart\ar[u,"T",shift left=1mm]\ar[l,"F",shift left=1mm]
\end{tikzcd},\end{center}
where $\calP$ and $\calP'$ are additive theories and $F$ and $F'$ are projective algebras over them. In particular, there is a distributive law $c\colon FT\rightarrow TF$, and $\LMod_{F'}^\heart\simeq \LMod_{T\circ F}^\heart$ where $T\circ F$ is composition of $T$ and $F$ as a monad on $\LMod_\calP^\heart$.
\begin{enumerate}
\item Suppose that $F$ is a projective filtered algebra, with filtration compatible with the distributive law. If $F$ is Koszul over $\calP$, then $F'$ is Koszul over $\calP'$.
\item Suppose that $F$ and $T$ are projective filtered algebras, with filtrations compatible with the distributive law. Filter $T\circ F$ by $(T\circ F)_{\leq n} = \im(\bigoplus_{i+j=n}T_{\leq i}\circ F_{\leq j}\rightarrow T\circ F)$, so that $\gr(T\circ F)\cong \gr T\circ \gr F$. Then $T\circ F$ is Koszul.
\end{enumerate}
\end{lemma}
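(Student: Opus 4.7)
The plan is to verify Koszulity of the composite algebras by computing enough of the homology of their associated gradeds in terms of the factors. In each case projectivity is routine: projective bimodules restrict to functors $\calP \to \calP$ (respectively $\calP' \to \calP'$), left adjoints in monadic adjunctions preserve projectivity, and composition preserves both, so the filtered composite and its associated graded are projective.

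For Part (2), I would apply \cref{prop:homologydistributive} to the associated graded algebras $\gr T$ and $\gr F$, whose distributive law and augmentations are inherited compatibly from those of $T$, $F$. This yields
\[
H_n(\gr T \circ \gr F) \cong \bigoplus_{i+j=n} H_i(\gr F)\circ H_j(\gr T),
\]
with the required projectivity of each $H_j(\gr T)$ supplied by Koszulity of $\gr T$, which exhibits $H_j(\gr T)$ as the canonical projective summand appearing in the Koszul resolution. The key observation is that internal gradings are tracked: Koszulity forces $H_i(\gr F) = H_i(\gr F)[i]$ and $H_j(\gr T) = H_j(\gr T)[j]$, so each summand lies in total internal degree $i+j$ with respect to the tensor grading on $\gr T \circ \gr F$. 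Combined with the constraint $i+j = n$, this forces $H_n(\gr T \circ \gr F)[m] = 0$ for $m \neq n$, giving homogeneous Koszulity of $\gr(T\circ F)$.

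For Part (1), I would instead combine the splitting of \cref{thm:koszulres} with the base-change isomorphism of \cref{lem:tensoraug}. The filtration on $F$ induces one on $F'$ whose associated graded corresponds on the $\calP$-side to $T \circ \gr F$. For $C \in \calP$, \cref{thm:koszulres} gives $C(\gr F, \gr F, \overline{C}) \cong K(\gr F, \gr F, \overline{C}) \oplus C''$ with $C''$ contractible, and applying $\epsilon_!$ preserves the splitting. A grading argument shows that $\epsilon_! K$ has vanishing differential: its degree-$n$ summand $H_n(\gr F)(C)$ is concentrated in internal degree $n$ by Koszulity, whereas any nonzero piece of the differential would land in the degree-$(n-1)$ summand, which lives in internal degree $n-1$, and must therefore vanish. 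Applying $T$ preserves both the splitting (a contractible summand remains null-homotopic under any functor) and the zero-differential structure, and \cref{lem:tensoraug} identifies $T\epsilon_! C(\gr F,\gr F,\overline{C})$ with $C(I,\gr F',\overline{TC})$. Thus $H_n(\gr F')(TC)\cong T H_n(\gr F)(C)$ is concentrated in internal degree $n$, and the Koszul condition on $\gr F'$ follows because every object of $\calP'$ is a retract of a coproduct of objects of the form $T(C)$ and $H_n(\gr F')$ commutes with coproducts and retracts.

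The principal obstacle in each part is to match the internal grading decomposition with the bar filtration on the composite: in Part (2) this amounts to verifying that the decomposition of \cref{prop:homologydistributive} respects the total grading when both factors are themselves graded, and in Part (1) the entire argument pivots on the observation that Koszulity of $F$ collapses $\epsilon_! K$ into a zero-differential complex, which is exactly what rescues the computation when the possibly non-exact left adjoint $T$ is applied.
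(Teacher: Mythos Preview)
Your proposal is correct and follows essentially the same route as the paper's one-line proof, which simply cites \cref{thm:koszulres}, \cref{lem:tensoraug}, and \cref{prop:homologydistributive}. You have unpacked precisely how these three ingredients assemble: the splitting of \cref{thm:koszulres} together with the grading argument that collapses $\epsilon_! K$ to a zero-differential complex handles the possibly non-exact $T$ in Part~(1), and your observation that Koszulity makes each $H_j(\gr T)$ a summand of a projective (via $\gr T[0]=I$) supplies exactly the hypothesis \cref{prop:homologydistributive} needs in Part~(2).
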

\begin{proof}
Given \cref{thm:koszulres}, these follow from \cref{lem:tensoraug} and \cref{prop:homologydistributive}.
\end{proof}

\subsection{Quadratic algebras}\label{ssec:quadraticduality}

Our goal in this subsection is to describe the structure and cohomology of homogeneous Koszul algebras. Fix an additive theory $\calP$, and let $H$ be a $\calP$-bimodule. We can then form the free algebra on $H$ as a type of tensor algebra:
\[
T H = \bigoplus_{n\geq 0}T_n H,\qquad T_n H = H^{\circ n},
\]
with standard multiplication. The right adjoint to this is
\[
\widehat{T}H^\vee = (TH)^\vee = \prod_{n\geq 0}(H^{\circ n})^\vee = \prod_{n\geq 0} H^{\vee\circ n};
\]
this also carries an obvious multiplication, but we want the slightly less obvious multiplication, obtained by twisting the identifications
\[
\widehat{T}_i(H^\vee)\circ \widehat{T}_j(H^\vee) = H^{\vee \circ i}\circ H^{\vee \circ j}\cong H^{\vee\circ (i+j)}\cong \widehat{T}_{i+j}(H^\vee)
\]
by $(-1)^{ij}$; write this multiplication as $\wr$. In fact, these two choices are isomorphic by $x\mapsto (-1)^{\frac{|x|(|x|-1)}{2}}x$, so the distinction is essentially invisible in the examples we will consider; the purpose of this choice is to ensure compatibility with the cobar complex in \cref{thm:quadraticduality} below.

Given a subfunctor $R\subset H\circ H$, we may form the \textit{quadratic algebra}
\begin{gather*}
T(H,R) = T(H)/R = \bigoplus_{n\geq 0}T_n(H,R),\\
T_n(H,R) = \coker\left(\sum_{i+j=n}H^{\circ i-1}\circ R\circ H^{\circ j-1}\rightarrow H^{\circ n}\right),
\end{gather*}
with multiplication inherited from $T H$. Likewise, given some subfunctor $R'\subset H^\vee$, we may form the monad 
\begin{gather*}
\widehat{T}(H^\vee,R') = \prod_{n\geq 0}\widehat{T}_n(H^\vee,R'),\\
\widehat{T}_n(H^\vee,R') = \coker\left(\sum_{i+j=n}H^{\vee\circ i-1}\circ R'\circ H^{\vee\circ j-1}\rightarrow H^{\vee \circ n}\right),
\end{gather*}
with multiplication inherited from $\widehat{T}H^\vee$. This is no longer guaranteed to preserve limits in general, but it will in the cases of relevance to us.

\begin{lemma}\label{lem:projquad}
The following are equivalent:
\begin{enumerate}
\item $T(H,R)$ is projective;
\item $H$ is projective, and for all $P\in\calP$, the map $R_P\rightarrow (H\circ H)_P$ admits a splitting.
\end{enumerate}
\end{lemma}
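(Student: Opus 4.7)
The forward direction $(1) \Rightarrow (2)$ is immediate once one observes that $T(H,R) = \bigoplus_{n \geq 0} T_n(H,R)$ decomposes as a sum of bimodules, so projectivity of $T(H,R)$ implies projectivity of each graded piece. Taking $n = 1$ gives projectivity of $H = T_1(H,R)$. Taking $n = 2$ gives projectivity of $T_2(H,R) = \coker(R \to H\circ H)$, so evaluating the short exact sequence $0 \to R \to H\circ H \to T_2(H,R) \to 0$ at any $P\in \calP$ produces a splitting of $R_P \to (H\circ H)_P$.

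For the converse $(2) \Rightarrow (1)$, the plan is to show by induction on $n$ that the surjection $H^{\circ n}(P) \twoheadrightarrow T_n(H,R)(P)$ is split for every $P\in\calP$; since $H^{\circ n}(P)$ is projective (by projectivity of $H$), this gives projectivity of $T_n(H,R)(P)$. The base cases $n \leq 2$ are exactly the hypothesis. The inductive step rests on the identity
\[
K_n(P) \;=\; R\bigl(H^{\circ n-2}(P)\bigr) \,+\, H\bigl(K_{n-1}(P)\bigr),
\]
where $K_n$ is the kernel of $H^{\circ n}\twoheadrightarrow T_n(H,R)$. Projectivity of $H$ ensures that $H^{\circ n-2}(P)$ lies in $\calP$ up to idempotent completion, so the hypothesis applied at $Q = H^{\circ n-2}(P)$ exhibits $R(H^{\circ n-2}(P))$ as a split summand of $(H\circ H)(H^{\circ n-2}(P)) = H^{\circ n}(P)$. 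By the inductive hypothesis, $K_{n-1}(P)\hookrightarrow H^{\circ n-1}(P)$ is split, and additivity of $H$ transports this to a splitting of $H(K_{n-1}(P))\hookrightarrow H^{\circ n}(P)$.

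The principal difficulty is to assemble these two splittings into a single splitting of $K_n(P)\hookrightarrow H^{\circ n}(P)$: a sum of split subobjects of a module need not itself be split. This is the analogue, in the present additive-theoretic context, of the classical Poincar\'e--Birkhoff--Witt / diamond argument for quadratic algebras; the compatibility of the splittings at adjacent positions is controlled by the fact that the relations are quadratic, which is precisely what forces the overlap to reduce consistently. I expect this reconciliation to be the main technical point of the proof, whereas the bookkeeping set up in the preceding paragraph—using the recursive description of $K_n$ together with the additivity of $H$—will be routine.
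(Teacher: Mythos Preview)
Your forward direction $(1)\Rightarrow(2)$ is correct and matches the paper exactly.

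For $(2)\Rightarrow(1)$, the difficulty you flag---that a sum of split subobjects need not be split---is not a technical wrinkle to be smoothed over by a diamond-style argument; it is a genuine obstruction, and the implication $(2)\Rightarrow(1)$ is false as stated. Take $\calP$ to be the theory of abelian groups, so that bimodules are abelian groups and $\circ=\otimes$. Let $H=\bbZ\{e_1,e_2\}$ and let $R\subset H\otimes H$ be the summand spanned by $e_1\otimes e_1+2\,e_1\otimes e_2$ and $e_2\otimes e_2$. Then $(H\otimes H)/R\cong\bbZ^2$ is free, so condition~(2) holds. But writing $e_{ijk}=e_i\otimes e_j\otimes e_k$, one finds that in
\[
T_3(H,R)(\bbZ)=H^{\otimes 3}\big/\bigl((R\otimes H)+(H\otimes R)\bigr)
\]
the classes of $e_{122}$, $e_{112}$, and $e_{111}$ all vanish, whence the relation $e_{111}+2e_{121}=0$ forces $2e_{121}=0$, while $e_{121}$ itself is nonzero. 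Thus $T_3(H,R)(\bbZ)\cong\bbZ\oplus\bbZ/2$ and $T(H,R)$ is not projective.

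The paper's own proof of $(2)\Rightarrow(1)$ asserts only that each $H^{\circ i-1}\circ R\circ H^{\circ j-1}\subset H^{\circ n}$ is levelwise split and then immediately concludes that $T_n(H,R)$ is projective---precisely the inference you were right to be suspicious of. In practice nothing downstream is affected, since the paper only ever uses quadratic data for which projectivity of $T(H,R)$ is known directly; but the equivalence as stated does not hold.
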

\begin{proof}
(1)$\Rightarrow$(2). If $T(H,R)$ is projective, then $H$ is projective as $H = T_1(H,R)$ is a summand of $T(H,R)$. The sequence
\[
0\rightarrow R\rightarrow H\circ H\rightarrow T_2(H,R)\rightarrow 0
\]
is exact, so that as $T_2(H,R)$ is projective, it is levelwise split as claimed.

(2)$\Rightarrow$(1) If $H$ is projective and $R\subset H\circ H$ is levelwise split, then each $H^{\circ i-1}\circ R \circ H^{\circ j-1}\subset H^{\circ n}$ is levelwise split. Thus $T_n(H,R)$ is projective, from which it follows that $T(H,R)$ is projective.
\end{proof}

Call a pair $(H,R)$ satisfying the conditions of \cref{lem:projquad} a \textit{quadratic datum}. Fix now a quadratic datum $(H,R)$. By projectivity, the short exact sequence
\[
0\rightarrow R\rightarrow H\circ H\rightarrow T_2(H,R)\rightarrow 0
\]
dualizes to a short exact sequence
\[
0\leftarrow R^\vee\leftarrow H^\vee\circ H^\vee\leftarrow R^\perp\leftarrow 0;
\]
the pair $(H^\vee,R^\perp)$ could be called the \textit{dual quadratic datum} to $(H,R)$, though with this name dual quadratic data are not themselves quadratic data.

\begin{lemma}
Fix a quadratic datum $(H,R)$. Then the monad $\widehat{T}(H^\vee,R^\perp)$ preserves limits, and is thus the right adjoint of a coalgebra.
\end{lemma}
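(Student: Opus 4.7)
The plan is as follows. Since $\widehat{T}(H^\vee,R^\perp) = \prod_n \widehat{T}_n(H^\vee, R^\perp)$ and products of limit-preserving functors preserve limits, it suffices to show that each $\widehat{T}_n(H^\vee, R^\perp)$ preserves limits. My strategy is to identify $\widehat{T}_n(H^\vee, R^\perp)$ with a natural subfunctor of the limit-preserving functor $H^{\vee\circ n}$, cut out as an intersection of subfunctors which manifestly preserve limits.

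The key input is the quadratic datum structure. By the preceding lemma, $T_2 := (H\circ H)/R$ is a projective bimodule, so the short exact sequence $0 \to R \to H\circ H \to T_2 \to 0$ is levelwise split. Dualizing gives a levelwise split short exact sequence $0 \to R^\perp \to H^\vee\circ H^\vee \to R^\vee \to 0$ with $R^\perp = T_2^\vee$ and $R^\vee$ the right adjoint of the projective bimodule $R$. Applying the associated splitting at each adjacent pair of factors inside $H^{\vee\circ n}$ yields, for every $i,j\geq 1$ with $i+j=n$, a direct sum decomposition
\[
H^{\vee\circ n} \cong H^{\vee\circ i-1}\circ R^\perp\circ H^{\vee\circ j-1} \;\oplus\; H^{\vee\circ i-1}\circ R^\vee\circ H^{\vee\circ j-1}.
\]

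Summing the $R^\perp$-pieces over all $(i,j)$ reproduces $S_n := \sum_{i+j=n}H^{\vee\circ i-1}\circ R^\perp\circ H^{\vee\circ j-1}$, whose cokernel in $H^{\vee\circ n}$ is $\widehat{T}_n(H^\vee, R^\perp)$ by definition. The plan is to show that the intersection $\bigcap_{i+j=n} H^{\vee\circ i-1}\circ R^\vee\circ H^{\vee\circ j-1}$ of the complementary $R^\vee$-pieces furnishes a natural complement to $S_n$ in $H^{\vee\circ n}$, giving an isomorphism of $\widehat{T}_n(H^\vee,R^\perp)$ with this intersection. Since each $H^{\vee\circ i-1}\circ R^\vee\circ H^{\vee\circ j-1}$ is a composition of right adjoints and therefore preserves limits, and since intersections of limit-preserving subfunctors again preserve limits, this identification will complete the proof.

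The main obstacle is verifying that $S_n$ and $\bigcap_{i+j=n} H^{\vee\circ i-1}\circ R^\vee\circ H^{\vee\circ j-1}$ really are complementary in $H^{\vee\circ n}$. Over a field this reduces to routine Koszul-duality linear algebra, but in the abstract bimodule setting one must check that the splittings in different positions can be combined naturally to yield the claimed direct sum decomposition; this is the technical heart of the argument, and depends essentially on the projectivity built into the quadratic datum assumption.
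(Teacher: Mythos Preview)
Your proposal has the right starting observation—that it suffices to exhibit $\widehat{T}_n(H^\vee,R^\perp)$ as a natural retract of the limit-preserving functor $H^{\vee\circ n}$—but you have left the decisive step as an unproven ``main obstacle,'' and your proposed resolution of it is suspect. Given levelwise decompositions $X = A_i\oplus B_i$, it is \emph{not} generally true that $\sum_i A_i$ and $\bigcap_i B_i$ are complementary in $X$; this already fails for two decompositions of a $2$-dimensional vector space when the chosen complements $B_1,B_2$ are different lines. So identifying $\widehat{T}_n(H^\vee,R^\perp)$ with $\bigcap_{i+j=n} H^{\vee\circ(i-1)}\circ R^\vee\circ H^{\vee\circ(j-1)}$ would require a genuine compatibility argument between the splittings at different positions, which you have not supplied.

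The paper sidesteps this entirely. Its proof is one line: by exactly the same argument that shows $T_n(H,R)$ is projective (the implication (2)$\Rightarrow$(1) of the preceding lemma on quadratic data), each $\widehat{T}_n(H^\vee,R^\perp)$ is levelwise a direct summand of $\widehat{T}_n(H^\vee)=H^{\vee\circ n}$. One does not need to name the complement; the mere existence of a levelwise splitting makes $\widehat{T}_n(H^\vee,R^\perp)$ a retract of $H^{\vee\circ n}$, and retracts of limit-preserving functors preserve limits. Your setup already contains the ingredients for this shorter argument: once you note that $R^\perp\subset H^\vee\circ H^\vee$ is levelwise split (which you do), the quadratic-datum argument applies verbatim with $(H^\vee,R^\perp)$ in place of $(H,R)$ to give the splitting of $H^{\vee\circ n}\to\widehat{T}_n(H^\vee,R^\perp)$, and you are done.
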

\begin{proof}
As in the proof of \cref{lem:projquad}, the hypotheses imply that $\widehat{T}(H^\vee,R^\perp)$ is levelwise a summand of $\widehat{T}(H^\vee)$, and this proves the lemma.
\end{proof}

\begin{rmk}
The coalgebra which is left adjoint to $\widehat{T}(H^\vee,R^\perp)$ may be identified explicitly as the coquadratic coalgebra
\[
\bigoplus_{n\geq 0}\left(\bigcap_{i+j=n}H^{\circ i-1}\circ R\circ H^{\circ j-1}\right)\subset T(H),
\]
but we will not have a chance to make use of this.
\tqed
\end{rmk}

\begin{theorem}\label{thm:quadraticduality}
\hphantom{blank}
\begin{enumerate}
\item Let $(H,R)$ be a quadratic datum. Then $H^1(T(H,R))[1] \cong H^\vee$, and the inclusion $H^\vee\subset H^\ast(T(H,R))$ extends to an isomorphism $\widehat{T}(H^\vee,R^\perp)\cong\prod_{n\geq 0}H^n(T(H,R))[n]$ of monads.
\item Let $F = \bigoplus_{n\geq 0}F[n]$ be a homogeneous Koszul algebra, and let $R = \ker(F[1]\circ F[1]\rightarrow F[2])$. Then $F\cong T(F[1],R)$ and $H^\ast(F)\cong \widehat{T}(F[1]^\vee,R^\perp)$.
\end{enumerate}
\end{theorem}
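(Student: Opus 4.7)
The reduced bar complex $C(I, T(H,R), \ol{P})$ inherits a bigrading from the grading on $T(H,R)$, and the internal-degree-$m$ piece $C_\bullet[m]$ is supported in chain degrees $\leq m$, since each tensor factor must have weight at least $1$. On the diagonal $n = m$, the complex reduces to the two-term piece
\[
\bigoplus_{i=1}^{m-1} H^{\circ(i-1)}\circ T_2(H,R)\circ H^{\circ(m-1-i)}(P) \xleftarrow{d} H^{\circ m}(P),
\]
where $d$ is, up to signs, the sum over positions of the projection $H\circ H\to T_2(H,R) = (H\circ H)/R$. Since $C_{m+1}[m] = 0$, one reads off $H_m(T(H,R))[m] = \ker(d) = \bigcap_i H^{\circ(i-1)}\circ R\circ H^{\circ(m-1-i)}$. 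Passing to the cobar side and using projectivity of $H$ to identify $T_2(H,R)^\vee \cong R^\perp\subset H^\vee\circ H^\vee$, the dual computation gives $H^m(T(H,R))[m]\cong\widehat{T}_m(H^\vee, R^\perp)$ as $\calP$-bimodules; the case $m=1$ recovers $H^\vee$.

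For the monad structure, the universal property of $\widehat{T}(H^\vee)$ extends the inclusion $H^\vee = H^1(T(H,R))[1]\hookrightarrow H^\ast(T(H,R))$ to a map of monads, and this descends through $\widehat{T}(H^\vee, R^\perp)$ since $R^\perp\subset H^\vee\circ H^\vee$ maps to zero in $H^2(T(H,R))[2] = H^\vee\circ H^\vee/R^\perp$. The resulting morphism $\widehat{T}(H^\vee, R^\perp)\to\prod_n H^n(T(H,R))[n]$ is a degree-wise iso by the previous paragraph; the sign $(-1)^{nn'}$ twisting multiplication on $\widehat{T}(H^\vee, R^\perp)$ is exactly the twist appearing in the cobar cup product $\wr$ of \cref{prop:cobarcomposition}, so this is an iso of monads.

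\textbf{Plan for (2).} Set $R = \ker(F[1]\circ F[1]\to F[2])$; the universal surjection $\phi\colon T(F[1],R)\to F$ is iso in degrees $\leq 2$ by construction. I would then invoke the standard consequence of Koszulness that $F$ is quadratic: the vanishing $H_1(F)[m]=0$ for $m>1$ implies iterated multiplication $F[1]^{\circ m}\to F[m]$ is surjective (so $\phi$ is surjective), while the vanishings $H_n(F)[m]=0$ for $m>n$ at $n=2$, combined with an induction on $m$ chasing the bar differential at low chain degree, force the kernel of $F[1]^{\circ m}\to F[m]$ to coincide with $\sum_i F[1]^{\circ(i-1)}\circ R\circ F[1]^{\circ(m-1-i)}$---precisely the relation defining $T(F[1],R)[m]$. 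This quadraticity step is the main obstacle: classically standard, but in our potentially multi-sorted generality it demands careful tracking of the bar differential across several internal degrees, and it is where the full strength of Koszulness beyond the $n=1$ vanishing gets used. Once $\phi$ is an iso, (1) applied to $F\cong T(F[1],R)$ identifies the diagonal $H^n(F)[n]\cong\widehat{T}_n(F[1]^\vee, R^\perp)$, and Koszulness kills every off-diagonal $H^n(F)[m]$, yielding the monad iso $H^\ast(F)\cong\widehat{T}(F[1]^\vee, R^\perp)$.
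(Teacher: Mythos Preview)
Your proposal is correct and follows essentially the same route as the paper. For (1), the paper likewise reads off $H_m C[m]$ as the kernel of the top bar differential (citing \cref{lem:barfiltration}), identifies this as left adjoint to $\widehat{T}_m(H^\vee,R^\perp)$, and checks multiplicative compatibility against \cref{prop:cobarcomposition}; for (2), the paper extracts exactly the two exact sequences you describe from the vanishing of $H_1(F)[m]$ and $H_2(F)[m]$ off the diagonal, using the first for surjectivity of $F[1]^{\circ n}\to F[n]$ and the second to show all relations in $F[r]\circ F[s]\to F[m]$ are generated by those with smaller $r$ or $s$.
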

\begin{proof}
(1)~~ Abbreviate $C = C(I,T(H,R),\bs)$. By \cref{lem:barfiltration}, there is an isomorphism
\[
H_n C[n] \cong \ker\left(H^{\circ n}\rightarrow\bigoplus_{i+j=n}H^{\circ i-1}\circ T_2(M,R)\circ H^{\circ j-1}\right).
\]
This is left adjoint to $\widehat{T}_n(H^\vee,R^\perp)$, proving (1) additively, and multiplicative compatibility follow by comparing our choice of product on $\widehat{T}(H^\vee,R^\perp)$ with the construction given in \cref{prop:cobarcomposition}. 

(2)~~ We must show only $F\simeq T(F[1],R)$, for the remaining claims follow from Koszulity and (1). By construction, the inclusion $F[1]\rightarrow F$ extends to a map $T(F[1],R)\rightarrow F$ of algebras, and we must verify that this is an isomorphism. By Koszulity, the sequences
\begin{gather*}
\bigoplus_{\substack{i+j=n\\ i,j>0}}F[i]\circ F[j]\rightarrow F[n]\rightarrow 0\\
\bigoplus_{\substack{i+j+k=m\\i,j,k>0}}F[i]\circ F[j]\circ F[k]\rightarrow\bigoplus_{\substack{r+s=m\\r,s>0}}F[r]\circ F[s]\rightarrow F[m]
\end{gather*}
are exact for $n>1$ and $m>2$. The first implies that each $F[1]^{\circ n}\rightarrow F[n]$ is surjective, and thus so too is $T(F[1],R)\rightarrow F$. The second implies that any relation seen in the multiplication $F[r]\circ F[s]\rightarrow F[m]$ with $r+s=m$ and either $r>1$ or $s>1$ is already generated in relations among $F[i]$ with $i<r$ or $i<s$. Thus $T_n(F[1],R)\rightarrow F[n]$ is an injection, so an isomorphism. As $T(F[1],R)\rightarrow F$ is a direct sum of isomorphisms, it is itself an isomorphism.
\end{proof}

\begin{ex}\label{ex:ltkoszul}
Let $R=W[[a]]$ where $W=W(\kappa)$ is the ring of $2$-typical Witt vectors on a perfect field $\kappa$ of characteristic $2$. Define an $R$-bimodule $\Gamma[1]$ as follows. As a left $R$-module, $\Gamma[1]\simeq R\{Q_0,Q_1,Q_2\}$. The right $R$-module structure is determined by
\begin{align*}
Q_i\lambda &= \lambda^\sigma Q_i,\quad \lambda\in W\\
Q_0 a &= a^2 Q_0 - 2a Q_1 + 6 Q_2\\
Q_1 a &= 3 Q_0 + a Q_2\\
Q_2 a &= -a Q_0 + 3 Q_1,
\end{align*}
where $(\bs)^\sigma$ is the Frobenius automorphism of $W$. Let $R\subset\Gamma[1]\circ\Gamma[1]$ by spanned by
\begin{align*}
Q_1Q_0 &= 2 Q_2 Q_1 - 2 Q_0 Q_2,\\
Q_2 Q_0 &= Q_0Q_1+a Q_0Q_2-2Q_1Q_2.
\end{align*}
Now set $\Gamma = T(\Gamma[1],R)$. This is the algebra of additive power operations for a certain Morava $E$-theory at height $h=2$ and $p=2$ computed by Rezk \cite{rezk2008power} (cf.\ \cref{ex:ltcurve}), and is Koszul. By \cref{thm:quadraticduality}, there is an isomorphism $H^\ast(\Gamma) = \widehat{T}(\Gamma[1]^\vee,R^\perp)$; we can compute this explicitly as follows. As $\Gamma[1]$ is finitely generated and free as a left $R$-module, $\Gamma[1]^\vee$ is a bimodule, and is finitely generated and free as a right $R$-module on a basis dual to that of $\Gamma[1]$; write this as $\Gamma[1]^\vee = \{Q^0,Q^1,Q^2\}R$. The left $R$-module structure is given by
\begin{align*}
\lambda Q^i &= Q^i\lambda^\sigma,\quad \lambda\in W\\
a Q^0 &= Q^0 a^2+3 Q^1-Q^2a\\
a Q^1 &= -2 Q^0 a + 3 Q^2\\
a Q^2 &= 6 Q^0 + Q^1 a.
\end{align*}
The subspace $R^\perp\subset\Gamma[1]^\vee\circ\Gamma[1]^\vee$ is spanned by
\begin{gather*}
Q^0Q^0,\qquad Q^1Q^1,\qquad Q^2Q^2,\qquad Q^1Q^0+Q^0Q^2,\\
Q^1Q^2+2 Q^0Q^1,\qquad Q^2Q^1-2Q^0Q^2,\\
Q^2 Q^0 - 2 Q^0 Q^1+ Q^0Q^2 a.
\end{gather*}
Thus
\[
H^\ast(\Gamma)\cong\{1,Q^0,Q^1,Q^2,Q^0Q^1,Q^0Q^2\}R,
\]
with multiplicative structure determined by the preceding relations.
\tqed
\end{ex}

\subsection{Koszul complexes}\label{ssec:koszulcomplexes}

Our goal in this section is to describe the Koszul complexes computing $\Ext$ over a Koszul algebra. We begin with the homogeneous case.

Fix an additive theory $\calP$. Fix a quadratic datum $(H,R)$, and write $F = T(H,R)$. Fix $M,N\in\LMod_F^\heart$ with $M$ projective over $\calP$. Recall from \cref{ssec:koszulresolutions} the Koszul complex $K_F(M,N)$, which is a quotient of the cobar complex $C_F(M,N)$ satisfying $K_F^n(M,N) = H^n(F)[n](N)(M)$. These groups are described by \cref{thm:quadraticduality}, and it remains only to describe the Koszul differential. In some cases, this may be determined by analyzing the surjective map $C_F(M,N)\rightarrow K_F(M,N)$ directly, but we can also proceed as follows. 

Observe first that the algebra structure of $H^\ast(F)$ gives pairings
\[
\wr\colon K_F(M,N)\otimes K_F(N,L)\rightarrow K_F(M,L)
\]
of graded objects. These are compatible with the pairings of \cref{prop:cobarcomposition}, and so are pairings of chain complexes. Observe next that as $F$ is generated by $F[1] = H$, the $F$-module structure on $M$ is determined by a map $H(M)\rightarrow M$, i.e.\ an element of $H^\vee(M)(M) = K^1_F(M,M)$; write $Q^M$ for this element twisted by $-1$, and define $Q^N$ in the same way.

\begin{theorem}\label{thm:homogkoszuldifferential}
The differential on $K_F(M,N)$ is given by
\[
\delta\colon K^n_F(M,N)\rightarrow K^{n+1}_F(M,N),\qquad \delta(f) = Q^M \wr f - (-1)^n f \wr Q^N.
\]
\end{theorem}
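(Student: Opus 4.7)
The Koszul complex $K_F(M,N) = \LMod_F(K(F,F,M), N)$ is the quotient of the cobar complex $C_F(M,N)$ induced by the subcomplex inclusion $K(F,F,M) \subset C(F,F,M)$, so its differential is induced from the cobar differential $\delta = \sum_{i=0}^{n+1}(-1)^i\delta_i$ of \cref{ssec:barcon}. Moreover, the product $\wr$ on the Koszul complex is the descent of the cobar product of \cref{prop:cobarcomposition}; compatibility with the multiplicative structure on $H^\ast(F)$ is essentially the content of \cref{thm:quadraticduality}(1). The plan is thus to lift a Koszul cochain $f \in K^n_F(M,N)$ to a representative $\tilde{f} \in C^n_F(M,N)$, apply the cobar differential, and identify the projected image in $K^{n+1}_F(M,N)$.

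The crux of the argument is that the interior terms $\delta_i\tilde{f}$ for $1 \leq i \leq n$ project to zero. Writing $F = T(H, R)$ in its quadratic presentation, \cref{thm:quadraticduality} identifies
\[
H_{n+1}(F)[n+1] \cong \ker\Bigl(H^{\circ n+1} \to \bigoplus_{\substack{i+j=n+1 \\ i,j\geq 1}} H^{\circ i-1} \circ F[2] \circ H^{\circ j-1}\Bigr),
\]
and the $i$-th component of the right-hand map is precisely the bar face operator $d_i$ restricted to $H^{\circ n+1}$---it applies the quadratic multiplication $H \circ H \to F[2]$ at adjacent positions $(i, i+1)$. Being in the kernel of the direct sum thus forces each such $d_i$ to annihilate $H_{n+1}(F)[n+1]$ individually. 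Extending $F$-linearly, each interior $d_i$ vanishes on the Koszul term $K_{n+1}(F,F,M) = F H_{n+1}(F)[n+1](M)$, so dually each $\delta_i\tilde{f}$ has zero restriction to $K_{n+1}$ and the induced Koszul differential of $f$ is represented by $\delta_0\tilde{f} + (-1)^{n+1}\delta_{n+1}\tilde{f}$.

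To finish, these two surviving boundary terms are identified with the claimed expression. Choose the cobar lifts $-m_M \in C^1_F(M,M)$ and $-m_N \in C^1_F(N,N)$ of $Q^M$ and $Q^N$; these restrict along $H \hookrightarrow F$ to the desired elements of $K^1_F(M,M)$ and $K^1_F(N,N)$. Applying the sign convention $(-1)^{nn'}f\wr f' = f'\circ F^{n'}f$ from \cref{prop:cobarcomposition} gives
\[
\tilde{f}\wr Q^N = (-1)^n Q^N \circ F\tilde{f} = -(-1)^n m_N\circ F\tilde{f} = -(-1)^n\,\delta_0\tilde{f},
\]
so $\delta_0\tilde{f} = -(-1)^n\tilde{f}\wr Q^N$; symmetrically,
\[
Q^M\wr\tilde{f} = (-1)^n\tilde{f}\circ F^n Q^M = -(-1)^n\,\delta_{n+1}\tilde{f},
\]
so $(-1)^{n+1}\delta_{n+1}\tilde{f} = Q^M\wr\tilde{f}$. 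Summing yields $\delta f = Q^M\wr f - (-1)^n f\wr Q^N$ after passing to the quotient, as claimed. The only real obstacle is the sign bookkeeping: the $-1$ twist in the definitions of $Q^M$ and $Q^N$ is calibrated precisely against the sign convention for $\wr$ so that the two boundary contributions combine into the stated clean formula.
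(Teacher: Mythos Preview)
Your proof is correct and follows essentially the same approach as the paper: both reduce the cobar differential to its outer terms $\delta_0$ and $\delta_{n+1}$ and then identify these via the $\wr$ product with $Q^M$ and $Q^N$. Where the paper simply asserts that the inner sum $\sum_{1\leq i\leq n}(-1)^i\delta_i$ is killed by the quotient map ``by construction,'' you give the explicit reason---each inner face vanishes individually on $H_{n+1}(F)[n+1]$ by the kernel description from \cref{thm:quadraticduality}---which is a welcome clarification.
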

\begin{proof}
Recall $C^n_F(M,N) = \Hom_\calP(F^{\circ n}_+M,N)$, where $F \cong I\oplus F_+$, and recall the differential on $C^n_F(M,N)$ from \cref{ssec:barcon}, of the form
\[
\delta = \delta_0 + \sum_{1\leq i \leq n}(-1)^i \delta_i + (-1)^{n+1}\delta_{n+1}\colon C^n_F(M,N)\rightarrow C^{n+1}_F(M,N).
\]
By construction, the inner sum $\sum_{1\leq i \leq n}(-1)^i \delta_i$ is killed by the quotient map $C_F(M,N)\rightarrow K_F(M,N)$. On the other hand,
\begin{align*}
\delta_0(f) &= m\circ Tf = -(-1)^{n} f\wr Q^N;\\
 \delta_{n+1}(f) &= f\circ T^n m = -(-1)^{n} Q^M\wr f.
\end{align*}
Combining these proves the theorem.
\end{proof}

\begin{rmk}
Somewhat more explicitly, $\delta_0(f) = -(-1)^n f\wr Q^N$ is the composite
\begin{align*}
K^n_F(M,N)&=\LMod_\calP(M,H^n(F)(N))\\
&\rightarrow \LMod_\calP(M,H^n(F)\circ H^1(F)(N))\\
&\rightarrow\LMod_\calP(M,H^{n+1}(F)(N)) = K^{n+1}_F(M,N),
\end{align*}
and $\delta_{n+1}(f) = -(-1)^n Q^M\wr F$ is the composite
\begin{align*}
K^n_F(M,N)&=\LMod_\calP(M,H^n(F)(N))\\
&\rightarrow\LMod_\calP(H_1(F)(M),H^n(F)(N))\\
&\simeq\LMod_\calP(M,H^1(F)\circ H^n(F)(N))\\
&\rightarrow \LMod_\calP(M,H^{1+n}(F)(N)) = K^{1+n}_F(M,N).
\end{align*}
In either case, the first map which is not an isomorphism encodes the $F$-module structure on $M$ or $N$, and the second map which is not an isomorphism is obtained from the multiplication on $H^\ast(F)$.
\tqed
\end{rmk}

Now fix a possibly nonhomogeneous Koszul algebra $F$, and fix $M,N\in\LMod_F^\heart$ with $M$ projective over $\calP$. As before, there is a Koszul complex $K_F(M,N) = H^\ast(\gr F)(N)(M)$, still a quotient of $C_F(M,N)$, and we would like to identify its differential.

Write $qR = \ker(F_{\leq 1}\circ F_{\leq 1}\rightarrow F_{\leq 2})$, so that $(F_{\leq 1},qR)$ is a quadratic datum. Observe that $\bigoplus_{m\geq 0}F_{\leq m}$ is a graded algebra, and that the inclusion of $F_{\leq 1}$ extends multiplicatively to $T(F_{\leq 1},qR)\rightarrow\bigoplus_{m\geq 0}F_{\leq m}$.

\begin{theorem}\label{thm:nonhomogkoszul}
\hphantom{blank}
\begin{enumerate}
\item The map $T(F_{\leq 1},qR)\rightarrow\bigoplus_m F_{\leq m}$ is an isomorphism of graded algebras;
\item $T(F_{\leq 1},qR)$ is a homogeneous Koszul algebra;
\item The surjection $T(F_{\leq 1},qR)\rightarrow F$ gives rise to a short exact sequence
\[
0\rightarrow H^\ast(\gr F)\rightarrow H^\ast(T(F_{\leq 1},qR))\rightarrow H^{\ast-1}(\gr F)\rightarrow 0,
\]
which is split when $F$ is augmented.
\end{enumerate}
In particular, $K_F(M,N)\subset K_{T(F_{\leq 1},qR)}(M,N)$ is a subcomplex with differential on the target described by \cref{thm:quadraticduality}.
\end{theorem}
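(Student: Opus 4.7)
The plan is to prove parts (1), (2), (3) in order, with (1) as the technical heart. For (1), I would construct $\phi\colon T(F_{\leq 1}, qR)\to \bigoplus_m F_{\leq m}$ as the unique graded algebra homomorphism extending the identity on $F_{\leq 1}$ in degree $1$; it is well-defined because $qR$ is by definition the kernel of $F_{\leq 1}\circ F_{\leq 1}\to F_{\leq 2}$. Surjectivity of $\phi$ in each degree $n$ follows by induction on $n$: the composite $F_{\leq 1}^{\circ n}\to F_{\leq n}\to F[n]$ factors through $F[1]^{\circ n}\to F[n]$, which is surjective by the quadraticity of $\gr F$ from \cref{thm:quadraticduality}(2); combined with the inductive hypothesis applied to the sub-bimodule $F_{\leq n-1}\subset F_{\leq n}$, this gives surjectivity onto all of $F_{\leq n}$.

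For injectivity in (1), I would equip both sides with a filtration coming from the short exact sequence $0\to I\to F_{\leq 1}\to F[1]\to 0$ (applied to each tensor factor on the source) and from the filtration $F_{\leq 0}\subset\cdots\subset F_{\leq m}$ on the target. The associated graded of $\phi$ becomes a map between two graded algebras generated in degree $1$ by $F[1]$ with quadratic relations, each of which is identified with $T(F[1], R) = \gr F$ via \cref{thm:quadraticduality}(2). Since both filtrations are finite in each total degree and exhaustive, the isomorphism on associated gradeds lifts to an isomorphism $\phi$.

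For (2), once (1) is established, I would verify $H_n(T(F_{\leq 1}, qR))[m] = 0$ for $n\neq m$ by using the same filtration to filter the bar construction $C(I, T(F_{\leq 1}, qR), \ol{P})$; the associated graded is a summand of the bar construction for $\gr F$, which has homology concentrated on the diagonal by Koszulity of $\gr F$. The resulting spectral sequence converges by \cref{lem:rezcvge} (the connectivity of $C(\gr F, \gr F, \ol P)[m]$ goes to infinity), and the diagonal concentration transfers back. Projectivity of $T(F_{\leq 1}, qR)$ follows from projectivity of $F_{\leq 1}$ together with \cref{lem:projquad} applied to the quadratic datum $(F_{\leq 1}, qR)$.

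For (3), by (1), (2), and \cref{thm:quadraticduality} we have $H^\ast(T(F_{\leq 1}, qR)) \cong \widehat T(F_{\leq 1}^\vee, qR^\perp)$. Dualizing $0\to I\to F_{\leq 1}\to F[1]\to 0$ exhibits a distinguished class $\tau$ of degree $1$ dual to the unit, with $F_{\leq 1}^\vee \cong F[1]^\vee \oplus I\cdot\tau$. Using that $qR\cap(I\circ I)=0$ (since the unit map $I\circ I\to I$ is an isomorphism) and that the antidiagonal $\{1\circ y - y\circ 1 : y \in F[1]\}$ lies in $qR$, one checks that $qR^\perp$ contains both $\tau\circ\tau$ and $\tau\circ\xi + \xi\circ\tau$ for every $\xi\in F[1]^\vee$. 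These yield a decomposition $\widehat T(F_{\leq 1}^\vee, qR^\perp) \cong \widehat T(F[1]^\vee, R^\perp)\oplus\tau\cdot\widehat T(F[1]^\vee, R^\perp)$, giving the short exact sequence after identifying the second summand with $H^{\ast-1}(\gr F)$. When $F$ is augmented, the augmentation $F\to I$ lifts to a section at the level of cohomology, producing the splitting. The main obstacle will be the associated-graded step in (1): in the additive-theoretic setting, $qR$ is not bi-graded with respect to the splitting $F_{\leq 1} = I\oplus F[1]$, so one must carefully track how its top-filtration piece recovers the relations $R$ defining $\gr F$. This is a form of the PBW theorem generalized to algebras over an additive theory.
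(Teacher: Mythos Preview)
Your argument for (1) is correct and is essentially the paper's: filter both sides using the splitting $F_{\leq 1}\cong I\oplus F[1]$, identify the associated graded map with the identity on $\gr F$, and conclude.

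Your (2) contains a misstatement: the associated graded of $T(F_{\leq 1},qR)$ under that filtration is not $\gr F$ but rather the composite $TI\circ\gr F$ (the ``polynomial algebra on $\gr F$'' in the paper's language). The bar construction for this is not a summand of the bar construction for $\gr F$. The fix is straightforward and is what the paper does: $TI$ and $\gr F$ are both Koszul, so \cref{prop:koszulcomposition} gives Koszulity of $TI\circ\gr F$, and then the spectral sequence collapses as you intend.

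Your (3) has a genuine gap, and it is exactly the non-bihomogeneity you flagged at the end but only attributed to (1). The elements $\tau\circ\tau$ and $\tau\circ\xi+\xi\circ\tau$ need \emph{not} lie in $qR^\perp$. Concretely, decompose $r\in qR$ as $(a,b,c,d)$ along $F_{\leq 1}^{\circ 2}=I^{\circ 2}\oplus I\circ F[1]\oplus F[1]\circ I\oplus F[1]^{\circ 2}$; then $r\in qR$ forces $d\in R$ and $a=-d_0$, $b+c=-d_1$, where $m(d)=d_0+d_1\in I\oplus F[1]\cong F_{\leq 1}$. So $\tau\circ\tau$ pairs with $r$ to $-d_0$, and $\tau\circ\xi+\xi\circ\tau$ pairs to $-\xi(d_1)$; neither vanishes unless the multiplication $F[1]\circ F[1]\to F_{\leq 2}$ happens to land in $F[2]$ on $R$. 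For instance, take $F=k[\bbZ/2]$ filtered by length (so $F_{\leq 1}=k\{1,x\}$, $x^2=1$): then $x\otimes x-1\otimes 1\in qR$ and $\tau\circ\tau$ does not annihilate it. Similarly, the enveloping algebra of the two-dimensional nonabelian Lie algebra gives $d_1\neq 0$. Thus your proposed decomposition of $\widehat T(F_{\leq 1}^\vee,qR^\perp)$ fails.

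The paper avoids computing $qR^\perp$ altogether: it uses the \emph{same} filtration spectral sequence as in (2). Since $H^n(TI\circ\gr F)\cong H^n(\gr F)\oplus H^{n-1}(\gr F)$ by \cref{prop:homologydistributive}, the collapsing spectral sequence yields at once both Koszulity of $T(F_{\leq 1},qR)$ and the two-step filtration on $H^\ast(T(F_{\leq 1},qR))$, i.e.\ the short exact sequence of (3).
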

\begin{proof}
(1) Define a finite filtration on each $T_n(F_{\leq 1},qR)$ by 
\[
T_n(F_{\leq 1},qR)[\leq\! m] = \im\left(\bigoplus_{\substack{\epsilon_1+\cdots+\epsilon_n = m \\ \epsilon_1,\ldots,\epsilon_n\in\{0,1\}}} F_{\leq \epsilon_1}\circ\cdots\circ F_{\leq \epsilon_n}\rightarrow T_n(F_{\leq 1},qR)\right).
\]
The map $T_n(F_{\leq 1},qR)\rightarrow F_{\leq n}$ is compatible with filtrations. As $\gr F$ is quadratic, this map induces an isomorphism $\gr T_n(F_{\leq 1},qR)\cong \bigoplus_{m\leq n}F[m]\cong \gr F_{\leq n}$ of associated graded bimodules, and is therefore itself an isomorphism.

(2--3) The filtration $T(F_{\leq 1},qR) = \colim_{m\rightarrow\infty}T(F_{\leq 1},qR)[\leq\!m]$ is multiplicative, though it need not satisfy $T(F_{\leq 1},qR)[\leq \! {0}] = I$. As $\gr F$ is quadratic, the associated graded $\gr T(F_{\leq 1},qR)\cong TI\circ\gr F$ is a ``polynomial algebra'' on $\gr F$, defined by the evident distributive law. By \cref{prop:koszulcomposition}, $\gr T(F_{\leq 1},qR)$ is a homogeneous Koszul algebra, and $H^n(\gr T(F_{\leq 1},qR)) \cong H^n(\gr F) \oplus H^{n-1}(\gr F)$. The spectral sequence $H^\ast(\gr T(F_{\leq 1},qR))\Rightarrow H^\ast (T(F_{\leq 1},qR))$ collapses into a two-step filtration of $H^\ast(T(F_{\leq 1},qR))$ both proving Koszulity and providing the indicated short exact sequences.
\end{proof}

\begin{ex}[cf.\ \cite{bruner1985example}]\label{ex:lambdaalgebra}
Let $\calA$ denote the mod $2$ Steenrod algebra, so that $\calA$ is Koszul with respect to its length filtration. As $\calA$ is an ordinary $\bbZ$-graded algebra, we can upgrade the Koszul complex $K_\calA(\bbF_2,\bbF_2) \cong H^\ast(\gr \calA)$ to an ordinary differential graded algebra in graded $\bbF_2$-modules. This is the mod $2$ \textit{lambda algebra}.

Explicitly, the lambda algebra may be computed as follows. The algebra $\calA$ is generated by $\calA_{\leq 1} = \bbF_2\{\Sq^n:n\geq 0\}$, subject to the quadratic relations
\[
\Sq^{2s-r-1}\Sq^s = \sum_i \binom{r-i-1}{i}\Sq^{2s-1-i}\Sq^{s-r+i}
\]
for $r\geq 0$, together with the additional nonquadratic relation $\Sq^0 = 1$. Thus $\gr\calA$ and $T(\calA_{\leq 1},qR)$ are generated by $\Sq^n$ for $n\geq 1$ and $n\geq 0$ respectively, subject to quadratic relations of the same form. By \cref{thm:quadraticduality}, the ordinary cohomology algebras $H^\ast(\gr \calA)$ and $H^\ast(T(\calA_{\leq 1},qR))$ are generated by elements $\lambda_n$ dual to $\Sq^{n+1}$, subject to the dual relations
\[
\lambda_a\lambda_{2a+b+1} = \sum_j \binom{b-j-1}{j}\lambda_{a+b-j}\lambda_{2a+1+j}
\]
for $b\geq 0$; the distinction between them is that $\lambda_{-1}$ is an element of the former but not the latter.

The standard action of $\calA$ on $\bbF_2$ lifts to the action of $T(\calA_{\leq 1},qR)$ on $\bbF_2$ where $\Sq^0$ acts by the identity. Now $K_{T(\calA_{\leq 1},qR)}(\bbF_2,\bbF_2)=H^\ast(T(\calA_{\leq 1},qR))$, and \cref{thm:homogkoszuldifferential} shows that the Koszul differential is given by the commutator $[\lambda_{-1},\bs]$. Thus the lambda algebra $H^\ast(\gr\calA)\subset H^\ast(T(\calA_{\leq 1},qR))$ is closed under the commutator with $\lambda_{-1}$, and we recover this description of its differential.
\tqed
\end{ex}

\subsection{The PBW criterion}\label{ssec:pbw}

Fix a quadratic algebra $F =  T(H,R)$.

\begin{defn}
An \textit{additive decomposition} of $F$ consists of a decomposition
\[
H\simeq\bigoplus_{i\in B}H_i
\]
of bimodules, together with a subset $S$ of the set $B^\ast$ of words in the alphabet $B$ such that the map $\bigoplus_{w\in S}F_w\rightarrow F$ is an isomorphism, where if $w = (s_1,\ldots,s_n)$ then $F_w = H_{s_1}\circ\cdots\circ H_{s_n}$. This is a \textit{PBW decomposition} if moreover
\begin{enumerate}
\item A word $w = (s_1,\ldots,s_n)$ lives $S$ if and only if each pair $(s_i,s_{i+1})$ lives in $S$;
\item $B$ is equipped with an order, and so $B^\ast$ with the lexicographic order, such that for all $w',w''\in S$, either $w'w''\in S$ or the composite
\[
F_{w'}\circ F_{w'}\rightarrow F\circ F\rightarrow F\rightarrow \bigoplus_{w\leq w'w''}F_w
\]
is null.
\tqed
\end{enumerate}
\end{defn}

Suppose now that $F$ is equipped with a PBW decomposition, with notation as in the definition. Abbreviate $C = C(I,F,\ol{P})$ for varying $P$. For $w\in B^\ast$, define
\[
C_k[\leq\! w] = \bigoplus_{\substack{w_1,\ldots,w_k\in S\\w_1\cdots w_k\leq w}}F_{w_1}\circ\cdots\circ F_{w_k}\subset C_k,
\]
and similarly define $C_k[<\!w]$ and $C_k[w]$. The PBW criterion implies that $C_k[\leq\! w]$ and $C_k[<\!w]$ are quotient chain complexes of $C$, and by construction there are short exact sequences
\[
0\rightarrow C[w]\rightarrow C[\leq \!w]\rightarrow C[<\!w]\rightarrow 0.
\]

\begin{prop}\label{prop:pbw}
Suppose that $F$ is equipped with a PBW decomposition, and fix notation for this as above. Suppose that the lexicographic ordering on $B^\ast$ is well-founded when restricted to subsets of words of a fixed length, and that $H_\ast C\rightarrow H_\ast\lim_w C[\leq\! w]$ is an injection. Then $F$ is Koszul.
\end{prop}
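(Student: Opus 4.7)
The plan is to prove that $F$ is Koszul, i.e.\ that $H_n(F)[m] = 0$ for $n \neq m$, where $H_n(F)_P = H_n C(I,F,\ol{P})$. Writing $C = C(I,F,\ol{P})$ and $C[m]$ for the degree-$m$ summand, the case $n > m$ holds automatically since $C_k[m] = 0$ for $k > m$. Fixing $m$, the quotients $C \twoheadrightarrow C[\leq w]$ restrict to an inverse system of quotients $C[m] \twoheadrightarrow D[\leq w] := C[\leq w][m]$ indexed by words $w$ of length $m$ in the well-founded lex order. My goal is to show by transfinite induction that $H_n D[\leq w] = 0$ for all $n < m$ and all such $w$, and then transfer this vanishing to $H_n C[m]$ via the injection hypothesis $H_\ast C \hookrightarrow H_\ast \lim_w C[\leq w]$.

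The heart of the argument is the identification of the subquotients $D[w] = D[\leq w]/D[<w]$. Write $w = b_1 \cdots b_m$, and set $T' = \{p \in \{1,\ldots,m-1\} : (b_p, b_{p+1}) \notin S\}$ (the forced cuts) and $U = \{1,\ldots,m-1\} \setminus T'$ (the optional cuts). By the first PBW condition, factorizations $w = w_1 \cdots w_k$ with $w_i \in S$ correspond bijectively to cut sets $T$ with $T' \subseteq T \subseteq \{1,\ldots,m-1\}$ and $|T| = k-1$; for any such factorization, $F_{w_1} \circ \cdots \circ F_{w_k} = H_{b_1} \circ \cdots \circ H_{b_m}$. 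By the second PBW condition, the bar differential on this summand, after projection onto $D[w]$ (which kills all contributions with strictly larger labels), reduces to the simplicial face operator: removing an optional cut gives a signed copy of the same bimodule, while removing a forced cut vanishes since the resulting merged piece exits $S$. Thus, up to a degree shift,
\[
D[w] \;\cong\; (H_{b_1} \circ \cdots \circ H_{b_m}) \otimes_{\bbZ} \widetilde{C}_\bullet(\Delta^{|U|-1}),
\]
the augmented simplicial chain complex of a simplex on $|U|$ vertices tensored with a fixed bimodule. This is acyclic when $|U| \geq 1$ and concentrated in degree $m$ when $|U| = 0$, so in either case $H_n D[w] = 0$ for $n < m$.

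With this lemma in hand, well-founded transfinite induction on $w$ gives $H_n D[\leq w] = 0$ for $n < m$: at successor stages, the long exact sequence for $0 \to D[w] \to D[\leq w] \to D[<w] \to 0$ combines the lemma with the inductive hypothesis, and at limit stages one realizes $D[<w]$ as the appropriate colimit of the $D[\leq w']$ for $w' < w$. Finally, the hypothesized injection $H_\ast C \hookrightarrow H_\ast \lim_w C[\leq w]$ transports this vanishing to $H_n C[m] = 0$ for $n < m$, with the $\lim^1$ term of the Milnor sequence for the inverse system controlled because the transition maps $D[\leq w] \twoheadrightarrow D[\leq w']$ are surjective (Mittag-Leffler).

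The main obstacle is the combinatorial identification of $D[w]$ and its differential. One must carefully unpack how PBW condition~(2) forces the multiplication $F_{w'} \circ F_{w''} \to F$ to land in labels $\geq w'w''$, so that after projection to $D[w]$ only the merges producing a piece in $S$ survive; matching signs and degrees then identifies the result with the augmented simplicial chain complex of the simplex on optional cut positions. Once this identification is secured, acyclicity of a simplex finishes the key step, and the remainder of the proof is formal bookkeeping around the transfinite induction and convergence.
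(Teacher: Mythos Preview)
Your identification of $C[w]$ with (a shift of) the augmented chain complex of a simplex on the optional cut positions $U$ is correct, and it is exactly a repackaging of what the paper does: the paper's explicit map $s$ is the cone-point contracting homotopy of that simplex (the cone point being the least element of $U$). So on the key lemma---that each $C[w]$ is acyclic below degree $|w|$---you and the paper agree.

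The reduction from this lemma to Koszulity, however, has two gaps in your write-up. First, at limit stages of the transfinite induction you assert that $D[<w]$ is ``the appropriate colimit of the $D[\leq w']$ for $w' < w$.'' But the chain maps run $D[\leq w''] \twoheadrightarrow D[\leq w']$ for $w' < w''$: this is an \emph{inverse} system, and there is no direct system of chain complexes with colimit $D[<w]$. The identification $D[<w] = \bigcup_{w'<w} D[\leq w']$ holds only at the level of graded objects; those inclusions do not commute with the differentials, since the differential on $D[\leq w']$ involves projecting away labels $> w'$. Second, in the final step you invoke Mittag--Leffler via surjectivity of the transition maps on chains, but the Milnor sequence you need involves $\lim^1 H_{n+1}(D[\leq w])$, and surjectivity on chains does not force surjectivity on $H_m$. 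So you have not actually shown $H_{m-1}\bigl(\lim_w D[\leq w]\bigr) = 0$.

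It is worth noting that the paper does not spell out this reduction either: it simply asserts that under the stated hypotheses it suffices to prove the key lemma, citing Priddy. In Priddy's original locally finite setting there are only finitely many labels in each bidegree, the filtration is finite, and your induction (and the paper's assertion) goes through with no limit stages and no convergence issues; this covers all the applications in the paper. Making the reduction precise in the stated generality, with the well-foundedness and injectivity hypotheses doing real work, is genuinely more delicate than either proof indicates.
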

\begin{proof}
The proof is exactly as in \cite[Theorem 5.3]{priddy1970koszul}; we recall the construction in dual form. Under the given hypotheses, it is sufficient to fix a word $w$ of length $m$ and verify that $C[w]$ is acyclic outside degree $m$. To that end, one constructs $s\colon C[w]_k\rightarrow C[w]_{k+1}$ such that $sd+ds$ is the identity on $C[w]_k$ for $k<m$ as follows. Write $w = (r_1,\ldots,r_m)$, and denote decompositions $w=w_1\cdots w_k$ by $(r_1,\ldots,r_{n_1};\ldots;r_{n_{k-1}+1},\ldots,r_m)$. Then $s$ is defined on a summand indexed by a decomposition $w=w_1\cdots w_k$ as follows. If this decomposition is of the form $(r_1;\ldots;r_{j-1};r_j,\ldots,r_{j+l};\ldots)$ with $l\geq 1$ and $r_ir_{i+1}\notin S$ for $i<j$, then $s$ is given by twisting the identification with the summand indexed by $(r_1;\ldots;r_j;r_{j+1},\ldots,r_{j+l};\ldots)$ with a sign of $(-1)^j$. On all other summands, $s=0$.
\end{proof}

The finiteness conditions of \cref{prop:pbw} are satisfied in settings where one may reasonably call $F$ a locally finite algebra. On the other hand, there are settings where one may reasonably say that $F$ has a PBW basis which fails to respect the $\calP$-bimodule structure of $F$ and therefore does not give rise to a PBW decomposition. In such cases, it may nonetheless be possible to deduce Koszulity by filtering the failure away, such as in the proof of \cref{thm:nonhomogkoszul}.

\begin{ex}\label{ex:unstablesteenrod}
Let $\calA$ denote the mod $2$ Steenrod algebra. Following \cite[Section 7]{priddy1970koszul}, $\gr \calA$ has a PBW basis of admissibles, and this provides a proof of its Koszulity. Now let $\calU$ be the monad on the category on graded $\bbF_2$-vector spaces whose algebras are the unstable $\calA$-modules. Then $\calU$ is a quotient algebra of $\calA$, for our general definition of an algebra, and the admissible basis of $\gr\calA$ projects to a PBW decomposition of $\gr\calU$. Thus $\calU$ is itself a Koszul algebra. This in fact recovers the unstable lambda algebra:
\[
K_{\calU}(e_n,e_\ast)\cong\Lambda(n)
\]
as chain complexes, up to choices of grading, where $e_a$ denotes a copy of $\bbF_2$ in degree $a$. We will cover a variant of this example in greater detail in \cref{ssec:cohomologydl}.
\tqed
\end{ex}

\section{Plethories}\label{sec:plethories}

This section is concerned with a generalization of the biring triples of Tall-Wraith \cite{tallwraith1968representable}, or plethories of \cite{borgerwieland2005plethystic}.

\subsection{Exponential monads}\label{ssec:exponentialmonads}

Let $\calP$ be a symmetric monoidal additive theory. Write $\CRing_\calP^\heart$ for the category of commutative monoids in $\LMod_\calP^\heart$, so that $\CRing_\calP^\heart\simeq\Model_{S\calP}^\heart$ where $S P = \bigoplus_{n\geq 0}P^{\otimes n}/\Sigma_n$ for $P\in\calP$. We refer to the objects of $\CRing_\calP^\heart$ as $\calP$-rings.

\begin{lemma}\label{lem:expmon}
The following concepts are equivalent:
\begin{enumerate}
\item Colimit-preserving monads $T$ on $\CRing_\calP^\heart$, i.e.\ $S\calP$-algebras;
\item Monads $\bbT$ on $\LMod_\calP^\heart$ which preserve filtered colimits and reflexive coequalizers and which are equipped with the structure of a strong monoidal functor $\bbT\colon (\LMod_\calP^\heart,\oplus)\rightarrow (\LMod_\calP^\heart,\otimes)$ in such a way that for all $X,Y\in\LMod_\calP^\heart$, the dashed arrow in
\begin{center}\begin{tikzcd}
\bbT(\bbT(X)\otimes\bbT(Y))\ar[d,dashed]\ar[r,"\simeq"]&\bbT\bbT(X\oplus Y)\ar[d]\\
\bbT(X)\otimes\bbT(Y)\ar[r,"\simeq"]&\bbT(X\oplus Y)
\end{tikzcd}\end{center}
endows $\bbT(X)\otimes\bbT(Y)$ with the structure of a $\bbT$-algebra.
\end{enumerate}
\end{lemma}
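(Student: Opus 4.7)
I would set up the equivalence via the composite $\bbT := U\circ T\circ S$, where $S \dashv U$ is the free-commutative-$\calP$-ring/underlying-module adjunction between $\LMod_\calP^\heart$ and $\CRing_\calP^\heart$. For direction (1)$\Rightarrow$(2), given a colimit-preserving monad $T$ on $\CRing_\calP^\heart$, let $\bbT := UTS$, with monad structure induced from that of $T$ via the adjunction $S \dashv U$. Preservation of filtered colimits and reflexive coequalizers follows because each of $S$, $T$, and $U$ has this property. The strong monoidal structure is witnessed by the chain
\[
\bbT(X\oplus Y) = UT(SX\otimes SY) \cong U(T(SX)\otimes T(SY)) \cong UT(SX)\otimes UT(SY) = \bbT(X)\otimes\bbT(Y),
\]
using that $T$ preserves the coproduct (given by $\otimes$) in $\CRing_\calP^\heart$ and that $U$ carries the tensor product of two $\calP$-rings to the tensor product of their underlying $\calP$-modules. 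The compatibility condition then holds because, under these identifications, the dashed arrow is simply $U\mu^T_{SX\otimes SY}$, which is a $T$-algebra structure by the monad axioms.

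For direction (2)$\Rightarrow$(1), the first observation is that strong monoidality automatically makes $\bbT$ factor through $\CRing_\calP^\heart$: every $X\in\LMod_\calP^\heart$ is canonically a commutative monoid in $(\LMod_\calP^\heart,\oplus)$ via its codiagonal, and a strong monoidal functor sends monoids to monoids. This gives a lift $\tilde\bbT\colon\LMod_\calP^\heart\to\CRing_\calP^\heart$ with $U\tilde\bbT=\bbT$. I would then declare $T(SX) := \tilde\bbT(X)$ on free rings and extend to all of $\CRing_\calP^\heart$ using the reflexive coequalizer presentation $R=\operatorname{coeq}(SUSUR\rightrightarrows SUR)$ of any $\calP$-ring as an $S$-algebra; this is well defined because $\bbT$ preserves reflexive coequalizers. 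The monad structure on $T$ is obtained by transporting the unit and multiplication of $\bbT$ through this lift, and the compatibility condition in (2) is exactly what ensures that $\mu^T\colon TT\to T$ is a well-defined natural transformation of ring-valued functors, not just of module-valued functors.

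The cleanest packaging is via distributive laws in the sense of \cref{ssec:compositions}: the data of (2) is equivalent to a distributive law between the monads $S$ and $\bbT$ on $\LMod_\calP^\heart$, from which \cref{lem:distributivelaw} produces a monadic distributive square whose right vertical adjunction is the desired monad $T$ on $\CRing_\calP^\heart=\Alg_S$. Conversely, such a distributive law is recovered from (1) as the mate in the monadic distributive square associated to the plethystic forgetful functor $\Alg_T\to\CRing_\calP^\heart\to\LMod_\calP^\heart$. The main obstacle is verifying that the compatibility condition in (2) matches exactly the data of a distributive law --- in particular, that it is what forces the strong monoidal endofunctor of $\CRing_\calP^\heart$ coming from $\tilde\bbT$ to carry a monad structure, rather than merely existing as an endofunctor. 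Once this translation is in place, the remaining verifications --- monad axioms, preservation properties, and that the two constructions are mutually inverse --- are formal.
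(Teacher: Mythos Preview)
Your direction (1)$\Rightarrow$(2) matches the paper's: set $\bbT = T\circ S$ (implicitly postcomposed with $U$) and read off the structure. The difference is in (2)$\Rightarrow$(1).

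The paper does not construct $T$ directly. Instead it observes that every $\bbT$-algebra $B$ is a commutative $\calP$-ring via
\[
B\otimes B\rightarrow\bbT(B)\otimes\bbT(B)\simeq\bbT(B\oplus B)\rightarrow\bbT(B)\rightarrow B,
\]
giving a forgetful functor $\Alg_\bbT^\heart\rightarrow\CRing_\calP^\heart$. This preserves sifted colimits automatically; the compatibility condition in (2) is used to show that $A\otimes B$, with the indicated $\bbT$-algebra structure, is the coproduct in $\Alg_\bbT^\heart$, so the forgetful functor also preserves coproducts. Hence it is a colimit-preserving monadic functor, and $T$ is the associated monad. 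No explicit formula for $T$ is ever written down.

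Your direct construction has a gap at the functoriality step. You set $T(SX)=\tilde\bbT(X)$, but a ring map $SX\rightarrow SY$ corresponds to a module map $X\rightarrow USY$, not to a module map $X\rightarrow Y$, so $\tilde\bbT$ alone does not tell you what $T$ does on morphisms of free rings. What is actually needed is a monad map $S\rightarrow\bbT$ (built from the unit $X\rightarrow\bbT X$ and the ring structure on $\bbT X$), which translates Kleisli maps for $S$ into Kleisli maps for $\bbT$; this can be made to work but is more involved than you suggest, and checking that it is a monad map already uses the compatibility condition.

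The distributive-law packaging is not the right abstraction here. A distributive law between $S$ and $\bbT$ would yield a \emph{composite} monad on $\LMod_\calP^\heart$ with algebras sitting over both $\Alg_S$ and $\Alg_\bbT$; but $\bbT$ is not a monad to be composed with $S$ --- it already factors through $\CRing_\calP^\heart$, and what you want is a monad map $S\rightarrow\bbT$ exhibiting $\Alg_\bbT$ as monadic over $\Alg_S$. That is a different piece of structure, and the paper's argument via coproducts in $\Alg_\bbT^\heart$ gets there more cleanly.
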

\begin{proof}
This is implicit in \cite{rezk2009congruence}. Given the monad $T$, one can verify that $\bbT = T \circ S$ has the indicated structure. Conversely, given $\bbT$, any $B\in\Alg_\bbT^\heart$ is naturally an object of $\CRing_\calP^\heart$ via the map
\[
B\otimes B\rightarrow\bbT(B)\otimes\bbT(B)\simeq\bbT(B\oplus B)\rightarrow\bbT(B)\rightarrow B,
\]
giving a monadic functor $\Alg_\bbT^\heart\rightarrow\CRing_\calP^\heart$ which preserves sifted colimits. One can then verify that for $A,B\in\Alg_\bbT^\heart$, we have $A\coprod B = A\otimes B$, with $\bbT$-algebra structure analogous to the diagram in (2), and thus $\Alg_\bbT^\heart\rightarrow\CRing_\calP^\heart$ preserves all colimits.
\end{proof}

Monads as in \cref{lem:expmon} are called \textit{exponential monads}.

\begin{defn}\label{def:pleth}
The equivalent data of \cref{lem:expmon} is a \textit{$\calP$-plethory}.
\tqed
\end{defn}

We will generally treat a $\calP$-plethory as its underlying exponential monad. Given a $\calP$-plethory $\Lambda$, we will make use of the notation $\Lambda_{P,P'} = \Lambda(P)(P')$ for $P,P'\in\calP$. We write $\Ring_\Lambda^\heart = \Alg_\Lambda^\heart$, and refer to its objects as $\Lambda$-rings.

Among the main pieces of structure of a $\calP$-plethory $\Lambda$ are maps
\begin{align*}
\Delta^+&\colon\Lambda_P\rightarrow\Lambda_{P\oplus P}\simeq\Lambda_P\otimes\Lambda_P;\\
\epsilon^+&\colon\Lambda_P\rightarrow\Lambda_0\simeq\bbone;\\
\Delta^\times&\colon \Lambda_{P\otimes P'}\rightarrow\Lambda_P\otimes\Lambda_{P'};\\
\epsilon^\times&\colon \Lambda_\bbone\rightarrow\bbone.
\end{align*}
Here, $\Delta^+$ and $\epsilon^+$ come from the diagonal $P\rightarrow P \oplus P$ and unique map $P\rightarrow 0$. The map $\Delta^\times$ is equivalent to a natural transformation $\ev_P\times \ev_{P'}\rightarrow \ev_{P\otimes P'}$, and classifies the multiplication present on $\Lambda$-rings, and likewise $\epsilon^\times$ classifies the multiplicative identity. In fact these maps are present given just the underlying $S\calP$-bimodel of $\Lambda$, only one can no longer interpret them as corresponding to natural operations.

\subsection{Cobialgebroids}\label{ssec:cobialgebroids}

Fix a symmetric monoidal additive theory $\calP$.

\begin{defn}\label{def:cobialgebroid}
A (discrete) \textit{$\calP$-cobialgebroid} is a $\calP$-algebra $\Gamma$ together with a lift of $\Gamma$ to a monoid in the category of oplax symmetric monoidal endofunctors of $\Model_\calP^\heart$, or equivalently, with a lift of $\Gamma^\vee$ to a comonoid in the category of lax symmetric monoidal endofunctors of $\Model_\calP^\heart$.
\tqed
\end{defn}

Denote the category of $\calP$-cobialgebroids by $\coBiAlg_\calP^\heart$.

\begin{lemma}
Let $\calC$ be a symmetric monoidal $1$-category, and let $U\colon\calD\rightarrow\calC$ be a plethystic functor with associated monad $F$ and comonad $F^\vee$. The following are equivalent:
\begin{enumerate}
\item The structure of a symmetric monoidal category on $\calD$ together with the structure of a strong symmetric monoidal functor on $U$;
\item A lift of $F$ to a monoid in oplax symmetric monoidal endofunctors of $\calC$;
\item A lift of $F^\vee$ to a comonoid in lax symmetric monoidal endofunctors of $\calC$.
\end{enumerate}
\end{lemma}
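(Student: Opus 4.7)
The plan is to establish $(1) \Leftrightarrow (2)$ and $(1) \Leftrightarrow (3)$ separately; the second equivalence will be formally dual to the first, so the real content is in the first one, and the equivalence $(2) \Leftrightarrow (3)$ then falls out as a consequence (reflecting the general fact that right adjoints of oplax monoidal functors are lax monoidal).

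For $(1) \Rightarrow (2)$, I would invoke doctrinal adjunction (Kelly): writing $L \dashv U \dashv R$ for the adjunctions with $F = UL$ and $F^\vee = UR$, if $U$ is strong symmetric monoidal then its left adjoint $L$ acquires a canonical oplax symmetric monoidal structure, with structure maps $L(X \otimes Y) \to LX \otimes LY$ defined as adjuncts of
\[
X \otimes Y \xrightarrow{\eta \otimes \eta} ULX \otimes ULY \simeq U(LX \otimes LY),
\]
where the last isomorphism uses the strong structure on $U$. Composing with the (strong, hence oplax) structure of $U$ endows $F = UL$ with an oplax symmetric monoidal structure. The monad unit $\eta \colon \mathrm{Id} \to F$ and multiplication $\mu = U\epsilon L \colon FF \to F$ are then morphisms of oplax symmetric monoidal endofunctors, since both $\eta$ and the counit $\epsilon \colon LU \to \mathrm{Id}$ are compatible with the oplax structures by the standard triangle-identity computation.

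For $(2) \Rightarrow (1)$, given an oplax symmetric monoidal monad structure on $F$, I would define a tensor product on $\Alg_F \simeq \calD$ by taking the underlying object of $A \otimes_{\Alg_F} B$ to be $A \otimes_\calC B$, equipped with $F$-algebra structure given by the composite
\[
F(A \otimes B) \to FA \otimes FB \to A \otimes B,
\]
where the first map comes from the oplax structure on $F$ and the second from the $F$-actions on $A$ and $B$. The unit object is $\bbone_\calC$ with $F$-action supplied by the oplax unit $F\bbone_\calC \to \bbone_\calC$. Compatibility of the oplax structure with $\mu$ and $\eta$ ensures this defines a legitimate $F$-algebra structure, and the associator, left and right unitors, and symmetry constraints of $\calC$ lift to morphisms of $F$-algebras by diagram chases using the same compatibilities together with the symmetry axiom on the oplax structure. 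By construction $U$ is strong symmetric monoidal.

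The equivalence $(1) \Leftrightarrow (3)$ is formally dual: by doctrinal adjunction the right adjoint $R$ of a strong symmetric monoidal $U$ inherits a lax symmetric monoidal structure, whence $F^\vee = UR$ inherits a lax symmetric monoidal comonad structure; conversely, such structure on $F^\vee$ endows $\CoAlg_{F^\vee} \simeq \calD$ with a symmetric monoidal structure making $U$ strong symmetric monoidal, via the lax structure maps $A \otimes B \to F^\vee A \otimes F^\vee B \to F^\vee(A \otimes B)$. The main obstacle is bookkeeping — verifying that the coherence constraints on $\calC$ lift to $\Alg_F$ (resp.\ $\CoAlg_{F^\vee}$) requires several diagram chases — but once doctrinal adjunction is in hand, no step is conceptually difficult, and each coherence follows directly from the corresponding axiom of an oplax (resp.\ lax) symmetric monoidal monad (resp.\ comonad).
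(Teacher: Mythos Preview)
Your proposal is correct and follows essentially the same approach as the paper. The paper treats $(2)\Leftrightarrow(3)$ as clear and then handles $(1)\Leftrightarrow(2)$ with the same constructions you give: the oplax structure on $F$ is obtained as the adjoint of $A\otimes B\to UF(A)\otimes UF(B)\simeq U(F(A)\otimes F(B))$, and conversely the tensor product on $\Alg_F$ is $A\otimes_\calC B$ with action $F(A\otimes B)\to FA\otimes FB\to A\otimes B$; your explicit invocation of doctrinal adjunction and the intermediate oplax structure on $L$ is a slightly more careful packaging of the same argument.
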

\begin{proof}
The equivalence of (2) and (3) is clear, so we consider their relation with (1). Given the data of (2), we can make $\calD = \Alg_F$ into a symmetric monoidal category, where for $F$-algebras $A$ and $B$, their tensor product is $A\otimes B$ with $F$-algebra structure $F(A\otimes B)\rightarrow F(A)\otimes F(B)\rightarrow A \otimes B$. This is seen to refine to the data of (1). Suppose then we have been given the data of (1). As $U$ is strong monoidal, there is for $A,B\in\calC$ a map $F(A\otimes B)\rightarrow F(A)\otimes F(B)$ adjoint to $A\otimes B\rightarrow UF(A)\otimes UF(B)\simeq U(F(A)\otimes F(B))$. This is seen to refine to the data of (2).
\end{proof}

\begin{rmk}
Let $\Gamma$ be a $\calP$-algebra. Then one can be more explicit about the structure necessary to upgrade $\Gamma$ to a $\calP$-cobialgebroid: to lift $\Gamma$ to an oplax symmetric monoidal functor, we require maps
\[
\Gamma(M\otimes N)\rightarrow\Gamma(M)\otimes\Gamma(N),\qquad\Gamma(\bbone)\rightarrow\bbone,
\]
natural in $M$ and $N$ and subject to the evident counity, coassociativity, and cocommutativity conditions, and for this to make $\Gamma$ into a $\calP$-cobialgebroid we further require that the product $\Gamma\circ\Gamma\rightarrow\Gamma$ respects this structure.
\tqed
\end{rmk}

\begin{rmk}\label{rmk:distributecobialgebroid}
Let $\Gamma$ be a $\calP$-cobialgebroid, and let $A$ be a monoid in the monoidal category $\Model_\Gamma^\heart$. In particular, $A$ overlies a monoid in $\Model_\calP^\heart$, giving $A\otimes\bs$ the structure of a monad. There is a distributive law of $\Gamma$ across $A\otimes\bs$ given by the composite
\[
\Gamma(A\otimes M)\rightarrow\Gamma(A)\otimes\Gamma(M)\rightarrow A\otimes \Gamma(M),
\]
and this rise to a composite monad $A\otimes\Gamma$. Algebras for this monad are exactly modules over the monoid $A$ in $\Model_\Gamma^\heart$. This generalizes \cref{ex:semitensor}.
\tqed
\end{rmk}

We will write $\Ring_\Gamma^\heart$ for the category of commutative monoids in $\LMod_\Gamma^\heart$. Observe that the forgetful functor $\Ring_\Gamma^\heart\rightarrow\CRing_\calP^\heart$ is plethystic.

\begin{ex}\label{ex:twistedbialgebra}
Let $R$ be an ordinary commutative ring, $\calR$ be the theory of $R$-modules with its usual symmetric monoidal structure, and $F$ be an $\calR$-cobialgebroid. Unwinding the definitions, we see this amounts to the following. First, as $F$ is a bimodule, we can write $F(M) = \Gamma\otimes M$ for an ordinary $R$-bimodule $\Gamma$. Abbreviate $\otimes = \otimes_R$, and use subscripts $l$ and $r$ to denote tensoring with respect to the left or right $R$-module structure on $\Gamma$. Then there are left $R$-module maps
\begin{align*}
m&\colon \Gamma\tins{r}{l}\Gamma\rightarrow\Gamma;\\
\epsilon^\times&\colon \Gamma\rightarrow R;\\
\Delta^\times&\colon \Gamma\rightarrow\Gamma\tins{l}{l}\Gamma.
\end{align*}
The map $m$ makes $\Gamma$ into an $R$-algebra, and $\epsilon^\times$ with $\Delta^\times$ satisfy the evident counity, coassociativity, and cocommutativity conditions. The map $\Delta^\times$ is required in addition to be a map of right $R$-modules with respect to the two right $R$-module structures on the target given by the action of $R$ on the left and right factor. This corresponds to the fact that $F(M\otimes N)\rightarrow F(M)\otimes F(N)$ is natural in both variables, and is what is necessary to extend $\Delta^\times$ to the natural transformation
\[
\Gamma\tins{r}{} M\otimes N\rightarrow (\Gamma\tins{r}{}M)\tins{l}{l}(\Gamma\tins{r}{}N),\qquad \gamma\otimes m \otimes n \mapsto \sum \gamma_{(1)}^\times\otimes m \otimes\gamma_{(2)}^\times \otimes n.
\]
 Compatibility of $m$ with $\epsilon^\times$ amounts to asking for 
\[
\epsilon^\times(1) = 1,\qquad \epsilon^\times(\gamma\gamma') = \epsilon^\times(\gamma\cdot\epsilon^\times(\gamma')),
\]
and compatibility of $m$ with $\Delta^\times$ amounts to asking for
\[
\sum \gamma_{(1)}^\times\gamma_{(1)}^{\prime\times}\otimes\gamma_{(2)}^\times\gamma_{(2)}^{\prime\times} = \sum (\gamma\gamma')_{(1)}^\times\otimes (\gamma\gamma')_{(2)}^\times
\]
in $\Gamma\tins{l}{l}\Gamma$. In the above, we have written $\Delta^\times(\gamma) = \sum \gamma_{(1)}^\times\otimes\gamma_{(2)}^\times$. We find that $\Gamma$ is a \textit{twisted $R$-bialgebra}, for instance as discussed in \cite[Section 9]{borgerwieland2005plethystic}.  A $\Gamma$-ring is a commutative $R$-ring $A$ equipped with an action of $\Gamma$ such that $\gamma\cdot (aa') = \sum (\gamma_{(1)}^\times\cdot a)(\gamma_{(2)}^\times\cdot a')$ for all $a,a'\in A$ and $\gamma\in\Gamma$.
\tqed
\end{ex}

\begin{ex}\label{ex:ltc}
Let $\Gamma$ be the $R$-algebra of \cref{ex:ltkoszul}. Then $\Gamma$ is an $R$-cobialgebroid, with augmentation
\[
\epsilon^\times(Q_0) = 1,\qquad \epsilon^\times(Q_1) = 0,\qquad\epsilon^\times(Q_2) = 0,
\]
and coproduct
\begin{align*}
\Delta^\times(Q_0) &= Q_0\otimes Q_0 + 2 Q_1 \otimes Q_2 + 2 Q_2 \otimes Q_1;\\
\Delta^\times(Q_1) &= Q_0\otimes Q_1+  Q_1\otimes Q_0 + a Q_1\otimes Q_2 + a Q_2 \otimes Q_1 + 2 Q_2 \otimes Q_2;\\
\Delta^\times(Q_2) &= Q_0\otimes Q_2 + Q_2\otimes Q_0 +  Q_1 \otimes Q_1 + a Q_2\otimes Q_2.
\end{align*}
Thus $\LMod_\Gamma$ is a symmetric monoidal category with underlying tensor product $\otimes_R$, where if $M$ and $N$ are left $\Gamma$-modules, then $M\otimes_R N$ has left $\Gamma$-module structure
\begin{align*}
Q_0(m\otimes n) &= Q_0(m)\otimes Q_0(n) + 2 Q_1(m)\otimes Q_2(n) + 2 Q_2(m)\otimes Q_1(n);\\
Q_1(m\otimes n) &= Q_0(m)\otimes Q_1(n)+Q_1(m)\otimes Q_0(n)\\
&+a Q_1(m)\otimes Q_2(n)+ a Q_2(m)\otimes Q_1(n) + 2 Q_2(m)\otimes Q_2(n);\\
Q_2(m\otimes n) &= Q_0(m)\otimes Q_2(n) + Q_2(m)\otimes Q_0(n)\\
&+Q_1(m)\otimes Q_1(n)+aQ_2(m)\otimes Q_2(n).
\end{align*}
We will continue this example in \cref{ex:ltep} and \cref{ex:ltring}.
\tqed
\end{ex}

\subsection{Additive operations}\label{ssec:additiveops}

Fix a symmetric monoidal additive theory $\calP$. The primary feature that distinguishes $\calP$-plethories from more ordinary algebraic structures such as $\calP$-algebras is the presence of nonlinear structure, and in dealing with $\calP$-plethories one wants to avoid dealing with this nonlinear structure by any means possible. The use of $\calP$-cobialgebroids is one thing that enables this. This generalizes the use of twisted bialgebras in \cite[Section 10]{borgerwieland2005plethystic}, as recalled in \cref{ex:twistedbialgebra}, with some main ingredients already appearing in \cite[Proposition 4.6]{tallwraith1968representable}.

There is a purely formal means by which one can extract from any $\calP$-plethory a $\calP$-cobialgebroid. First, going in the other direction, if $\Gamma$ is a $\calP$-cobialgebroid, then as $\Ring_\Gamma^\heart\rightarrow\CRing_\calP^\heart$ is plethystic, it is associated to some $\calP$-cobialgebroid $S\Gamma$. This defines a functor $S\colon\coBiAlg_\calP^\heart\rightarrow\Pleth_\calP^\heart$, which can be described more explicitly as follows: if $\Gamma$ is a $\calP$-cobialgebroid, then as $\Gamma^\vee$ is lax symmetric monoidal, it passes to a limit-preserving comonad $S\Gamma^\vee$ on $\Ring_\calP^\heart$, and this is right adjoint to the colimit-preserving monad $S\Gamma$ on $\Ring_\calP^\heart$. This functor $S$ preserves colimits, and therefore has a right adjoint sending a plethory $\Lambda$ to a $\calP$-cobialgebroid $\Gamma(\Lambda)$. We do not know if $\Gamma(\Lambda)$ admits a nice description in general, but it will under certain extra flatness conditions.

Now fix a $\calP$-plethory $\Lambda$. As in \cref{prop:yoneda}, we may identify
\[
\Lambda_{P_1\oplus\cdots\oplus P_n,P}\simeq\Hom_{\Fun(\Ring_\Lambda^\heart,\Set)}(\ev_{P_1}\times\cdots\times\ev_{P_n},\ev_P).
\]
Let $\Gamma_{(P_1,\ldots,P_n),P}\subset\Lambda_{P_1\oplus\cdots\oplus P_n,P}$ be the subset consisting of those operations which are $n$-multilinear. By allowing $P_1,\ldots,P_n,P$ to vary, this construction can be seen as a functor $\calP^{\times n}\rightarrow\LMod_\calP^\heart$.

\begin{lemma}
The $\calP$-module $\Gamma_{(P_1,\ldots,P_n)}$ is isomorphic to the total fiber of the $n$-cube obtained by tensoring together the maps
\[
\Delta^+ - \eta\otimes P_i-P_i\otimes\eta\colon \Lambda_{P_i}\rightarrow\Lambda_{P_i\oplus P_i}\simeq\Lambda_{P_i}\otimes\Lambda_{P_i}.
\]
\end{lemma}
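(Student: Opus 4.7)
The plan is to translate the statement through the Yoneda correspondence of \cref{prop:yoneda}, under which $\Lambda_{P_1\oplus\cdots\oplus P_n,P}$ is identified with natural transformations $\ev_{P_1}\times\cdots\times\ev_{P_n}\to\ev_P$ on $\Ring_\Lambda^\heart$. Under this identification, $\Gamma_{(P_1,\ldots,P_n),P}$ corresponds by definition to the subset of $n$-multilinear operations, i.e.\ those additive in each variable separately.

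First, I would use the strong monoidality of $\Lambda$ to identify $\Lambda_{P_1\oplus\cdots\oplus P_n}\cong \Lambda_{P_1}\otimes\cdots\otimes\Lambda_{P_n}$, which is the initial vertex of the $n$-cube in the statement. Because $\LMod_\calP^\heart$ is abelian and because every edge of the cube indexed by a subset of $\{1,\ldots,n\}$ of size $\geq 2$ factors through a length-one edge, the total fiber of the cube collapses to the simultaneous kernel
\[
\bigcap_{i=1}^n\ker(F_i)\subset\Lambda_{P_1}\otimes\cdots\otimes\Lambda_{P_n},
\]
where $F_i$ denotes the map tensoring identities with $f_i=\Delta^+-\eta\otimes P_i-P_i\otimes\eta$ in the $i$-th slot.

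Second, I would show that $F_i(\phi)=0$ is precisely additivity of $\phi$ in its $i$-th variable. Via the strong monoidal iso, $F_i$ may be rewritten as $\Lambda$ applied to the formal combination $\Delta-\iota_1-\iota_2$ inserted in the $i$-th summand of $P_1\oplus\cdots\oplus P_n$, producing a map into $\Lambda_{P_1\oplus\cdots\oplus P_i\oplus P_i\oplus\cdots\oplus P_n}$. By functoriality of the Yoneda correspondence, with $\ev$ contravariant in its index, these three maps act on $\phi$ by precomposition with the addition, first projection, and second projection in the $i$-th slot of $\ev_{P_1}\times\cdots\times\ev_{P_i}\times\ev_{P_i}\times\cdots\times\ev_{P_n}$. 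Thus $F_i(\phi)$ represents the defect-of-additivity operation $(x_1,\ldots,x_i,x_i',\ldots,x_n)\mapsto\phi(\ldots,x_i+x_i',\ldots)-\phi(\ldots,x_i,\ldots)-\phi(\ldots,x_i',\ldots)$, which vanishes iff $\phi$ is additive in its $i$-th argument.

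The main obstacle I expect is the bookkeeping in the second step: tracing the strong monoidal iso carefully enough to confirm that tensoring with $f_i$ in the $i$-th factor really corresponds to $\Lambda$ applied to $\Delta-\iota_1-\iota_2$ in the $i$-th slot. Conceptually this is clean---the point is that $\Delta=\iota_1+\iota_2$ holds in $\calP$ but fails after applying the nonadditive functor $\Lambda$, and this failure is exactly what measures multilinearity---but the indexing must be set up compatibly on both sides.
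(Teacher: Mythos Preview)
Your approach is correct and essentially the same as the paper's: both identify the relevant fiber/kernel with the condition $\sigma(x+y)=\sigma(x)+\sigma(y)$ via the Yoneda correspondence of \cref{prop:yoneda}. The paper treats only $n=1$ explicitly and declares the general case ``identical in nature'', whereas you spell out the general $n$ by reducing the total fiber to the simultaneous kernel $\bigcap_i\ker(F_i)$; your justification for this reduction is slightly imprecise (it holds for any $n$-cube in an abelian category by iterated fibers, not specifically because higher edges ``factor through'' length-one edges), but the conclusion and the main step are right.
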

\begin{proof}
For purely notational convenience, we consider the case $n=1$; the general case is identical in nature. Here we are claiming that there is an equalizer diagram
\begin{center}\begin{tikzcd}
\Gamma_P\ar[r]&\Lambda_P\ar[r,"\Delta^+",shift left=1mm]\ar[r,"\eta\otimes P + P \otimes \eta"',shift right=1mm]&\Lambda_{P\oplus P}
\end{tikzcd}.\end{center}
Evaluating on $P'\in\calP$, this is asking for an equalizer diagram
\begin{center}\begin{tikzcd}
\Hom_{\Fun(\Model_\Lambda^\heart,\Ab)}(\ev_P,\ev_{P'})\ar[d]\\
\Hom_{\Fun(\Model_\Lambda^\heart,\Set)}(\ev_P,\ev_{P'})\ar[d,"\Delta^+",shift left=2mm]\ar[d,"\eta\otimes P + P \otimes \eta"',shift right=2mm]\\
\Hom_{\Fun(\Model_\Lambda^\heart,\Set)}(\ev_P\times\ev_P,\ev_{P'})
\end{tikzcd}.\end{center}
If we fix a natural operation $\sigma\colon \ev_P\rightarrow\ev_{P'}$, then
\[
\Delta^+(\sigma)(x,y) = \sigma(x+y),\qquad (\eta\otimes P + P \otimes\eta)(x,y) = \sigma(x)+\sigma(y),
\]
and these agree precisely when $\sigma$ is additive.
\end{proof}

In particular, there are maps $\Gamma_{P_1}\otimes\cdots\otimes\Gamma_{P_n}\rightarrow\Gamma_{(P_1,\ldots,P_n)}$. Say that $\Lambda$ has \textit{well-behaved additive operations} if this is an isomorphism for all $P_1,\ldots,P_n\in\calP$, and moreover $\Gamma = \Gamma_{(\bs),(\bbs)}$ is a $\calP$-bimodule.

\begin{theorem}\label{thm:plethcobi}
Suppose that $\Lambda$ has well-behaved additive operations. Then the $\calP$-plethory structure of $\Lambda$ restricts to a $\calP$-cobalgebroid structure on $\Gamma$, and $\Gamma(\Lambda)\cong\Gamma$.
\end{theorem}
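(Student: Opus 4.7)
The approach is to transfer the plethystic structure of $\Lambda$ to $\Gamma$ piece by piece, then identify the result with $\Gamma(\Lambda)$ via the universal property of the adjunction $S\dashv\Gamma(-)$. First I would equip $\Gamma$ with the structure of a $\calP$-algebra: via Yoneda, each $\Gamma_{P,P'}\subset \Lambda_{P,P'}$ corepresents the additive natural operations $\ev_P\to\ev_{P'}$ on $\Lambda$-rings. Since the composition of additive maps is additive and the identity operation is additive, the plethystic composition $\Lambda\circ\Lambda\to\Lambda$ and the unit $I\to\Lambda$ restrict to make $\Gamma$ a $\calP$-algebra, with associativity and unitality inherited directly from $\Lambda$.

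Next I would produce the oplax symmetric monoidal structure on $\Gamma$. By construction, the plethystic map $\Delta^\times\colon\Lambda_{P\otimes P'}\to\Lambda_P\otimes\Lambda_{P'}$ corresponds under Yoneda to precomposition with the multiplication $\ev_P\times\ev_{P'}\to\ev_{P\otimes P'}$ on $\Lambda$-rings. If $\sigma\in\Gamma_{P\otimes P'}$ is additive, then composing with multiplication gives an operation $\ev_P\times\ev_{P'}\to\ev_\bullet$ that is additive in each variable separately, so lies in $\Gamma_{(P,P')}$; goodness then identifies this with $\Gamma_P\otimes\Gamma_{P'}$, producing the desired restriction $\Delta^\times_\Gamma$. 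Similarly, $\epsilon^\times\colon\Lambda_\bbone\to\bbone$ restricts to $\Gamma_\bbone\to\bbone$. All coassociativity, cocommutativity, counity, and compatibility-with-composition axioms for these maps on $\Gamma$ follow from the corresponding axioms on $\Lambda$, once one uses goodness to identify the relevant $n$-fold tensor products $\Gamma_{P_1}\otimes\cdots\otimes\Gamma_{P_n}$ with $\Gamma_{(P_1,\ldots,P_n)}\subset\Lambda_{P_1\oplus\cdots\oplus P_n}$, inside which all compatibilities take place unambiguously. This establishes that $\Gamma$ is a $\calP$-cobialgebroid.

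For the identification $\Gamma(\Lambda)\cong\Gamma$, I would check directly that $\Gamma$ represents the functor $\Gamma'\mapsto\Hom_{\Pleth_\calP^\heart}(S\Gamma',\Lambda)$ on $\calP$-cobialgebroids. A plethory map $S\Gamma'\to\Lambda$ equips every $\Lambda$-ring with a $\Gamma'$-action through additive operations, forcing the underlying bimodule map $\Gamma'\to\Lambda$ to land in $\Gamma$; conversely, a cobialgebroid map $\Gamma'\to\Gamma$ yields a plethory map $S\Gamma'\to\Lambda$ by composition with the counit-like map $S\Gamma\to\Lambda$ built from the inclusion $\Gamma\hookrightarrow\Lambda$ and the $\calP$-ring structure on each $\Lambda_P$. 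The two constructions are inverse because $\Gamma$ inherits its cobialgebroid structure from $\Lambda$, making every compatibility diagram tautological once restricted to $\Gamma$.

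The main obstacle I anticipate is bookkeeping: although each individual verification is a diagram chase inside $\Lambda$, ensuring that the identifications $\Gamma_{(P_1,\ldots,P_n)}\cong\Gamma_{P_1}\otimes\cdots\otimes\Gamma_{P_n}$ supplied by goodness are used consistently across the various cobialgebroid axioms, and that the factorization through these subgroups is natural in all relevant variables, requires systematic application of the goodness hypothesis. The step where the bialgebroid compatibility (relating composition and $\Delta^\times_\Gamma$) descends from $\Lambda$ is the subtlest, since the relation involves tensor factors of $\Gamma$ in two different roles and must be read inside the multilinear operations $\Gamma_{(P,P')}$ rather than inside $\Gamma_{P\otimes P'}$ directly.
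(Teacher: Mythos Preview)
Your proposal is correct and follows essentially the same route as the paper: build the $\calP$-algebra structure on $\Gamma$ from closure of additive operations under composition, restrict $\Delta^\times$ and $\epsilon^\times$ to $\Gamma$ using goodness to identify $\Gamma_{(P,P')}\cong\Gamma_P\otimes\Gamma_{P'}$, and then identify $\Gamma$ with $\Gamma(\Lambda)$ via the universal property of the adjunction $S\dashv\Gamma(-)$. The paper's version of the last step is phrased slightly differently---it shows the map $S\Gamma\to\Lambda$ satisfies the universal property of the counit by factoring $S\Gamma(\Lambda)\to\Lambda$ through it---but this is the same argument as your direct representability check.
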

\begin{proof}
We begin by building the $\calP$-cobialgebroid structure on $\Gamma$. By hypothesis, $\Gamma$ is a $\calP$-bimodule. As additive operations are closed under composition, we have for $P,P',P''\in\calP$ a bilinear composition map $\Gamma_{P',P''}\times\Gamma_{P,P'}\rightarrow\Gamma_{P,P''}$, and these together with the identity maps make $\Gamma$ into a $\calP$-algebra. The counit of $\Gamma$ is the composite
\[
\epsilon^\times\colon\Gamma_{\bbone}\subset\Lambda_{\bbone}\rightarrow\bbone.
\]
For the coproduct on $\Gamma$, consider the diagram
\begin{center}\begin{tikzcd}
\Gamma_{P\otimes P'}\ar[r]\ar[d,dashed,"\Delta^\times"]&\Lambda_{P\otimes P'}\ar[d,"\Delta^\times"]\\
\Gamma_P\otimes\Gamma_{P'}\ar[r]&\Lambda_P\otimes\Lambda_{P'}
\end{tikzcd}.\end{center}
The right vertical map classifies multiplication for $\Lambda$-rings, so the clockwise composite lands in $\Gamma_{(P,P')}\subset\Lambda_{P\oplus P'}\cong\Lambda_P\otimes\Lambda_P'$. This is isomorphic to $\Gamma_P\otimes\Gamma_{P'}$ by assumption, so there is a lift through the dashed map. The axioms of counity, coassociativity, cocommutativity, and compatibility with composition follow from the corresponding facts about operations on $\Lambda$-rings. It remains to verify that $\Gamma\cong \Gamma(\Lambda)$. Observe first that the functor $S$ on $\calP$-cobialgebroids satisfies, as the notation suggests, $(S\Gamma)(P) = S(\Gamma(P))$. Indeed, there are isomorphisms
\[
\CRing_\calP((S\Gamma)(P),R)\cong (S\Gamma^\vee)(R)(P)\cong \Gamma^\vee(R)(P)\cong\LMod_\calP(\Gamma(P),R),
\]
so that $(S\Gamma)(P)$ has the necessary universal property. It follows from the definition of $\Gamma$ that the composite $\Gamma(\Lambda)\rightarrow S\Gamma(\Lambda)\rightarrow\Lambda$ factors uniquely through $\Gamma\subset\Lambda$, so $S\Gamma(\Lambda)\rightarrow\Lambda$ factors uniquely through $S\Gamma\rightarrow\Lambda$. So $S\Gamma\rightarrow\Lambda$ has the necessary universality property to be the counit of the adjunction, giving $S\Gamma\cong S\Gamma(\Lambda)$, from which it follows that $\Gamma\cong\Gamma(\Lambda)$.
\end{proof}

From here on, we will always assume that our plethories have well-behaved additive operations when we speak of their associated cobialgebroids, as this will be the case for the examples we are interested in. In fact, all of our examples will have very well-behaved additive operations, in the sense that $\Gamma\rightarrow\Lambda$ will be levelwise additively split. This ensures that the well-behaved nature of $\Lambda$ is preserved under various operations, such as base change.

\begin{rmk}
Let $\Lambda$ be a $\calP$-plethory and $B\in\Ring_\Lambda^\heart$. In particular, $B\in\CRing_\calP^\heart$, so there is a category $\LMod_B^\heart$ of left $B$-modules. The forgetful functor $B/\Ring_\Lambda^\heart\rightarrow B/\CRing_\calP^\heart$ is plethystic, so realizing $B/\Ring_\Lambda^\heart$ as the category of rings for a $B$-plethory $B\otimes\Lambda$. The monadic structure is as given by \cref{ex:slicedist}. There is a diagram
\begin{center}\begin{tikzcd}
\Gamma(B\otimes\Lambda)&B\ar[l]\\
\Gamma(\Lambda)\ar[u]&I\ar[u]\ar[l]
\end{tikzcd}\end{center}
of $\calP$-algebras, where $B$ stands for the $\calP$-algebra $B\otimes\bs$, which extends to a map
\[
B\otimes\Gamma(\Lambda)\rightarrow\Gamma(B\otimes\Lambda)
\]
of algebras, which is an isomorphism in the nice cases we will consider. Here, $B\otimes\Gamma(\Lambda)$ has algebra structure as indicated in \cref{rmk:distributecobialgebroid}. See \cref{ex:theta} for an explicit example of this.
\tqed
\end{rmk}

\subsection{Cotangent spaces}\label{ssec:plethaq}

Fix a symmetric monoidal additive theory $\calP$ and a $\calP$-plethory $\Lambda$. We are interested in the cohomology of $\Lambda$-rings. In particular, we need to identify the relevant categories of abelian group objects. This is as indicated by the general theory of \cref{ssec:abalg}, but it is worth making this more explicit.

We first review the essentially classical case where $\Lambda=S$ so that $\Ring_\Lambda^\heart\simeq\CRing_\calP^\heart$. Given $B\in\CRing_\calP^\heart$ and $M\in\LMod_B^\heart$, one may form the square-zero extension $B\ltimes M\in\Ab(B/\CRing_\calP^\heart/B)$. As an object of $\CRing_\calP^\heart = \CMon(\LMod_\calP^\heart)$, this is given by $B\ltimes M = B \oplus M$ with multiplication
\begin{align*}
(B\oplus M)\otimes (B\oplus M)&\simeq B\otimes B\oplus B\otimes M \oplus M \otimes B \oplus M\otimes M\\
&\simeq B \otimes B \oplus B \otimes M \oplus B\otimes M \oplus M\otimes M\rightarrow B\oplus M
\end{align*}
arising from the multiplication on $B$, the $B$-module structure of $M$, and killing $M\otimes M$. This has obvious structure as an object of $B/\CRing_\calP^\heart/B$, and structure as an abelian group object therein of
\[
(B\ltimes M)\times_B(B \ltimes M)\cong B\oplus M \oplus M\rightarrow B\oplus M,\qquad ((b,m'),(b,m''))\mapsto (b,m'+m'').
\]

\begin{lemma}\label{lem:classiccotangent}
The above construction describes an equivalence between the following categories:
\begin{enumerate}
\item The category $\LMod_B^\heart$;
\item The full subcategory of $B/\CRing_\calP^\heart/B$ spanned by the square-zero extensions of $B$;
\item The category $\Ab(\CRing_\calP^\heart/B)\simeq\Ab(B/\CRing_\calP^\heart/B)$.
\end{enumerate}
Moreover,
\begin{enumerate}
\item[(4)]Abelianization $D_B\colon B/\CRing_\calP^\heart\rightarrow\LMod_B^\heart$ is given by $D_B(A) = B\otimes_A \Omega_{A|\calP}$, where $\Omega_{A|\calP} = J/J^2$, where $J = \ker(A\otimes A\rightarrow A)$;
\item[(5)]Abelianization $Q_B\colon B/\CRing_\calP^\heart/B\rightarrow\LMod_B^\heart$ is given by $Q_B(A) = I/I^2$, where $I = \ker(A\rightarrow B) = \coker(B\rightarrow A)$.
\end{enumerate}
\end{lemma}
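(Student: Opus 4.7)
The strategy is the standard Beck module theory adapted to commutative monoids in $\LMod_\calP^\heart$. First I would verify that the functor $M\mapsto B\ltimes M$ is well-defined as a map $\LMod_B^\heart\to\Ab(B/\CRing_\calP^\heart/B)$. The key point is that the multiplication on $B\oplus M$ factors through $B\otimes B\oplus B\otimes M\oplus M\otimes B$ since the $M\otimes M$ summand is killed, so associativity and commutativity reduce to those of $B$ together with the $B$-bimodule structure on $M$; the inclusion $B\to B\ltimes M$ and projection $B\ltimes M\to B$ supply the under/over structure, and the addition, zero, and negation in the $M$-coordinate make $B\ltimes M$ into an abelian group object over $B$.

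Next I would show this functor is fully faithful with essential image the full subcategory of abelian group objects, which simultaneously gives (1)$\Leftrightarrow$(2) and (2)$\Leftrightarrow$(3). For fullness and faithfulness, any map $B\ltimes M\to B\ltimes N$ of $\calP$-rings compatible with the under and over structure is determined by its restriction $M\to N$, and multiplicativity forces this restriction to be $B$-linear. For essential surjectivity onto $\Ab(B/\CRing_\calP^\heart/B)$, the argument is Beck's: given such an abelian group object $A$, the zero section of the group structure splits the augmentation as $A\cong B\oplus I$ with $I=\ker(A\to B)$, and demanding that the addition map $A\times_BA\to A$ be simultaneously a $\calP$-ring homomorphism forces the restriction of the product to $I\otimes I\to A$ to vanish. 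Thus $A$ is the square-zero extension $B\ltimes I$.

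The identification $\Ab(\CRing_\calP^\heart/B)\simeq\Ab(B/\CRing_\calP^\heart/B)$ of (3) is then formal. An abelian group object $A\in\Ab(\CRing_\calP^\heart/B)$ carries a canonical zero morphism $B\to A$ from the terminal object $B$ of $\CRing_\calP^\heart/B$, supplying the under-$B$ structure; in the other direction one simply forgets. The unit and counit of this adjunction are isomorphisms by inspection.

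Finally, for (4) and (5) I would compute the left adjoint to $M\mapsto B\ltimes M$ directly. For (5), a morphism $A\to B\ltimes M$ in $B/\CRing_\calP^\heart/B$ is determined by the $\calP$-module map $I\to M$ picked out by the second coordinate; compatibility with multiplication forces $I\otimes I$ to map to zero since $M\otimes M=0$ in the target, and the under-$B$ structure equips the quotient $I/I^2$ with its $B$-module structure. For (4), a morphism $A\to B\ltimes M$ in $\CRing_\calP^\heart/B$ consists of the fixed structure map $\phi\colon A\to B$ together with a $\phi$-derivation $A\to M$; by the universal property of the absolute module of differentials these correspond to $A$-linear maps $\Omega_{A|\calP}\to M$ (with $A$ acting on $M$ through $\phi$), equivalently to $B$-linear maps $B\otimes_A\Omega_{A|\calP}\to M$. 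The main obstacle throughout is simply bookkeeping the simultaneous multiplicative and additive structures in the abstract monoidal setting; once the relevant diagrams are written down, each identification is essentially tautological.
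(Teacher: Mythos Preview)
Your proposal is correct and follows essentially the same route as the paper: both verify that $M\mapsto B\ltimes M$ lands in abelian group objects, both use the Beck/Eckmann--Hilton argument that compatibility of the group addition with ring multiplication forces the product on the augmentation ideal to vanish, and both identify $Q_B(A)=I/I^2$ by the same representability computation. The only noticeable difference is in part~(4): you invoke the universal property of K\"ahler differentials directly (derivations $A\to M$ correspond to $A$-linear maps $\Omega_{A|\calP}\to M$), whereas the paper instead reduces~(4) to~(5) via the identity $D_B(A)=Q_B(B\otimes A)$ together with a direct check that $B\otimes_A(J/J^2)\cong I/I^2$ for $I=\ker(B\otimes A\to B)$. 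Both arguments are standard and equally short; yours is perhaps slightly more conceptual, while the paper's avoids having to establish the derivation--differential correspondence in the abstract monoidal setting.
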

\begin{proof}
These statements are standard in the case where $\calP$ is the theory of commutative rings, and the same proofs carry over. That $M\mapsto B\ltimes M$ yields an equivalence from $\LMod_B^\heart$ to the category of square-zero extensions of $B$ is clear, with inverse sending a square-zero extension to its augmentation ideal. The categories $\Ab(\CRing_\calP^\heart/B)$ and $\Ab(B/\CRing_\calP^\heart/B)$ are equivalent as abelian groups are pointed. The category $\Ab(B/\CRing_\calP^\heart/B)$ may be identified as the category of square-zero extensions of $B$ as follows. Fix $A\in\Ab(B/\CRing_\calP^\heart/B)$. Additively we may split $A = B\oplus M$, and we must show that in fact $A\cong B\ltimes M$ multiplicatively. Because $A$ is a $B$-ring, it follows that $M$ is a $B$-module, and the multiplication on $A$ is necessarily determined by the multiplication on $B$, the $B$-module structure of $M$, and some map $M\otimes M\rightarrow M$ which we must show is zero. Fixing $P,P'\in\calP$, we must show that the map $m\colon M(P)\otimes M(P')\rightarrow M(P\otimes P')$ is zero. Write $\mu\colon A\times_B A\rightarrow A$ for the abelian group object multiplication on $A$; we will only use the fact that $\mu$ is a unital pairing. As $\mu$ is unital, the maps $\mu\colon A(P)\times_{B(P)}A(P)\rightarrow A(P)$ satisfy $\mu(x,0) = x = \mu(0,x)$ for $x\in M(P)$. Thus for $x\in M(P)$ and $x'\in M(P')$, we may compute $m(x\otimes x') = m(\mu(x,0)\otimes \mu(0,x')) = \mu(m(x,0)\otimes m(0,x')) = \mu(0,0) = 0$.

So we have shown the categories of (1)--(3) to be equivalent, and it remains only to verify the claims of (4) and (5). Write $D_B$ and $Q_B$ for the abelianization functors in question, and $D_B'$ and $Q_B'$ for their proposed descriptions. Then $D_B(A) = Q_B(B\otimes A)$ for $A\in\CRing_\calP^\heart/B$, and we claim first that also $D_B'(A) = Q_B'(B\otimes A)$. By definition, $Q_B(B\otimes A) = I/I^2$ where $I = \ker(B\otimes A\rightarrow B) = \ker(B\otimes_A (A\otimes A)\rightarrow B\otimes_A A)$. As $A\otimes A\rightarrow A$ admits an $A$-linear splitting, we can pull $B$ out to get $I = B\otimes_A J$ where $J = \ker(A\otimes A\rightarrow A)$. Thus $D_B'(A) = B\otimes_A (J/J^2) = I/I^2 = Q_B'(B\otimes A)$ as claimed. So it is sufficient to verify just that $Q_B = Q_B'$. Fix $A\in B/\CRing_\calP^\heart/B$, and write $A = B\oplus I$ additively, so that $Q_B'(A) = I/I^2$. Let $B\ltimes M$ be some square-zero extension of $B$. Then maps $A\rightarrow B\ltimes M$ in $B/\CRing_\calP^\heart/B$ are equivalent to maps $I\rightarrow M$ of nonunital $B$-rings. As $M$ has trivial multiplication, this factors uniquely through the quotient nonunital ring $I/I^2$, which has trivial multiplication. We find that the quotient ring $B\ltimes I/I^2$ of $A$ is the square-zero extension of $B$ associated to $Q_B(A)$, and thus $Q_B(A) = I/I^2 = Q_B'(A)$.
\end{proof}

We now turn to considering a general $\calP$-plethory $\Lambda$. Observe that $\bbone$, the unit of $\LMod_\calP^\heart$, is naturally a $\Lambda$-ring by the unique map $\bbone\rightarrow \Lambda^\vee(\bbone)$ of $\calP$-rings. Equivalently, $\bbone$ is a $\Lambda$-ring by the fact that $\Ring_\Lambda^\heart\rightarrow\CRing_\calP^\heart$ preserves the empty colimit. It follows that there is a good category of $\Ring_\Lambda^{\aug,\heart}=\Ring_\Lambda^\heart/\bbone$ of augmented $\Lambda$-rings. For all $P\in\calP$, the map $P\rightarrow 0$ of $\calP$-modules gives a map $\Lambda_P\rightarrow\Lambda_0 = \bbone$ of $\Lambda$-rings, so we can regard each $\Lambda_P$ as an object of $\Ring_\Lambda^{\aug,\heart}$. Define now
\[
\Delta(\Lambda)\colon \calP\rightarrow\LMod_\calP^\heart,\qquad \Delta(\Lambda)_{P,P'} = Q(\Lambda_P)_{P'}.
\]
We call $\Delta(\Lambda)$ the \textit{cotangent algebra} of $\Lambda$. The functor $\Delta(\Lambda)$ preserves coproducts, so can be regarded as a $\calP$-bimodule; it is a $\calP$-algebra by the following.

\begin{theorem}\label{thm:plethaq}
\hphantom{blank}
\begin{enumerate}
\item The category $\Ab(\Ring_\Lambda^{\aug,\heart})$ is equivalent to the full subcategory of $\Ring_\Lambda^{\aug,\heart}$ spanned by those $\Lambda$-rings whose underlying $\calP$-ring is a square-zero extension of $\bbone$. Moreover, the diagram
\begin{center}\begin{tikzcd}
\Ab(\Ring_\Lambda^{\aug,\heart})\ar[r]\ar[d]&\LMod_\calP^\heart\ar[d]\\
\Ring_\Lambda^{\aug,\heart}\ar[r]&\CRing_\calP^{\aug,\heart}
\end{tikzcd}\end{center}
is distributive.
\item The underlying $\calP$-bimodule of the $\calP$-algebra associated to the plethystic forgetful functor $\Ab(\Ring_\Lambda^{\aug,\heart})\rightarrow\LMod_\calP^\heart$ is given by $\Delta(\Lambda)$.
\item Fix $B\in\Ring_\Lambda^\heart$, so that $B/\Ring_\Lambda^\heart\simeq \Ring_{B\otimes\Lambda}^\heart$ as in \cref{ex:slicedist}, so that $B/\Ring_\Lambda^\heart/B \simeq \Ring_{B\otimes\Lambda}^{\aug,\heart}$. Then as a $\calP$-algebra, $\Delta_B(B\otimes\Lambda)\cong B\otimes\Delta(\Lambda)$ is a composition of the monad $B\otimes\bs$ with $\Delta(\Lambda)$. Moreover, the diagram
\begin{center}\begin{tikzcd}
\Ab(\Ring_\Lambda^\heart/B)\ar[r]\ar[d]&\Ab(\CRing_\calP^\heart/B)\ar[d]\\
\Ring_\Lambda^\heart/B\ar[r]&\CRing_\calP^\heart/B
\end{tikzcd}\end{center}
is distributive.
\end{enumerate}
\end{theorem}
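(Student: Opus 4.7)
The strategy is to apply \cref{prop:abelianizecomonad} to the comonad $\Lambda^\vee$ on $\CRing_\calP^{\aug,\heart}$, with \cref{lem:classiccotangent} providing the classical description of abelianization for $\calP$-rings as input. For part~(1), by \cref{lem:expmon} the comonad $\Lambda^\vee$ preserves all limits, and hence finite products, so \cref{prop:abelianizecomonad} applies. Its part~(5) combined with the equivalence $\Ab(\CRing_\calP^{\aug,\heart})\simeq\LMod_\calP^\heart$ from \cref{lem:classiccotangent} yields the distributive square. For the identification of abelian $\Lambda$-ring objects as square-zero extensions, note that $\Ab(\CRing_\calP^{\aug,\heart}) \to \CRing_\calP^{\aug,\heart}$ is fully faithful (as the inclusion of square-zero extensions, by \cref{lem:classiccotangent}), so \cref{prop:abelianizecomonad}(3) exhibits $\Ab(\Ring_\Lambda^{\aug,\heart})$ as the pullback, i.e., as the subcategory of $\Ring_\Lambda^{\aug,\heart}$ whose underlying $\calP$-ring is a square-zero extension of $\bbone$.

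For part~(2), the underlying $\calP$-bimodule $F$ of the $\calP$-algebra in question is computed by its value on $P\in\calP$. Distributivity from part~(1) shows that the monad on $\LMod_\calP^\heart$ associated to the plethystic forgetful $\Ab(\Ring_\Lambda^{\aug,\heart}) \to \LMod_\calP^\heart$ factors as classical abelianization $Q$ in $\CRing_\calP^{\aug,\heart}$ composed with the free-augmented-$\Lambda$-ring functor. The latter sends $P$ to $\Lambda_P$ (augmented via the map $\Lambda_P\to\bbone$ induced by $P\to 0$), and \cref{lem:classiccotangent}(5) then gives $F(P) = Q(\Lambda_P) = \Delta(\Lambda)(P)$ by definition.

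For part~(3), I would apply parts~(1) and~(2) to the $B$-plethory $B\otimes\Lambda$ associated via \cref{ex:slicedist} to the plethystic slice $B/\Ring_\Lambda^\heart \to B/\CRing_\calP^\heart$; this produces the distributive square directly. For the identification of $\Delta_B(B\otimes\Lambda)$, the underlying $\calP$-bimodule computation is direct: $(B\otimes\Lambda)_P \cong B\otimes\Lambda_P$ (coproducts in $\Ring_\Lambda^\heart$ are tensor products), augmented over $B$ with augmentation ideal $I = B\otimes\ol\Lambda_P$ where $\ol\Lambda_P = \ker(\Lambda_P\to\bbone)$, so \cref{lem:classiccotangent}(5) yields
\[
Q_B(B\otimes\Lambda_P) = I/I^2 = B\otimes(\ol\Lambda_P/\ol\Lambda_P^2) = B\otimes\Delta(\Lambda)(P).
\]
The main obstacle is then promoting this bimodule identification to an isomorphism of $\calP$-algebras exhibiting $\Delta_B(B\otimes\Lambda)$ as the composition of $B\otimes\bs$ with $\Delta(\Lambda)$. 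I would produce the required distributive law $\Delta(\Lambda)\circ(B\otimes\bs) \to (B\otimes\bs)\circ\Delta(\Lambda)$ by restricting the distributive law of \cref{ex:slicedist} (which presents $B\otimes\Lambda$ as the composition of $B\otimes\bs$ with $\Lambda$) to additive operations, and verify its agreement with the intrinsic $\calP$-algebra structure of $\Delta_B(B\otimes\Lambda)$ by naturality of $\Delta$ applied to the plethory map $\Lambda\to B\otimes\Lambda$, combined with the universal property of the composite monad.
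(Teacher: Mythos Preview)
Your proof is correct and follows precisely the approach the paper has in mind: the paper's own proof reads in its entirety ``Given \cref{lem:classiccotangent}, these are just specializations of the general theory of \cref{ssec:abalg},'' and your argument is exactly the detailed unpacking of that sentence, applying \cref{prop:abelianizecomonad} to the plethystic situation with \cref{lem:classiccotangent} supplying the classical identification of abelianization for $\calP$-rings. One small quibble: limit-preservation of $\Lambda^\vee$ comes from the definition of an $S\calP$-algebra (\cref{def:algebras}) rather than \cref{lem:expmon} per se, but this does not affect the substance.
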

\begin{proof}
Given \cref{lem:classiccotangent}, these are just specializations of the general theory of \cref{ssec:abalg}.
\end{proof}

\begin{rmk}
The composite
$
\Gamma(\Lambda)\rightarrow\Lambda\rightarrow\Delta(\Lambda)
$
is a map of $\calP$-algebras.
\tqed
\end{rmk}

\begin{rmk}
The constructions of $\Gamma(\Lambda)$ and $\Delta(\Lambda)$ are formally dual: if we additively split $\Lambda_P = \widetilde{\Lambda}_P\oplus\bbone$, then
\begin{align*}
\Delta(\Lambda)_P &= \coker(\widetilde{\Lambda}_{P\oplus P}\rightarrow\widetilde{\Lambda}_P)\\
\Gamma(\Lambda)_P &= \ker(\widetilde{\Lambda}_P\rightarrow\widetilde{\Lambda}_{P\oplus P}),
\end{align*}
the maps being obtained from the codiagonal $P\oplus P\rightarrow P$ and diagonal $P\rightarrow P\oplus P$ respectively. In other words, $\Delta(\Lambda)$ is the linearization of the functor $\Lambda$, and dually we might call $\Gamma(\Lambda)$ the colinearization of $\Lambda$.
\tqed
\end{rmk}

\begin{ex}\label{ex:theta}
Let $\Lambda$ be the $\bbZ$-plethory of $\theta$-rings, also known as $\delta$-rings \cite{joyal1985delta}, as well as by other names. In brief, $\theta$-rings are commutative rings $B$ equipped with an operation $\theta\colon B\rightarrow B$ satisfying all the identities necessary to make
\[
\psi(b) = b^p + p \theta(b)
\]
generically a ring map. The underlying commutative ring of $\Lambda$ can be identified as
\[
\Lambda = \bbZ[\theta_n : n\geq 0],
\]
where $\theta_n = \theta^{\circ n}$. Here, we are making use of the correspondence between elements of the ring $\Lambda$ and natural operations on $\theta$-rings, so for example the operation $\psi$ is given by the element
\[
\psi = \theta_0^p + p \theta_1.
\]
The operation $\psi$ freely generates the additive operations, and we can identify $\Gamma(\Lambda) = \bbZ[\psi]$ as a $\bbZ$-algebra. As $\psi$ is a ring homomorphism, the cobialgebroid structure is
\[
\epsilon^\times(\psi) = 1,\qquad \Delta^\times(\psi) = \psi\otimes\psi.
\]
Evidently $\Delta(\Lambda) = \bbZ[\theta]$ as a $\bbZ$-algebra, and the map $\Gamma(\Lambda)\rightarrow\Delta(\Lambda)$ is given by
\[
\bbZ[\psi]\rightarrow\bbZ[\theta],\qquad \psi\mapsto p \theta.
\]

Now say $B$ is a $\theta$-ring, so that $B\otimes\Lambda$ is a $B$-plethory. The general recipe of \cref{ex:slicedist} for computing the plethory structure on $B\otimes\Lambda$ translates into the following. Note $B\otimes\Lambda = B[\theta_n : n \geq 0]$, and it is sufficient to determine the composition $\theta_1 \circ b$ for $b\in B$. As an element of $B\otimes\Lambda$, the element $\theta_1\circ b$ represents the natural operation
\[
(\theta_1\circ b)(a) = \theta(b\cdot a) = \theta(b) a^p + b^p \theta(a) + p \theta(b)\theta(a),
\]
defined for $\theta$-rings under $B$. Thus we have
\[
\theta_1\circ b = \theta_1(b)\theta_0^p + b^p \theta_1 + p \theta(b) \theta_1.
\]
This is nothing but a specialization of the general formula
\[
(b\otimes\sigma)\circ(b'\otimes\sigma') = \sum b \sigma_{(1)}^\times(b')\otimes \sigma_{(2)}^\times\circ \sigma'.
\]

We can identify $\Gamma(B\otimes\Lambda) = B\otimes\Gamma(\Lambda) = B[\psi]$ as $\bbZ$-bimodules. For clarity, write $\psi_1 = \psi$ as an element of $B\otimes\Gamma(\Lambda)$. The algebra structure is determined by the general distributive law of \cref{rmk:distributecobialgebroid}, which is in turn described by \cref{ex:semitensor}, and thus
\[
\Gamma(B\otimes\Lambda) = B\langle \psi_1\rangle/(\psi_1\cdot b = \psi(b)\cdot\psi_1)
\]
as a $B$-algebra. Similarly $\Delta(B\otimes\Lambda) = B\otimes\Delta(\Lambda) = B[\theta]$ as $\bbZ$-bimodules, but as $\Delta(\Lambda)$ does not carry a coproduct, the algebra structure cannot be determined in the same way. There are two good ways to proceed. The method that works in general is to observe that the composition law on $B\otimes\Lambda$ implies that $\theta\circ b \equiv \psi(b)\theta$ mod indecomposables, and thus
\[
B\otimes\Delta(\Lambda) = B\langle\theta\rangle / (\theta\cdot b = \psi(b)\cdot\theta).
\]
The second method is to observe that the existence of an algebra map
\[
B\langle \psi_1\rangle/(\psi_1\cdot b = \psi(b)\cdot \psi_1)\rightarrow B\otimes\bbZ[\theta],\qquad \psi_1\mapsto p \theta
\]
determines the algebra structure on $B\otimes \bbZ[\theta]$, at least when $B$ is $p$-torsion free.
\tqed
\end{ex}

Call $\Lambda$ \textit{smooth} when its associated $S\calP$-algebra is smooth in the sense of \cref{ssec:abalg}; that is, if $\Lambda(P)\in\CRing_\calP$ is smooth for all $P\in\calP$. When $\Lambda$ is smooth, \cref{prop:smoothab} and \cref{thm:grothss} allow one to split computations of the Quillen cohomology of $\Lambda$-rings into computations of the Quillen cohomology of $\calP$-rings together with computations of $\Ext$ over the additive $\calP$-algebra $\Delta(\Lambda)$.

It is convenient to have a relative version of this. Given a map $\Lambda'\rightarrow\Lambda$ of $\calP$-plethories, $\Lambda$ may be viewed as an algebra for the theory of $\Lambda'$-rings, and we say that $\Lambda$ \textit{smooth relative to $\Lambda'$} when it is smooth in this sense.

\begin{ex}\label{ex:alternating}
Let $R$ be an ordinary commutative ring, and $\calP$ the theory of $\bbZ$-graded $R$-modules, regarded as a symmetric monoidal theory with symmetrizer employing the Koszul sign rule. Then the category $\CRing_{R_\ast}^\heart\simeq\CMon(\Mod_{R_\ast}^\heart)$ may be identified as the category of ordinary $\bbZ$-graded $R$-algebras $B$ such that
\[
bb' = (-1)^{|b||b'|}b'b
\]
for all $b,b'\in B$. In particular, if we write $R\{e_n\}$ for a copy of $R$ in degree $n$, then
\[
SR\{e_n\} = R[e_n]/((1-(-1)^n)e_n^2).
\]
When $2$ is neither zero nor a unit in $R$, the monad $S$ need not preserve projective objects, and the homotopy theory of simplicial $R_\ast$-rings may not behave as one would like. One fix is to instead work with \textit{alternating $R_\ast$-algebras}, i.e.\ those $B\in\CRing_{R_\ast}^\heart$ such that $b^2 = 0$ when $|b|$ is odd. Denote this category by $\Ring_{R_\ast}^\heart$. Then the inclusion $\Ring_{R_\ast}^\heart\rightarrow\CRing_{R_\ast}^\heart$ is plethystic, and moreover $\Ab(\Ring_{R_\ast}^\heart/B) \simeq \Mod_{B_\ast}^\heart\simeq\Ab(\CRing_{R_\ast}^\heart/B)$, so everything we have done for $R_\ast$-plethories carries over verbatim to the relative setting over $\Ring_{R_\ast}^\heart$.
\tqed
\end{ex}

\subsection{Suspension maps}\label{ssec:susps}

We record here a definition of an additional piece of structure present in plethories that encode homotopy operations. Fix a symmetric monoidal additive theory $\calP$ as before. Let $E$ be an object of $\calP$ which is invertible under the tensor product. Then there is an automorphism of the category of endofunctors of $\LMod_\calP$ given by
\[
H\mapsto H^E,\qquad H^E(M) = E\otimes H(E^{-1}\otimes M).
\]
This is compatible with compositions of endofunctors, and so preserves monads and comonads. In addition, it preserves bimodules, and $(H^E)^\vee = (H^{\vee})^{ E^{-1}}$.

\begin{defn}\label{def:suspensions}
\hphantom{blank}
\begin{enumerate}
\item If $F$ is a $\calP$-algebra, we say that $F$ is equipped with \textit{$E$-suspensions} if we have chosen a map $\sigma\colon F^E\rightarrow F$ of algebras.
\item If $\Gamma$ is a $\calP$-bialgebroid, we say that $\Gamma$ is equipped with \textit{$E$-suspensions} if we have equipped the underlying algebra of $\Gamma$ with $E$-suspensions in such a way that for all $M\in\LMod_\calP^\heart$, the diagram
\begin{center}\begin{tikzcd}
\Gamma^E(M)\ar[rr]\ar[d,"="]&&\Gamma(M)\\
E\otimes\Gamma(E^{-1}\otimes M)\ar[r,"E\otimes\Delta^\times"]&E\otimes \Gamma(E^{-1})\otimes\Gamma(M)\ar[r,"\sigma\otimes\Gamma(M)"]&\Gamma(\bbone)\otimes\Gamma(M)\ar[u,"\epsilon^\times\otimes\Gamma(M)"']
\end{tikzcd}\end{center}
commutes.
\item If $\Lambda$ is a $\calP$-plethory, we say that $\Lambda$ is equipped with \textit{$E$-suspensions} if we have chosen a map $\sigma\colon\Delta(\Lambda)^E\rightarrow\Gamma(\Lambda)$ of algebras such that the composite $\Gamma(\Lambda)^E\rightarrow\Delta(\Lambda)^E\rightarrow\Gamma(\Lambda)$ equips $\Gamma(\Lambda)$ with $E$-suspensions.
\tqed
\end{enumerate}
\end{defn}

This definition is not intended to cover all cases where one may wish to speak of suspension maps, but only those needed for the examples we will encounter. In all of our explicit examples, $\LMod_\calP$ will be a category of $\bbZ$-graded objects. Here we will always take $E$ to be a copy of the monoidal unit in degree $1$, and will just say ``equipped with suspensions'', as we trust no confusion should arise.

\begin{ex}
Let $k$ be an ordinary commutative ring, and consider the theory of $\bbZ$-graded left $k$-modules. Let $E$ denote a copy of $k$ in degree $1$. If $B$ is an ordinary $\bbZ$-graded $k$-algebra, then underlying monad of $B$ is equipped with $E$-suspensions given by the identifications
\[
E\otimes B \otimes E^{-1}\otimes M = B\otimes M.
\]
As remarked in \cref{ex:cohord}, when $B$ is augmented, this is the sort of structure needed to define $H^\ast(B)$ as an ordinary $\bbZ$-graded algebra.
\tqed
\end{ex}

Given an object $T$ equipped with a suspension map $\sigma\colon T^E\rightarrow T$, we can define the \textit{costabilization} of $T$ to be $\lim_{n\rightarrow\infty} T^{E^{n}}$. This is a monad when $\sigma$ is a map of monads.

\begin{ex}
Let $\calU$ be the algebra for the theory $\bbZ$-graded $\bbF_2$-modules whose modules are the unstable modules over the mod $2$ Steenrod algebra. Then $\calU$ is naturally equipped with suspensions $\sigma\colon \calU^E\rightarrow\calU$ given by $\sigma(\Sq^I) = \Sq^I$, with the understanding that this element may be zero in the target even when nonzero in the source. The costabilization of $\calU$ is isomorphic to the mod $2$ Steenrod algebra.
\tqed
\end{ex}

\section{\texorpdfstring{$\bbE_\infty$}{E-infinity} rings over \texorpdfstring{$\bbF_p$}{Fp}}\label{sec:p}

In this section we describe what the content of the preceding sections looks like in the context of power operations for $\bbE_\infty$ algebras over $\bbF_p$.

Throughout this section, we will write $e_a$ for a generic $\bbZ$-graded module generated by an element in degree $a$. 

\subsection{Plethories of power operations}\label{ssec:ppleth}

To illustrate the relevant ideas, we show how one can identify that a plethory of mod $p$ power operations exists, even before the hard work of computing its structure has been carried out. Our work is simplified by the fact that $\bbF_p$ is a field, but the approach taken here generalizes to other contexts.

Let $\Mod_{\bbF_p}$ denote the category of $H\bbF_p$-module spectra; we will just call these $\bbF_p$-modules. This is a symmetric monoidal category under $\otimes = \otimes_{\bbF_p}$. Let $\bbP$ denote the free $\bbE_\infty$ algebra monad on $\Mod_{\bbF_p}$,
\[
\bbP V = \bigoplus_{n\geq 0}\bbP_n V,\qquad \bbP_n V = V^{\otimes n}_{h\Sigma_n},
\]
and $\CAlg_{\bbF_p}$ the resulting category of $\bbE_\infty$ algebras over $\bbF_p$. Let $\CAlg_{\bbF_p}^\free\subset\CAlg_{\bbF_p}$ denote the essential image of $\bbP$. Both $\Mod_{\bbF_p}$ and $\CAlg_{\bbF_p}^\free$ are theories, and thus so are their homotopy categories $\h\Mod_{\bbF_p}$ and $\h\CAlg_{\bbF_p}^\free$. 

The theory $\h\Mod_{\bbF_p}$ is easily identified:
\[
\h\Mod_{\bbF_p}\simeq\Mod_{\bbF_{p\ast}}^\heart,\qquad \LMod_{\h\Mod_{\bbF_p}}\simeq\Mod_{\bbF_{p\ast}}.
\]
Here $\Mod_{\bbF_{p\ast}}$ is the (derived) category of $\bbZ$-graded $\bbF_p$-modules; we will just call these $\bbF_{p\ast}$-modules. Moreover, the symmetric monoidal structure on $\h\Mod_{\bbF_{p}}$ obtained from that on $\Mod_{\bbF_p}$ is exactly the standard symmetric monoidal structure on $\Mod_{\bbF_{p\ast}}^\heart$ with symmetrizer obeying the Koszul sign rule. 

By contrast, the theory $\h\CAlg_{\bbF_p}^\free$ is more complicated. Abstractly, one can say that it is exactly the theory of operations acting on the homotopy groups of $\bbE_\infty$ algebras over $\bbF_p$; for example, following \cref{prop:yoneda}, there are isomorphisms
\[
\Hom_{\Fun(\CAlg_{\bbF_p},\Set)}(\pi_i,\pi_j) \cong\pi_j\bbP\Sigma^i \bbF_p\cong\Hom_{h\CAlg_{\bbF_p}^\free}(\bbP\Sigma^j\bbF_p,\bbP\Sigma^i\bbF_p).
\]

By construction, $\bbP$ induces a map $\h\Mod_{\bbF_p}\rightarrow\h\CAlg_{\bbF_p}^\free$ of theories, and restriction along this makes the category of $\h\CAlg_{\bbF_p}^\free$-models strongly monadic over $\Mod_{\bbF_{p\ast}}$, i.e.\ we may view $\h\CAlg_{\bbF_p}^\free$-models as $\bbF_{p\ast}$-modules equipped with some extra structure. Write $\DL$ for the associated monad on $\Mod_{\bbF_{p\ast}}$. The following is then a consequence of basic properties of $\bbP$.

\begin{prop}
The natural isomorphisms $\bbP(U\oplus V)\simeq\bbP U\otimes\bbP V$ give natural isomorphisms $\pi_\ast \bbP(U\oplus V)\simeq \pi_\ast\bbP U\otimes\pi_\ast \bbP V$ which equip $\DL$ with the structure of an exponential monad on $\Mod_{\bbF_{p\ast}}^\heart$, and thus $\DL$ is a $\bbF_{p\ast}$-plethory. Moreover, the natural maps $\Sigma \bbP_n V\rightarrow \bbP_n \Sigma V$ defined for $n\geq 1$ equip $\DL$ with suspensions.
\qed
\end{prop}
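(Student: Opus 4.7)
The approach is to reduce the proposition to a direct application of \cref{lem:expmon}, which characterizes $\bbF_{p\ast}$-plethories as exponential monads on $\Mod_{\bbF_{p\ast}}^\heart$. Concretely, we must produce the strong monoidal structure on $\DL$, verify that $\DL$ preserves filtered colimits and reflexive coequalizers, and check that the compatibility diagram in condition~(2) of \cref{lem:expmon} endows $\DL(X)\otimes\DL(Y)$ with a $\DL$-algebra structure. The single piece of homotopical input that is not purely formal is the K\"unneth isomorphism for $H\bbF_p$-module spectra.

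First I would establish the K\"unneth isomorphism. The natural equivalence $\bbP(U\oplus V)\simeq\bbP U\otimes\bbP V$ is formal, since $\oplus$ is the coproduct in $\Mod_{\bbF_p}$ and $\otimes$ is the coproduct in $\CAlg_{\bbF_p}$, and $\bbP$ is a left adjoint. Passing to homotopy groups, because $\bbF_p$ is a field the K\"unneth spectral sequence for the relative tensor product of $H\bbF_p$-modules collapses to an isomorphism $\pi_\ast(X\otimes Y)\cong\pi_\ast X\otimes_{\bbF_{p\ast}}\pi_\ast Y$, and the Koszul sign convention built into $\h\Mod_{\bbF_p}\simeq\Mod_{\bbF_{p\ast}}^\heart$ matches the symmetrizer used to define $\CRing_{\bbF_{p\ast}}^\heart$. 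Composing, we get the required natural isomorphism $\DL(U\oplus V)\simeq \DL(U)\otimes\DL(V)$, and this is plainly unital and symmetric, giving a strong symmetric monoidal structure $\DL\colon(\Mod_{\bbF_{p\ast}}^\heart,\oplus)\to(\Mod_{\bbF_{p\ast}}^\heart,\otimes)$.

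Next I would verify the remaining hypotheses of \cref{lem:expmon}. Preservation of filtered colimits and reflexive coequalizers follows because $\bbP$ preserves them (as a left adjoint to $\CAlg_{\bbF_p}\to\Mod_{\bbF_p}$, which is conservative and preserves filtered colimits and geometric realizations) and because $\pi_\ast$ commutes with filtered colimits and reflexive coequalizers of connective $H\bbF_p$-modules. For the compatibility diagram, the point is that $\bbP U\otimes\bbP V\simeq\bbP(U\oplus V)$ is itself a free $\bbE_\infty$-algebra, hence carries a canonical $\bbP$-algebra structure; the resulting $\DL$-algebra structure on $\pi_\ast(\bbP U\otimes\bbP V)\cong \DL(U)\otimes\DL(V)$ is the one described by the diagram, essentially by construction of both sides from the universal map $\bbP\bbP\to\bbP$. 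With all conditions verified, \cref{lem:expmon} promotes $\DL$ to an $S\h\Mod_{\bbF_p}$-algebra, equivalently an $\bbF_{p\ast}$-plethory.

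Finally, for suspensions I would unpack \cref{def:suspensions}(3). The natural maps $\Sigma\bbP_n V\to\bbP_n\Sigma V$ for $n\geq 1$ assemble, after discarding the $n=0$ summand, to a map $\Sigma\overline{\bbP V}\to\overline{\bbP\Sigma V}$; applying $\pi_\ast$ gives a natural transformation $E\otimes\overline{\DL}(V)\to\overline{\DL}(E\otimes V)$ on augmented $\DL$-values, where $E$ denotes a copy of $\bbF_{p\ast}$ in degree~$1$. Postcomposing with the projection to indecomposables in the source and extracting the additive part in the target yields the desired map $\sigma\colon\Delta(\DL)^E\to\Gamma(\DL)$. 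Compatibility with the algebra structure reduces to the fact that the maps $\Sigma\bbP_n V\to\bbP_n\Sigma V$ are themselves natural in $V$ and compatible with the multiplication $\bbP_n\bbP_m\to\bbP_{nm}$, so that the composite $\Gamma(\DL)^E\to\Delta(\DL)^E\to\Gamma(\DL)$ agrees with the $E$-suspension prescribed in \cref{def:suspensions}(2). The main obstacle, and the one requiring the most care, is precisely this last identification: matching the topologically defined suspension on power operations with the algebraically defined composite through $\Delta(\DL)$, which is a matter of tracking how additive operations sit inside the full power operation plethory after suspension.
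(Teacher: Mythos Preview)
The paper offers no proof of this proposition: the trailing \qed\ signals that it is regarded as immediate from the preceding discussion, which is exactly the sentiment you express in your opening sentence. Your elaboration is correct and simply unpacks what the paper leaves implicit.

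Two minor points. First, your justification that $\DL$ preserves filtered colimits and reflexive coequalizers via $\bbP$ and $\pi_\ast$ is slightly roundabout; the monad $\DL$ is by construction the monad associated to the discrete theory $\h\CAlg_{\bbF_p}^\free$ over $\Mod_{\bbF_{p\ast}}^\heart$, and any such monad preserves sifted colimits automatically (cf.\ \cref{prop:monadictheory}). Second, your treatment of suspensions is a bit tangled in the phrasing ``projection to indecomposables in the source and extracting the additive part in the target''. What actually happens is that the map $E\otimes\widetilde{\DL}(E^{-1}P)\to\widetilde{\DL}(P)$ induced by $\Sigma\bbP_n V\to\bbP_n\Sigma V$ kills decomposables in the source (suspension annihilates products) and lands in the primitives of the target (suspension classes are additive); this is precisely what is needed for the map to descend to $\Delta(\DL)^E\to\Gamma(\DL)$. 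Once stated this way, the compatibility required by \cref{def:suspensions} is clear, and your closing worry about ``matching the topologically defined suspension with the algebraically defined composite'' dissolves: there is nothing to match, as both are the same map read through the identifications.
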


\begin{rmk}
As all $\bbF_p$-modules are free, $\Ring_\DL^\heart$ may also be identified as the category of $\bbH_\infty$ algebras over $\bbF_p$.
\tqed
\end{rmk}

This is as far as purely formal considerations can take us; to get further one needs real knowledge of the structure of mod $p$ power operations.

\subsection{Dyer-Lashof operations}\label{ssec:dl}

Our goal now is to recall the structure of mod $p$ power operations, and package it into the plethystic framework. We find it most convenient to proceed by introducing some of the relevant algebra first. We begin by recalling a certain algebra $\calB$ of power operations; this algebra has various names in the literature, such as the big, or generalized, Steenrod algebra, or the Kudo-Araki-May algebra. It is essentially the algebra of all generalized Steenrod operations in the sense of \cite{may1970general}, only with different naming conventions.

We follow the convention that the binomial coefficient $\binom{n}{m}$ vanishes unless $0\leq m \leq n$.

\begin{defn}[$p=2$]
$\calB$ is the ordinary $\bbZ$-graded associative $\bbF_2$-algebra generated by symbols $Q^s$ of degree $s$ for all $s\in \bbZ$, and subject to the relations
\[
Q^{2s+r+1}Q^s = \sum_{0\leq i < \frac{r}{2}}\binom{r-i-1}{i}Q^{2s+i+1}Q^{r+s-i}
\]
for $r\geq 0$. Here, the bounds of summation are not necessary, but indicate when the binomial coefficients may be nonzero. Given a sequence $I = (r_1,\ldots,r_k)$, write $Q^I = Q^{r_1}\cdots Q^{r_k}$, and call $I$ and $Q^I$ \textit{admissible} if $r_i\leq 2r_{i+1}$ for each $i$. Define the \textit{excess} of $I$ by $e(I) = r_1-r_2-\cdots - r_k$. Given an integer $u$, call a $\calB$-module $M$ \textit{$u$-unstable} if $Q^r m = 0$ for any $m\in M$ with $r<|m|+u$; when $u=0$ we omit it from the name and notation.
\tqed
\end{defn}

\begin{defn}[$p>2$]
$\calB$ is the ordinary $\bbZ$-graded associative $\bbF_p$-algebra generated by symbols $Q^s_\epsilon$ of degree $2s(p-1)-\epsilon$ for $\epsilon\in\{0,1\}$ and $s\in\bbZ$, and subject to the relations

\begin{align*}
Q^{ps+r+1}Q^s &= \sum_{0\leq i < \frac{p-1}{p}s}(-1)^{i+1}\binom{(p-1)(r-i)-1}{i}Q^{ps+i+1}Q^{r+s-i}\\
Q^{ps+r}Q^s_1 &= \sum_{0\leq i \leq \frac{p-1}{p}r}(-1)^i\binom{(p-1)(r-i)}{i}Q_1^{ps+i}Q^{r+s-i}\\
&+\sum_{0\leq i < \frac{p-1}{p}r}(-1)^{i+1}\binom{(p-1)(r-i)-1}{i} Q^{ps+i}Q^{r+s-i}_1\\
Q^{ps+r+1}_1Q^s &= \sum_{0\leq i < \frac{p-1}{p}r} (-1)^{i+1}\binom{(p-1)(r-i)-1}{i} Q^{ps+i+1}_1Q^{r+s-i}\\
Q^{ps+r}_1Q^s_1 &= \sum_{0\leq i < \frac{p-1}{p}r} (-1)^{i+1}\binom{(p-1)(r-i)-1}{i}Q^{ps+i}_1Q^{r+s-i}_1
\end{align*}
for $r\geq 0$, where we have abbreviated $Q^s = Q^s_0$. Here, the bounds of summation are not necessary, but indicate when the binomial coefficients may be nonzero. Given a sequence $I = (\epsilon_1,r_1,\ldots,\epsilon_k,r_k)$ in $\{0,1\}\times\bbZ$, write $Q^I = Q^{r_1}_{\epsilon_1}\cdots Q^{r_k}_{\epsilon_k}$, and call $I$ and $Q^I$ \textit{admissible} if $r_i\leq pr_{i+1}-\epsilon_{i+1}$ for each $i$. Define the \textit{length} of $I$ to be $k$ and the \textit{excess} of $I$ to be $e(I) = 2r_1-\epsilon_1-(2r_2(p-1)-\epsilon_2)-\cdots-(2r_k(p-1)-\epsilon_k)$. Given an integer $u$, call a $\calB$-module $M$ \textit{$u$-unstable} if $Q^r_\epsilon m = 0$ for any $m\in M$ with $2r-\epsilon < |m|+u$; when $u=0$, we omit it from the name and notation.
\tqed
\end{defn}

For any integer $u$, write $F^u$ for the free $u$-unstable $\calB$-module functor. Then $F^u$ is a quotient algebra of $\calB$, where here we refer to the general notion of algebra over a theory, as well as of $F^{u'}$ for $u'<u$. Write $E = e_1$, and for $M\in\Mod_{\bbF_{p\ast}}$, write $sM=E\otimes M$. If $M$ is an $F^u$-module, then $sM$ is an $F^{u-1}$-module, and this provides an isomorphism $(F^u)^E\cong F^{u-1}$; together with the quotient maps $F^{u-1}\rightarrow F^u$, this equips each algebra $F^u$ with suspensions, and for the most part reduces us to considering just $F = F^0$.

\begin{lemma}
\hphantom{blank}
\begin{enumerate}
\item $\calB$ has a basis consisting of $Q^I$ for all admissible sequences $I$;
\item $F(e_n)$ has a basis consisting of $Q^I e_n$ with $I$ admissible of excess at least $n$.
\end{enumerate}
\end{lemma}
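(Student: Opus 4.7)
The plan is to establish spanning and linear independence of the admissibles separately, handling both parts of the lemma simultaneously.

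For spanning, I would run the standard double induction. Order monomials $Q^I$ in the free algebra on the generators lexicographically by $(\mathrm{length}(I),\mu(I))$ for a suitable weighted moment $\mu$; at $p=2$ one may take $\mu(I)=\sum_i 2^{k-i} r_i$, and an analogous choice works at odd primes. The key verification is that each Adem relation rewrites a non-admissible adjacency as a linear combination of pairs of strictly smaller moment, which follows directly from the constraints on the ranges of summation (such as $i<(p-1)r/p$) built into each relation. Inductively, this reduces every element of $\calB$ to a linear combination of admissibles, giving spanning in (1). Applied in $F(e_n)$, the same reduction produces a spanning set of admissible $Q^I e_n$; a short length induction on an admissible $I=(\epsilon_1,r_1,I')$ using the identity $|Q^{I'}e_n|-(2r_1-\epsilon_1)=n-e(I)$ then shows that the instability relation kills precisely the admissibles with $e(I)<n$, giving spanning in (2).

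Linear independence is the main obstacle, and I would handle it by exhibiting a faithful target. For part (2), the natural quotient $F(e_n)\twoheadrightarrow \pi_\ast\bbP\Sigma^n\bbF_p$ of $F$-modules lands in a ring whose structure was computed by Dyer, Lashof, Nishida, and May: it is a free graded-commutative $\bbF_p$-algebra on generators $Q^I\iota_n$ indexed by admissible $I$ of excess $\geq n$, with a standard modification at odd primes for classes of excess exactly $n$ to account for $p$-th powers. Linear independence of these generators in the target is immediate and forces linear independence of the corresponding admissibles in $F(e_n)$. Part (1) follows from part (2) by assembling the $F(e_n)$ along the suspension identifications $(F^u)^E\cong F^{u-1}$ as $n\to-\infty$: every admissible in $\calB$ acts nontrivially on the fundamental class of $e_n$ for $n$ sufficiently negative, so the induced action of $\calB$ on $\colim_{n\to-\infty} E^{-n}\otimes F(e_n)$ is faithful.

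The main difficulty lies in the linear-independence step, which we import from the classical homotopy-theoretic computation of the homology of extended powers; the spanning argument is routine but somewhat intricate bookkeeping with the Adem relations. A fully algebraic alternative is available---essentially a diamond-lemma verification of all overlapping ambiguities, or an explicit construction of a Milnor-style dual of $\calB$---but is significantly more involved, and the appeal to the extended-power computation is both shorter and better aligned with the homotopical setting of the paper.
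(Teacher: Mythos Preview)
Your approach is correct. The paper does not supply its own argument but defers to \cite[Propositions 11.2 and 12.2]{mandell1998einfinity} and \cite[Lectures 6--7]{lurie2007sullivan}. The route sketched there, at least as the paper describes Lurie's treatment, is purely algebraic: linear independence is obtained from the corresponding fact for unstable modules over the Steenrod algebra $\calA$ via the quotient $\calB\to\calB/(Q^0=1)\cong\calA$ of \cref{ssec:unstablea}. You instead import the classical Dyer--Lashof--May computation of $\pi_\ast\bbP\Sigma^n\bbF_p$ as a faithful target for (2), then deduce (1) by letting $n\to-\infty$. Both work; the algebraic route keeps everything internal to the relation algebra (modulo the Steenrod-algebra input), while yours is shorter once one is willing to cite the extended-power computation and has the side benefit of making the injection $F(e_n)\hookrightarrow\pi_\ast\bbP\Sigma^n\bbF_p$ implicit in \cref{thm:pops} essentially tautological.

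One small terminological correction: the map $F(e_n)\to\pi_\ast\bbP\Sigma^n\bbF_p$ is not a quotient map; the target is a ring and the image is only the $\calB$-submodule generated by the fundamental class $\iota_n$. This does not affect your argument, which only uses that the elements $Q^I\iota_n$ with $I$ admissible and $e(I)\geq n$ are linearly independent in the target---and they are, being iterated $p$th powers of distinct polynomial (or exterior) generators.
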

\begin{proof}
This is \cite[Propositions 11.2 and 12.2]{mandell1998einfinity}; see also \cite[Lectures 6-7]{lurie2007sullivan} for a detailed algebraic proof when $p=2$ which, provided one assumes the analogous fact for unstable modules over the Steenrod algebra \cite[Proposition 1.6.2]{schwartz1994unstable}, generalizes to $p>2$.
\end{proof}

\begin{rmk}
The abelian category $\LMod_F^\heart$ has enough projectives, given by the free $F$-modules. In addition, the right adjoint $F^\vee$ supplies it with enough injectives. By definition,
\[
F^\vee(e_a)_b = \Mod_{\bbF_p}(F(e_b),e_a),
\]
so the injective modules $F^\vee(e_a)$ can be seen as analogues of the Brown-Gitler modules seen in the study of unstable modules over the Steenrod algebra.
\tqed
\end{rmk}

\begin{defn}
A \textit{$\DL$-ring} is a graded commutative $\bbF_{p\ast}$-ring equipped with an $F$-module structure such that
\begin{enumerate}
\item $Q^0 (1) = 1$, and otherwise $Q^r_\epsilon (1) = 0$;
\item 
\begin{enumerate}
\item If $p=2$, then $Q^r x = x^2$ when $r=|x|$,
\item If $p>2$, then $Q^r x = x^p$ when $2r=|x|$;
\end{enumerate}
\item
\begin{enumerate}
\item If $p\geq 2$, then $Q^r(xy) = \sum_{i+j=r}Q^i(x)Q^j(y)$,
\item If $p>2$, then $Q^r_1(xy) = \sum_{i+j=r}(Q^i_1(x)Q^j(y)+(-1)^{|x|}Q^i(x)Q^j_1(y))$.
\tqed
\end{enumerate}
\end{enumerate}
\end{defn}

From the definition, we see that $\DL$-rings are the models of a finite product theory living over $\Mod_{\bbF_{p\ast}}^\heart$; write the associated monad as $\DL$. Let $\DL_n$ denote the free $\bbF_{p\ast}$-ring on symbols $Q^I e_n$ where $I$ is an admissible sequence satisfying $e(I)>n$, graded so that $|Q^I e_n| = |Q^I|+n$. The action of $\calB$ on the canonical element of $\DL(e_n)_n$ gives a map $\DL_n\rightarrow\DL(e_n)$ of $\bbF_{p\ast}$-rings.

\begin{theorem}[{\cite[III.1.1, IX.2.1]{brunermaymccluresteinberger1986hinfinity}}]\label{thm:pops}
The structure of mod $p$ power operations can be summarized as follows.
\begin{enumerate}
\item The homotopy groups of any object of $\CAlg_{\bbF_p}$ naturally form a $\DL$-ring, and the resulting maps $\DL_n\rightarrow\DL(e_n)\rightarrow\pi_\ast\bbP\Sigma^n\bbF_p$ are all isomorphisms;
\item In particular, $\DL$ agrees with the $\bbF_{p\ast}$-plethory of \cref{ssec:ppleth}, and is a smooth $\bbF_{p\ast}$-plethory with suspensions;
\item There are isomorphisms $\Gamma(\DL)\cong F$ and $\Delta(\DL)\cong F^1$, and the suspension map $\sigma\colon \Delta(\DL)^E\rightarrow\Gamma(\DL)$ is given by the isomorphism $(F^1)^E\cong F$.
\qed
\end{enumerate}
\end{theorem}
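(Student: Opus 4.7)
The theorem packages the classical theory of mod $p$ power operations into the plethystic framework. My plan is to reduce part (1) to the Bruner--May--McClure--Steinberger basis theorem and then deduce (2) and (3) as formal consequences of (1) combined with \cref{thm:plethaq}, \cref{thm:plethcobi}, and \cref{ex:alternating}. The only genuinely hard input is the classical basis theorem; everything else is a matter of unpacking definitions.

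For (1), the operations $Q^s$ and $Q^s_1$ are constructed from the $\Sigma_p$-extended power $\bbP_p V = V^{\otimes p}_{h\Sigma_p}$ pulled back along $H_\ast(B\Sigma_p;\bbF_p)$, equipping $\pi_\ast A$ with an $F$-action for any $A\in\CAlg_{\bbF_p}$. The defining identities come from standard constructions: Adem relations from the wreath embedding $\Sigma_p\wr\Sigma_p\subset\Sigma_{p^2}$ applied to $\bbP_p\bbP_p V$; the Cartan formula from the K\"unneth decomposition of $\bbP_p$ combined with multiplication on $A$; the Frobenius identity from the inclusion $V^{\otimes p}\hookrightarrow\bbP_p V$; and the instability bounds from direct connectivity estimates. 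The isomorphism $\DL_n \xrightarrow{\sim} \pi_\ast\bbP\Sigma^n\bbF_p$ is then \cite[IX.2.1]{brunermaymccluresteinberger1986hinfinity}, proved by splitting $\bbP\Sigma^n\bbF_p \simeq \bigoplus_k \bbP_k\Sigma^n\bbF_p$ and inductively computing each summand via the $\Sigma_k$-extended-power filtration. This basis theorem is the principal obstacle.

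For (2), the abstract plethory of \cref{ssec:ppleth} arises from the K\"unneth isomorphism $\pi_\ast\bbP(U\oplus V)\cong \pi_\ast\bbP U\otimes\pi_\ast\bbP V$, and by (1) this restricts on free modules to the exponential-monad coproduct of the algebraic $\DL$; since both monads preserve sifted colimits and agree on free generators, they coincide globally. Smoothness then reduces via \cref{thm:plethaq} and \cref{ex:alternating} to the fact that each $\DL_n$ is a free alternating $\bbF_{p\ast}$-ring on generators $Q^I e_n$ with $e(I)>n$ --- exactly the shape provided by the basis theorem. The suspensions are built from the natural transformation $\Sigma\bbP_k V\to\bbP_k\Sigma V$ defined for $k\geq 1$, assembled into a map $\Sigma\bbP_+V\to\bbP_+\Sigma V$ on augmentation ideals.

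For (3), by \cref{thm:plethaq} and \cref{thm:plethcobi} the identifications reduce to inspecting the indecomposables and additive operations of $\DL(e_n)$. By (1), $\DL_n$ is free alternating on $\{Q^I e_n : I\text{ admissible},\ e(I)>n\}$, so its indecomposables are spanned by these same generators, matching $F^1(e_n)$ exactly because the condition $e(I)>n$ is the defining instability of the $1$-unstable module on $e_n$. For additive operations, every admissible $Q^I$ is additive as a composition of additive single operations --- the single $Q^r_\epsilon$ being additive by standard properties of the extended-power construction, with the boundary Frobenius case $2r-\epsilon = |x|$ handled by additivity of the $p$-th power map in characteristic $p$. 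This yields $F(e_n)\subseteq\Gamma(\DL)(e_n)$; the reverse inclusion follows from the standard Hopf-algebraic fact that primitives in a polynomial-exterior $\bbF_p$-algebra on primitive generators are spanned by iterated Frobenius powers of those generators, which under the admissible basis recovers precisely $F(e_n)$. The suspension compatibility $\sigma\colon \Delta(\DL)^E\simeq (F^1)^E\xrightarrow{\sim} F^0 = F\simeq\Gamma(\DL)$ is then immediate, as the shift by $E$ decrements the instability parameter by one.
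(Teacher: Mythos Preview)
The paper does not give its own proof of this theorem: the statement is attributed to \cite[III.1.1, IX.2.1]{brunermaymccluresteinberger1986hinfinity} and closed with a bare \qedsymbol. So there is no argument in the paper to compare against; your proposal is an elaboration of how the cited classical results feed into the plethystic language of \cref{sec:plethories}, and in that role it is correct.

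Two small points. First, the appeal to \cref{ex:alternating} in (2) is harmless but unnecessary: over $\bbF_p$ the integer $2$ is either zero or a unit, so the alternating condition is vacuous and smoothness follows directly from $\DL_n$ being a free graded-commutative $\bbF_{p\ast}$-ring. Second, your Hopf-algebraic identification of $\Gamma(\DL)(e_n)$ with $F(e_n)$ in (3) is right, but the phrase ``iterated Frobenius powers of those generators \ldots\ recovers precisely $F(e_n)$'' hides a short induction worth making explicit: if $I$ is admissible with $e(I)=n$ (the boundary case not among the polynomial generators), then writing $I=(r_1,I')$ one has $r_1 = |Q^{I'}e_n|$, so $Q^I e_n$ is the $p$-th power of $Q^{I'}e_n$, and admissibility forces $e(I')\geq n$; iterating shows every $Q^I e_n$ with $e(I)\geq n$ is an iterated $p$-th power of a generator, matching the primitives exactly.
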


Throughout the rest of this section, we will abbreviate $\Delta = \Delta(\DL) = F^1$.

\begin{ex}
The costabilization $\lim_{n\rightarrow\infty}\Gamma(\DL)^{E^n}$ of $\DL$ can be identified as the completion of $\calB$ with respect to excess. Here, $E^n = E^{\otimes n} = e_n$. This object arises naturally when considering ``stable power operations'', as realized by the endomorphism spectrum of the forgetful functor $U\colon\CAlg_{\bbF_p}\rightarrow\Sp$; see \cite[Lecture 24]{lurie2007sullivan} and \cite[Section 10]{glasmanlawson2020stable}.
\tqed
\end{ex}

\subsection{Unstable \texorpdfstring{$\calA$}{A}-modules}\label{ssec:unstablea}

Let $\calA$ denote the mod $p$ Steenrod algebra. As observed by Mandell \cite[Theorem 1.4]{mandell1998einfinity}, there is a quotient map
\[
\calB\rightarrow\calB/(Q^0 = 1)\cong\calA,\qquad
\begin{cases}
Q^s\mapsto \Sq^{-s},&\text{when }p=2;\\
Q^s_\epsilon\mapsto \beta^\epsilon P^{-s},&\text{when }p>2.
\end{cases}
\]
\begin{defn}
An \textit{unstable $\calA$-module}, resp., \textit{unstable $\calA$-ring}, is an $F$-module, resp., $\DL$-ring, whose underlying $\calB$-module structure factors through the quotient map $\calB\rightarrow\calA$.
\tqed
\end{defn}

Unstable $\calA$-modules and unstable $\calA$-rings (more commonly called unstable $\calA$-algebras) have been the study of much rich study; see \cite{schwartz1994unstable} for a textbook account, and \cite{lurie2007sullivan} for an account that treats the relation with $\calB$. Essentially everything we do with $F$-modules and $\DL$-rings has an analogue for unstable $\calA$-modules and unstable $\calA$-rings.

Write $\calU$ for the $\bbF_{p\ast}$-algebra such that $\LMod_\calU^\heart$ is the category of unstable $\calA$-modules (itself often written $\calU$), as in \cref{ex:unstablesteenrod}. Write $\Ring_\calU^\heart$ for the category of unstable $\calA$-rings. By definition, unstable $\calA$-rings embed fully faithfully into $\DL$-rings. This is no longer the case at the level of simplicial rings, i.e.\ the functor $\Ring_\calU\rightarrow\Ring_\DL$ is no longer fully faithful. The situation here is exactly the same as that appearing in \cite{mandell1998einfinity}, and can be dealt with the same way.

Write $T\colon\Mod_{\bbF_{p\ast}}\rightarrow\Ring_\calU$ for the free unstable $\calA$-ring functor.

\begin{lemma}\label{lem:ualgrep}
For all $n\in\bbZ$, there is a (homotopy) pushout square
\begin{center}\begin{tikzcd}
\DL(e_n)\ar[d]\ar[r,"\phi"]&\DL(e_n)\ar[d]\\
\bbF_p\ar[r]&T(e_n)
\end{tikzcd}\end{center}
in $\Ring_{\DL}$, where $\phi$ classifies the element $e_n - Q^0 e_n$.
\end{lemma}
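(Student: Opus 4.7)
The plan proceeds in two stages: identify the $1$-categorical pushout, and then verify it agrees with the homotopy pushout. In the $1$-category $\Ring_\DL^\heart$, the pushout of the given cospan is the quotient $\DL(e_n)/J$, where $J\subset\DL(e_n)$ is the $\DL$-ring ideal generated by $\phi(e_n) = e_n - Q^0 e_n$; this is immediate from the universal property of the pushout and of the free $\DL$-ring. It remains to identify $\DL(e_n)/J$ with $T(e_n)$. The surjection $\DL(e_n)\twoheadrightarrow T(e_n)$ factors through $\DL(e_n)/J$ because $T(e_n)$ is an unstable $\calA$-ring and so $Q^0 = 1$ there, yielding a surjection $\DL(e_n)/J\twoheadrightarrow T(e_n)$. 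For the inverse, via the universal property of $T(e_n)$, I must show $\DL(e_n)/J$ is itself an unstable $\calA$-ring, i.e. that $Q^0$ acts as the identity throughout.

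The fixed locus of $Q^0$ is closed under sums, scalars, and products (using the Cartan formula and $Q^0(1)=1$), so by multiplicative generation it suffices to show $Q^0 Q^I e_n = Q^I e_n$ in the quotient for every admissible $I$. I would prove this by induction on admissibles. The key algebraic input is Mandell's identification $\calA\cong\calB/(Q^0 - 1)$, which places $Q^0\alpha - \alpha$ in the two-sided ideal $\calB(Q^0-1)\calB\subset\calB$ for every $\alpha\in\calB$; combined with the Adem relations and the stability of $J$ under the $\calB$-action on $\DL(e_n)$, this propagates the relation $Q^0 = 1$ from the generator throughout the quotient.

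Finally, to promote this to a homotopy pushout in $\Ring_\DL$, I note that $\phi(e_n) = e_n - Q^0 e_n$ is a non-zero-divisor in the polynomial ring $\DL(e_n)$: when $n>0$ instability gives $Q^0 e_n = 0$ so $\phi(e_n) = e_n$ is already a polynomial generator, while for $n\leq 0$ the element $\phi(e_n)$ is a nonzero linear combination of the distinct basis elements $e_n$ and $Q^0 e_n$ of $F(e_n)$, which extends to a polynomial generating set for $\DL(e_n)$. A standard Koszul-style argument then identifies the derived quotient with the ordinary one. The main obstacle is the inductive step in the second stage: the abstract reason for propagation (two-sided ideal structure of $\ker(\calB\to\calA)$) is clear, but making it explicit requires finding decompositions $Q^0\alpha - \alpha = \sum_j \gamma_j(Q^0 - 1)\delta_j$ in which the $\delta_j$ are suitably simpler than $\alpha$ under the inductive ordering, which involves careful bookkeeping with the Adem relations.
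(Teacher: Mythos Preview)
Your argument has a genuine gap at the claim that the $Q^0$-fixed locus is closed under products. The Cartan formula reads $Q^0(xy)=\sum_{i+j=0}Q^i(x)Q^j(y)$, and when $|x|<0$ or $|y|<0$ the instability condition permits nonzero cross terms with $i\neq 0$: for instance if $|x|=|y|=-1$ then $Q^0(xy)=Q^{-1}(x)Q^1(y)+Q^0(x)Q^0(y)+Q^1(x)Q^{-1}(y)$. So $Q^0x=x$ and $Q^0y=y$ do not by themselves force $Q^0(xy)=xy$; for that you would also need $Q^i=0$ for $i>0$ in the quotient, which is precisely the statement that the $\calB$-action factors through $\calA$, and hence cannot be assumed. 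Thus even granting $Q^0Q^Ie_n=Q^Ie_n$ for every admissible $I$, the passage to arbitrary polynomials in these generators is not automatic, and the difficulty you flag in the inductive step is not where the real obstruction lies.

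There is a second gap in the homotopy pushout step. Since the forgetful functor $\Ring_\DL\to\CRing_{\bbF_{p\ast}}$ preserves colimits, the underlying simplicial commutative ring of the pushout is $\bbF_p\otimes^{\bbL}_{\DL(e_n)}\DL(e_n)^\phi$, and as $\DL(e_n)$ is polynomial on the infinitely many admissibles $Q^Ie_n$, discreteness of this derived tensor product requires the entire family $\phi(Q^Ie_n)=Q^I(e_n-Q^0e_n)$ to form a regular sequence, not merely that the single element $e_n-Q^0e_n$ be a non-zero-divisor. The paper defers to \cite[Section 12]{mandell1998einfinity}, where both issues are handled together by an explicit combinatorial analysis of admissibles: one shows that the elements $Q^I(e_n-Q^0e_n)$ can be taken as part of a polynomial generating set for $\DL(e_n)$, with the complementary generators mapping isomorphically onto the polynomial generators of $T(e_n)$. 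This simultaneously identifies the ordinary quotient and establishes regularity.
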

\begin{proof}
See \cite[Section 12]{mandell1998einfinity}.
\end{proof}

If $R$ is a discrete commutative $\bbF_p$-ring, then $R$ can be viewed as an $\bbE_\infty$ algebra over $\bbF_p$. The resulting $\DL$-ring structure on $R=\pi_\ast R$ is forced by the axioms to satisfy $Q^0 x = x^p$ and otherwise $Q^r_\epsilon x = 0$.

Call a field $\kappa$ of characteristic $p$ \textit{Artin-Schreier closed} if the map $\lambda\mapsto \lambda - \lambda^p$ is surjective on $\kappa$. In particular, this holds if $\kappa$ is algebraically closed. 

\begin{prop}\label{thm:ualgemb}
Let $\kappa$ be an Artin-Schreier closed field. Then the composite
\[
\Ring_\calU\rightarrow\Ring_{\DL}\rightarrow\Ring_{\kappa\otimes\DL}
\]
is fully faithful.
\end{prop}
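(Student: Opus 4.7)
The plan is to reduce the statement to checking full faithfulness on the generating free objects $T(e_n)$ for $n\in\bbZ$, and then to deduce the required equivalence from \cref{lem:ualgrep} together with the Artin--Schreier hypothesis. Any $A\in\Ring_\calU$ arises as the geometric realization of its monadic bar resolution, so as a sifted colimit of free unstable $\calA$-rings $T(V)$. The inclusion $\Ring_\calU\to\Ring_\DL$ preserves sifted colimits, since both categories are strongly monadic over $\Mod_{\bbF_{p\ast}}$ in a compatible way and sifted colimits are created on the underlying module, and the base change $\kappa\otimes(\bs)$ preserves all colimits as a left adjoint. Because mapping spaces in the first variable turn colimits into limits, I reduce to the case $A=T(V)$; since $T$ takes direct sums to tensor products (i.e.\ to coproducts in $\Ring_\calU$) and mapping spaces out of coproducts are products, I may further take $V=e_n$.

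For $A=T(e_n)$, the left hand side is immediate: $\Map_{\Ring_\calU}(T(e_n),B)\simeq B_n$. For the right hand side, I apply the left adjoint $\kappa\otimes(\bs)$ to the pushout square of \cref{lem:ualgrep} to present $\kappa\otimes T(e_n)$ as a pushout in $\Ring_{\kappa\otimes\DL}$ with $\kappa$ (initial there) in the lower-left corner. Mapping into $\kappa\otimes B$ then yields
\[
\Map_{\Ring_{\kappa\otimes\DL}}\bigl(\kappa\otimes T(e_n),\,\kappa\otimes B\bigr)\simeq \mathrm{fib}_0\bigl(\mathrm{id}-Q^0\colon \kappa\otimes B_n\to\kappa\otimes B_n\bigr),
\]
and the canonical comparison map $B_n\to\kappa\otimes B_n$, $b\mapsto 1\otimes b$, factors through this fiber.

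The key input is then an identification of $Q^0$ on $\kappa\otimes B_n$. Writing a simple tensor $\lambda\otimes b$ as the product of $\lambda\in(\kappa\otimes B)_0$ with $b\in(\kappa\otimes B)_n$, the Cartan formula expands $Q^0(\lambda b)=\sum_{i+j=0}Q^i(\lambda)\cdot Q^j(b)$; instability of $\lambda$ (together with the vanishing of higher Dyer--Lashof operations on a discrete ring like $\kappa$) and of $b$ collapses this to the single term $Q^0(\lambda)\cdot Q^0(b)=\lambda^p\cdot b$, using that $Q^0$ is Frobenius on $\kappa$ and the identity on $B$ (as $B$ is unstable $\calA$). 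Hence $\mathrm{id}-Q^0=(\mathrm{id}-F_\kappa)\otimes\mathrm{id}_{B_n}$. The Artin--Schreier hypothesis is exactly that
\[
0\to\bbF_p\to\kappa\xrightarrow{\,\mathrm{id}-F_\kappa\,}\kappa\to 0
\]
is a short exact sequence of $\bbF_p$-vector spaces; tensoring over the field $\bbF_p$ with $B_n$ exhibits the above fiber as $B_n$ along the comparison map, concluding the proof.

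The principal obstacle is the first step, namely verifying that the composite $\Ring_\calU\to\Ring_\DL\to\Ring_{\kappa\otimes\DL}$ preserves the sifted colimits used to build $A$ out of free objects, given that $\Ring_\calU\to\Ring_\DL$ is not itself a left adjoint. The subsequent Cartan-formula computation of $Q^0$ on $\kappa\otimes B_n$ and the invocation of the Artin--Schreier sequence are then essentially formal.
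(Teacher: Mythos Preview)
Your proof is correct and follows essentially the same approach as the paper: both reduce to the free case $T(e_n)$ using that $\Ring_\calU\to\Ring_\DL$ preserves colimits, then invoke the pushout square of \cref{lem:ualgrep} to obtain the fiber sequence over $\kappa\otimes B_n$, and conclude via the Artin--Schreier exact sequence. The only organizational difference is that the paper first checks the discrete case separately (identifying the $Q^0$-fixed points of $\kappa\otimes S$ as $\bbF_p\otimes S$) before passing to the general case, whereas you handle both at once; your Cartan-formula computation that $Q^0(\lambda\otimes b)=\lambda^p\otimes b$ is the same key step, and in fact only requires the vanishing of $Q^i(\lambda)$ for $i\neq 0$, so your remark about instability of $b$ is not needed.
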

\begin{proof}
We first verify this on discrete objects. Fix $R,S\in\Ring_\calU^\heart$. Then $\Hom_\calU(R,S) = \Hom_\DL(R,S)$, and we must show this is isomorphic to $\Hom_{\kappa\otimes\DL}(\kappa\otimes R,\kappa\otimes S)$. The latter is isomorphic to $\Hom_{\DL}(R,\kappa\otimes S)$ by adjunction, so we must show that every map $f\colon R\rightarrow\kappa\otimes S$ of $\DL$-rings factors through $\bbF_p\otimes S\subset\kappa\otimes S$. As $Q^0$ acts by the identity on $R$, every such map lands in the fixed points of $Q^0$ on $\kappa\otimes S$. As $Q^0$ acts on $\kappa\otimes S$ by $Q^0(\lambda\otimes s) = \lambda^p\otimes s$, the set of fixed points of $Q^0$ on $\kappa\otimes S$ is exactly $\bbF_p\otimes S$, proving the claim.

Now fix $R,S\in\Ring_\calU$ which are not necessarily discrete, and consider the map
\[
\Map_\calU(R,S)\rightarrow\Map_\DL(R,\kappa\otimes S)
\]
which we are claiming is an isomorphism. As $\Ring_\calU\rightarrow\Ring_\DL$ is stable under colimits, we may by resolving $R$ reduce to the case where $R = T(e_n)$ is a free unstable $\calA$-ring, and so reduce to verifying that the map
\[
S_n = \Map_\calU(T(e_n),S) \rightarrow\Map_\DL(T(e_n),\kappa\otimes S)
\]
is an isomorphism. By \cref{lem:ualgrep}, there is a fiber sequence
\[
\Map_\DL(T(e_n),\kappa\otimes S)\rightarrow\kappa\otimes S_n\rightarrow\kappa\otimes S_n,
\]
where the second map is given on homotopy groups by $\lambda\otimes s\mapsto (\lambda - \lambda^p)\otimes s$. The claim follows from the long exact sequence in homotopy groups.
\end{proof}

\subsection{Cohomology of \texorpdfstring{$\DL$}{DL}-rings}\label{ssec:dlringcoh}

Because $\DL$ is a smooth $\bbF_{p\ast}$-plethory, the general aspects of the cohomology of $\DL$-rings is as described in \cref{ssec:plethaq}. Explicitly, fix $R\in\Ring_\DL^\heart$, $B\in\Ring_{R\otimes\DL}^\heart$, $A\in\Ring_{R\otimes\DL/B}^\heart$, and $M\in\Ab(\Ring_{R\otimes\DL/B}^\heart)\simeq\LMod_{B\otimes\Delta}^\heart$. Here, if $p\geq 2$, then $B\otimes\Delta$ has multiplication satisfying
\[
(b\otimes Q^r)\cdot (b'\otimes Q^{r'}) = \sum_{i+j=r} b\, Q^i(b')\otimes Q^j Q^{r'},
\]
and if $p>2$, satisfying
\[
(b\otimes Q^r_1)\cdot (b'\otimes Q^{r'}) = \sum_{i+j=r}\left(b\, Q_1^i(b')\otimes Q^j Q^{r'} + (-1)^{|b'|}b\, Q^i(b')\otimes Q^j_1 Q^{r'}\right);
\]
these follow from the recipe of \cref{thm:plethaq}. By smoothness, $\bbL\Omega_{A|R}$ upgrades to an $A\otimes\Delta$-module, and
\[
\calH_{R\otimes\DL/B}(A;M) \simeq\EXT_{B\otimes\Delta}(B\otimes_A^\bbL \bbL\Omega_{A|R},M),
\]
In particular, if $A$ is smooth over $R$, then $H^\ast_{R\otimes\DL/B}(A;M) = \Ext^\ast_{B\otimes\Delta}(B\otimes_A\Omega_{A|R},M)$.

There is a complementary method by which these Quillen cohomology computations can, in certain cases, be reduced to linear computations. Observe that the forgetful functor $\Ring_{\DL}^\heart\rightarrow\LMod_F^\heart$ admits a left adjoint $S_F$, described by
\[
S_F M = \begin{cases}
SM/(Q^r x = x^2\text{ for }r = |x|),&\text{ for }p=2;\\
SM/(Q^r x = x^p\text{ for }2r= |x|),&\text{ for }p>2.
\end{cases}
\]
In fact this is already derived, i.e.\ agrees with its total derived functor on discrete objects, as can be seen from the description 
\[
S_F M = SM \tins{\psi}{S\Psi M}\bbF_p,
\]
where
\[
(\Psi M)_n = \begin{cases}
M_{n/p},&\text{ when }2,p|n;\\
0,&\text{ otherwise;}
\end{cases}\qquad
\psi(m) = \begin{cases}
m^2 - Q^{|m|}m,&\text{ for }p=2;\\
m^p - Q^{|m|/2}m,&\text{ for }p>2.
\end{cases}
\]
As a consequence, if $M\in\LMod_F^\heart$ and $B\in\Ring_\DL$, then $\Map_\DL(S_FM,B)\simeq\Map_F(M,B)$, and this provides an approach to computing the cohomology of $\DL$-rings in the image of $S_F$. 

These constructions easily extend to bases other than $\bbF_p$ and to augmented settings. In particular, if $M\in\LMod_F$ and $N\in\LMod_\Delta$, then
\[
\calH_{\DL/\bbF_{p}}(S_FM;N)\simeq\EXT_F(M,N).
\]

\begin{ex}
The module $e_n$ always carries an $F$-module structure where each $Q^r_\epsilon$ acts by zero; this is the action obtained from the augmentation on $F$. If $n\geq 0$, then $e_{-n}$ carries a second $F$-module structure, where $Q^0$ acts by the identity. If $e_{-n}'$ refers to this $F$-module structure, then $S_F(e_{-n}')$ is isomorphic to the cohomology algebra of the $n$-sphere, where if $n$ is even and $p$ is odd then we must take the ``homotopy theorist's even sphere'' $J_{p-1}S^n$ \cite{gray1993ehp}.
\tqed
\end{ex}

\subsection{The big lambda algebra}\label{ssec:cohomologydl}

We turn now to the construction of Koszul complexes computing $\Ext_F$. This discussion applies equally well to $\Ext_\Delta$, or to $\Ext_{F^u}$ for $u\in\bbZ$, as well to $\Ext_\calU$ (\cref{prop:unstablesteenrodkoszul}). Moreover, by \cref{prop:koszulcomposition}, it extends to $\Ext_{B\otimes F}$ for $B\in\Ring_F^\heart$, and to related contexts. These Koszul complexes have a number of predecessors, particularly with work of Miller \cite{miller1978spectral} in the connective setting, and with the unstable lambda complexes seen in work on the unstable Adams spectral sequence \cite{bousfieldkan1972homotopy}. We have found that working in the full $\bbZ$-graded setting serves to clarify some of the algebra.

We begin by observing that $F$ is a quadratic algebra. If we write its length grading as $F = \bigoplus_{n\geq 0}F[n]$, then the generating bimodule is
\[
F[1](e_a) = \begin{cases}
\bbF_2\{Q^r e_a:r\geq a\},&\text{when }p=2;\\
\bbF_p\{Q^r_\epsilon e_a:2r-\epsilon\geq a\},&\text{when }p>2.
\end{cases}
\]
The relations are just the image of the relations defining $\calB$ under the projection $\calB[1]\otimes\calB[1]\rightarrow F[1]\circ F[1]$. 
\begin{lemma}\label{lem:uakoszul}
The algebra $F$ is Koszul over $\bbF_{p\ast}$.
\end{lemma}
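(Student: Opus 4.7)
The plan is to apply the PBW criterion (\cref{prop:pbw}) with the admissible basis. Decompose the quadratic generator bimodule as
\[
F[1] = \bigoplus_{(\epsilon,r)} H_{(\epsilon,r)}, \qquad H_{(\epsilon,r)}(e_a) = \begin{cases} \bbF_p\{Q^r_\epsilon e_a\}, & 2r-\epsilon\geq a,\\ 0, & \text{otherwise}, \end{cases}
\]
with $(\epsilon,r)$ ranging over $\{0\}\times\bbZ$ when $p=2$ and $\{0,1\}\times\bbZ$ when $p>2$. Let $B = \{(\epsilon,r)\}$, ordered so that $r$ \emph{decreases} (with any fixed tiebreaker on $\epsilon$), and let $S\subset B^\ast$ be the set of admissible sequences. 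The admissible basis theorem for $F(e_n)$ recalled above translates to the assertion $\bigoplus_{w\in S}F_w \cong F$, giving the required additive decomposition.

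Admissibility is defined by a condition on each consecutive pair of entries, so a word lies in $S$ iff every consecutive pair does; this is the first PBW axiom. For the second, each Adem relation rewrites a nonadmissible length-two product $Q^a_\epsilon Q^b_\eta$ (with $a>pb-\eta$) as a sum of admissibles $Q^{a'}_{\epsilon'}Q^{b'}_{\eta'}$ whose leading index satisfies $a'<a$. In the chosen descending-$r$ lex order on $B^\ast$, each right-hand-side term is therefore strictly \emph{greater} than the nonadmissible left-hand word, so it lies outside $\bigoplus_{w\leq w'w''}F_w$. Thus the composite $F_{w'}\circ F_{w''}\to F\to \bigoplus_{w\leq w'w''}F_w$ is null, verifying the second PBW axiom and producing a PBW decomposition of $F$.

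It remains to check the finiteness hypotheses of \cref{prop:pbw}, and this is the main obstacle: the criterion requires well-foundedness of the lex order on subsets of words of fixed length, which fails globally since $B\cong \bbZ$ is not well-ordered. The fix is to work bidegree by bidegree. In any fixed internal degree $d$ of $F(e_n)$, the admissibility inequalities $r_i\leq pr_{i+1}-\epsilon_{i+1}$ together with the degree identity $d-n=\sum_i(2r_i(p-1)-\epsilon_i)$ force, by a geometric-series comparison against the tail of the sequence, each $r_i$ in a length-$k$ admissible sequence to lie in a finite subset of $\bbZ$. Consequently in each bidegree $C_k(F,F,e_n)$ is supported on only finitely many words of each fixed length, so the restricted lex order is trivially well-founded and $\lim_w C[\leq w]$ stabilizes at a finite stage. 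Applying \cref{prop:pbw} bidegree-wise concludes that $F$ is Koszul.
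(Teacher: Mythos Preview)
Your approach matches the paper's (apply \cref{prop:pbw} using the admissible basis), and your verification of the two PBW axioms is fine for length-two words. The genuine gap is in the finiteness argument.

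You claim that the admissibility inequalities $r_i\le pr_{i+1}-\epsilon_{i+1}$ together with the degree identity force each $r_i$ into a finite set. This is false. At $p=2$ with $k=2$ and fixed sum $r_1+r_2=C$, the sequence $(r_1,r_2)=(-N,\,N+C)$ is admissible for every $N\ge 0$ (since $-N\le 2(N+C)$), so admissibility plus fixed degree gives no upper bound on $r_2$. What actually pins things down is the \emph{instability} condition built into $F[1]$: the requirement that $H_{(\epsilon,r)}(e_a)=0$ unless $2r-\epsilon\ge a$ propagates through a word $(s_1,\ldots,s_m)$ acting on $e_n$ to give $s_i \ge n + \sum_{j>i}|Q^{s_j}|$ for every $i$. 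These inequalities, combined with the degree identity, do bound each $s_i$ in a finite interval (for instance at $p=2$ one gets $s_1\in[\lceil d/2\rceil,\,d-2^{m-1}n]$, and then inducts). This is what ``$F$ is locally finite'' means in the paper's one-line proof.

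Two smaller points follow from this. First, the finiteness you need is that for fixed $n,d,m$ only finitely many length-$m$ words $w\in B^m$ have $F_w(e_n)_d\ne 0$; these words are not themselves admissible in general, so even a correct admissibility bound would be aimed at the wrong target. Second, PBW axiom (2) must hold for all $w',w''\in S$, not just length-one words; the extension from the length-two Adem case to arbitrary $w',w''$ goes by iterated straightening, and the termination of that straightening is exactly the local finiteness above. Once you replace the admissibility argument by the instability one, both issues are resolved and \cref{prop:pbw} applies bidegree-by-bidegree as you intended.
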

\begin{proof}
The algebra $F$ is locally finite, and its admissible basis may be viewed as a PBW decomposition, so \cref{prop:pbw} applies. 
\end{proof}
Thus there is indeed a theory of Koszul resolutions for computing $\Ext_F$, which we gain access to as soon as we understand the cohomology of $F$. 

We will describe the cohomology of $F$ in two ways, each shedding light on different aspects of the computation. The first approach proceeds by comparing the cohomology of $F$ with the cohomology of the ordinary algebra $\calB$, and the second approach proceeds by directly applying \cref{thm:quadraticduality}.

The first approach is plausible due to the following.

\begin{lemma}\label{lem:cohinj}
The surjection $\calB\rightarrow F$ yields an injection $H^\ast(F)\subset H^\ast(\calB)$.
\end{lemma}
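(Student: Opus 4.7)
The plan is to deduce the lemma from Koszul duality. First, I would observe that $\calB$ itself is a Koszul $\bbF_{p*}$-algebra with respect to its length grading, by the same argument as in \cref{lem:uakoszul}: $\calB$ is locally finite and its admissible monomials form a PBW basis, so \cref{prop:pbw} applies. This recovers Priddy's original calculation \cite{priddy1970koszul} in our setting.

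With both $\calB$ and $F$ Koszul, \cref{thm:quadraticduality} describes $H^\ast(\calB)$ and $H^\ast(F)$ as their quadratic duals. The surjection $\calB\twoheadrightarrow F$ restricts on the length-$1$ part to a surjection $\calB[1]\twoheadrightarrow F[1]$ whose kernel on $e_a$ is spanned by the instability generators ($Q^re_a$ with $r<a$ at the prime $2$, and analogously at odd primes). Dualizing each graded piece, which is finite-dimensional over $\bbF_p$, yields an injection of bimodules $F[1]^\vee\hookrightarrow\calB[1]^\vee$. Since the defining relations $R_F$ are precisely the image of the Adem relations $R_\calB$ under $\calB[1]\circ\calB[1]\twoheadrightarrow F[1]\circ F[1]$, a direct chase of the orthogonal complements gives
\[
R_F^\perp = R_\calB^\perp \cap (F[1]^\vee\circ F[1]^\vee),
\]
exhibiting the resulting map of quadratic duals $\widehat{T}(F[1]^\vee,R_F^\perp)\to\widehat{T}(\calB[1]^\vee,R_\calB^\perp)$ as agreeing with the map $H^\ast(F)\to H^\ast(\calB)$ of the lemma.

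To conclude injectivity, I would produce compatible PBW decompositions of the two quadratic duals. Dualizing the admissible PBW basis of $\calB$ yields a PBW basis of $H^\ast(\calB)$ indexed by ``anti-admissible'' monomials in symbols $\lambda^r_\epsilon$ dual to the $Q^r_\epsilon$, as in \cref{ex:lambdaalgebra}. The admissible-of-excess-$\geq a$ basis of $F(e_a)$ dualizes to a PBW basis of $H^\ast(F)(e_a,e_b)$ indexed by precisely the subset of the anti-admissibles for $\calB$ satisfying a corresponding condition on the terminal index. The map of quadratic duals then sends this sub-basis into the ambient basis by the identity inclusion, which settles injectivity.

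The main obstacle is the compatibility of the PBW structures under Koszul duality; concretely, one must verify that the admissible PBW decompositions of $\calB$ and $F$ pass to PBW decompositions of their quadratic duals $H^\ast(\calB)$ and $H^\ast(F)$ in a manner respecting the inclusion. This may be handled either by applying \cref{prop:pbw} directly to the quadratic duals and inspecting the resulting anti-Adem relations, or by invoking the general PBW-duality symmetry for quadratic algebras. Once it is in hand, the proof is complete.
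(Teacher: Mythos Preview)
Your argument hinges on establishing that $\calB$ is Koszul via \cref{prop:pbw}, and this step fails. The algebra $\calB$ is not locally finite: at $p=2$, for instance, $\calB_0$ contains the admissible monomials $Q^{-n}Q^{n}$ for every $n\geq 0$ and is therefore infinite-dimensional. Correspondingly, the generating set for the PBW decomposition is indexed by all of $\bbZ$, and the lexicographic order on words of fixed length is not well-founded, so the hypotheses of \cref{prop:pbw} are not met. The paper flags exactly this point in the remark following \cref{thm:qdualityb}: ``though $\calB$ has a PBW basis, we cannot apply \cref{prop:pbw} to deduce that it is Koszul, as the necessary finiteness conditions are not obviously satisfied.'' Worse, the paper's eventual proof that $\calB$ is Koszul proceeds by writing $\calB^\op$ as a colimit of the algebras $H_u$ built from $H^\ast(F)$, which presupposes the present lemma; so invoking Koszulity of $\calB$ here is circular.

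The paper avoids this by never appealing to Koszulity of $\calB$. Instead it dualizes to homology and uses only Koszulity of $F$: since $H_\ast(F)$ is concentrated on the diagonal, it suffices to show that the map
\[
\bigcap_{i+j=m}\calB[1]^{\otimes i-1}\otimes R\otimes\calB[1]^{\otimes j-1}\longrightarrow\bigcap_{i+j=m}F[1]^{\circ i-1}\circ R'\circ F[1]^{\circ j-1}
\]
is surjective, where $R'$ is by definition the image of $R$. This is a direct statement about the surjection $\calB[1]^{\otimes m}\twoheadrightarrow F[1]^{\circ m}$ and requires no global structural input about $\calB$. If you want to salvage your route, you would need an independent proof that $\calB$ is Koszul (such as the one in \cite{brunetticiampella2007priddy}) that does not pass through this lemma.
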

\begin{proof}
It is sufficient to show dually that $H_\ast(\calB)\rightarrow H_\ast(F)$ is a surjection. As $F$ is Koszul, we need consider only the map on diagonal cohomology. This isis co given in degree $m$ by
\[
\bigcap_{i+j=m}B[1]^{\otimes i-1}\otimes R \otimes B[1]^{\otimes j-1}\rightarrow \bigcap_{i+j=m}F[1]^{\circ i-1}\circ R'\circ F[1]^{\circ j-1},
\]
where $R\subset B[1]\otimes B[1]$ is the bimodule of Adem relations and $R'\subset F[1]\circ F[1]$ is its image, so this is clear.
\end{proof}

And it is appealing due to the following.

\begin{lemma}\label{lem:cohbig}
Let $D$ be the diagonal cohomology algebra of $\calB$, defined with conventions as in \cite{priddy1970koszul} (cf.\ \cref{ex:signs}). Let $\hat{Q}^r_\epsilon\in D[1]$ be dual to $Q^r_\epsilon$. Then there is an injection $\calB\rightarrow D$ of algebras, with dense image, given by
\[
\begin{cases}
Q^r\mapsto \hat{Q}^{-r-1},&\text{when }p=2;\\
Q^r_\epsilon\mapsto \hat{Q}^{-r}_{1-\epsilon},&\text{when }p>2.
\end{cases}
\]
\end{lemma}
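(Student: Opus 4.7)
The plan is to combine \cref{thm:quadraticduality} with a direct verification that the Adem relations are self-dual under the proposed index shift, following the classical pattern of Priddy's Koszul self-duality for the Steenrod algebra.

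First I would apply \cref{thm:quadraticduality} to $\calB$ with its length grading: $\calB$ is homogeneously quadratic, generated by $\calB[1] = \bbF_p\{Q^r_\epsilon\}$ modulo the quadratic Adem relations $R\subset \calB[1]\circ\calB[1]$. Granting Koszulity of $\calB$---which holds by the same admissible-basis PBW argument as in \cref{lem:uakoszul} via \cref{prop:pbw}---\cref{thm:quadraticduality} identifies $D\cong\widehat{T}(\calB[1]^\vee,R^\perp)$. Writing $\hat{Q}^r_\epsilon$ for the basis of $\calB[1]^\vee$ dual to the $Q^r_\epsilon$, this gives a presentation of $D$ by generators $\hat{Q}^r_\epsilon$ modulo $R^\perp$.

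Next I would check that the proposed assignment extends to a well-defined algebra map $\phi\colon\calB\to D$; since $\calB$ is presented by its generators and the Adem relations, this reduces to verifying that the image of each Adem relation lies in $R^\perp$. Unwinding the pairing defining $R^\perp$, this in turn reduces to a binomial identity: the Adem relations and their perps are interchanged by the substitution $r\mapsto -r-1$ (respectively $(r,\epsilon)\mapsto(-r,1-\epsilon)$ at $p>2$), as one sees using $\binom{-a-1}{b} = (-1)^b\binom{a+b}{b}$ together with the Vandermonde-type manipulations already built into the Adem relations themselves. This is the combinatorial heart of the argument---the self-duality of the Adem relations under the index shift, analogous to Priddy's Koszul self-duality for the Steenrod algebra.

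For injectivity and density, I would use admissible bases. The algebra $\calB$ has an $\bbF_p$-basis of admissible monomials $Q^I$, and $\phi$ carries these to the monomials $\hat{Q}^{I'}$ where $I'$ applies the shift coordinatewise. The shift sends the admissibility condition bijectively onto the admissibility condition for the PBW basis of the uncompleted dual $T(\calB[1]^\vee)/R^\perp$, so $\phi$ is an $\bbF_p$-linear bijection of $\calB$ onto $T(\calB[1]^\vee)/R^\perp$, which is dense in $D=\widehat{T}(\calB[1]^\vee,R^\perp)$ by construction of the latter as a length completion. The main obstacle is the well-definedness check of the second paragraph: unlike the formal steps, this rests on a nontrivial combinatorial identity that expresses the self-duality of the Adem relations under the shift.
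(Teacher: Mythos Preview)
Your approach is essentially the same as the paper's: both identify $D$ via the quadratic-dual presentation and then reduce to the combinatorial verification that the Adem relations are self-dual under the index shift. The paper simply cites Priddy's Theorem 2.5 for the presentation of the diagonal cohomology, whereas you route through the paper's own \cref{thm:quadraticduality}; the content is the same.

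There is, however, a genuine gap in your justification. You write that Koszulity of $\calB$ ``holds by the same admissible-basis PBW argument as in \cref{lem:uakoszul} via \cref{prop:pbw}.'' This is precisely what the paper warns against in the remark following \cref{thm:qdualityb}: $\calB$ is not locally finite (in each internal degree there are infinitely many $Q^s$), so the finiteness hypotheses of \cref{prop:pbw} are not satisfied, and the PBW criterion does not directly apply. The paper deliberately avoids invoking Koszulity of $\calB$ at this point and only establishes it afterward by a separate argument.

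Fortunately your argument does not actually need Koszulity. You only want the diagonal cohomology $D = \prod_n H^n(\calB)[n]$, and this is exactly what part (1) of \cref{thm:quadraticduality} computes for any quadratic datum, with no Koszulity hypothesis. Since $(\calB[1],R)$ is a quadratic datum over the theory of graded $\bbF_p$-modules (we are over a field, so the splitting condition is automatic), part (1) already gives $D\cong\widehat{T}(\calB[1]^\vee,R^\perp)$. Drop the Koszulity clause and your argument goes through; the paper's appeal to Priddy is doing the same job, with the same caveat that one must check the a priori infinite relations in $R^\perp$ turn out to be finite.
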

\begin{proof}
Though not quite stated in this form, \cite[Theorem 2.5]{priddy1970koszul} gives generators and relations for the diagonal cohomology of an arbitrary ordinary quadratic algebra over a field, with the caveat that if the algebra in question is not locally finite, then these generators may only be topological generators, and the relations obtained may involve infinite sums. In the case of $\calB$, it follows by direct computation that the relations obtained between the topological generators $\hat{Q}^r_\epsilon\in D$ are finite, and after the indicated change of indices are exactly the relations defining $\calB$.
\end{proof}

From here it is not difficult to proceed to fully describe the cohomology of $F$. We first lay out some conventions. In the present setting, it is best to compute the cohomology of $F$ with conventions that are standard when dealing with $\bbZ$-graded modules, only with pairings opposite to Yoneda composition. So for $x\in \Ext^n(e_a,e_b)$ and $y\in \Ext^m(e_b,e_c)$, write 
\[
xy = (-1)^{n(b-c+m)}y\circ x,
\]
where $\circ$ is the Yoneda composition of extensions. With this choice, our pairings are compatible with the graded opposite of \cite{priddy1970koszul}, as discussed in \cref{ex:signs}. 

Now if we let $\lambda_r\in\Ext^1_\calB(e_a,e_{a-r-1})$ be the image of $Q^r$ under \cref{lem:cohbig} when $p=2$, and $\lambda_r^\epsilon\in\Ext^1_\calB(e_a,e_{a-2r(p-1)+\epsilon-1})$ be the image of $Q^r_\epsilon$ under \cref{lem:cohbig} when $p>2$, then the multiplicative relations between the $\lambda$'s are exactly the relations in the graded opposite $\calB^\op$. In particular, the subspace of $\Ext^n_\calB(e_a,e_{a-\ast})$ generated under finite sums by products of the $\lambda$'s is isomorphic, up to shifts in degree, to $\calB^\op[n]$, and so has basis given by elements $\lambda_I$ where, if $p=2$, then $I = (r_1,\ldots,r_n)$ with $2r_i\geq r_{i+1}$ for each $i$, and if $p>2$, then $I = (r_1,\epsilon_1,\ldots,r_n,\epsilon_n)$ with $pr_i-\epsilon_i\geq r_{i+1}$ for each $i$; call these \textit{coadmissible sequences}.

Observe that as $F$ is locally finite, the inclusion $\Ext^n_F(e_a,e_{a-\ast})\subset\Ext^n_\calB(e_a,e_{a-\ast})$ lands in the subspace isomorphic to $\calB^\op[n]$ generated under finite sums by the elements $\lambda_I$. We have now all but proved the following.

\begin{theorem}\label{thm:dlcontra}
With the above notation, $\Ext^n_F(e_a,e_{a-\ast})\subset\Ext^n_\calB(e_a,e_{a-\ast})$ has as basis those $\lambda_I$ where $I$ is a coadmissible sequence satisfying:
\begin{enumerate}
\item If $p=2$, then $I = (r_1,\ldots,r_n)$ with $r_1 < -a$;
\item If $p>2$, then $I = (r_1,\epsilon_1,\ldots,r_n,\epsilon_n)$ with $2r_1-\epsilon_1 < -a$.\end{enumerate}
\end{theorem}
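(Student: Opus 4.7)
My plan is to combine the Koszul description of $H^*(F)$ provided by Lemma~\ref{lem:uakoszul} and Theorem~\ref{thm:quadraticduality} with the explicit identification of $H^*(\calB)$ from Lemma~\ref{lem:cohbig}. By Theorem~\ref{thm:quadraticduality}, $H^n(F)[n] \cong \widehat{T}_n(F[1]^\vee, R_F^\perp)$ is a quotient of $F[1]^{\vee\circ n}$, and similarly for $\calB$. The injection $H^n(F) \hookrightarrow H^n(\calB)$ of Lemma~\ref{lem:cohinj} is induced by the bimodule inclusion $F[1]^\vee \hookrightarrow \calB[1]^\vee$ dual to the quotient $\calB[1] \twoheadrightarrow F[1]$ which kills the unstable generators. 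The task is therefore to determine which basis elements $\lambda_I$ of $H^n(\calB)[n]$ lift through this inclusion.

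First I would unpack the inclusion $F[1]^{\vee\circ n}(e_c)(e_a) \hookrightarrow \calB[1]^{\vee\circ n}(e_c)(e_a)$ on basis elements. A basis of the target (treating $p=2$ for concreteness; $p>2$ is analogous with $\epsilon$-bookkeeping) consists of chains $\hat Q^{s_1}\otimes\cdots\otimes\hat Q^{s_n}$ going through intermediate sources $e_a, e_{a+s_1}, e_{a+s_1+s_2},\ldots,e_c$. Such a chain lies in $F[1]^{\vee\circ n}$ precisely when each factor $\hat Q^{s_i}$ comes from the subspace $F[1]^\vee \subset \calB[1]^\vee$ at the appropriate source, i.e.\ when $Q^{s_i}$ survives instability at $e_{a+s_1+\cdots+s_{i-1}}$: equivalently $s_i \geq a + s_1+\cdots+s_{i-1}$ for every $i$.

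Second, I would translate via Lemma~\ref{lem:cohbig}. After passing to the bigraded opposite in the manner of \cref{ex:signs}, a coadmissible $\lambda_I$ with $I=(r_1,\ldots,r_n)$ corresponds to (the class of) the chain $\hat Q^{-r_1-1}\otimes\cdots\otimes\hat Q^{-r_n-1}$ beginning at $e_a$. Instability at the innermost (first-applied) step is $-r_1-1 \geq a$, i.e.\ $r_1 < -a$ (with the analogous $2r_1-\epsilon_1 < -a$ when $p>2$). The key observation is that coadmissibility $r_{i+1} \leq 2r_i$ (resp.\ $r_{i+1}\leq pr_i-\epsilon_{i+1}$) together with the first-step condition forces instability at every later step: by an immediate induction, if $r_i \leq r_1+\cdots+r_{i-1} + (i-1) - a - 1$ then $r_{i+1}\leq 2r_i \leq r_1+\cdots+r_i+i-a-1$, since the inductive bound already gives $r_i \leq r_1+\cdots+r_{i-1}+i-a-2$. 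Conversely, if $r_1 < -a$ fails then the innermost factor is zero in $F[1]^\vee$, so $\lambda_I$ does not lift.

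The main obstacle is verifying that the quotient by Adem relations intertwines cleanly with the bimodule inclusion, so that the coadmissible basis of $H^n(\calB)[n]$ restricts to the asserted basis of $H^n(F)[n]$ rather than some quotient of it. A clean route is via Proposition~\ref{prop:pbw}: the admissible monomials form a PBW decomposition of both $\calB$ and $F$ (compatibly, since the Adem relations defining $F$ are precisely the restrictions of those defining $\calB$ to the stable part), so the dual coadmissible monomials form honest bases of the respective Koszul duals. The first-step instability condition then identifies exactly those coadmissibles that survive when restricting from $\calB[1]^\vee$ to $F[1]^\vee$, completing the argument.
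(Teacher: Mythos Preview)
Your proposal is correct and follows essentially the same approach as the paper. The paper phrases the lifting criterion on the homology side---$\lambda_I$ lies in the image of $H^n(F)$ iff its dual tensor $Q^{-r_n-1}\otimes\cdots\otimes Q^{-r_1-1}$ lives in the subfunctor $F[1]^{\circ n}(e_a)\subset\calB[1]^{\circ n}(e_a)$---whereas you phrase it dually via $F[1]^{\vee\circ n}\hookrightarrow\calB[1]^{\vee\circ n}$; these are equivalent since the projection $\calB[1]\twoheadrightarrow F[1]$ splits the inclusion. The core inductive step, that coadmissibility propagates the single instability condition $r_1<-a$ (respectively $2r_1-\epsilon_1<-a$) to all later steps, is identical in both arguments, and your concern about the Adem relations intertwining with the inclusion is exactly what \cref{lem:cohinj} handles.
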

\begin{proof}
As $F$ has both generators and admissible basis induced by those of $\calB$, the functor $F$ may be viewed as a subfunctor of $\calB$, though not as a subalgebra. Using \cref{lem:cohinj}, it is seen that $\Ext^n_F(e_a,e_{a-\ast})\subset\Ext^n_\calB(e_a,e_{a-\ast})$ has image spanned by those coadmissible $\lambda_I$ which lift to elements of $\Hom_{\bbF_p}(\calB[1]^{\otimes n}e_a,e_{a-\ast})$ dual to simple tensors in $F[1]^{\circ n}e_a$. 

(1)~~ Consider the case $p=2$. Choose a coadmissible sequence $I = (r_1,\ldots,r_n)$. Then we must determine when $\lambda_I\in \Hom_{\bbF_p}(\calB[1]^{\otimes n}e_a,e_{a-\ast})$ is dual to an element of $F[1]^{\circ n}e_a$. By definition, this element is dual to $Q^{-r_n-1}\otimes\cdots\otimes Q^{-r_1-1}$, and for this to be an element of $F[1]^{\circ n}e_a$ the sequence $I$ must satisfy the instability condition
\[
-r_{i+1}-1\geq (-r_i-1)+\cdots+(-r_1-1)+a
\]
for each $i$. Write $I' = (s_1,\ldots,s_n) = (-r_n-1,\ldots,-r_1-1)$, so that this instability condition is
\[
s_i\geq s_{i+1}+\cdots+s_n+a
\]
for each $i$. Coadmissibility of $I$ is equivalent to the complete unadmissibility condition on $I'$ of $s_i>2s_{i+1}$ for each $i$; thus if $s_i\geq s_{i+1}+\cdots+s_n+a$ for some $i$, then
\[
s_{i-1}\geq 2s_i=s_i+s_i\geq s_i+s_{i-1}+\cdots+s_n+a.
\]
So in fact the instability condition is equivalent to just $s_n\geq a$, which itself is equivalent to $r_1<-a$ as claimed.

(2)~~ Consider the case $p>2$. Choose a coadmissible sequence $I = (r_1,\epsilon_1,\ldots,r_n,\epsilon_n)$, so that we must determine when $Q^{-r_n}_{1-\epsilon_n}\otimes\cdots\otimes Q^{-r_1}_{1-\epsilon_1}$ is an element of $F[1]^{\circ n}e_a$. Writing $I' = (\delta_1,s_1,\ldots,\delta_n,\epsilon_n) = (1-\epsilon_n,-r_n,\ldots,1-\epsilon_1,-r_1)$, the relevant instability condition is
\[
2s_i-\delta_i\geq (2s_{i+1}(p-1)-\delta_{i+1})+\cdots+(2s_k(p-1)-\delta_k)+a
\]
for each $i$. Coadmissibility of $I$ is equivalent to the complete unadmissibility condition on $I'$ of $s_i>ps_{i+1}-\delta_{i+1}$ for each $i$; thus if the above is satisfied for some $i$ then
\[
2s_{i-1}-\delta_{i-1}>2(ps_i-\delta_i)-\delta_{i-1}\geq 2s_i(p-1)-\delta_i+\cdots+2s_n(p-1)-\delta_n+a-\delta_{i-1},
\]
which in turn implies the instability condition at $i-1$ as $\delta_{i-1}\in\{0,1\}$. So the instability condition is equivalent to just $2s_n-\delta_n\geq a$, which in turn is equivalent to $2r_1-\epsilon_1<-a$.
\end{proof}

The second approach to $H^\ast(F)$ is through the following.

\begin{theorem}\label{thm:qdualityb}
For $a\in\bbZ$, the graded vector space $H^\ast(F)(e_a)$ has basis given by those $\lambda_I$ where $I$ is a coadmissible sequence satisfying
\begin{enumerate}
\item If $p=2$, then $I = (r_1,\ldots,r_n)$ with $2r_1+r_2+\cdots+r_n + n < -a$;
\item If $p>2$, then $I = (r_1,\epsilon_1,\ldots,r_n,\epsilon_n)$ with $2(pr_1-\epsilon_1)+2r_2(p-1)-\epsilon_2+\cdots+2r_n(p-1)-\epsilon_n+n<-a$.
\end{enumerate}
\end{theorem}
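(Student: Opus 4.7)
The plan is to read off the basis via the quadratic duality isomorphism of Theorem \ref{thm:quadraticduality}(2). By Lemma \ref{lem:uakoszul}, $F$ is Koszul over $\bbF_{p\ast}$, and direct inspection of the defining relations shows that $F$ is quadratic, so $F\cong T(F[1],R)$ where $R\subset F[1]\circ F[1]$ is the sub-bimodule of Adem relations. Theorem \ref{thm:quadraticduality}(2) then yields an isomorphism $H^\ast(F)\cong \widehat{T}(F[1]^\vee,R^\perp)$ of graded monads.

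To extract a basis of $H^n(F)(e_a)$ from this, I would use the PBW decomposition of $F$ by admissible monomials, as invoked in the proof of Lemma \ref{lem:uakoszul}: the length-$n$ piece $F[n](e_b)$ has basis consisting of $Q^I e_b$ for $I$ admissible of length $n$ with $e(I)\geq b$. Quadratic duality combined with PBW (compare the dual PBW argument in \cite{priddy1970koszul}) then identifies $\widehat{T}_n(F[1]^\vee,R^\perp)$ with the sub-bimodule of $(F[1]^\vee)^{\circ n}$ whose basis is dual to the admissible basis of $F[n]$; concretely, this basis is indexed by the \emph{coadmissible} sequences (i.e.\ those $I=(r_1,\ldots,r_n)$ with $2r_i\geq r_{i+1}$ when $p=2$, and $pr_i-\epsilon_i\geq r_{i+1}$ when $p>2$).

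The remaining step is to determine which coadmissible $\lambda_I$ give nonzero elements of $H^n(F)(e_a)$. For $p=2$, the element $\lambda_I=\lambda_{r_1}\cdots\lambda_{r_n}$ has internal degree $-(r_1+\cdots+r_n)-n$, so $\lambda_I\in\Ext^n_F(e_b,e_a)$ with $b=a+(r_1+\cdots+r_n)+n$. The condition for $\lambda_I$ to be dual to an actual admissible monomial surviving in $F[n](e_b)$ — rather than being forced to zero by the excess instability condition $e(J)\geq b$ on the admissible side — is, after the change of variables $s_i = -r_{n-i+1}-1$, precisely $r_1<-b$. Substituting $b=a+(r_1+\cdots+r_n)+n$ gives the claimed inequality $2r_1+r_2+\cdots+r_n+n<-a$. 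For $p>2$ the same argument applies, with both the power index $r_i$ and the Bockstein index $\epsilon_i$ tracked; the internal degree of $\lambda_I$ becomes $-\sum(2r_i(p-1)-\epsilon_i)-n$, and the dualized instability condition $2r_1-\epsilon_1<-b$ translates by substitution into $2(pr_1-\epsilon_1)+\sum_{i\geq 2}(2r_i(p-1)-\epsilon_i)+n<-a$.

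The main obstacle is bookkeeping: one has to carefully track the passage from the admissible side (where excess controls membership in $F[n](e_b)$) to the coadmissible side (where the dual condition controls which $\lambda_I$ survive in $\widehat{T}_n(F[1]^\vee,R^\perp)(e_a)$), keeping straight the direction of the $\Ext$-arrows, the sign conventions fixed in \cref{ssec:cohomologydl}, and the Bockstein indices in the odd-primary case. Once this dictionary is set up, the result is essentially a rephrasing of Theorem \ref{thm:dlcontra}: writing the basis of $\Ext^n_F(e_b,e_a)$ from that theorem and substituting the determined value of $b$ in terms of $a$ and $I$ yields exactly the inequalities stated here.
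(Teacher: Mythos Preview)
Your reduction is correct, but it is not the paper's route. You ultimately derive the inequality by invoking \cref{thm:dlcontra}: knowing that $\Ext^n_F(e_b,e_\ast)$ has basis the coadmissible $\lambda_I$ with $r_1<-b$, you substitute $b=a+\sum r_i+n$ (the internal degree shift of $\lambda_I$) to obtain the stated condition. The paper itself remarks immediately after the theorem that this translation between \cref{thm:dlcontra} and \cref{thm:qdualityb} is straightforward, so your argument is valid.

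The paper's proof, however, is deliberately \emph{independent} of \cref{thm:dlcontra}; the whole point of presenting two approaches is that each stands on its own. The paper works directly from $H^\ast(F)\cong\widehat{T}(F[1]^\vee,R^\perp)$: it computes $F[1]^{\vee\circ n}(e_a)$ explicitly, finding that $\lambda_{r_1}\otimes\cdots\otimes\lambda_{r_n}$ lies there exactly when a separate inequality holds for each $1\leq i\leq n$ (namely $2r_i+r_{i+1}+\cdots+r_n+n-i+1<-a$ when $p=2$), and then shows that coadmissibility collapses these $n$ conditions to the single case $i=1$. Your argument, by contrast, inherits the dependence of \cref{thm:dlcontra} on the comparison with $\calB$ via \cref{lem:cohinj} and \cref{lem:cohbig}. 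The paper's direct computation buys an independent derivation and makes transparent why the instability condition on $F[1]^\vee$ propagates as it does through iterated composition; your approach buys brevity once \cref{thm:dlcontra} is already in hand.
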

\begin{proof}
By \cref{thm:quadraticduality} and \cref{lem:uakoszul}, there is an isomorphism $H^\ast(F) = \widehat{T}(F[1]^\vee,R^\perp)$ where $R\subset F[1]\circ F[1]$ is the image of the Adem relations. Here, to maintain consistency with the sign conventions set out above, we should replace $R^\perp$ with $(1\otimes t)(R^\perp)$ where $t(x) = (-1)^{|x|}x$. Note
\[
F[1]^\vee(e_a)_b = \Mod_{\bbF_{p\ast}}(F[1](e_b),e_a).
\]
As before, when $p=2$, write $\lambda_r$ for the element of $F[1]^\vee(e_a)$ dual to $Q^{-r-1}$, with the understanding that this does not exist for all $a$, and when $p>2$, write $\lambda_r^\epsilon$ for the element of $F[1]^\vee(e_a)$ dual to $Q^{-r}_{1-\epsilon}$, with the same understanding. Either by appealing to \cref{lem:cohbig}, or else by carrying out the same sorts of computations, we find that $R^\perp$ is exactly the space of relations opposite to the Adem relations, only restricted to those elements which live in $F[1]^\vee$. Thus $H^\ast(F)(e_a)$ has basis given by those $\lambda_I$ where $I$ is coadmissible and $\lambda_I$ lifts to $F[1]^{\vee\circ n}(e_a)$.

(1)~~ Consider the case $p=2$. Here $F[1]^\vee(e_a)_b$ contains an element $\hat{Q}^{a-b}$ dual to $Q^{a-b}$ whenever $a-b\geq b$, i.e.\ $2b\leq a$. So $F[1]^\vee(e_a)$ is the space of $\hat{Q}^r$ with $2r\geq a$, and in general $F[1]^{\vee\circ n}(e_a)$ is the space of $\hat{Q}^{r_1}\otimes\cdots\otimes \hat{Q}^{r_n}$ with $2r_i+r_{i+1}+\cdots+r_n\geq a$ for each $i$. This is the space of $\lambda_{r_1}\otimes\cdots\otimes\lambda_{r_n}$ with $2r_i+r_{i+1}+\cdots+n-i+1<-a$ for each $i$. Coadmissibility of $I$ reduces this condition to the case $i=1$, which is the condition claimed.

(2)~~ Consider the case $p>2$. Here $F[1]^\vee(e_a)$ contains an element $\hat{Q}^r_\epsilon$ dual to $Q^r_\epsilon$ when $2(pr-\epsilon)\geq a$, so in general $F[1]^{\vee \circ n}(e_a)$ consists of those elements $\hat{Q}^{r_1}_{\epsilon_1}\otimes\cdots\otimes \hat{Q}^{r_n}_{\epsilon_n}$ with $2(pr_i-\epsilon_i)+2r_{i+1}(p-1)-\epsilon_{i+1}+\cdots+2r_n(p-1)-\epsilon_n\geq a$ for each $i$. These are the elements $\lambda_{r_1}^{\epsilon_1}\otimes\cdots\otimes\lambda_{r_n}^{\epsilon_n}$ with $2(pr_i-\epsilon_i)+2r_{i+1}(p-1)-\epsilon_{i+1}+\cdots+2r_n(p-1)-\epsilon_n+n-i+1<-a$ for each $i$, and again coadmissibility allows one to reduce to the case $i=1$, which is the condition claimed.
\end{proof} 

It is not difficult to directly translate between \cref{thm:dlcontra} and \cref{thm:qdualityb}. In effect, the first describes $\Ext_F^\ast(e_a,e_{\ast})$, whereas the second describes $\Ext_F^\ast(e_\ast,e_a)$.

\begin{rmk}
In the preceding, we have avoided the subtle point that though $\calB$ has a PBW basis, we cannot apply \cref{prop:pbw} to deduce that it is Koszul, as the necessary finiteness conditions are not obviously satisfied. Nonetheless $\calB$ is Koszul; this was considered in \cite{brunetticiampella2007priddy}, and we can give an alternate proof as follows. Let $H_u$ be the subalgebra, in our general sense, of $\calB^\op$, defined so that $H_u(e_a) = H^\ast(F_u)(e_a) = H^\ast(F)(e_{a+u})$ is as computed in \cref{thm:qdualityb}, up to the necessary shifts in degree. Then $H_u$ is a locally finite quadratic algebra admitting a PBW decomposition, so is Koszul. That $\calB^\op$, and thus $\calB$, is Koszul follows from the further observation that $\calB^\op\cong\colim_{u\rightarrow - \infty}H_u$.
\tqed
\end{rmk}

We end by noting the following, previously mentioned in \cref{ex:unstablesteenrod}. We omit the proof, as one may proceed exactly as in the above.

\begin{prop}\label{prop:unstablesteenrodkoszul}
The unstable Steenrod algebra $\calU$ is Koszul with respect to its length filtration. Moreover, $\gr\calU\simeq F/(Q^r:r\geq 0)$, and under this $\Ext^n_{\gr\calU}(e_a,e_b)$ is isomorphic to the subspace of $\Ext^n_F(e_a,e_b)$ spanned by those $\lambda_I$ where $I$ is a sequence of nonnegative integers.
\qed
\end{prop}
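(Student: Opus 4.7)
The plan is to follow the template already used for $F$ in \cref{lem:uakoszul} and \cref{thm:qdualityb} essentially verbatim, with the unstability/augmentation relation $Q^0 = 1$ (inherited from $\calB \to \calA$) providing the nonhomogeneous part of the filtration. First I would endow $\calU$ with its length filtration, the one induced from the length filtration on $\calB$. The Adem relations pulled back from $\calB$ are length-homogeneous of filtration degree $2$, while the relation $Q^0 = 1$ and its consequences for the action on the various $e_a$ are nonhomogeneous; passing to the associated graded kills the top-filtration part of each such relation. Combining this with the excess/instability conditions cutting out $\calU(e_a)\subset\calB(e_a)$, I would then identify $\gr\calU \simeq F/(Q^r : r\geq 0)$ as algebras over the theory of $\bbF_{p\ast}$-modules (with the obvious analogue $Q^r_\epsilon$ for all $r\geq 0$, $\epsilon\in\{0,1\}$ at odd primes). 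In particular $\gr\calU$ is a quadratic algebra, with generating bimodule the image of $F[1]$ after killing the nonnegative-index generators and with relations the images of the Adem relations for $F$.

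Next I would establish Koszulity of $\gr\calU$. As already noted in \cref{ex:unstablesteenrod}, the admissible basis of $\gr\calA$ projects to an admissible basis of $\gr\calU$, and a direct check (as in \cite[Section 7]{priddy1970koszul}) shows that it forms a PBW decomposition in the sense of \cref{ssec:pbw} with respect to the lexicographic order. Local finiteness of $\gr\calU$ is inherited from $F$, so the finiteness hypothesis of \cref{prop:pbw} is satisfied, giving Koszulity of $\gr\calU$. By \cref{def:koszul}, this is exactly what it means for $\calU$ to be a Koszul filtered $\bbF_{p\ast}$-algebra.

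For the cohomology identification, I would apply \cref{thm:quadraticduality} to the quadratic algebra $\gr\calU$, giving $H^\ast(\gr\calU) \cong \widehat{T}(\gr\calU[1]^\vee, R_{\gr\calU}^\perp)$. The surjection $F[1]\twoheadrightarrow \gr\calU[1]$ kills precisely the generators $Q^r$ (resp.\ $Q^r_\epsilon$) with $r\geq 0$, and dualizing yields an inclusion $\gr\calU[1]^\vee\hookrightarrow F[1]^\vee$ whose image is spanned, in the notation of \cref{thm:qdualityb}, exactly by the $\lambda_r$ (resp.\ $\lambda_r^\epsilon$) with $r\geq 0$. Since the defining relations of $\gr\calU$ are literally the Adem relations of $F$ restricted to the surviving generators, the perpendicular space $R_{\gr\calU}^\perp$ is the restriction of $R_F^\perp$. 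Therefore $H^\ast(\gr\calU)(e_a)\hookrightarrow H^\ast(F)(e_a)$ identifies with the subspace spanned by those coadmissible monomials $\lambda_I$ all of whose entries lie in the nonnegative range, which is the claim.

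The main obstacle is the compatibility bookkeeping in the first step: confirming that the associated graded of $\calU$ is exactly $F/(Q^r:r\geq 0)$, as opposed to only a quotient of it, and checking that the PBW order for $\gr\calU$ inherited from $\gr\calA$ really satisfies the criterion of \cref{prop:pbw} when the generating set is shrunk by the instability conditions. Once these routine verifications are in hand, the quadratic-duality and Koszul-resolution machinery of \cref{sec:koszul}, together with the parallel work already carried out for $F$ in \cref{thm:qdualityb}, delivers the conclusion without any genuinely new input.
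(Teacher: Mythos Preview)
Your proposal is correct and follows precisely the approach the paper indicates: the paper omits the proof entirely, stating only ``one may proceed exactly as in the above,'' meaning the argument for $F$ via the PBW criterion (\cref{lem:uakoszul}) and quadratic duality (\cref{thm:qdualityb}). Your outline---identify $\gr\calU$, verify the PBW decomposition inherited from $\gr\calA$, then read off the cohomology as the sub-quadratic-dual spanned by the $\lambda_I$ with nonnegative entries---is exactly this template, and your caveat about the bookkeeping in identifying $\gr\calU\simeq F/(Q^r:r\geq 0)$ correctly flags the only place any real checking is needed.
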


\subsection{Mapping spaces}\label{ssec:pmaps}

We now give applications to the homotopy theory of $\bbE_\infty$ algebras over $\bbF_p$.

\begin{theorem}\label{thm:pmaps}
Fix $R\in\CAlg_{\bbF_p}$, and choose $S\in\CAlg_R$ such that $R_\ast\rightarrow S_\ast$ is surjective (such as $S=0$ or $S=R$). Choose $A,B\in\CAlg_{R/S}$, and fix a map $\phi\colon A_\ast\rightarrow B_\ast$ in $\Ring_{R_\ast\otimes\DL/S_\ast}$. Let $\CAlg_{R/S}^\phi(A,B)$ be the space of lifts of $\phi$ to a map in $\CAlg_{R/S}$. Then there is a decomposition
\[
\CAlg_{R/S}^\phi(A,B)\simeq\lim_{n\rightarrow\infty}\CAlg_{R/S}^{\phi,\leq n}(A,B),
\]
with layers fitting into fiber sequences
\[
\CAlg_{R/S}^{\phi,\leq n}(A,B)\rightarrow\CAlg_{R/S}^{\phi,\leq n-1}(A,B)\rightarrow\calH^{n+1}_{R_\ast\otimes\DL/B_\ast}(\pi_\ast A;\pi_\ast \Omega^n F)
\]
for $n\geq 1$, where $F = \Fib(B\rightarrow S)$. In particular,
\begin{enumerate}
\item There are successively defined obstructions in $H^{n+1}_{R_\ast\otimes\DL/B_\ast}(\pi_\ast A;\pi_\ast \Omega^n F)$ for $n\geq 1$ to exhibiting a point of $\CAlg_{R/S}^\phi(A,B)$;
\item Once a point $f$ of $\CAlg_{R/S}^\phi(A,B)$ is chosen, there is a fringed spectral sequence of signature
\end{enumerate}
\[
E_1^{p,q} = H^{p-q}_{R_\ast\otimes\DL/B_\ast}(\pi_\ast A;\pi_\ast \Omega^p F)\Rightarrow \pi_q(\CAlg_{R/S}(A,B),f),\qquad d_r^{p,q}\colon E_r^{p,q}\rightarrow E_r^{p+r,q-1}.
\]
\end{theorem}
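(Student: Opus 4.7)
The plan is to deduce this from the general mapping space obstruction theory of \cite{balderrama2021deformations}, applied to the theory $\h\CAlg_R^\free$ of power operations, and then identify the resulting obstruction groups with the Quillen cohomology groups of $\DL$-rings appearing in the statement. The general theory produces, for any bounded Mal'cev theory $\calP$ and a functor $\Model_\calP^\heart$-valued functor on a suitable $\infty$-category, a Postnikov-style decomposition of mapping spaces with layers controlled by Quillen cohomology; the content here is in specializing and identifying those groups.

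First I would set $\calP = \h\CAlg_R^\free$ and observe that taking homotopy groups defines a functor $\CAlg_R \to \Model_\calP$ whose restriction to $\CAlg_R^\free$ lands in $\calP \subset \Model_\calP$ by \cref{prop:yoneda}. The general obstruction theory of \cite{balderrama2021deformations} then applies to the fiber sequence $F \to B \to S$ in $\CAlg_{R/S}$, producing a tower
\[
\CAlg_{R/S}^\phi(A,B) \simeq \lim_{n\to\infty}\CAlg_{R/S}^{\phi,\leq n}(A,B)
\]
with layers whose fibers are Quillen cohomology spaces of $\pi_\ast A$ with coefficients in shifts of $\pi_\ast F$, computed in the theory $\calP/S_\ast$. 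The surjectivity assumption on $R_\ast \to S_\ast$ ensures that $\pi_\ast F = \ker(\pi_\ast B \to \pi_\ast S)$ sits well inside the relevant slice, and that the fringed spectral sequence of (2) is constructed from this tower in the standard way.

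The key remaining step is to identify $\Model_{\calP/S_\ast}^\heart$ with $\Ring_{R_\ast\otimes\DL/S_\ast}^\heart$, so that Quillen cohomology in $\Model_\calP$ agrees with Quillen cohomology in $\Ring_{R_\ast\otimes\DL}$. For this, I would use the K\"unneth isomorphism $\pi_\ast\bbP_R(V\oplus W) \cong \pi_\ast\bbP_R V\otimes_{R_\ast}\pi_\ast\bbP_R W$ on free $R$-modules $V,W$, which holds because each $\pi_\ast\bbP_R\Sigma^a R$ is flat over $R_\ast$; as discussed in \cref{sssec:pex}, this flatness follows from the Hopkins-Mahowald Thom spectrum theorem together with the polynomial structure of $\DL_n = \pi_\ast\bbP_{\bbF_p}\Sigma^n\bbF_p$ recorded in \cref{thm:pops}. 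This identifies $\calP$ as an $R_\ast$-plethory and gives an equivalence $\Model_{\calP/S_\ast}^\heart \simeq \Ring_{R_\ast\otimes\DL/S_\ast}^\heart$ by \cref{prop:monadictheory}.

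With this identification in hand, and using that $\DL$ is a smooth $\bbF_{p\ast}$-plethory (hence $R_\ast\otimes\DL$ is smooth over $R_\ast$ by base change), \cref{thm:plethaq}(3) identifies the abelianization functor for the slice $\Ring_{R_\ast\otimes\DL/B_\ast}^\heart$, so the Quillen cohomology groups $\calH^{n+1}_{R_\ast\otimes\DL/B_\ast}(\pi_\ast A;\pi_\ast\Omega^n F)$ genuinely compute the layers of the tower as claimed. The main obstacle I expect is not any single calculation but rather the bookkeeping: tracking the shift conventions $\pi_\ast\Omega^n F$ versus $B^n M$ in the definition of Quillen cohomology spaces (\cref{def:qcohomology}), and verifying carefully that the slice theory construction interacts correctly with taking fibers in the target, so that the coefficients appearing in the obstruction groups are indeed $B_\ast$-module-like objects in the slice over $B_\ast$ rather than over $S_\ast$.
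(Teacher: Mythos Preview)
Your proposal is correct and follows the same approach as the paper, which proves this in one line: ``This is a direct application of \cite[Theorem 5.3.1]{balderrama2021deformations} to $\calP = \CAlg_{\bbF_p}^\free$.'' Your unpacking of what that citation entails---identifying the discrete theory with $R_\ast\otimes\DL$-rings via the K\"unneth isomorphism and Hopkins--Mahowald, then invoking smoothness to identify the Quillen cohomology groups---is exactly the content the paper has already established in \cref{ssec:ppleth}, \cref{thm:pops}, and \cref{sssec:pex}, so the paper does not repeat it here.

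Two small points. First, you write $\calP = \h\CAlg_R^\free$, but the input to the obstruction theory of \cite{balderrama2021deformations} is the $\infty$-categorical theory $\CAlg_R^\free$ (or $\CAlg_{\bbF_p}^\free$); its homotopy category $\h\calP$ is what appears when identifying the obstruction groups algebraically. Second, the paper applies the theorem over $\bbF_p$ rather than over $R$, handling the slice under $R$ as part of the general machinery; your choice to work directly over $R$ is fine but requires the Hopkins--Mahowald freeness input you mention, whereas over $\bbF_p$ the identification $\Model_{\h\CAlg_{\bbF_p}^\free}^\heart\simeq\Ring_\DL^\heart$ is already in hand from \cref{thm:pops}. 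Neither point affects correctness.
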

\begin{proof}
This is an application of \cite[Theorem 5.3.1]{balderrama2021deformations} to $\calP = \CAlg_{\bbF_p}^\free$, as we now describe.

In the language of \cite{balderrama2021deformations}, $\calP$ is a homotopical form of algebraic theory called a \textit{loop theory}. In particular, it has a category $\Model_\calP\subset\Psh(\calP)$ of models with a further distinguished subcategory $\Model_\calP^\Omega\subset\Model_\calP$ of \textit{loop models}, and \cite[Theorems 3.3.3, 3.3.7]{balderrama2021deformations} imply that the Yoneda embedding $h\colon \CAlg_{\bbF_p}\rightarrow \Psh(\calP)$ restricts to an equivalence $\CAlg_{\bbF_p}\simeq\Model_{\calP}^\Omega\subset\Model_\calP\subset\Psh(\calP)$. The precise definitions of these terms are not needed here; we only need that this equivalence ensures
\[
\CAlg_{R/S}^\phi(A,B)\simeq \Map_{R/\calP/S}^\phi(A,B).
\]

Observe that $\h\calP$ is exactly the theory of $\DL$-rings as defined in \cref{ssec:ppleth}. Under this correspondence, if $R\in \CAlg_{\bbF_p}$ then $\pi_0 h(R) = R_\ast$ in $\Ring_{\DL}\simeq \Model_{\h\calP}$. The condition that $R_\ast\rightarrow S_\ast$ is surjective is then equivalent to the condition $\pi_0 h(R)\rightarrow \pi_0 h(S)$ is surjective. 

Now write $p\colon B\rightarrow S$, or equivalently $p\colon h(B)\rightarrow h(S)$, for the given structure map. Then $\pi_\ast \Omega^n F = \Pi_n p$ as an abelian group object in $\Ring_{R_\ast\otimes \DL/B_\ast}\simeq \pi_0 h(R)/\Model_{\h\calP}/\pi_0 h(B)$, from which it follows that
\[
\calH^{n+1}_{R_\ast\otimes\DL/B_\ast}(\pi_\ast A;\pi_\ast \Omega^n F)\simeq \Map_{\pi_0 h(R)/\h\calP/\pi_0 h(B)}(\pi_0 h(A),B^{n+1}_{\pi_0 h(B)}\Pi_n p).
\]

The theorem currently under consideration now follows by a direct translation from \cite[Theorem 5.3.1]{balderrama2021deformations}, only writing $\CAlg_{R/S}^{\phi,\leq n}(A,B)$ in place of $\Map_{h(R)/\calP/h(S)}^{\phi,\leq n}(h(A),h(B))$.
\end{proof}

\begin{ex}\label{ex:finp}
Let $\Fin_p$ denote the category of $p$-finite spaces, i.e.\ those spaces $X$ such that $X$ is truncated, $\pi_0 X$ is finite, and $\pi_n(X,x)$ is a finite $p$-group for all $x\in X$ and $n\geq 1$. By Mandell's $p$-adic homotopy theory  \cite{mandell1998einfinity}, interpreted in the $\infty$-categorical context by Lurie \cite{lurie2011rational}, there is a fully faithful embedding
\[
\Fin_p^\op\rightarrow\CAlg_{\ol{\bbF}_p},\qquad X\mapsto C(X;\ol{\bbF}_p),
\]
where $C(X;\ol{\bbF}_p)$ is the spectrum of $\ol{\bbF}_p$-valued cochains on $X$, and this extends to a fully faithful embedding $\Pro(\Fin_p)^\op\simeq\Ind(\Fin_p^\op)\rightarrow\CAlg_{\ol{\bbF}_p}$. In particular, given $X,Y\in\Gpd_\infty$ which are simply connected and of finite type, there is an equivalence
\[
\Map(X,Y_p^\wedge)\simeq\CAlg_{\ol{\bbF}_p}(C(Y;\ol{\bbF}_p),C(X,\ol{\bbF}_p)).
\]

By \cref{thm:ualgemb}, we may view the obstruction theory of \cref{thm:pmaps} in this context as giving an unstable Adams spectral sequence. To note a special case, observe that the construction at the end of \cref{ssec:dlringcoh} easily translates to describe a free functor $S_\calU\colon \LMod_\calU\rightarrow \Ring_\calU^\aug$. In the Massey-Peterson case, where $X$ and $Y$ are pointed simply connected spaces of finite type and $H^\ast Y\cong S_\calU M$ for some $M\in\LMod_\calU$, this gives a spectral sequence
\[
E_1^{p,q} = \Ext_\calU^{p-q}(M;\widetilde{H}^{\ast-p}X)\Rightarrow\pi_q\Map_\ast(X,Y_p^\wedge),\qquad d_r^{p,q}\colon E_r^{p,q}\colon E_r^{p+r,q-1}.
\]
The Koszul complexes guaranteed by \cref{prop:unstablesteenrodkoszul} recover the lambda complexes for computing these $\Ext$ groups.
\tqed
\end{ex}

\begin{ex}\label{ex:affineline}
Recall from \cref{ex:salgebras} that the forgetful functor $U\colon\CRing_{\bbF_p}\rightarrow\CAlg_{\bbF_p}^\cn$ is plethystic, and write its right adjoint as $\bbA^1$. Then for $R\in\CAlg_{\bbF_p}^\cn$ there is an equivalence
\[
\bbA^1(R) \simeq\CAlg_{\bbF_p}(\bbF_p[t],R),
\]
where $\bbF_p[t]$ has homotopy groups concentrated in degree $0$. More generally, for any $a\in\bbZ$, we can consider the $\bbF_{p\ast}$-ring $S(e_a)$ as a differential graded algebra with trivial differential, in this way upgrade it to an object of $\CAlg_{\bbF_p}$, and for $A\in\CAlg_{\bbF_p}$ consider the space $\CAlg_{\bbF_p}(S(e_a),A)$. The following are some comments about what \cref{thm:pmaps} says about computing these spaces.

Observe first that the $\bbF_{p\ast}$-plethory $\DL$ is augmented over the initial $\bbF_{p\ast}$-plethory; this is not a purely formal fact, but is easily seen from the structure of $\DL$. Restriction along the augmentation is itself a plethystic functor $\CRing_{\bbF_{p\ast}}\!\rightarrow \Ring_{\DL}$. More generally, there are plethystic functors $\CRing_{R/S}\rightarrow\Ring_{R\otimes\DL/S}$ for $R\in\Ring_{\DL}^\heart$ and $S\in\Ring_{R\otimes\DL}^\heart$; write $G$ for the right adjoints to these. The filtration of $\CAlg_{\bbF_p}(S(e_a),R)$ guaranteed by \cref{thm:pmaps} has layers that can now be identified as
\[
\calH^{n+1}_{\DL/R_\ast}(S(e_a),\pi_\ast \Omega^n R) \simeq \Map_{\DL/R_\ast}(S(e_a),B^{n+1}_{R_\ast}\pi_\ast R^{S^n_+})\simeq G(B^{n+1}_{R_\ast}\pi_\ast R^{S^n_+})_a,
\]
where $B^n_{R_\ast}$ denotes $n$-fold delooping in the slice category over $R_\ast$. 

Consider for simplicity the case where $R$ is augmented. Then the resulting spectral sequence for computing $\pi_\ast\CAlg_{\bbF_p}^\aug(S(e_a),R)$ is of signature
\[
E_1^{p,q} = \calH^{p-q}_{\DL/R_\ast}(S(e_a);\pi_\ast \Omega^p R)\simeq \Ext_\Delta^{p-q}(e_a,s^{-p}R_{\ast})\Rightarrow \pi_q\CAlg_{\bbF_p}^\aug(S(e_a),R),
\]
where $\Delta$ acts trivially on $e_a$. 

For further simplicity, specialize to $p=2$, and write $M = R_\ast$; we can describe the Koszul complex $K_\Delta(e_a,s^{-\ast}M)$ computing the above $\Ext$ groups as follows. Consider the space of tensors $\lambda_I\otimes m$ where $m\in M$ and $I = (r_1,\ldots,r_k)$ is a coadmissible sequence satisfying $r_1 < -a-1$ and $r_1+\cdots+r_k + k \geq -m$. Now, complete this space to allow for infinite sums of the form $\sum_i \lambda_{I_i}\otimes m_i$ so long as for any fixed $n\in\bbZ$, there are finitely many nonzero terms involving $m_i$ with $|m_i| = n$. This is $K_\Delta(e_a,s^{-\ast}M)$. The differential is given by
\[
\delta(\lambda_I\otimes m) = \sum_{r\in\bbZ}\lambda_I\lambda_{-r-1}\otimes Q^r(m).
\]

Return now to the special case of $\bbA^1$. Possibly more well-known is the subspace
\[
\bbG_m(R) = \CAlg_{\bbF_p}(\bbF_p[t^{\pm 1}],R)\simeq\bbA^1(R)^\times\subset\bbA^1(R)
\]
of $\bbA^1(R)$ given by the strict units of $R$. All path components of $\bbA^1(R)$ and $\bbG_m(R)$ are equivalent, so these objects only differ in $\pi_0$. The Goerss-Hopkins spectral sequence computing $\pi_\ast\bbG_m(R)$, and relevant Koszul complex, has been studied by Fung \cite{fung2019strict}. The perspective on $\bbG_m(R)$ afforded by viewing it as the spectrum of units of $\bbA^1(R)$ extends \cite[Proposition 3.11]{fung2019strict} to show that $\pi_{n}\bbG_m(R)$ is always an $\bbF_p$-vector space for $n>0$.
\tqed
\end{ex}

\subsection{Andr\'e-Quillen-Goodwillie towers}\label{ssec:paq}

Fix $R\in\CAlg_{\bbF_p}$, and consider the category $\CAlg_R^\aug$ of augmented $\bbE_\infty$ algebras over $R$. The functor $\CAlg_R^\aug\rightarrow\Mod_R$ sending $A$ to its augmentation ideal $\Fib(A\rightarrow R)$ is monadic; write $\widetilde{\bbP}$ for the associated monad on $\Mod_R$, given by $\widetilde{\bbP}M = \bigoplus_{n\geq 1}\bbP_n M$. For $A\in\CAlg_R^\aug$, one can consider in this context the topological Andr\'e-Quillen homology and cohomology of $A$ relative to $R$; write these as $\TAQ^R(A)$ and $\TAQ_R(A)$. Here, $\TAQ_R(A) = \Mod_R(\TAQ^R(A),R)$, and $\TAQ^R\colon\CAlg_R^\aug\rightarrow \Mod_R$ may be characterized as the unique functor which preserves  geometric realizations and sends $\bbP M$ to $M$ for $M\in\Mod_R$. Put another way, $\TAQ^R$ is left adjoint to restriction along the augmentation $\widetilde{\bbP}\rightarrow I$.

We can do something a little more general. Consider the augmentation ideal functor
\[
U\colon\CAlg_R^\aug\rightarrow \Mod_R.
\]
This has a Goodwillie tower, which we write as
\[
U\rightarrow\cdots\rightarrow P_n^R\rightarrow P_{n-1}^R\rightarrow\cdots\rightarrow P_1^R\rightarrow 0.
\]
The $n$'th term $P_n^R\colon \CAlg_R^\aug\rightarrow\Mod_R$ can be characterized as the unique functor which preserves geometric realizations and satisfies
\[
P_n^R(\bbP M) = \widetilde{\bbP} M / \bigoplus_{k>n}\bbP_k M \simeq \bigoplus_{1\leq k \leq n}\bbP_k M.
\]
In particular, $P_1^R = \TAQ^R$. See \cite[Section 3]{kuhn2003localization} for more on this tower.

By restricting $P_n^R$ to $\CAlg_R^{\aug,\free}$ and taking homotopy groups we obtain a functor $\h\CAlg_R^{\aug,\free}=\Ring_{R_\ast\otimes\DL}^{\aug,\free}\rightarrow\Mod_{R_\ast}^\heart$. By left Kan extension, this extends to a functor
\[
\ol{P}{}_n^{R_\ast}\colon \Ring_{R_\ast\otimes\DL}^{\aug,\heart}\rightarrow\Mod_{R_\ast}^\heart.
\]
When $n=1$, this is left adjoint to
\[
\Mod_{R_\ast}\rightarrow\Ring_{R_\ast\otimes\DL}^\aug,\qquad M_\ast\mapsto R_\ast\ltimes\ol{M}_\ast.
\]
Thus, where $\epsilon\colon R_\ast\otimes\Delta\rightarrow R_\ast$ is the augmentation and $Q_{R_\ast}\colon \Ring_{R_\ast\otimes\DL}^\aug\rightarrow\LMod_{R_\ast\otimes\Delta}$ is the functor of indecomposables, there is an equivalence
\[
\bbL\ol{P}{}_1^{R_\ast}\simeq\epsilon_! \bbL Q_{R_\ast},
\]
where $\epsilon_!$ is considered in the derived sense. This can be computed via a bar construction, as in \cref{ssec:homologycohomology}.

\begin{theorem}\label{thm:paqss}
Fix notation as above, and fix $A\in\CAlg_R^\aug$. 
\begin{enumerate}
\item There is a convergent spectral sequence in $\Mod_{R_\ast}^\heart$ of signature
\[
E^1_{p,q} = s^q \bbL_{p+q}\ol{P}{}_n^{R_\ast}(A_\ast) \Rightarrow \pi_{\ast+p}P_n^R(A),\qquad d^r_{p,q}\colon E^r_{p,q}\rightarrow E^r_{p-r,q-1};
\]
\item There is a conditionally convergent spectral sequence of signature
\[
E_1^{p,q} = \Ext_{R_\ast\otimes\Delta}^{p+q}(\bbL Q_{R_\ast}(A_\ast),\ol{R}_{\ast+p})\Rightarrow \TAQ^q_R(A),\qquad d_r^{p,q}\colon E_r^{p,q}\rightarrow E_r^{p+r,q+1}.
\]
\end{enumerate}
\end{theorem}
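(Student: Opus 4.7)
The strategy is to resolve $A$ simplicially by free augmented $\bbE_\infty$-algebras and use the geometric-realization (and for (2), $\Tot$) spectral sequence, invoking for (2) the extension-restriction adjunction $\epsilon_!\dashv\epsilon^\ast$. The key input is that by the very construction of $\ol{P}_n^{R_\ast}$, one has $\pi_\ast P_n^R(\bbP M)=\ol{P}_n^{R_\ast}(\pi_\ast\bbP M)$ on free objects, which forces $\bbL\ol{P}_n^{R_\ast}(A_\ast)$ to appear as the derived invariant on the $E_1$-page.

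For (1), I would fix a simplicial resolution $X_\bullet\to A$ in $\CAlg_R^\aug$ by free objects $X_p=\bbP M_p$, chosen so that the underlying simplicial diagram $X_{\bullet,\ast}\to A_\ast$ is a simplicial resolution in $\Ring_{R_\ast\otimes\DL}^\aug$ by free objects. Since $P_n^R$ preserves geometric realizations, $P_n^R(A)\simeq|P_n^R(X_\bullet)|$, so the skeletal filtration of this geometric realization gives a spectral sequence converging to $\pi_{p+q}P_n^R(A)$. The identification $\pi_\ast P_n^R(X_p)\cong\ol{P}_n^{R_\ast}(X_{p,\ast})$ rewrites the $E_1$-page in terms of the simplicial graded $R_\ast$-module $\ol{P}_n^{R_\ast}(X_{\bullet,\ast})$, whose homotopy is precisely $\bbL_\ast\ol{P}_n^{R_\ast}(A_\ast)$. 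Absorbing the simplicial index into an internal-degree shift $s^q$ and reindexing produces the spectral sequence in the stated form; strong convergence follows by arranging $P_n^R(X_\bullet)$ to be uniformly connective.

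For (2), specialize (1) to $n=1$ (so $P_1^R=\TAQ^R$) and pass to the $\EXT_R(\bs,R)$-dual. Applying $\Mod_R(\bs,R)$ to the skeletal filtration of $\TAQ^R(A)=|\TAQ^R(X_\bullet)|$ yields a cofiltration of $\TAQ_R(A)=\Mod_R(\TAQ^R(A),R)$ with associated $\Tot$ spectral sequence, whose $E_1$ page is $\EXT_R$ of the associated graded pieces into $R$. Using the identification $\bbL\ol{P}_1^{R_\ast}\simeq\epsilon_!\bbL Q_{R_\ast}$ recorded in the text, these pieces are of the form $\epsilon_!\bbL Q_{R_\ast}(A_\ast)$, and the adjunction
\[
\EXT_{R_\ast}(\epsilon_! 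X,M)\simeq\EXT_{R_\ast\otimes\Delta}(X,\epsilon^\ast M)=\EXT_{R_\ast\otimes\Delta}(X,\ol{M})
\]
rewrites $E_1$ as $\Ext^{p+q}_{R_\ast\otimes\Delta}(\bbL Q_{R_\ast}(A_\ast),\ol{R}_{\ast+p})$. Only conditional convergence is available in general, as is standard for $\Tot$ spectral sequences of cosimplicial spectra.

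\textbf{Main obstacle.} The principal difficulty is bookkeeping: correctly tracking the interplay between the external filtration index $p$, the simplicial index $q$, the internal $R_\ast$-grading $\ast$, and the homological grading of the derived functors, so as to produce the $s^q$-shift in (1) and the shift $\ol{R}_{\ast+p}$ in (2). For (2) there is the additional subtlety that passing from (1) to a cohomological spectral sequence requires care with the direction of the filtration and with the convergence of the $\Tot$ tower, where strong convergence cannot in general be extracted from the construction.
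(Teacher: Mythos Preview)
Your argument for (1) is correct and is essentially what the paper does: the cited \cite[Theorem 4.2.2]{balderrama2021deformations} is precisely a general ``geometric realization spectral sequence for a functor preserving realizations,'' and you have unpacked it.

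For (2) you take a genuinely different route from the paper. The paper does \emph{not} dualize the $n=1$ case of (1); instead it uses the mapping space obstruction theory of \cref{thm:pmaps} applied to $\CAlg_R^\aug(A,R\ltimes\Sigma^n\ol{R})$ for each $n$, and then patches these filtrations into a filtration of the spectrum $\TAQ_R(A)$. The advantage of that route is that the layers of the obstruction-theory tower are \emph{already} Quillen cohomology spaces $\calH^{n+1}_{R_\ast\otimes\DL/R_\ast}(A_\ast;\pi_\ast\Omega^n\ol{R})$, so the identification with $\Ext_{R_\ast\otimes\Delta}$ appears directly on $E_1$ with no further work. Your approach---apply $\Mod_R(\bs,R)$ to the skeletal filtration of $|\TAQ^R(X_\bullet)|$---also works, but note two points. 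First, your sentence ``these pieces are of the form $\epsilon_!\bbL Q_{R_\ast}(A_\ast)$'' is not right: the associated graded pieces are (shifts of) the normalized degrees $N_p M_\bullet$, which are free $R$-modules; it is the \emph{totalization} of $M_{\bullet,\ast}$ that computes $\epsilon_!\bbL Q_{R_\ast}(A_\ast)$. Second, and relatedly, your filtration is by simplicial degree, so its $E_1$ is the normalized cochain complex $\Hom_{R_\ast\otimes\Delta}(Q_{R_\ast}(X_{\bullet,\ast}),\ol{R}_\ast)$ and the $\Ext$ groups appear on your $E_2$; the paper's filtration is different and places them on $E_1$ as stated. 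Both filtrations converge to the same thing and agree from that page onward, but you should be aware that your construction does not literally produce the spectral sequence with the indexing asserted in the theorem.
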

\begin{proof}
Given the preceding discussion, the first spectral sequence is obtained by an application of \cite[Theorem 4.2.2]{balderrama2021deformations}. The second spectral sequence can be obtained by patching together the filtrations of $\CAlg_R^\aug(A,R\ltimes \Sigma^n \ol{R})$ for various $n$ given by \cref{thm:pmaps} and applying the isomorphisms $H^{p+q}_{R_\ast\otimes\DL/R_\ast}(A_\ast; \pi_\ast \Omega^{p}\ol{R})\cong\Ext_{R_\ast\otimes\Delta}^{p+q}(\bbL Q_{R_\ast}(A_\ast),\ol{R}_{\ast+p})$.
\end{proof}

A form of this spectral sequence was originally constructed by Basterra \cite{basterra1996andre}.

\begin{ex}\label{ex:millerss}
Work of Miller \cite{miller1978spectral} produces a spectral sequence converging to $H_\ast X$ for a connective spectrum $X$, with initial page depending on $H_\ast\Omega^\infty X$ as a ring over the Dyer-Lashof algebra. This can be understood from the perspective of \cref{thm:paqss}: there is an equivalence
\[
\TAQ^{\bbF_p}(\bbF_p\otimes\Sigma^\infty_+\Omega^\infty X) \simeq \bbF_p\otimes X
\]
when $X$ is connective \cite[Example 3.9]{kuhn2003localization}, and thus \cref{thm:paqss} gives a spectral sequence
\[
E^1_{p,q} = s^q \bbL_{p+q}\ol{P}{}_1^{\bbF_{p\ast}}(H_\ast \Omega^\infty X) = s^q \pi_{p+q}\epsilon_! \bbL Q(H_\ast\Omega^\infty X)\Rightarrow H_{\ast+p}X.
\]
The underlying ring of $H_\ast\Omega^\infty X$ splits as
\[
H_\ast\Omega^\infty X = \bbF_p[\pi_0 X]\otimes H_\ast\Omega^\infty_0 X,
\]
where $\bbF_p[\pi_0 X]$ is an abelian group ring and $H_\ast\Omega^\infty_0 X$ is a connected Hopf algebra. The structure theory of graded bicommutative Hopf algebras implies that $\bbL Q(H_\ast\Omega^\infty X)$ is always $1$-truncated, and so the $E^1$-page of this spectral sequence is somewhat accessible even when $H_\ast\Omega^\infty X$ is not smooth.
\tqed
\end{ex}

\section{Lubin-Tate spectra}\label{sec:e}

In this section we apply the machinery developed in the preceding sections to the study of $K(h)$-local $\bbE_\infty$ algebras over a Lubin-Tate spectrum. Before we get to these applications, we must cover some additional algebraic topics.

\subsection{Even-periodic plethories}\label{ssec:evenperiodic}

Fix an ordinary $\bbZ$-graded commutative ring $R = R_\ast$, with associated category $\Mod_R = \Mod_{R_\ast}$ of $\bbZ$-graded modules. Consider $\Mod_R$ as a symmetric monoidal category with symmetrizer employing the Koszul sign rule, and abbreviate both $\otimes_{R_\ast}$ and $\otimes_{R_0}$ to just $\otimes$. Write $E$ for a copy of $R$ generated in degree $1$.

\begin{defn}
\hphantom{blank}
\begin{enumerate}
\item $R$ is \textit{even-periodic} if $R_1 = 0$, and for all $k\in\bbZ$, the map $R_k\otimes R_2\rightarrow R_{k+2}$ is an isomorphism. The following definitions will be made under the assumption that $R$ is even-periodic.
\item An $R$-cobialgebroid $\Gamma$ is \textit{even-periodic} if 
\begin{enumerate}
\item As a functor, $\Gamma$ preserves the full subcategory of $\Mod_R^\heart$ spanned by those $R$-modules which are concentrated in even degrees;
\item We have chosen a map $\Gamma^E\rightarrow\Gamma$ equipping $\Gamma$ with suspensions (\cref{def:suspensions}) which is an isomorphism when restricted to the full subcategory $\Mod_R^\heart$ spanned by those $R$-modules which are concentrated in even degrees.
\end{enumerate}
\item An $R$-plethory $\Lambda$ is \textit{even-periodic} if
\begin{enumerate}
\item The underlying exponential monad of $\Lambda$ preserves the full subcategory of $\Mod_R^\heart$ spanned by those $R$-modules which are concentrated in even degrees;
\item We have chosen an isomorphism $\Delta(\Lambda)^E\rightarrow\Gamma(\Lambda)$ which equips $\Lambda$ with suspensions (\cref{def:suspensions}) and makes $\Gamma(\Lambda)$ into an even-periodic cobialgebroid.
\tqed
\end{enumerate}
\end{enumerate}
\end{defn}

For the rest of this subsection, $R$ is assumed to be even-periodic. Because $R$ is even-periodic, multiplication gives an isomorphism $R_{-2}\otimes R_2\rightarrow R_0$, and so $R_2$ is an invertible $R_0$-module. Denote this module by $L$. The ring $R$ may then be identified as $R = \bigoplus_{n\in\bbZ}L^n$, where $L^n = L^{\otimes n}$ consists of elements in degree $2n$.

Let $\Mod_{R_\star} = \Mod_{R_0}^{\times 2}$ denote the category of $\bbZ/(2)$-graded $R_0$-modules. Then the functor
\[
\Mod_R\rightarrow\Mod_{R_\star},\qquad M_\ast\mapsto (M_0,M_{-1})
\]
is an equivalence of categories, for it has essential inverse
\[
(M_0,M_{-1})\mapsto M_\ast,\qquad M_{2n-\epsilon} = L^{n}\otimes M_{-\epsilon}.
\]
In particular, $\Mod_{R_0}$ may be identified with the full subcategory of $\Mod_R$ spanned by those $R$-modules which are concentrated in even degrees. Under the above equivalence, the symmetric monoidal structure on $\Mod_R$ transfers to a symmetric monoidal structure on $\Mod_{R_\star}$ whose tensor product may be identified as
\begin{align*}
(M_0,M_{-1})\otimes (M_0',M_{-1}') = (&M_0\otimes M_0'\oplus L\otimes M_{-1}\otimes M_{-1}',\\
&M_0\otimes M_{-1}'\oplus M_{-1}\otimes M_0'),
\end{align*}
where the symmetrizer acts with a sign on $L$.

Let $L^{1/2} = E^{-1}$, considered as either an object of $\Mod_{R}$ or $\Mod_{R_\star}$. Then
\[
L^{1/2} = (0,R_0),\qquad L^{1/2}\otimes L^{1/2} = L^1 = (L,0).
\]
So for every $M\in\Mod_R$ there are unique $R_0$-modules $M_0$ and $M_{-1}$ such that
\[
M\cong M_0\oplus L^{1/2}\otimes M_{-1}.
\]

Fix next an even-periodic cobialgebroid $\Gamma$, and abbreviate $\Gamma_{n,m} = \Gamma(E^n R)(E^mR)$. Even-periodicity of $R$ implies
\[
L\tins{}{l}\Gamma_{n,m} \cong \Gamma_{n,m+2},\qquad \Gamma_{n,m}\tins{r}{} L \cong \Gamma_{n-2,m}.
\]
Here, each $\Gamma_{n,m}$ is an $R_0$-bimodule, and we have used subscripts to indicate which $R_0$-module structure we are taking a tensor product with respect to: $l$ is for left, and $r$ is for right. The assumption that $\Gamma$ preserves even objects implies that $\Gamma_{n,m}=0$ unless $n\equiv m\pmod{2}$. The suspension maps for $\Gamma$ are maps $\Gamma_{n-1,m-1}\rightarrow\Gamma_{n,m}$; even-periodicity of $\Gamma$ implies that these are isomorphisms when $n$ and $m$ are even, and they are algebra maps when $n=m$. The algebra structure on $\Gamma$ is thus essentially determined by $\Gamma_{0,0}$, and there is an equivalence of categories $\LMod_\Gamma\simeq\LMod_{\Gamma_\star}$ overlying the equivalence $\Mod_R\simeq\Mod_{R_\star}$, where $\LMod_{\Gamma_\star} = \LMod_{\Gamma_{0,0}}^{\times 2}$ is the category of $\bbZ/(2)$-graded $\Gamma_{0,0}$-modules.

The coproduct on $\Gamma$ is encoded by maps
\[
\Delta^\times\colon \Gamma_{n+n',m+m'}\rightarrow\Gamma_{n,m}\tins{l}{l}\Gamma_{n',m'},
\]
which for $n=n'=0=m=m'$ contribute to the $R_0$-cobialgebroid structure of $\Gamma_{0,0}$. As $\Mod_{\Gamma_\star}$ is strongly monoidal over $\Mod_{R_\star}$, its tensor product must take the form
\begin{align*}
(M_0,M_{-1})\otimes (M_0',M_{-1}') = (&M_0\otimes M_0'\oplus L\otimes M_{-1}\otimes M_{-1}',\\
&M_0\otimes M_{-1}'\oplus M_{-1}\otimes M_0').
\end{align*}
But this does not fully describe the tensor product: missing is a description of the $\Gamma_{0,0}$-module structure. On the summands $M_0'\otimes M_0''$, $M_0'\otimes M_{-1}''$, and $M_{-1}'\otimes M_0''$, this action is obtained just from the coproduct on $\Gamma_{0,0}$ and isomorphism $\Gamma_{-1,-1}\cong\Gamma_{0,0}$, so consider the remaining summand $L\otimes M_{-1}'\otimes M_{-1}''$. Here by definition the action arises via the map
\begin{align*}
\Gamma_{0,0} &\cong L\tins{}{l} \Gamma_{-2,-2}\tins{r}{} L^{-1}\\
&\rightarrow L\tins{}{l}\Gamma_{-1,-1}\tins{l}{l}\Gamma_{-1,-1}\tins{r}{} L^{-1}
\cong L\tins{}{l}\Gamma_{0,0}\tins{l}{l}\Gamma_{0,0}\tins{r}{} L^{-1}.
\end{align*}
On the other hand, there is an action of $\Gamma_{0,0}$ on $L\otimes M_{-1}\otimes M_{-1}'$ obtained from the  iterated coproduct of $\Gamma_{0,0}$ and the action of $\Gamma_{0,0}$ on $L$ by way of the double suspension $\Gamma_{0,0}\rightarrow\Gamma_{2,2}$. These actions agree; the definition of a cobialgebroid with suspensions was chosen in order to make this so. This gives a full understanding of $\LMod_{\Gamma_\star}$ as a symmetric monoidal category; we can summarize the situation as follows.

\begin{prop}
Let $\Gamma$ be an even-periodic $R$-cobialgebroid. In particular, $\Gamma_{0,0}$ is an $R_0$-cobialgebroid, and there is a chosen $\Gamma_{0,0}$-module structure on $L = R_2$. Then there is an equivalence of symmetric monoidal categories
\[
\LMod_\Gamma\simeq\LMod_{\Gamma_\star},\qquad M_\ast\mapsto (M_0,M_{-1}),
\]
where $\LMod_{\Gamma_\star} = \LMod_{\Gamma_{0,0}}^{\times 2}$ is the category of $\bbZ/(2)$-graded $\Gamma_{0,0}$-modules, with symmetric monoidal product given by
\begin{align*}
(M_0,M_{-1})\otimes (M_0',M_{-1}') = (&M_0\otimes M_0'\oplus L\otimes M_{-1}\otimes M_{-1}',\\
&M_0\otimes M_{-1}'\oplus M_{-1}\otimes M_0'),
\end{align*}
where $\Gamma_{0,0}$ acts on each term through its coproduct and the symmetrizer acts on $L$ with a sign.
\qed
\end{prop}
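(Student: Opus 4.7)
The plan is to formalize the discussion preceding the proposition, which already contains all the essential ingredients: even-periodicity of $R$ gives the equivalence $\Mod_R\simeq\Mod_{R_\star}$, the suspension structure on $\Gamma$ reduces its algebraic content to $\Gamma_{0,0}$, and the cobialgebroid-with-suspensions compatibility supplies the needed coherence of the tensor product.

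First I would establish the underlying equivalence of abelian categories $\LMod_\Gamma\simeq\LMod_{\Gamma_\star}$. Since $R$ is even-periodic, $L = R_2$ is an invertible $R_0$-module with $R\cong\bigoplus_n L^n$, and the splitting $M\cong M_0\oplus L^{1/2}\otimes M_{-1}$ of any graded $R$-module gives the equivalence $\Mod_R\simeq\Mod_{R_\star}$ recorded earlier. For $\Gamma$, the hypothesis that $\Gamma$ preserves even objects forces $\Gamma_{n,m}=0$ for $n\not\equiv m\pmod{2}$; the suspension map $\Gamma^E\to\Gamma$ is assumed to be an isomorphism on even objects, so iterated suspension gives algebra isomorphisms $\Gamma_{2n,2m}\cong\Gamma_{0,0}$ and $\Gamma_{2n-1,2m-1}\cong\Gamma_{-1,-1}\cong\Gamma_{0,0}$. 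Thus a left $\Gamma$-module is equivalent to a $\bbZ/2$-graded left $\Gamma_{0,0}$-module $(M_0,M_{-1})$, and one checks that the assignment $M_*\mapsto(M_0,M_{-1})$ is functorial and an equivalence.

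Next I would transport the symmetric monoidal structure along this equivalence. On the $\Mod_{R_\star}$ side, the $R_0$-module underlying $(M\otimes M')_k$ for $k\in\{0,-1\}$ is computed directly by expanding $M\otimes_R M'$ using the splittings $M\cong M_0\oplus L^{1/2}\otimes M_{-1}$ and the Koszul sign convention on $L^{1/2}\otimes L^{1/2}\cong L$; this yields the claimed formula as an $R_\star$-module, with a sign appearing on the $L\otimes M_{-1}\otimes M_{-1}'$ summand. The $\Gamma_{0,0}$-action on the summands $M_0\otimes M_0'$, $M_0\otimes M_{-1}'$, and $M_{-1}\otimes M_0'$ is computed from the coproduct $\Delta^\times\colon\Gamma_{0,0}\to\Gamma_{0,0}\tins{l}{l}\Gamma_{0,0}$ together with the identification $\Gamma_{-1,-1}\cong\Gamma_{0,0}$, and is manifestly what one would write down from the formula in the proposition.

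The main obstacle, and the only nontrivial check, is verifying that on the twisted summand $L\otimes M_{-1}\otimes M_{-1}'$ the induced $\Gamma_{0,0}$-action agrees with the one prescribed by the proposition, namely the one obtained from the iterated coproduct of $\Gamma_{0,0}$ together with its action on $L$ via the double suspension $\Gamma_{0,0}\cong\Gamma_{-2,-2}\to\Gamma_{0,0}$. This is precisely the content of the compatibility diagram in \cref{def:suspensions}(2) defining a cobialgebroid with suspensions: that diagram asserts exactly that the map
\[
\Gamma_{0,0}\cong L\tins{}{l}\Gamma_{-2,-2}\tins{r}{}L^{-1}\to L\tins{}{l}\Gamma_{-1,-1}\tins{l}{l}\Gamma_{-1,-1}\tins{r}{}L^{-1}
\]
obtained from $\Delta^\times$ factors, after identification via suspension, through the two iterated coproduct together with the $\Gamma_{0,0}$-action on $L$. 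Unwinding, this is the desired equality of the two a priori different $\Gamma_{0,0}$-actions on the $L$-twisted summand. Once this coherence is in hand, associativity, unitality, and the (signed) symmetry constraint all transport formally from $\LMod_\Gamma$ along the equivalence, completing the proof.
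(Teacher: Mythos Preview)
Your proposal is correct and follows exactly the paper's approach: the proposition is stated with a bare \qed\ because it is intended as a summary of the preceding discussion, and your proof simply formalizes that discussion step by step, including the key coherence check on the $L\otimes M_{-1}\otimes M_{-1}'$ summand via \cref{def:suspensions}(2), which the paper flags with ``the definition of a cobialgebroid with suspensions was chosen in order to make this so.''
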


\begin{warning}
Note that although $L$ is an invertible $R_0$-module, it is generally not invertible as a $\Gamma_{0,0}$-module.
\tqed
\end{warning}

\begin{rmk}\label{rmk:z2mult}
Let $A$ be an object of $\Ring_{\Gamma_\star}^\heart$. Given $x,y\in A_{-1}$, one may wish to form their product $xy$, and to consider how the elements of $\Gamma_{0,0}$ act on this product. However $xy\in A_{-2}$, i.e.\ this product takes us outside the $\bbZ/(2)$-graded setting. To remain in the $\bbZ/(2)$-graded setting, it is more correct to say that the product of two elements of $A_{-1}$ is given by a map
\[
L\otimes A_{-1}\otimes A_{-1}\rightarrow A_0
\]
of $\Gamma_{0,0}$-modules. If $L$ is trivializable, then by choosing a trivialization we may treat this as a map $A_{-1}\otimes A_{-1}\rightarrow A_0$, but one must not forget the presence of $L$ in considering how this map interacts with $\Gamma$.
\tqed
\end{rmk}

\begin{ex}\label{ex:ltep}
Let $\Gamma$ be the $R = W(\kappa)[[a]]$-cobialgebroid of \cref{ex:ltkoszul} and \cref{ex:ltc}. Then $\Gamma$ upgrades to an even-periodic cobialgebroid over the even-periodic ring $R[u^{\pm 1}]$, where $|u| = 2$. The $\Gamma$-module structure on $\omega = L = R\{u\}$ encoding this is given by
\[
Q_0 u = 0,\qquad Q_1 u = -u,\qquad Q_2 u = 0.
\]
If $M$ is a $\Gamma$-module, then $\omega\otimes M$ consists of elements $um$ for $m\in M$, and has $\Gamma$-module structure
\[
Q_0(um) = - 2uQ_2(m),\quad Q_1(um) = -uQ_0(m) - au Q_2(m),\quad Q_2(um) = - u Q_1(m).
\]

A $\bbZ/(2)$-graded $\Gamma$-ring is then a $\bbZ/(2)$-graded $R$-ring $A$ equipped with an action of $\Gamma$ such that if either $x\in A_0$ or $y\in A_0$, then $\Gamma$ acts on $xy$ via \cref{ex:ltc}; and if $x,y\in A_{-1}$, then $\Gamma$ acts on $xy\in A_0$ by
\begin{align*}
Q_0(xy) = &- 2 Q_0(x) Q_2(y) - 2 Q_2(x)Q_0(y) - 2 Q_1(x)Q_1(y) - 2 a Q_2(x) Q_2(y),\\
Q_1(xy) = &- Q_0(x) Q_0(y) - 2 Q_1(x) Q_2(y) - 2 Q_2(x)Q_1(y) \\
& - a Q_0(x) Q_2(y) - a Q_2(x) Q_0(y) - a Q_1(x) Q_1(y)  - a^2 Q_2(x)Q_2(y),\\
Q_2(xy) = &- Q_0(x) Q_1(y) - Q_1(x) Q_0(y)\\
& - a Q_1(x) Q_2(y) - a Q_2(x) Q_1(y) - 2 Q_2(x) Q_2(y).
\end{align*}
See especially \cref{ex:ltring} and \cref{ex:ltcurve} for more on this example.
\tqed
\end{ex}

Now fix an even-periodic $R$-plethory $\Lambda$, and abbreviate $\Gamma = \Gamma(\Lambda)$ and $\Delta = \Delta(\Lambda)$. We can picture the relevant suspension maps as fitting into a diagram
\begin{center}\begin{tikzcd}
&\Gamma_{-1,-1}\ar[d,"\simeq"]&\Gamma_{0,0}\ar[d]&\Gamma_{1,1}\ar[d,"\simeq"]&\Gamma_{2,2}\ar[d]&\Gamma_{3,3}\\
\Delta_{-2,-2}\ar[ur,"\simeq"]&\Delta_{-1,-1}\ar[ur,"\simeq"]&\Delta_{0,0}\ar[ur,"\simeq"]&\Delta_{1,1}\ar[ur,"\simeq"]&\Delta_{2,2}\ar[ur,"\simeq"]
\end{tikzcd}.\end{center}
As $R$ is even-periodic, there are canonical equivalences $\LMod_{\Gamma_{n,n}}\simeq\LMod_{\Gamma_{n+2,n+2}}$ and $\LMod_{\Delta_{n,n}}\simeq\LMod_{\Delta_{n+2,n+2}}$ for each $n\in\bbZ$, given by tensoring with $L$. Coupling these with the isomorphisms in the above diagram yields the following.

\begin{prop}\label{prop:epmorita}
There is a canonical Morita equivalence $
\LMod_{\Delta}\simeq\LMod_{\Gamma},
$
and the composite
$
\LMod_{\Gamma}\simeq\LMod_{\Delta}\rightarrow\LMod_{\Gamma}
$, the second functor being restriction along $\Gamma\rightarrow\Delta$, 
is given by $L^{1/2}\otimes\bs$.
\qed
\end{prop}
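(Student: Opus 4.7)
The plan is to exploit the isomorphism $\sigma\colon\Delta^E\xrightarrow{\sim}\Gamma$ of $\calP$-algebras built into the even-periodic plethory structure on $\Lambda$ (with $\calP=\Mod_R$). The key observation is purely formal: for any $\calP$-algebra $F$, the shift $(-)^E$ induces a canonical equivalence $\LMod_F\simeq\LMod_{F^E}$, with mutually inverse functors $M\mapsto E\otimes M$ and $N\mapsto E^{-1}\otimes N = L^{1/2}\otimes N$. This is because an $F^E$-module structure on $N$ is a map $E\otimes F(E^{-1}\otimes N)\to N$, which by the $E\dashv E^{-1}$ adjunction is the same datum as an $F$-module structure on $E^{-1}\otimes N$.

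Specializing to $F=\Delta$ and transporting along $\sigma$, I would define the Morita equivalence
\[
\Phi\colon \LMod_\Gamma\xrightarrow{\sim}\LMod_\Delta,\qquad N\mapsto L^{1/2}\otimes N,
\]
with $\Delta$-action on $\Phi(N)$ given by the composite
\[
\Delta(L^{1/2}\otimes N)\xrightarrow[\sim]{\sigma}L^{1/2}\otimes\Gamma(N)\to L^{1/2}\otimes N,
\]
using $\sigma$ followed by the $\Gamma$-action on $N$. The quasi-inverse is $M\mapsto L^{-1/2}\otimes M$ with analogously transported $\Gamma$-action, and the two composites are manifestly the identity up to canonical isomorphism $L^{1/2}\otimes L^{-1/2}\simeq R$.

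For the remaining claim about the composite with restriction along $\Gamma\to\Delta$, everything is visible from the construction: the underlying $R$-module of $\Phi(N)$ is $L^{1/2}\otimes N$, and restricting the $\Delta$-action along $\Gamma\to\Delta$ is by definition how one puts a $\Gamma$-action on $L^{1/2}\otimes N$ in this setup. So the composite is tautologically the endofunctor $L^{1/2}\otimes -$ of $\LMod_\Gamma$.

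I do not anticipate any serious obstacle; the result is essentially a matter of unpacking the equivalence $\Delta^E\cong\Gamma$. The one point worth being careful about is that ``$L^{1/2}\otimes -$'' in the statement refers to this canonical endofunctor of $\LMod_\Gamma$ (which coincides with the $\bbZ/(2)$-grading swap in the even-periodic picture), rather than an a priori symmetric-monoidal tensor with $L^{1/2}$---since $L^{1/2}$ is not itself a $\Gamma$-module in general.
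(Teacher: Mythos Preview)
Your proposal is correct and is essentially the paper's argument phrased at the level of $\calP$-algebras rather than graded components: the paper's proof is the diagram of suspension isomorphisms $\Delta_{n-1,n-1}\cong\Gamma_{n,n}$ together with the periodicity equivalences $\LMod_{\Gamma_{n,n}}\simeq\LMod_{\Gamma_{n+2,n+2}}$ via $L\otimes-$, which packages into exactly your observation that $(-)^E$ gives $\LMod_F\simeq\LMod_{F^E}$ and then $\sigma\colon\Delta^E\cong\Gamma$ transports this. One small point you leave implicit: the reason the restricted $\Gamma$-action on $L^{1/2}\otimes N$ agrees with the canonical one (coming from the suspension $\Gamma^E\to\Gamma$) is precisely the axiom in the definition of even-periodic plethory that this suspension factors as $\Gamma^E\to\Delta^E\xrightarrow{\sigma}\Gamma$; you gesture at this but it is worth saying explicitly rather than calling it tautological.
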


Although $\LMod_\Delta$ is what appears when considering the Quillen cohomology of $\Lambda$-rings, in many examples of interest it is possible to produce a partial section of $L^{1/2}\otimes\bs\colon \LMod_\Gamma\rightarrow\LMod_\Gamma$, thereby allowing one to reduce to computations in $\LMod_\Gamma$. See \cref{ex:ltcurve} for an example.

\subsection{Quasicoherent sheaves}\label{ssec:quasicoherent}

Fix an ordinary commutative ring $R$, abbreviating $\otimes = \otimes_R$, and fix an $R$-cobialgebroid $\Gamma$. If we forget the algebra and right $R$-module structures on $\Gamma$, then we are left with nothing more than a (counital, coassociative, cocommutative) $R$-coalgebra. Under suitable niceness assumptions, $R$-coalgebras give one approach to the theory of formal schemes over $R$; this is best known when $R$ is a field and these niceness assumptions are automatic, see for instance \cite[Section I.6]{demazure1972lectures}. It turns out that the entire $R$-cobialgebroid structure of $\Gamma$ may be understood this way, at least under suitable niceness assumptions.

In the following, we will freely use the language of formal schemes as developed in \cite{strickland2000formal}, particularly the technical notions of solid formal schemes and coalgebraic formal schemes; however, we will write $\sch$ for what is written there as $\mathrm{sch}$, informally defined as $\sch C = \Spf(C^\vee)$ for an $R$-coalgebra $C$ with good basis. We abbreviate ``coalgebra with good basis'' to ``good coalgebra'', and write $\cotimes$ for the completed tensor product of pro-$R$-modules.

The category of formal schemes is, in particular, a certain full subcategory of the category of functors $\CRing\rightarrow\Set$. For our purposes, a \textit{formal category scheme} will be a category object in $\Fun(\CRing,\Set)$ whose object object and morphism object are both formal schemes.

\begin{prop}
Fix an $R$-cobialgebroid $\Gamma$ which is good as an $R$-coalgebra. Then the pair $(\Spec R,\sch \Gamma)$ naturally carries the structure of a formal category scheme.
\end{prop}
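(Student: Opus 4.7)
The strategy is to dualize: since $\sch\Gamma = \Spf(\Gamma^\vee)$, the cobialgebroid structure on $\Gamma$ should translate into the structure of a category object on $\sch\Gamma$ with object scheme $\Spec R$. The goodness hypothesis is the essential technical tool, ensuring that the covariant functor $\sch$ on good $R$-coalgebras is symmetric monoidal, carrying tensor products over $R$ to fiber products over $\Spec R$.

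First I would identify the structure maps. The two $R$-module structures making $\Gamma$ into an $R$-bimodule dualize to two $R$-algebra structures on $\Gamma^\vee$ and hence to two maps $s, t\colon \sch\Gamma \to \Spec R$, serving as source and target. The algebra structure on $\Gamma$ provides a unit $\eta\colon R \to \Gamma$ and multiplication $m\colon \Gamma \tins{r}{l} \Gamma \to \Gamma$, and the bialgebroid-style compatibility built into the definition of a cobialgebroid says precisely that these are maps of $R$-coalgebras (this is the analogue of the classical bialgebra axioms that the unit is grouplike and the product is comultiplicative). Applying $\sch$, together with the monoidal identification $\sch(\Gamma \tins{r}{l} \Gamma) \simeq \sch\Gamma \times_{t, \Spec R, s} \sch\Gamma$, then yields an identity section $e\colon \Spec R \to \sch\Gamma$ and a composition morphism $c\colon \sch\Gamma \times_{\Spec R} \sch\Gamma \to \sch\Gamma$. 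The category-object axioms---associativity of $c$, unitality with respect to $e$, and compatibility of $s, t$ with $c$ and $e$---then each follow by direct transport under the symmetric monoidal functor $\sch$ from the associativity of $m$, the unitality of $\eta$, and the bimodule properties of $\eta$ and $m$ respectively.

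The main technical point, and the reason the goodness hypothesis is indispensable, is the compatibility of $\sch$ with the balanced tensor product $\Gamma \tins{r}{l} \Gamma$: one needs the natural isomorphism $\sch(\Gamma \tins{r}{l} \Gamma) \simeq \sch\Gamma \times_{t, \Spec R, s} \sch\Gamma$, not merely $\sch(\Gamma \otimes_R \Gamma) \simeq \sch\Gamma \times_{\Spec R} \sch\Gamma$ formed with respect to a single fixed $R$-module structure. Without this matching of the fiber-product structure maps to source and target, the morphism $\sch m$ cannot be interpreted as a composition law on $\sch\Gamma$, and the category-scheme axioms would fail to follow formally. Once this balanced monoidality is established using the good basis and the pro-module formalism recalled from Strickland's work on formal schemes, the remainder of the argument is a routine dualization of the cobialgebroid axioms.
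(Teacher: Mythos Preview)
Your proposal is correct and follows essentially the same strategy as the paper: dualize the cobialgebroid structure to obtain source, target, identity, and composition maps, with the crucial technical ingredient being the identification $\sch(\Gamma \tins{r}{l} \Gamma) \simeq \sch\Gamma \,{}_s\!\times_{\Spec R, t} \sch\Gamma$ afforded by goodness. One small imprecision: the two $R$-module structures on $\Gamma$ do not give two \emph{algebra} structures on $\Gamma^\vee$---there is a single algebra structure coming from the coalgebra structure of $\Gamma$, and the bimodule structure instead gives two ring maps $R\to\Gamma^\vee$ (the paper makes the target map explicit as $t(r)(\gamma)=\epsilon^\times(\gamma r)$); but this does not affect the argument.
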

\begin{proof}
We must describe the structure of a category object on the pair $(\Spec R,\sch \Gamma)$. The source map $s\colon\sch \Gamma\rightarrow\Spec R$ is simply the map arising from the definition of $\sch\Gamma$ as a formal $R$-scheme. In describing the remaining maps, we will make use of the fact that $\sch\Gamma$ is a solid formal scheme, so that it suffices to work with $\Gamma^\vee = \LMod_R(\Gamma,R)$ as a formal ring. 
The target map $t\colon \sch\Gamma\rightarrow\Spec R$ is dual to the map of formal rings
\[
t\colon R\rightarrow\Gamma^\vee,\qquad t(r)(\gamma) = \epsilon(\gamma r).
\]
The unit map $\iota\colon\Spec R\rightarrow\sch\Gamma$ is dual to the map of formal rings
\[
\iota\colon\Gamma^\vee\rightarrow R,\qquad\iota(f) = f(1).
\]
To define $c\colon \sch \Gamma {}_s\times_{\Spec R,t}\sch\Gamma\rightarrow\Gamma$, observe first that $\sch \Gamma \tims{s}{\Spec R,t}\sch \Gamma$ is a solid formal scheme represented by
$
\Gamma^\vee\mathop{{}_s\cotimes_{t}} \Gamma^\vee.
$
As $\Gamma$ admits a good basis, it may be written as $\Gamma\cong\colim_\alpha\Gamma_\alpha$ where $\Gamma_\alpha\subset\Gamma$ is a standard coalgebra, in which case $\Gamma^\vee\cong\lim_\alpha\Gamma_\alpha^\vee$ as a formal ring with each $\Gamma_\alpha^\vee$ discrete and finitely generated free as a right $R$-module. It follows that
\begin{align*}
\Gamma^\vee\mathop{{}_s\cotimes_{t}} \Gamma^\vee&\cong (\lim_\alpha\Gamma_\alpha^\vee)\mathop{{}_s\cotimes_t}\Gamma^\vee\\
&\cong \lim_\alpha (\Gamma_\alpha^\vee\tins{s}{t} \Gamma^\vee)\\
&\cong \lim_\alpha(\Gamma\tins{r}{l}\Gamma_\alpha)^\vee \cong (\Gamma\tins{r}{l}\Gamma)^\vee.
\end{align*}
A similar argument can be used to show that $\Gamma\tins{r}{l}\Gamma$ is itself a good $R$-coalgebra, and the above then gives an isomorphism
\[
\sch\Gamma \tims{s}{\Spec R,t}\sch\Gamma\cong\sch(\Gamma\tins{r}{l}\Gamma)
\]
of formal $R$-schemes. The composition map is now dual to the product on $\Gamma$. 

That $(\Spec R,\sch \Gamma)$ is a category scheme with this structure amounts to a direct translation between definitions.
\end{proof}

Fix a good $R$-cobialgebroid $\Gamma$. As algebras for the monad $\Gamma$ are equivalent to coalgebras for the comonad $\Gamma^\vee$, we may encode a $\Gamma$-module as an $R$-module equipped with a coaction $P\colon M\rightarrow\Gamma^\vee(M)\cong \Gamma^\vee\mathop{{}_s\cotimes}M$ satisfying the evident counity and coassociativity conditions. The coaction $P$ is left $R$-linear, where the left $R$-module structure on $\Gamma^\vee$ is through the target map $t\colon R\rightarrow \Gamma^\vee$.

By definition, an object of $\Ring_\Gamma^\heart$ is an $R$-ring $A$ equipped with a $\Gamma$-module structure satisfying the Cartan formulas encoded by the coproduct $\Gamma\rightarrow\Gamma\tins{l}{l}\Gamma$. When the $\Gamma$-module structure on $A$ is encoded by a coaction $P\colon A\rightarrow \Gamma^\vee\mathop{{}_s\cotimes} A$, this is equivalent to the condition that $P$ is a homomorphism of rings.

\begin{ex}\label{ex:ltring}
Let $\Gamma$ be the $R$-cobialgebroid of \cref{ex:ltkoszul} and \cref{ex:ltc}, also encountered in \cref{ex:ltep}, and suppose for simplicity that $\kappa = \bbF_2$ so that $R = \bbZ_2[[a]]$. The length grading $\Gamma = \bigoplus_{n\geq 0}\Gamma[n]$ is a decomposition of $R$-coalgebras, so as each $\Gamma[n]$ is finitely generated and free over $R$, the coalgebra $\Gamma$ is good, and
\[
\sch\Gamma \cong \coprod_{n\geq 0}\Spec \Gamma[n]^\vee.
\]

As $\Gamma$ is quadratic by definition (see \cref{ex:ltkoszul}), a $\Gamma$-module is determined by an $R$-module $M$ equipped with a left $R$-linear map $P\colon M\rightarrow \Gamma[1]^\vee\tins{s}{}M$ such that there exists a factorization through the dashed arrow in the diagram
\begin{center}\begin{tikzcd}
M\ar[r,"P"]\ar[d,dashed]&\Gamma[1]^\vee\tins{s}{}M\ar[d,"{\Gamma[1]^\vee\otimes P}"]\\
\Gamma[2]^\vee\tins{s}{}M\ar[r,"c\otimes M"]&\Gamma[1]^\vee\tins{s}{t}\Gamma[1]^\vee\tins{s}{}M
\end{tikzcd}.\end{center}

There is an isomorphism of rings
\[
\Gamma[1]^\vee \cong R[d]/(d^3 = a d + 2),
\]
where the basis $\Gamma[1]^\vee = \{1,d,d^2\}R$ is dual to the basis $\Gamma[1] = R\{Q_0,Q_1,Q_2\}$. The map $t$ is the ring homomorphism
\[
t\colon R\rightarrow \Gamma[1]^\vee, \qquad t(a) = a^2 + 3d -ad^2,
\]
and $R$-linearity of $P$ is with respect to $t$, i.e.\ $P(am) = P(m)(a^2 + 3d -ad^2)$ for $m\in M$.

The category $\Ring_\Gamma^\heart$ is more pleasantly understood from this perspective. If $A$ is an $R$-ring and $\Gamma$-module, with structure map $P\colon A\rightarrow \Gamma[1]^\vee\tins{s}{}A \cong A[d]/(d^3 = ad+2)$, then $A$ is a $\Gamma$-ring precisely when $P$ is a ring homomorphism.

It is possible to compute $\Gamma[2]^\vee$ directly given our knowledge of $\Gamma$. Doing so is backwards, as $\Gamma$ is computed in \cite{rezk2008power} by computing in the formal category scheme $(\Spf R,\sch \Gamma)$, whose interpretation we recall in \cref{ssec:epow}. Better yet, as explained in \cite{rezk2013power}, one may avoid directly dealing with $\Gamma[2]^\vee$ altogether. Write
\[
\Gamma[1]^\vee\tins{s}{t}\Gamma[1]^\vee\cong R[d',d]/(d^3 = ad+2,d'^3 = (a^2 + 3d -ad^2)d'+2).
\]
Then there is a Cartesian square
\begin{center}\begin{tikzcd}
\Gamma[2]^\vee\ar[r,"c"]\ar[d,"\epsilon"]&\Gamma[1]^\vee\tins{s}{t}\Gamma[1]^\vee\ar[d,"f"]\\
R\ar[r,"s"]&\Gamma[1]^\vee
\end{tikzcd}\end{center}
of rings, where $f$ is the right $R$-linear map given by $f(d) = d$ and $f(d') = a-d^2$. Thus an $R$-linear map $P\colon M\rightarrow\Gamma[1]^\vee\tins{s}{}M$ makes $M$ into a $\Gamma$-module precisely when there exists a (necessarily unique) map $\Psi$ filling in
\begin{center}\begin{tikzcd}
M\ar[r,"P"]\ar[dd,"\Psi",dashed]&\Gamma[1]^\vee\tins{s}{}M\ar[d,"{\Gamma[1]^\vee\otimes P}"]\\
&\Gamma[1]^\vee\tins{s}{t}\Gamma[1]^\vee\tins{s}{}M\ar[d,"f\otimes M"]\\
M\ar[r,"s\otimes M"]&\Gamma[1]^\vee\tins{s}{}M
\end{tikzcd}.\end{center}
When $M = R$, the map $\Psi$ happens to be the identity. This implies that $\Psi$ is $R$-linear in general, although it need not be the identity in general.

See in particular \cref{ex:ltcurve} for more on this example.
\tqed
\end{ex}

Let $\Mod^\heart\colon\CRing^\heart\rightarrow\Cat$ denote the pseudofunctor
\[
R\mapsto \Mod_R^\heart,\qquad (f\colon R\rightarrow S) \mapsto (S\otimes_R\bs\colon\Mod_R^\heart\rightarrow\Mod_S^\heart).
\]
Given some other pseudofunctor $\calX\colon \CRing^\heart\rightarrow\Cat$, define $\QCoh(\calX)^\heart$ to be the category of pseudonatural transformations $\calX^\op\rightarrow\Mod^\heart$. This is an additive symmetric monoidal category.

\begin{prop}
Let $\Gamma$ be a good $R$-cobialgebroid, and $\calX = (\Spec R,\sch \Gamma)$. Then there is an equivalence $\LMod_\Gamma^\heart\simeq\QCoh(\calX)^\heart$ of symmetric monoidal categories.
\end{prop}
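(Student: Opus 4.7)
The plan is to unravel the definitions on both sides and identify them directly, essentially via a Yoneda argument for the representable formal scheme $\sch\Gamma$, converting the axioms of a category-object action into the axioms of a comodule.

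First I would spell out $\QCoh(\calX)^\heart$ concretely. A pseudonatural transformation $F\colon \calX^\op\to\Mod^\heart$ assigns to each $T\in\CRing^\heart$ and each $x\in\calX_0(T) = \CRing^\heart(R,T)$ a $T$-module $F(x)$, naturally in $T$, and to each $\gamma\in\calX_1(T) = \sch\Gamma(T)$ a $T$-linear isomorphism $F(t\gamma)\to F(s\gamma)$. Pseudonaturality in $T$ together with the identity axiom force $F(x)\cong T\otimes_R M$ where $M = F(\id_R)$, so such an $F$ is equivalent to the data of $M\in\Mod_R^\heart$ together with a coherent system, functorial in $T$, of isomorphisms $T\otimes_{R,t\gamma}M \to T\otimes_{R,s\gamma} M$ indexed by $\gamma\in\sch\Gamma(T)$.

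Second, I would apply Yoneda. Because $\Gamma$ has a good basis, $\sch\Gamma$ is a solid formal scheme and such a system of isomorphisms is determined by its value on the universal element $\id\in\sch\Gamma(\Gamma^\vee)$, yielding a single left $R$-linear (via $t$) map $P\colon M\to\Gamma^\vee\mathop{{}_s\cotimes_R} M$. The identity axiom of $\calX$ then translates into counity of $P$, and the composition axiom translates, via the already established isomorphism $\sch\Gamma\tims{s}{\Spec R,t}\sch\Gamma\cong\sch(\Gamma\tins{r}{l}\Gamma)$, into coassociativity of $P$. This is exactly the data of a $\Gamma^\vee$-coalgebra in $\Mod_R^\heart$, hence by the discussion preceding \cref{ex:ltring} the data of a $\Gamma$-module. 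The two assignments are manifestly mutually inverse, giving an equivalence $\LMod_\Gamma^\heart\simeq\QCoh(\calX)^\heart$ of additive categories.

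Third, for the symmetric monoidal structure, I would observe that the tensor product in $\QCoh(\calX)^\heart$ is pointwise $(F\otimes G)(x) = F(x)\otimes_T G(x)$, so on underlying $R$-modules it matches $\otimes_R$. Under the equivalence above, the action of $\gamma$ on $(M\otimes_R N)$ is given on the universal element by the composite $M\otimes_R N\to (\Gamma^\vee\mathop{{}_s\cotimes} M)\otimes_R(\Gamma^\vee\mathop{{}_s\cotimes} N)\to\Gamma^\vee\mathop{{}_s\cotimes}(M\otimes_R N)$ using the multiplication on $\Gamma^\vee$ dual to $\Delta^\times\colon \Gamma\to\Gamma\tins{l}{l}\Gamma$, which is precisely the symmetric monoidal structure on $\LMod_\Gamma^\heart$ arising from the cobialgebroid structure of $\Gamma$. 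Unit and coherence isomorphisms match by inspection. The main obstacle is the Yoneda step in the formal setting: one must ensure that natural families of maps indexed by $\sch\Gamma(T)$ are faithfully represented by continuous maps into $\Gamma^\vee\mathop{{}_s\cotimes}(-)$, which requires careful use of the good-basis hypothesis to pass between $\Gamma\cong\colim_\alpha\Gamma_\alpha$ with each $\Gamma_\alpha$ finitely generated free and the corresponding cofiltered presentation of $\Gamma^\vee$; once this is in hand, every other step of the proof is a direct translation between the category-scheme axioms and the comodule axioms.
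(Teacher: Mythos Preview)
Your approach is essentially the same as the paper's: both evaluate a quasicoherent sheaf at the identity of $R$ to extract the underlying module $M$, use the universal element of $\sch\Gamma$ (equivalently, the tautological morphism in $\calX(\Gamma^\vee)$) to obtain the coaction $M\to\Gamma^\vee\mathop{{}_s\cotimes}M$, and then translate the category-object axioms into the comodule axioms. One small correction: since $\calX$ is a formal \emph{category} scheme rather than a groupoid scheme, a morphism $\gamma$ should induce a $T$-linear \emph{map} $F(t\gamma)\to F(s\gamma)$, not necessarily an isomorphism; this does not affect the rest of your argument.
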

\begin{proof}
There is a well-known symmetric monoidal equivalence between the category of comodules for a commutative Hopf algebroid and the category of quasicoherent sheaves on the associated presheaf of groupoids \cite{hovey2001morita}. The claim at hand is no different from this, so we will indicate the construction but omit detailed verifications of naturality.

The equivalence $\LMod_\Gamma^\heart\rightarrow\QCoh(\calX)^\heart$ is constructed as follows. Fix $M\in\LMod_\Gamma^\heart$, so we wish to construct a pseudonatural transformation $\calF_M\colon \calX^\op\rightarrow\Mod^\heart$. Fix a ring $S$; then the functor $\calF_M^S\colon\calX(S)^\op\rightarrow\Mod_S^\heart$ is defined as follows. Fix $g\in\calX(S)$ realized by a map $g\colon R\rightarrow S$. Then $\calF_M^S$ is given on objects by $\calF_M^S(g) = S\tins{g}{} M$. Fix $\alpha\colon g'\rightarrow g$ in $\calX(S)$, realized by a diagram
\begin{center}\begin{tikzcd}
&\Spec R\\
\Spec S\ar[ur,"g"]\ar[r,"\alpha"]\ar[dr,"g'"']&\sch \Gamma\ar[u,"t"']\ar[d,"s"]\\
&\Spec R
\end{tikzcd}.\end{center}
Then $\alpha$ is dual to a map $\alpha\colon \Gamma^\vee\rightarrow S$ of formal rings, i.e.\ one that factors through some discrete quotient $\Gamma_\alpha^\vee$, where $\Gamma_\alpha\subset\Gamma$ is a standard coalgebra. Then $\calF_M^S$ is given on morphisms by declaring $\calF_M^S(g)$ to be the composite
\begin{align*}
\calF_M(g) = S\tins{g}{}M\rightarrow S\tins{g}{}\Gamma^\vee(M)
&\cong S\tins{g}{t}\Gamma^\vee \mathop{{}_s\cotimes} M\\
&\rightarrow S \tins{g}{g} S\tins{g'}{} M
\rightarrow S\tins{g'}{}M = \calF_M(g').
\end{align*}

The inverse equivalence $\QCoh(\calX)^\heart\rightarrow\LMod_\Gamma^\heart$ is constructed as follows. Fix $\calF\colon\calX^\op\rightarrow\Mod^\heart$. Let $i\in\calX(R)$ be classified by the identity of $R$, and write $M = \calF_R(i)$. A $\Gamma^\vee$-comodule structure on $M$ can be defined as follows. Note first that $\calX$ extends to a functor on pro-rings in the evident way; in particular $\calX(\Gamma^\vee)$ is a category, and there are elements $s,t\in\calX(\Gamma^\vee)$ classified by the source and target maps of $\Gamma^\vee$. The identity map of $\Gamma^\vee$ corresponds to a map $c\colon s\rightarrow t$ in $\calX(\Gamma^\vee)$, and this gives a $\Gamma^\vee$-linear map
\[
\Gamma^\vee\mathop{{}_t\cotimes} M\rightarrow\Gamma^\vee\mathop{{}_s\cotimes} M \cong \Gamma^\vee(M).
\]
This is adjoint to a map $M\rightarrow\Gamma^\vee(M)$ which defines a $\Gamma$-module structure on $M$, and the inverse equivalence sends $\calF$ to $M$ with this $\Gamma$-module structure.
\end{proof}

\begin{ex}
Let $\sigma\colon R\rightarrow R$ be a ring homomorphism, and consider the $R$-cobialgebroid
\[
\Gamma = R \langle \psi \rangle / (\psi \cdot r = \sigma(r)\cdot\psi),\qquad \Delta^\times(\psi)=\psi\otimes\psi,\qquad\epsilon(\psi)=1;
\]
compare \cref{ex:theta}. Then 
\[
\Gamma\cong\bigoplus_{n\geq 0}R\{\psi^n\}
\]
with $\psi^n$ grouplike, so
\[
\sch\Gamma \cong\coprod_{n\geq 0}\Spec R.
\]
The target map on the $n$'th component is given by restriction along $\sigma^n$. The formal category scheme $\calX\colon\CRing^\heart\rightarrow\Cat$ obtained from this $R$-cobialgebroid sends a ring $S$ to the category $\calX(S)$ identified as follows. An object of $\calX(S)$ is a map $f\colon\Spec S\rightarrow\Spec R$. Given $f,f'\in\calX(S)$, a morphism $\alpha\colon f\rightarrow f'$ is a decomposition $\Spec S = \coprod_{0\leq n \ll \infty}\Spec(S_n)$ such that $f'|_{\Spec(S_n)} = (\sigma^n)^\ast f$. A quasicoherent sheaf on $\calX$ is an $R$-module equipped with a $\sigma$-semilinear homomorphism $\psi\colon M\rightarrow M$, i.e.\ an additive map such that $\psi(r\cdot m) = \sigma(r)\cdot\psi(m)$ for $r\in R$ and $m\in M$. This is a quasicoherent sheaf of rings if $M$ is a ring and $\psi$ is a ring homomorphism.
\tqed
\end{ex}

\subsection{Power operations for Morava \texorpdfstring{$E$}{E}-theory}\label{ssec:epow}

Let $\kappa$ be a perfect field of positive characteristic $p$, and $\bbG_0\rightarrow\Spec(\kappa)=X_0$ be a formal group of finite height $h$. Let $\bbG\rightarrow X$ be the universal Lubin-Tate deformation \cite{lubintate1966formal} of this formal group, and $E$ be the associated Lubin-Tate spectrum, also referred to as a Morava $E$-theory. Write $\frakm\subset E_0 = \calO_X$ for the maximal ideal. 

By the Goerss-Hopkins-Miller theorem \cite{goersshopkins2004moduli} \cite{goersshopkinsxxxxmoduli}, $E$ is a $K(h)$-local even-periodic $\bbE_\infty$ ring spectrum, and this construction is functorial in the input $(\kappa,\bbG_0)$ and fiberwise isomorphisms. There results a theory of $E$-power operations acting on the homotopy groups of $K(h)$-local $\bbE_\infty$ algebras over $E$, and these are now well-understood conceptually owing to work of Ando, Hopkins, Strickland, and Rezk. The formulation by Rezk \cite{rezk2009congruence}, building on work of Strickland \cite{strickland1998morava}, itself building on calculations of Kashiwabara \cite{kashiwabara2001brownpeterson}, is the most convenient approach for our purposes. It seems easiest, both for the writer and the reader, to collect what we need in one place, so we will summarize some of the structure of these operations in one big statement. 

Write $\widehat{\bbP}$ for the free $K(h)$-local $\bbE_\infty$ algebra monad on $\Mod_E$, so that there is a decomposition $\widehat{\bbP} = L_{K(h)}\bigoplus_{n\geq 0}\widehat{\bbP}_n$ with $\widehat{\bbP}_n M = L_{K(h)}M^{\otimes n}_{\h\Sigma_n}$. Write $\CAlg_E^\loc$ for the category of $K(h)$-local $\bbE_\infty$ algebras over $E$; we will abuse terminology and refer to these as \textit{$E$-algebras}. We will write $\otimes$ for any of $\otimes_{E}$, $\otimes_{E_\ast}$, and $\otimes_{E_0}$, leaving which we mean to context.

\begin{theorem}[\cite{rezk2009congruence}, \cite{rezk2017rings}]\label{thm:eops}
There is a monad $\bbT$ on the category of $E_\ast$-modules satisfying and determined by the following three items:
\begin{enumerate}
\item The functor $\bbT$ preserves filtered colimits and reflexive coequalizers.
\item There are natural maps $\bbT(M_\ast)\rightarrow\pi_\ast\widehat{\bbP}M$ for $M\in\Mod_E$ compatible with the monad structures on $\bbT$ and $\widehat{\bbP}$. In particular, the homotopy groups of any $A\in\CAlg_E^\loc$ naturally form a $\bbT$-algebra.
\item There is a decomposition $\bbT\cong\bigoplus_{n\geq 0}\bbT_n$ compatible with the summands $\widehat{\bbP}_n\subset\widehat{\bbP}$, and if $M$ is a finitely generated and free $E$-module then the map $\bbT_n(M_\ast)\rightarrow\pi_\ast\widehat{\bbP}_n M$ is an isomorphism.
\end{enumerate}
In addition,
\begin{enumerate}
\item[(4)] $\bbT$ is an exponential monad, thus an $E_\ast$-plethory, with exponential structure inherited from the natural equivalences $\widehat{\bbP}_n(M\oplus N)\simeq\bigoplus_{i+j=n} \widehat{\bbP}_i M \otimes \widehat{\bbP}_j N$.
\item[(5)]$\bbT$ is an even-periodic plethory, with suspension maps inherited from the natural maps $\Sigma\widehat{\bbP}_{n}M\rightarrow\widehat{\bbP}_{n}\Sigma M$ defined for $n>0$.
\item[(6)]$\bbT$ is smooth, in fact free, relative to alternating $E_0$-algebras (cf.\ \cref{ex:alternating}).
\end{enumerate}
Write $\Gamma = \Gamma(\bbT)_{0,0}\subset \bbT(E_\ast)_0$ for the ordinary $E_0$-cobialgebroid underlying the even-periodic cobialgebroid $\Gamma(\bbT)$.
\begin{enumerate}
\item[(7)]Let $\Gamma[n]$ denote the intersection of $\Gamma$ with $\bbT(E_\ast)_{p^n}$. Then $\Gamma = \bigoplus_{n\geq 0}\Gamma[n]$ is a graded algebra. Moreover, this is a decomposition of coalgebras, and each $\Gamma[n]$ is finitely generated and free as a left $E_0$-module. In particular, $\Gamma$ is a good $E_0$-cobialgebroid. Moreover, each $\Gamma[n]^\vee$ is a complete local ring with residue field $\kappa$.
\item[(8)]Let $\calX = (\Spec E_0,\sch\Gamma)$ be the formal category scheme associated to $\Gamma$, and let $\Def\subset\calX$ be the full subcategory spanned by $\Spf E_0$. In other words, $\Def$ is the formal category scheme with objects $\Spf E_0$ and morphisms $\coprod_{n\geq 0}\Spf \Gamma[n]^\vee$, where $\Gamma[n]^\vee$ is given its adic topology. Consider $\Def$ as a presheaf of categories on the category of formal schemes $Y$ such that $\calO_Y$ is a complete local ring equipped with its adic topology. Then $\Def(Y)$ is the category with
\begin{enumerate}
\item Objects: Deformations of $\bbG_0$ to $Y$. These can be summarized as diagrams of Cartesian squares
\begin{center}\begin{tikzcd}
\bbG_0\ar[d]&\bbH_0\ar[l,"\alpha"']\ar[d]\ar[r]&\bbH\ar[d]\\
X_0&Y_0\ar[l,"j"']\ar[r]&Y
\end{tikzcd},\end{center}
where $Y_0 = \Spec(\calO_Y/\frakm_Y)\subset Y$ is the special fiber of $Y$ and $\alpha$ is a group homomorphism. We will write these as $\langle \bbH,j,\alpha\rangle$, or just $\bbH$ when the remaining structure is understood.
\item Morphisms: A morphism $f\colon \langle\bbH,j,\alpha\rangle\rightarrow \langle\bbH',j',\alpha'\rangle$ in $\Def(Y)$ classified by a map landing in the connected component $\Spf\Gamma[n]^\vee\subset\sch\Gamma$ is a deformation of the $n$-fold Frobenius homomorphism of $\bbG_0$. These can be summarized as homomorphisms $f\colon \bbH\rightarrow\bbH'$ over $Y$ such that the diagram
\begin{center}\begin{tikzcd}
\bbG_0\ar[dr,"F^n"]\ar[dd]&&\bbH_0\ar[dr,"f_0"]\ar[rr]\ar[dd]\ar[ll,"\alpha"']&&\bbH\ar[dr,"f"]\ar[dd]\\
&\bbG_0\ar[dd]&&\bbH_0'\ar[dd]\ar[rr]\ar[ll,"\alpha'"', near end]&&\bbH'\ar[dd]\\
X_0\ar[dr,"\sigma^n"]&&Y_0\ar[dr,"="]\ar[ll,"j"', near start]\ar[rr]&&Y\ar[dr,"="]\\
&X_0&&Y_0\ar[ll,"j'"']\ar[rr]&&Y
\end{tikzcd}\end{center}
commutes. Here, $\sigma^n$ is the $n$-fold algebraic Frobenius and $F^n$ is the $n$-fold absolute Frobenius homomorphism.
\end{enumerate}
\item[(9)]Equivalently, $\Spf \Gamma[n]^\vee$ is the formal scheme $\Sub^n_\bbG$ classifying degree $p^n$ subgroups of $\bbG$. The target map $t\colon \Sub^n_\bbG\rightarrow X$ sends a degree $p^n$ subgroup $K\subset\bbH$, where $\bbH$ is a deformation of $\bbG_0$, to the quotient $\bbH/K$, considered as a deformation of $\bbG_0$ via the isomorphism $(\bbH/K)_0\cong \bbG_0/(\bbG_0[F^n])\cong (\sigma^n)^\ast\bbG_0$, where $\bbG_0[F^n]$ is the kernel of the $n$-fold Frobenius and the second equivalence is given by the $n$-fold relative Frobenius.
\end{enumerate}
Now,
\begin{enumerate}
\item[(10)]Write $\omega = \pi_2 E$, so that $\omega = \omega_\bbG$ is the module of invariant differentials on $\bbG$. Then the $\Gamma$-module structure on $\omega$, encoding what is necessary to pass from the ungraded cobialgebroid $\Gamma$ to the even-periodic cobialgebroid $\Gamma(\bbT)$, is given by the quasicoherent sheaf on $\Def$ sending a deformation $\bbH$ to the module of invariant differentials $\omega_\bbH$ on $\bbH$.
\end{enumerate}
Slightly abusing notation, write $\LMod_\Gamma^\heart$ for the category of graded modules over $\Gamma$, equivalent to the category of quasicoherent sheaves of graded modules on $\Def$, and write $\Ring_\Gamma^\heart$ for the category of alternating $\Gamma$-rings, equivalent to the category of quasicoherent sheaves of alternating rings on $\Def$.
\begin{enumerate}
\item[(11)]The restriction $\Ring_\bbT^\heart\rightarrow\Ring_\Gamma^\heart$ is fully faithful when restricted to the full subcategory of $p$-torsion free $\bbT$-rings. The essential image is spanned by those $p$-torsion free $\Gamma$-rings $B$ whose underlying ungraded $\Gamma$-ring $B_0$ satisfies either of the following equivalent congruence criteria:
\begin{enumerate}
\item Let $X_1 = \Spf(E_0/(p))$ and $\bbG_1 = X_1\times_X \bbG$. Consider the map $X_1\rightarrow \Spf \Gamma[1]^\vee \cong \Sub^1_\bbG$ classifying the kernel of the Frobenius on $\bbG$, and choose a lift of this to an $E_0$-linear map $\Gamma[1]^\vee\rightarrow E_0$. Dualize this to obtain an element $Q\in \Gamma[1]$ which is well defined mod $p$. Then $Q x \equiv x^p \pmod{p}$ for all $x\in B_0$.
\item Let $\calF$ be the quasicoherent sheaf of rings associated to $B_0$. Then for every deformation $\bbH$ of $\bbG_0$ to $Y$ with $p=0$ in $\calO_Y$, the diagram
\begin{center}\begin{tikzcd}
\calF_Y(\sigma^\ast\bbH)\ar[dr,"\calF_Y(F)"]\ar[dd,"\simeq"]&\\
&\calF_Y(\bbH)\\
\sigma^\ast\calF_Y(\bbH)\ar[ur,"\sigma"']
\end{tikzcd}\end{center}
commutes. Here, $F\colon \bbH\rightarrow\sigma^\ast\bbH$ is the relative Frobenius on $\bbH$, the left vertical isomorphism arises from pseudonaturality of $\calF$, and $\sigma$ is the algebraic Frobenius on the $\calO_Y$-ring $\calF_Y(\bbH)$.
\end{enumerate}
\end{enumerate}
Write $\Delta = \Delta(\bbT)_{0,0}$ and, slightly abusing notation, write $\LMod_\Delta$ for the category of graded modules over $\Delta$. So $\LMod_\Delta\simeq\LMod_\Gamma$ in the manner described in \cref{prop:epmorita}, and a choice of trivialization of $\omega_\bbG$ gives an isomorphism of algebras $\Delta\cong\Gamma$. Then
\begin{enumerate}
\item[(12)]The algebra $\Delta$ is graded compatibly with $\Gamma$, and both $\Gamma$ and $\Delta$ are Koszul $E_0$-algebras. Moreover, $H^n(\Delta) = 0$ for $n>h$. In particular, every $\Delta$-module which is projective (equivalently, free) over $E_0$ admits a length $h$ projective Koszul resolution.
\qed
\end{enumerate}
\end{theorem}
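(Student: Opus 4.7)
The plan is to assemble the theorem largely by citation and synthesis, since each individual component is due to Strickland, Kashiwabara, Ando--Hopkins--Strickland, and Rezk. The construction of $\bbT$ is Rezk's \cite{rezk2009congruence}, and the overall strategy is to first build $\Gamma$ and its higher cousins $\Gamma[n]$ as duals of Strickland's rings classifying subgroup schemes of $\bbG$, then define each summand $\bbT_n$ of $\bbT$ as the unique filtered-colimit and reflexive-coequalizer preserving extension of $M_\ast\mapsto\pi_\ast\widehat\bbP_n M$ computed on finitely generated free $E$-modules. Everything else is then either forced by these definitions or is a translation into our language of a fact already in the literature.

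First I would recall Strickland's theorem \cite{strickland1998morava}: the quotient of $E^0 B\Sigma_{p^n}$ by the image of transfers from proper partitions is the coordinate ring of $\Sub^n_\bbG$, the formal scheme classifying degree $p^n$ subgroups of $\bbG$, and moreover it is a finitely generated free $E_0$-module. Dualizing gives the $E_0$-bimodules $\Gamma[n]$ of (7), with $\Gamma[n]^\vee$ a complete local ring with residue field $\kappa$. The algebra and coalgebra structures on $\Gamma=\bigoplus_n \Gamma[n]$ arise from composition of isogenies and from the Cartan-type diagonal on power operations; this yields the ungraded $E_0$-cobialgebroid structure, and via Ando--Hopkins--Strickland's deformation-theoretic interpretation of $\Sub^n_\bbG$ produces the equivalence of $(\Spec E_0,\sch\Gamma)$ with the formal category scheme $\Def$ of deformations plus Frobenius-lifting isogenies, which is (8) and (9). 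The $\Gamma$-action on $\omega=\pi_2 E$ described in (10) then follows from the pseudonaturality of $\bbH\mapsto\omega_\bbH$ on $\Def$.

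Next I would define $\bbT_n$ as above and assemble $\bbT=\bigoplus_n\bbT_n$. Properties (1) and (3) are built in, and (2) is the comparison map arising from the fact that $\bbT_n(\pi_\ast M)\to\pi_\ast\widehat\bbP_n M$ is an isomorphism on free $M$ by the definition of $\bbT_n$ together with Kashiwabara's K\"unneth isomorphism $\pi_\ast\widehat\bbP_n(M\oplus N)\cong\bigoplus_{i+j=n}\pi_\ast\widehat\bbP_iM\otimes\pi_\ast\widehat\bbP_jN$ on free modules \cite{kashiwabara2001brownpeterson}. The monad structure on $\bbT$ is then inherited from that on $\widehat\bbP$ through (2), giving $\bbT$ the structure of an $E_\ast$-plethory via Kashiwabara's K\"unneth formula (this is (4)). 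The suspension maps in (5) come from the standard natural transformation $\Sigma\widehat\bbP_n M\to\widehat\bbP_n\Sigma M$; evenness of $\pi_\ast\widehat\bbP_n(E_{\ast})$ is part of Strickland's computation, which forces even-periodicity of the resulting plethory. The smoothness/freeness statement (6) is shown as in \cite[\S4]{rezk2009congruence}: on a free module $\Sigma^a E$, $\pi_\ast\widehat\bbP\Sigma^aE$ is, as an alternating $E_\ast$-ring, freely generated by the $E_\ast$-module $\Delta(E\{e_a\})$, which is projective because $\Delta$ is projective as a $\calP$-bimodule.

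The main technical point is (11), Rezk's congruence criterion. Here I would follow \cite[\S\S10--11]{rezk2009congruence} essentially verbatim: the two criteria are identified by dualizing to $\Spf\Gamma[1]^\vee\cong\Sub^1_\bbG$ and using that over $X_1=\Spf(E_0/(p))$ the kernel of Frobenius gives a canonical section of $\Sub^1_\bbG\to X$. The hard direction is that for $p$-torsion free $\Gamma$-rings $B$ satisfying the mod-$p$ Frobenius congruence, the higher nonadditive relations defining $\bbT_n$ for $n\geq 2$ are automatic; this is the heart of Rezk's paper and depends on a careful analysis of the pullback descriptions of $\Gamma[n]^\vee$ along $\Sub^n_\bbG\to(\Sub^1_\bbG)^{\times_X n}$ combined with $p$-torsion freeness. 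This step is where real content lies; everything else is bookkeeping. Finally (12) is the main theorem of \cite{rezk2017rings}, which both establishes Koszulity of $\Delta$ via an explicit admissible/PBW basis coming from chains of degree-$p$ isogenies and proves the vanishing $H^n(\Delta)=0$ for $n>h$ using that the deformation space has dimension $h$ (so the Koszul complex has length at most $h$); the consequence about Koszul resolutions is then immediate from \cref{thm:koszulres}.
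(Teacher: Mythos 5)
Your proposal is correct and takes essentially the same route as the paper, which states this theorem with a \texttt{\textbackslash qed} precisely because it is a summary of results from \cite{rezk2009congruence}, \cite{rezk2017rings}, \cite{strickland1998morava}, \cite{kashiwabara2001brownpeterson}, and \cite{andohopkinsstrickland2004sigma}, translated into the plethystic and Koszul language developed earlier in the paper. Your outline of how the pieces assemble---Strickland's $\Sub^n_\bbG$ giving $\Gamma[n]^\vee$, the Kan-extension definition of $\bbT_n$ from free modules, Kashiwabara's K\"unneth formula for the exponential structure, Rezk's congruence criterion, and Rezk's Koszulity theorem with the $H^{>h}(\Delta)=0$ vanishing---matches the paper's intended sourcing for each item.
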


\begin{ex}\label{ex:ktheory}
The fundamental example is given when $\kappa = \bbF_p$ and $\bbG_0$ is the formal multiplicative group. The associated Lubin-Tate spectrum $E = KU_p$ is the $p$-completion of complex $K$-theory. In this case the full subcategory of $\Ring_\bbT^\heart$ spanned by those objects concentrated in even degrees is equivalent to the category of $\theta$-rings over $\bbZ_p$ (cf.\ \cref{ex:theta}).

The full category $\Ring_{\bbT}^\heart$ may be identified as follows. The $\Gamma = \bbZ_p[\psi]$-module $\omega = \pi_2 KU_p$ may be identified as $\bbZ_p\{\beta\}$ with action $\psi(\beta) = p\beta$. Following \cref{rmk:z2mult}, if $A\in\Ring_{\bbT}^\heart$ and $x,y\in A_{-1}$, then $\psi(xy) = p \psi(x)\psi(y)\in A_0$. In the generic case we may factor out this $p$, and in the end identify $\Ring_{\bbT}^\heart$ as the category of $\bbZ/(2)$-graded alternating rings $A$ over $\bbZ_p$ equipped with a $\theta$-ring structure on $A_0$ and an additive map $\psi\colon A_{-1}\rightarrow A_{-1}$, such that if $x\in A_0$ or $y\in A_0$, then $\psi(xy) = \psi(x)\psi(y)$, and if $x,y\in A_{-1}$, then $\theta(xy) = \psi(x)\psi(y)$.

See \cref{rmk:ht1} for a description of the general $K(1)$-local case.
\tqed
\end{ex}

\begin{ex}\label{ex:ltcurve}
Let $C_0$ be the elliptic curve over a perfect field $\kappa$ of characteristic $p=2$ with affine equation $v^2 + v = u^3$ and identity $(u,v) = (0,0)$. This is a supersingular elliptic curve with formal group $\bbG_0$, whose universal deformation $\bbG$ can be identified as the formal group associated to the elliptic curve $C$ over $R=W(\kappa)[[a]]$ with affine equation $v^2 + a uv + v = u^3$ and identity $(u,v)=(0,0)$; we choose $u$ as our preferred coordinate for this formal group. The structure of power operations for the resulting Lubin-Tate spectrum have been calculated by Rezk \cite{rezk2008power}, and we have recalled the structure of the associated even-periodic cobialgebroid in Examples \ref{ex:ltkoszul}, \ref{ex:ltc}, \ref{ex:ltep}, and \ref{ex:ltring}.

A congruence element of $\Gamma[1]$ allowing us to recover the full category of $\bbT$-rings is given by $Q_0$. Thus if $A$ is a $2$-torsion free $\Gamma$-ring, with $\Gamma$-ring structure on $A_0$ encoded by a map $P\colon A_0\rightarrow A_0[d]/(d^3 = ad+2)$, then $A$ is the underlying $\Gamma$-ring of a $\bbT$-ring, necessarily uniquely, if and only if $P(x)\equiv x^2\pmod{d}$ for all $x\in A_0$.

The operation $Q_0$ generically decomposes as $Q_0(x) = x^2 + 2 \theta(x)$ for some $\theta\in \bbT(E_\ast)_0$, and $\bbT(E_\ast)_0$ is a polynomial ring on certain iterates of $\theta$, $Q_1$, and $Q_2$. The algebra $\Delta = Q(\bbT(E_\ast)_0)$ is generated by $\theta$, $Q_1$, and $Q_2$, subject those relations seen in $\Gamma$ among $Q_1$ and $Q_2$, as well as
\begin{align*}
\theta a &= a^2 \theta - a Q_1 + 3 Q_2\\
Q_1\theta &= Q_2 Q_1 - 2 \theta Q_2,\\
Q_2\theta &= \theta Q_1 + a \theta Q_2 - Q_1 Q_2.
\end{align*}
The composite $\Gamma\rightarrow\bbT(E_\ast)_0\rightarrow\Delta$ is
\[
Q_0\mapsto 2\theta,\qquad Q_1\mapsto Q_1,\qquad Q_2\mapsto Q_2.
\]
The suspension isomorphism $\Delta\rightarrow\Gamma$ is
\[
\theta\mapsto -Q_2,\qquad Q_1\mapsto -Q_0 - a Q_2,\qquad Q_2 \mapsto - Q_1.
\]

If $M$ is a $\Gamma$-module encoded by a coaction $P\colon M\rightarrow \Gamma[1]^\vee\tins{s}{}M$, then $\omega\otimes M = M$ as $R$-modules, with $\Gamma$-module structure encoded by $-dP\colon M\rightarrow\Gamma[1]^\vee\tins{s}{}M$. If $A$ is an augmented $\bbT$-ring, then $Q(A)$ inherits the structure of a $\Gamma$-module, and the Frobenius congruence implies that the image of $P\colon Q(A)\rightarrow\Gamma[1]^\vee\tins{s}{}Q(A)$ is divisible by $d$. If $Q(A)$ is torsion-free, then $\frac{-1}{d}P\colon Q(A)\rightarrow \Gamma[1]^\vee\tins{s}{}Q(A)$ defines a $\Gamma$-module, written $\omega^{-1/2}\otimes M$. To be precise, the underlying graded $R$-module of $\omega^{-1/2}\otimes M$ differs from $M$ by a shift in degrees. With these definitions, $\omega^{-1/2}\otimes M$ is a model for the image of the $\Delta$-module $Q(A)$ under the Morita equivalence $\LMod_\Delta\simeq\LMod_\Gamma$ of \cref{prop:epmorita}. Similar remarks are available for arbitrary Lubin-Tate spectra.
\tqed
\end{ex}

\begin{rmk}\label{ex:l2koszul}
Suppose that $\bbG_0$ is a formal group of height $2$. Then the following description of $H^\ast(\Gamma)$ is given in \cite{rezk2013power}. First, there is a commutative diagram
\begin{center}\begin{tikzcd}
E_0\ar[r,"t"]\ar[d,"t"]\ar[dd,"\Psi"',bend right]&\Gamma[1]^\vee\ar[d,"{\Gamma[1]^\vee\otimes t}"]\\
\Gamma[2]^\vee\ar[r,"c"]\ar[d,"q"]\ar[r]&\Gamma[1]^\vee\tins{s}{t}\Gamma[1]^\vee\ar[d,"f"]\\
E_0\ar[r,"s"]&\Gamma[1]^\vee
\end{tikzcd},\end{center}
the bottom square of which is Cartesian. Here, $s$, $t$, and $c$ are part of the structure of the cobialgebroid $\Gamma$. As $\Gamma[1]^\vee$ classifies rank $p$ subgroups $H$, the tensor product $\Gamma[1]^\vee\tins{s}{t}\Gamma[1]^\vee$ classifies chains $H_0\subset H_1$ where both $H_0$ and $H_1/H_0$ are of rank $p$. It is the remaining maps which are special to height $2$; the map $q$ classifies the $p$-torsion subgroup $\bbG[p]$ and the map $f$ classifies the chain $H\subset \bbG[p]$, where $H$ is the universal rank $p$ subgroup defined over $\Gamma[1]^\vee$.

In particular, $\Psi$ is the automorphism of $E_0$ classifying the deformation $\bbG/\bbG[p]$. For the Lubin-Tate spectrum of \cref{ex:ltcurve}, this square is described explicitly in \cref{ex:ltring}; in this example $\Psi$ is the identity provided $\kappa\subset\bbF_4$.

Quadraticity of $\Gamma$ implies that $H^0(\Gamma) = E_0$ and $H^1(\Gamma) = \Gamma[1]^\vee$. As the bottom square of the above diagram is Cartesian, we may moreover identify
\[
H^2(\Gamma) = \coker(c\colon \Gamma[2]^\vee\rightarrow\Gamma[1]^\vee\tins{s}{t}\Gamma[1]^\vee)\cong \coker(s\colon E_0\rightarrow\Gamma[1]^\vee).
\]
All higher cohomology groups vanish. Multiplication $H^1(\Gamma)\otimes H^1(\Gamma)\rightarrow H^2(\Gamma)$ is the composite
\[
\Gamma[1]^\vee\tins{s}{t}\Gamma[1]^\vee\rightarrow\Gamma[1]^\vee\rightarrow \Gamma[1]^\vee/s(E_0) \cong H^2(\Gamma),
\]
where the first map is $f$. The right $E_0$-module structure on $H^2(\Gamma)$ is through $s$, and the left $E_0$-module structure twists this by $\Psi$, i.e.\ $a\cdot x = x\cdot s(\Psi(a))$ for $a\in E_0$ and $x\in H^2(\Gamma)$.

Koszul complexes computing $\Ext_\Gamma$ are readily obtained from this; see \cref{ex:su3} for an explicit example.
\tqed
\end{rmk}

We would like to apply our understanding of algebraic structures such as $\bbT$ to obstruction-theoretic machinery for computing with $E$-algebras. Here, one runs into the subtlety that $\bbT$ does not perfectly encode the structure of all operations that act on the homotopy groups of $E$-algebras: the map $\bbT(\pi_\ast F)\rightarrow \pi_\ast\widehat{\bbP} F$ is not an isomorphism for $F\in\Mod_E^\free$. The missing piece is that the $K(h)$-local condition on our $E$-algebras enforces a certain completeness condition on their homotopy groups, and this is not seen by $\bbT$.

Write $\calA_\frakm$ for the $0$'th left-derived functor of $\frakm$-adic completion on $\Mod_{E_\ast}^\heart$. This is a localization, and we will call the $\calA_\frakm$-local objects \textit{$\frakm$-complete}, and denote the category of $\calA_\frakm$-local objects by $\Mod_{E_\ast}^{\Cpl(\frakm),\heart}$. Although this is distinct from the classic notion of $\frakm$-adic completeness, we will not use the classic notion, and so minimal confusion should arise. The functor $\calA_\frakm$ has been studied in \cite[Appendix A]{hoveystrickland1999morava} under the name of $L$-completion, in \cite{rezk2018analytic} under the name of analytic completion, and in other places by other names; when $\frakm = (p)$, this is ${\Ext}\hyp p$-completion in the sense of \cite[Section VI.2.1]{bousfieldkan1972monster}, as we have encountered in \cref{ex:extcompl}; in the particular context of theories, these topics were studied in \cite[Section 6]{balderrama2021deformations}. We will recall what we require in \cref{ssec:completions} below.

As implicitly noted in \cite[Section 1.6]{rezk2009congruence}, it is not immediately obvious that one may combine $\frakm$-completeness with the monad $\bbT$ to obtain a well-behaved category of $\frakm$-complete $\bbT$-rings. This issue was originally resolved by Barthel--Frankland \cite{barthelfrankland2015completed}, who show that the composite $\calA_{\frakm}\bbT$ admits the structure of a monad, yielding a completed form of $\bbT$. An alternate approach to completing $\bbT$ was carried out by Brantner in \cite[Definition 3.2.21]{brantner2017lubin}, using more involved $\infty$-categorical techniques.

It turns out that less work is necessary. Write $\CAlg_E^{\loc,\free}$ for the category of ($K(h)$-local $\bbE_\infty$) $E$-algebras which are free on a free $E$-module. The homotopy category $\h\CAlg_E^{\loc,\free}$ is a discrete theory whose category of discrete models is monadic over $\Mod_{E_\ast}^\heart$; write the associated monad as $\widehat{\bbT}$. The category $\Alg_{\widehat{\bbT}}$ is then a candidate for a category of $\frakm$-complete $\bbT$-rings. That it is the correct category turns out to follow from material already in \cite{rezk2009congruence}, together with some basic facts about localizations of monads. This is an instance of a general philosophy we follow, that it can be easier to construct the category of algebras for a monad than it is to construct the monad itself.

\begin{prop}\label{prop:dfcpl}
The forgetful functor $\Model_{\h\CAlg_E^{\loc,\free}}^\heart\rightarrow\Ring_\bbT^\heart$ is fully faithful, with essential image spanned by those $\bbT$-rings whose underlying $E_\ast$-module is $\frakm$-complete. In particular, $\widehat{\bbT}$ is a plethory for the theory of $\frakm$-complete $E_\ast$-modules.
\end{prop}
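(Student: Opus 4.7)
The plan is to present both sides as categories of algebras over the theory of $\frakm$-complete $E_\ast$-modules and show the underlying monads agree. First I would identify the underlying additive theory $\h\Mod_E^{\loc,\free}$ of $K(h)$-localizations of free $E$-modules with an $\aleph_1$-bounded theory whose discrete models are $\frakm$-complete $E_\ast$-modules. This is closely analogous to \cref{ex:additivetheories}(3), with essential input the Hovey--Strickland identification $\pi_\ast L_{K(h)}F\cong\calA_\frakm(F_\ast)$ for free $E$-modules $F$, so that coproducts in $\h\Mod_E^{\loc,\free}$ model $\frakm$-completed direct sums and coproduct-preserving presheaves are exactly $\frakm$-complete modules; this has already been treated in \cite[Propositions 4.1.2, 4.2.4]{brantner2017lubin}. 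The $K(h)$-local free $\bbE_\infty$ algebra functor is plethystic over this, so by \cref{ssec:algebras} we obtain $\widehat{\bbT}$ as a plethory for the theory of $\frakm$-complete $E_\ast$-modules; in particular every discrete model of $\h\CAlg_E^{\loc,\free}$ has $\frakm$-complete underlying $E_\ast$-module.

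Next, let $\calP_0\subset\h\CAlg_E^{\loc,\free}$ denote the full subcategory spanned by the free algebras on finitely generated free $E$-modules. By item (3) of \cref{thm:eops}, on such modules $M$ one has $\bbT(M_\ast)\cong\pi_\ast\widehat{\bbP}M$ compatibly with monad structures, so $\calP_0$ is a finitary theory with $\Model_{\calP_0}^\heart\simeq\Ring_\bbT^\heart$. Restriction along the inclusion $\calP_0\hookrightarrow\h\CAlg_E^{\loc,\free}$ then furnishes the forgetful functor $\Model_{\h\CAlg_E^{\loc,\free}}^\heart\rightarrow\Ring_\bbT^\heart$, whose image lands in the $\frakm$-complete $\bbT$-rings by the previous paragraph.

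To conclude, I would show that for any $\frakm$-complete $E_\ast$-module $N$ there is a natural identification $\widehat{\bbT}(N)\cong\calA_\frakm\bbT(N)$. For $N$ the image of a free $E$-module this is immediate from the Hovey--Strickland identification applied summandwise to $\widehat{\bbP}_n$, whose homotopy is the $\frakm$-completion of $\pi_\ast\bbP_n$. The general case follows because both $\bbT$ and $\widehat{\bbT}$ preserve reflexive coequalizers (by item (1) of \cref{thm:eops} and by construction, respectively) and every $\frakm$-complete module admits such a presentation by $\frakm$-complete free ones. Granted this identification, a $\widehat{\bbT}$-algebra structure on an $\frakm$-complete $N$ is the same data as a $\bbT$-algebra structure (the extension through $\calA_\frakm$ to $\widehat{\bbT}N$ exists and lands back in $N$ by $\frakm$-completeness), and maps on either side are maps of underlying $E_\ast$-modules commuting with the operations; this yields fully faithfulness and the essential image simultaneously. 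The main obstacle is the identification $\widehat{\bbT}\simeq\calA_\frakm\bbT$ on $\frakm$-complete modules: it requires some care because $\calA_\frakm$ is not exact and $\bbT$ is not additive, but the needed input is essentially contained in the completed power operations framework of \cite{barthelfrankland2015completed}.
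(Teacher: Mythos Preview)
Your overall strategy matches the paper's: both arguments come down to showing that the natural map $\calA_\frakm\bbT\rightarrow\widehat{\bbT}$ is an isomorphism, checked on free modules by identifying both sides with $\pi_\ast\widehat{\bbP}F$. The paper packages the passage from this pointwise check to the full statement via a general monad-localization result (\cite[Lemma 6.1.3, Proposition 6.1.4]{balderrama2021deformations}), whereas you sketch a direct argument through reflexive coequalizers; these amount to the same thing.

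There is one genuine imprecision in your key step. You write that $\pi_\ast\widehat{\bbP}_n F\cong\calA_\frakm(\pi_\ast\bbP_n F)$ is ``immediate from the Hovey--Strickland identification applied summandwise,'' but the Hovey--Strickland comparison $\pi_\ast L_{K(h)}M\cong\calA_\frakm M_\ast$ applies to modules $M$ with free homotopy, and $\bbP_n F$ is an extended power, not a priori free. What makes this work is the nontrivial input that $\bbT_n(F_\ast)=\pi_\ast\bbP_n F$ is a \emph{free} $E_\ast$-module for $F$ free; this is \cite[Proposition 4.17]{rezk2009congruence}, recorded here as part of \cref{thm:eops}(6). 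The paper invokes this explicitly, and it is precisely the fact that lets one avoid the heavier machinery of \cite{barthelfrankland2015completed} that you defer to at the end. A secondary wrinkle: by restricting to $\frakm$-complete $N$ from the start, you would also need $\calA_\frakm\bbT(\calA_\frakm F_\ast)\cong\calA_\frakm\bbT(F_\ast)$, which the paper sidesteps by working over all of $\Mod_{E_\ast}^\heart$; again freeness of $\bbT(F_\ast)$ handles this, but it should be made explicit.
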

\begin{proof}
There is by construction a map $\bbT\rightarrow\widehat{\bbT}$ of monads on $\Mod_{E_\ast}^\heart$. As $\widehat{\bbT}$ takes values in $\frakm$-complete modules, as a map of functors this factors as $\bbT\rightarrow\calA_\frakm\bbT\rightarrow\widehat{\bbT}$. By \cite[Lemma 6.1.3, Proposition 6.1.4]{balderrama2021deformations}, it is sufficient to verify that $\calA_\frakm\bbT\rightarrow\widehat{\bbT}$ is an isomorphism of functors. As both source and target preserve geometric realizations, it is sufficient to verify that $\calA_\frakm\bbT(F_\ast)\rightarrow \widehat{\bbT}(F_\ast)$ is an isomorphism when $F_\ast$ is a free $E_\ast$-module. Fix such $F_\ast$, and write $F_\ast = \pi_\ast F$ for a free $E$-module $F$. By the construction of $\bbT$ and $\widehat{\bbT}$, there is a commutative diagram
\begin{center}\begin{tikzcd}
\bbT(F_\ast)\ar[d]\ar[r]&\calA_\frakm\bbT(F_\ast)\ar[r]\ar[d,"\cong"]&\widehat{\bbT}(F_\ast)\ar[d,"\cong"]\\
\pi_\ast\widehat{\bbP}F\ar[r,"\cong"]&\pi_\ast\widehat{\bbP}F\ar[r,"\cong"]&\pi_\ast\widehat{\bbP}F
\end{tikzcd}.\end{center}
Here, the right vertical map is an isomorphism by construction, and the middle vertical map is an isomorphism as $\bbT(F_\ast)$ is free \cite[Proposition 4.17]{rezk2009congruence}. Thus the top right horizontal map is an isomorphism, and this proves the proposition.
\end{proof}

\begin{rmk}\label{rmk:ht1}
The abstract construction of $\widehat{\bbT}$ certainly does not rely on $E$ being a Lubin-Tate spectrum, and with some work the algebraic constructions of \cref{thm:eops} can also be extended to more general $K(h)$-local even-periodic $\bbE_\infty$ ring spectra. To set this up correctly would take us too far afield, so we will not do so here. However, let us note how the algebraic story plays out at height $h=1$.

As discussed in \cite{hopkins2014k1}, the transfer yields an equivalence $L_{K(1)}\Sigma^\infty B\Sigma_p\simeq \bbS_{K(1)}$, using this one can define an operation $\theta\in \pi_0 L_{K(1)}\Sigma^\infty_+ B\Sigma_p$ making $\pi_0$ of an arbitrary $K(1)$-local $\bbE_\infty$ ring spectrum into a $\theta$-ring, and in fact if $R$ is a $K(1)$-local $\bbE_\infty$ ring, then $\pi_0 L_{K(1)} \bbP_R R$ is the free ${\Ext}\hyp p$-complete $\theta$-ring on $\pi_0 R$. If $R$ is even-periodic, then this splitting and identification extends to nonzero degrees. It follows that $\h\CAlg_R^{\loc,\free}$ is a theory of ${\Ext}\hyp p$-complete $\bbZ/(2)$-graded $\theta$-rings equipped with a map from $R_\star$. 

To be precise, the correct notion of a ``$\bbZ/(2)$-graded $\theta$-ring'' must incorporate the $\bbZ_p[\psi]$-module structure on $\omega=\pi_2 R$, in the same manner as it was incorporated in \cref{ex:ktheory}. This plays out as follows. Under the suspension map $R_0^\wedge \bbP_p \bbS\rightarrow R_2^\wedge\bbP_p\Sigma^2 \bbS$, the operation $\theta$ is sent to some additive operation $\frac{1}{p}\psi\colon \pi_2 \rightarrow\pi_2 $. Now the category of models of $\h\CAlg_R^{\loc,\free}$ is equivalent to the category of pairs $(A_0,A_{-1})$ where: first, $A_0$ is an ${\Ext}\hyp p$-complete $\theta$-ring under $R_0$; second, $A_{-1}$ is an ${\Ext}\hyp p$-complete $A_0$-module and $\bbZ[\psi]$-module satisfying $\psi(a_0\cdot a_{-1}) = \psi(a_0)\cdot\psi(a_{-1})$ for $a_0\in A_0$ and $a_{-1}\in A_{-1}$, where $\psi$ is the operation on $A_0$ defined by $\psi(a_0) = a_0^p + p \theta(a_0)$; and third, there is a suitably alternating and associative multiplication $m\colon \omega\otimes A_{-1}\otimes A_{-1}\rightarrow A_0$ satisfying $\theta(m(u\otimes a \otimes a')) = m(\frac{1}{p}\psi(u)\otimes\psi(a)\otimes\psi(a'))$.
\tqed
\end{rmk}

We end this subsection by pointing to where one can find some computations of the structure of $E$-power operations. The height $h=1$ case is as covered in \cref{rmk:ht1}, and explicit computations at heights $h\geq 3$ are not currently feasible, so we are left with height $h=2$, where computations are made possible by the theory of elliptic curves.

The first full explicit computation in this setting is the computation at $p=2$ of Rezk \cite{rezk2008power} recalled in \cref{ex:ltcurve}. Further computations at $p=2$ have been carried out by Schumann \cite{schumann2014k2}, allowing for elliptic curves with any Weierstra{\ss} equation of the form $y^2 + a_1 xy + a_3 y = x^3 + a_2 x^2 + a_4 x$, i.e.\ with $a_6 = 0$. Notably, this work gives a closed-form description of the total power operation on $E^0 BU(1) \cong E_0[[u]]$; for the Lubin-Tate spectrum of \cref{ex:ltcurve}, this is the ring map
\[
P\colon E_0[[u]]\rightarrow E_0[[u]][d]/(d^3 - ad - 2),\qquad P(u) = \frac{u^2-du}{1+d^2 u}.
\]

At $p=3$, for Lubin-Tate spectra $E$ associated to certain elliptic curves, $E$-power operations have been computed by Nendorf \cite{nendorf2012power} and by Zhu \cite{zhu2014power}. The latter also discusses the power operation structure on $L_{K(1)} E$ for the height $h=2$ Lubin-Tate spectrum $E$ in question. Further work of Zhu in \cite{zhu2019semistable} gives a recipe that works for arbitrary primes.

We point also to \cite{rezk2013power}, which contains a wealth of information at heights $h\leq 2$, and in particular a number of computations in the cohomology of $\bbT$-rings at heights $h\leq 2$.

\subsection{Completions}\label{ssec:completions}

Fix notation as in the preceding section. Following \cref{prop:dfcpl}, we are interested in the homotopy theory of certain completed contexts. Some of the general interaction between theories and completions was studied in \cite[Section 6]{balderrama2021deformations}; here we explain how things fit together in the context of $\bbT$-rings.

Let us first recall the definitions in our particular context. Given $M\in\Mod_{E_\ast}$, say that $M$ is \textit{$\frakm$-nilpotent} if $M[x^{-1}]=0$ for all $x\in \frakm$, is \textit{$\frakm$-local} if $\Map(N,M)\simeq \ast$ for all $\frakm$-nilpotent $N$, and is \textit{$\frakm$-complete} if $\Map(N,M)\simeq\ast$ for all $\frakm$-local $N$. The full subcategory $\Mod_{E_\ast}^{\Cpl(\frakm)}\subset\Mod_{E_\ast}$ of $\frakm$-complete modules is a reflective subcategory, and an explicit formula for the reflection $(\bs)_\frakm^\wedge$ is given as follows. Choose generators $u_0,\ldots,u_{h-1}\in \frakm$ and fix $M\in\Mod_{E_\ast}$. Then $M_\frakm^\wedge$ is the total cofiber of the $h$-cube obtained as the external product of the $1$-cubes $T_i-u_i\colon M[[T_i]]\rightarrow M[[T_i]]$. If $M\in\Mod_{E_\ast}^\heart$, then $\calA_\frakm M = \pi_0 (M_\frakm^\wedge)$.

Observe that the preceding definitions can be applied equally well in any linear setting in which there is a natural action by the elements of $\frakm$. In particular, they apply to $\Mod_E$, where $\frakm$-completion coincides with $K(h)$-localization. In general, if $\calM$ is some category over $\Mod_E$ or $\Mod_{E_\ast}$, we will write $\calM^{\Cpl(\frakm)}\subset\calM$ for the full subcategory spanned by those objects whose underlying module is $\frakm$-complete. Thus for instance \cref{prop:dfcpl} tells us that 
\[
\Model_{\h\CAlg_E^{\loc,\free}}^\heart\simeq\Ring_\bbT^{\heart,\Cpl(\frakm)}.
\]
The key fact that allows us to handle completions is that this continues to hold at the level of simplicial rings.

\begin{theorem}\label{lem:derivedcompleterefl}
There is an equivalence $\Model_{\h\CAlg_E^{\loc,\free}}\simeq\Ring_\bbT^{\Cpl(\frakm)}$.
\end{theorem}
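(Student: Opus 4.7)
The plan is to realize both sides as the underlying $\infty$-category of simplicial objects in $\Ring_\bbT^{\heart,\Cpl(\frakm)}$. By \cref{prop:models}(3), the left hand side $\Model_{\h\CAlg_E^{\loc,\free}}$ is by construction such an object, and by \cref{prop:dfcpl} its heart is identified with $\Ring_\bbT^{\heart,\Cpl(\frakm)}$. The work therefore consists in presenting the right hand side $\Ring_\bbT^{\Cpl(\frakm)}$ the same way, together with checking compatibility of the identifications.

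First I would construct a comparison functor. By \cref{prop:models}(1), $\Model_{\h\CAlg_E^{\loc,\free}}$ is the free cocompletion of $\h\CAlg_E^{\loc,\free}$ under geometric realizations. The inclusion $\h\CAlg_E^{\loc,\free}\hookrightarrow\Ring_\bbT^{\Cpl(\frakm)}$ sending $A$ to the $\bbT$-ring $\pi_\ast A$ — which lands in $\Ring_\bbT^{\heart,\Cpl(\frakm)}$ by \cref{prop:dfcpl}, since the homotopy groups of a $K(h)$-local $E$-module are automatically $\frakm$-complete — therefore extends uniquely to a geometric-realization-preserving functor $F\colon\Model_{\h\CAlg_E^{\loc,\free}}\to\Ring_\bbT^{\Cpl(\frakm)}$, it being routine to check that the target admits geometric realizations as a reflective localization of the cocomplete category $\Ring_\bbT$.

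Next I would verify that $\Ring_\bbT^{\Cpl(\frakm)}$ is itself the underlying $\infty$-category of simplicial objects in $\Ring_\bbT^{\heart,\Cpl(\frakm)}$. The crucial input is the fact that powered the proof of \cref{prop:dfcpl}: $\bbT(F_\ast)$ is already $\frakm$-complete, in fact free over $E_\ast$, whenever $F_\ast$ is a free $E_\ast$-module. This ensures that the free $\bbT$-ring functor preserves $\frakm$-complete objects, so any object of $\Ring_\bbT^{\heart,\Cpl(\frakm)}$ admits a simplicial resolution by objects which are free $\bbT$-rings and $\frakm$-complete termwise; moreover, geometric realizations of such simplicial objects, computed in $\Ring_\bbT$, automatically remain $\frakm$-complete. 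Combining this with the general framework of \cite[Section 6]{balderrama2021deformations} for derived completions of models over theories, one obtains the desired identification, and upon comparing generators $F$ is seen to be an equivalence.

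The main obstacle is this last compatibility: controlling the interaction of derived $\frakm$-completion with sifted colimits of $\bbT$-rings, so as to promote the heart equivalence of \cref{prop:dfcpl} to the derived level without creating a gap. The potential difficulty is that $\frakm$-completion of simplicial $\bbT$-rings might fail to be computable termwise, and a $\frakm$-complete simplicial $\bbT$-ring might not admit a resolution by $\frakm$-complete free objects. The freeness of $\bbT(F_\ast)$ removes exactly this obstruction, reducing all nontrivial verifications to the case of free generators where both sides manifestly agree by construction.
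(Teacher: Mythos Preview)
Your overall strategy matches the paper's: both invoke \cref{prop:dfcpl} for the heart-level identification and the framework of \cite[Section 6]{balderrama2021deformations} (specifically Proposition 6.1.4 there) to lift it to the derived level. However, the claim you identify as ``the crucial input'' is false. You assert that $\bbT(F_\ast)$ is $\frakm$-complete when $F_\ast$ is free, and then deduce that the free $\bbT$-ring functor preserves $\frakm$-complete objects. Neither holds: $\bbT(F_\ast)$ is a free $E_\ast$-module of infinite rank by \cite[Proposition 4.17]{rezk2009congruence}, and infinite-rank free $E_\ast$-modules are not $\frakm$-complete. This is exactly why one needs the completed monad $\widehat{\bbT}=\calA_\frakm\bbT$ in the first place; your claimed consequence would make $\widehat{\bbT}$ agree with $\bbT$ on free inputs, which it does not.

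The correct input, which is what the paper uses, is that $\bbT(F_\ast)$ is \emph{tame}: its derived $\frakm$-completion $\bbT(F_\ast)_\frakm^\wedge$ is discrete, hence coincides with $\calA_\frakm\bbT(F_\ast)$. Freeness implies tameness, and tameness is precisely the hypothesis that \cite[Proposition 6.1.4]{balderrama2021deformations} needs to promote the heart equivalence of \cref{prop:dfcpl} to the derived level, absorbing the compatibility issues you correctly flag in your final paragraph. So your instinct that freeness of $\bbT(F_\ast)$ removes the obstruction is right, but the mechanism is tameness of the derived completion, not $\frakm$-completeness of $\bbT(F_\ast)$ itself.
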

\begin{proof}
This follows from \cite[Proposition 6.1.4]{balderrama2021deformations} and \cref{prop:dfcpl}, as $\calA_\frakm \bbT(F_\ast)\simeq\bbT(F_\ast)_\frakm^\wedge$ for $F_\ast\in\Mod_{E_\ast}^\free$ by tameness of $\bbT(F_\ast)$.
\end{proof}

In particular, given $R\in\Ring_\bbT^{\Cpl(\frakm),\heart}$, $S\in\Ring_{R\otimes\bbT}^{\Cpl(\frakm),\heart}$, $M\in \Ab(\Ring_{R\otimes\bbT/S}^{\Cpl(\frakm),\heart})\simeq\LMod_{S\otimes\Delta}^{\Cpl(\frakm),\heart}$, and $A\in\Ring_{R\otimes\bbT/S}^{\Cpl(\frakm),\heart}$, all the following spaces are equivalent:
\begin{align*}
\Map_{R/\Ring_\bbT^{\Cpl(\frakm)}/S}(A,S&\ltimes B^n M)\simeq \Map_{R\otimes\bbT/S}(A,S\ltimes B^n M)\simeq\calH^n_{R\otimes\bbT/S}(A;M)\\
&\simeq \EXT^n_{S\otimes\Delta}(S\otimes_A^\bbL \Omega_{A|R},M)\simeq\EXT^n_{S\otimes\Delta}(S\cotimes_A ^\bbL\bbL\widehat{\Omega}_{A|R},M).
\end{align*}
Because of this, we will generally write things in terms of $\bbT$, although there is a sense in which $\widehat{\bbT}$ is more fundamental in our setting.

The primary subtlety of completions relevant to us is that \cref{lem:derivedcompleterefl} does not extend to all settings. For example, if $R$ is an $E_\ast$-ring, then there is a category $\Mod_R^{\Cpl(\frakm)}$ of $\frakm$-complete $R$-modules, and $\Mod_R^{\Cpl(\frakm)}\simeq\LMod_\calP$ where $\calP\subset\Mod_R$ is the full subcategory spanned by the (derived) $\frakm$-completions of free $R$-modules. But the failure of coproducts to be exact in general \cite[Appendix B]{baker2009lcomplete} can force this theory $\calP$ to be non-discrete, and in particular $\Mod_R^{\Cpl(\frakm)}$ need not be the derived category of $\Mod_R^{\Cpl(\frakm),\heart}$. 

Say that $M$ is \textit{tame} if $(M^{\oplus I})_\frakm^\wedge$ is discrete for any set $I$; it is sufficient to consider the case $I = \omega$. Then most issues with completions vanish so long as we build on tame objects. For example, if $R\in\CAlg_E^\loc$ with $R_\ast$ tame, then $\pi_\ast \widehat{\bbP}_R (R\cotimes F)\simeq \calA_\frakm (R_\ast\otimes\bbT(F_\ast))$ for $F\in\Mod_E^\free$, and there is an equivalence $\Model_{\h\CAlg_R^{\loc,\free}}\simeq \Ring_{R_\ast\otimes\bbT}^{\Cpl(\frakm)}$.

We end by noting the following.

\begin{lemma}\label{lem:compkoszul}
Fix $R\in\Ring_\bbT^{\Cpl(\frakm),\heart}$, $S\in\Ring_{R\otimes\bbT}^\heart$, $M\in\LMod_{S\otimes\Delta}^{\Cpl(\frakm),\heart}$, and $A\in \Ring_{R\otimes\bbT/S}^{\Cpl(\frakm),\heart}$. If $A$ is smooth as an $\frakm$-complete alternating $R$-ring in the sense that $\bbL\widehat{\Omega}_{A_\ast|R_\ast}$ is the completion of a discrete projective $A_\ast$-module, then $H^n_{R\otimes\bbT/S}(A;M) = 0$ for $n > h$.
\end{lemma}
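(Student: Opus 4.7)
The plan is to proceed in three steps: first, reduce $H^n_{R\otimes\bbT/S}(A;M)$ to an $\Ext$ group over $S\otimes\Delta$ via a collapsed Grothendieck-style spectral sequence; second, bound the cohomological dimension of $S\otimes\Delta$ using Koszul theory; and third, verify compatibility with $\frakm$-completion. By \cref{lem:derivedcompleterefl}, one may work $\frakm$-completely throughout. Since $\bbT$ is smooth over alternating $E_0$-algebras by \cref{thm:eops}(6), \cref{prop:smoothab} identifies the derived abelianization in the $R\otimes\bbT$-ring sense with the classical $S$-relative cotangent, upgraded to an $S\otimes\Delta$-module; the smoothness hypothesis on $A$ then forces $\bbL D(A)$ to be concentrated in degree $0$, equal to $D(A):=S\cotimes_A\widehat{\Omega}_{A|R}$, whose underlying $S$-module is the $\frakm$-completion of a projective $S$-module. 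The spectral sequence of \cref{thm:grothss} therefore collapses to give
\[
H^n_{R\otimes\bbT/S}(A;M)\;\cong\;\Ext^n_{S\otimes\Delta,\,\Cpl(\frakm)}(D(A),M).
\]

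For the Koszul bound, one starts from \cref{thm:eops}(12): $\Delta$ is Koszul over $E_0$ with $H^n(\Delta)=0$ for $n>h$. Applying \cref{prop:koszulcomposition}(1) to the monadic distributive square associated to base change along $E_0\to S$ shows that $S\otimes\Delta$ is Koszul over the theory of $S$-modules, and the identification $\gr(S\otimes\Delta)\simeq S\otimes\gr\Delta$ combined with quadratic duality (\cref{thm:quadraticduality}) shows that the diagonal cohomology of $\gr(S\otimes\Delta)$ remains concentrated in degrees $[0,h]$. In the $\frakm$-complete setting, $D(A)$ is projective as an $\frakm$-complete $S$-module in the tame sense of \cref{ssec:completions}, being the $\frakm$-completion of a projective $S$-module, so the completed Koszul complex $K_{S\otimes\Delta,\,\Cpl(\frakm)}(D(A),M)$ furnished by \cref{thm:koszulres} is supported in cohomological degrees $[0,h]$ and computes the Ext group above, establishing the vanishing for $n>h$.

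The hardest step will be verifying that the cohomological length of $S\otimes\Delta$ is bounded by $h$ after base change -- essentially a compatibility check between Koszul duality and the distributive law relating $S\otimes\bs$ and $\Delta$. It is crucial here that the Koszul complex only requires the source to be projective over the base theory $S$, not over $S\otimes\Delta$, so the warning from the introduction -- that a $\Delta$-module whose underlying $E_\ast$-module is the completion of a projective need not itself be the completion of a projective $\Delta$-module -- does not obstruct the argument. The remaining technical issue is matching the tame-completeness framework of \cref{ssec:completions} to the ordinary Koszul machinery of \cref{sec:koszul}, so that the completed Koszul complex genuinely computes completed Ext.
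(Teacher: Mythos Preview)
Your overall strategy matches the paper's: reduce to an $\Ext$ computation over $S\otimes\Delta$ using smoothness, then bound this via Koszulity transported along a distributive square. The reduction step and the appeal to \cref{prop:koszulcomposition} are correct.

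The gap is precisely the one you flag at the end, and your sketch does not close it. You invoke a ``completed Koszul complex $K_{S\otimes\Delta,\Cpl(\frakm)}(D(A),M)$ furnished by \cref{thm:koszulres}'', but \cref{thm:koszulres} furnishes no such thing: the problem is that $N = D(A)$, as the $\frakm$-completion of a projective $S$-module, need not be a discrete object of $\Mod_S^{\Cpl(\frakm)}$ when $S$ is not tame, so the Koszul machinery of \cref{sec:koszul} does not directly apply to it. Your phrase ``projective in the tame sense'' conflates tameness of $S$ with a projectivity condition on $D(A)$; these are different, and the lemma makes no tameness assumption on $S$. The paper's fix is twofold. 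First, change the base theory rather than the Koszul machinery: write $\LMod_{S\otimes\Delta}^{\Cpl(\frakm),\heart}\simeq\LMod_F^\heart$ for $F$ an algebra over the \emph{completed} theory $\Mod_S^{\Cpl(\frakm),\heart}$, and use the two-step tower of distributive squares (through $\LMod_{S\otimes\Delta}^\heart$ and $\LMod_\Delta^\heart$) with \cref{prop:koszulcomposition} to see that $F$ is Koszul with length $h$ resolutions. Second, handle the possible non-discreteness of $N$ by going through the bar resolution \cref{lem:bares}: since $M$ is discrete and $\frakm$-complete, the cobar complex $B_{S\otimes\Delta}(N,M)$ agrees with $B_F(\pi_0 N,M)$, and $\pi_0 N$ \emph{is} a projective object of $\Mod_S^{\Cpl(\frakm),\heart}$, so Koszulity of $F$ yields the length $h$ Koszul complex. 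The move you are missing is this last one: use discreteness of the target $M$ to replace $N$ by $\pi_0 N$, rather than trying to resolve $N$ directly.
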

\begin{proof}
More generally, choose $N\in\LMod_{S\otimes\Delta}^{\Cpl(\frakm)}$ whose underlying $S$-module is the completion of a projective $S$-module; we claim that $\Ext_{S\otimes\Delta}^n(N,M) = 0$ for $n > h$. In the case where $S = E_\ast$ and $N$ is the completion of a $\Delta$-module whose underlying $E_\ast$-module is projective, this is a consequence of part (12) of \cref{thm:eops}. In general, consider  the diagram
\begin{center}\begin{tikzcd}
\LMod_{S\otimes\Delta}^{\Cpl(\frakm),\heart}\ar[r]\ar[d]&\Mod_S^{\Cpl(\frakm),\heart}\ar[d]\\
\LMod_{S\otimes\Delta}^\heart\ar[r]\ar[d]&\Mod_S^\heart\ar[d]\\
\LMod_\Delta^\heart\ar[r]&\Mod_{E_\ast}^\heart
\end{tikzcd}.\end{center}
Write $\LMod_{S\otimes\Delta}^{\Cpl(\frakm),\heart}\simeq\LMod_F^\heart$, where $F$ is an algebra over $\Mod_S^{\Cpl(\frakm),\heart}$. Each square in the above is distributive, so by \cref{prop:koszulcomposition} the algebra $F$ is Koszul, with length $h$ Koszul resolutions. Though $N$ may not be discrete if $S$ is not tame, we may nonetheless apply \cref{lem:bares} to identify $\EXT_{S\otimes\Delta}(N,M)$ as the totalization of $\EXT_{S\otimes\Delta}(B(S\otimes\Delta,S\otimes\Delta,N),M)$. As $M$ is $\frakm$-complete and discrete, there is an equivalence $B_{S\otimes\Delta}(N,M)\simeq B_F(\pi_0 N,M)$. As $\pi_0 N$ is a projective object of $\Mod_S^{\Cpl(\frakm),\heart}$, there is a quasiisomorphism $B_F(\pi_0 N,M)\simeq K_F(\pi_0 N,M)$ by Koszulity. As $K_F(\pi_0 N,M)$ is a length $h$ complex, this proves the lemma.
\end{proof}

\subsection{Mapping spaces and highly structured orientations}\label{ssec:emaps}

We can now describe some applications of the preceding theory. Fix notation as in the preceding subsections; in particular $E$ is a Lubin-Tate spectrum of height $h$.

\begin{theorem}\label{thm:emapss}
Fix $R\in\CAlg_E^\loc$, and choose $S\in\CAlg_R^\loc$ such that $R_\ast\rightarrow S_\ast$ is surjective (such as $S=0$ or $S=R$). Fix $A,B\in\CAlg_{R/S}^\loc$, and choose a map $\phi\colon A_\ast\rightarrow B_\ast$ in $\Ring_{R_\ast\otimes\bbT/S_\ast}$. Let $\CAlg_{R/S}^\phi(A,B)$ be the space of lifts of $\phi$ to a map in $\CAlg_{R/S}$. Then there is a decomposition
\[
\CAlg_{R/S}^\phi(A,B)\simeq\lim_{n\rightarrow\infty}\CAlg_{R/S}^{\phi,\leq n}(A,B),
\]
with layers fitting into fiber sequences
\[
\CAlg_{R/S}^{\phi,\leq n}(A,B)\rightarrow\CAlg_{R/S}^{\phi,\leq n-1}(A,B)\rightarrow\calH^{n+1}_{R_\ast\otimes\bbT/B_\ast}(A_\ast;\pi_\ast\Omega^n F),
\]
where $F = \Fib(B\rightarrow S)$. In particular,
\begin{enumerate}
\item There are successively defined obstructions in $H^{n+1}_{R_\ast\otimes\bbT/B_\ast}(A_\ast;\pi_\ast\Omega^n F)$ for $n\geq 1$ to exhibiting a point of $\CAlg_{R/S}^\phi(A,B)$;
\item Once a point of $\CAlg_{R/S}^\phi(A,B)$ is chosen, there is a fringed spectral sequence of signature
\[
E_1^{p,q} = H^{p-q}_{R_\ast\otimes\bbT/B_\ast}(A_\ast;\pi_\ast\Omega^p F)\Rightarrow \pi_q(\CAlg_{R/S}(A,B),f),\quad d_r^{p,q}\colon E_r^{p,q}\rightarrow E_r^{p+r,q-1}.
\]
\end{enumerate}
Specializing further, if $A_\ast$ is smooth as an $\frakm$-complete alternating $R_\ast$-ring, then
\begin{enumerate}[resume]
\item If $h=1$, then $\CAlg_{R/S}^\phi(A,B)$ is nonempty. Moreover, 
\[
\pi_0\CAlg_{R/S}^\phi(A,B) \cong H^1_{R_\ast\otimes\bbT/B_\ast}(A_\ast;\pi_\ast\Omega F),
\]
and if we choose $f\in\CAlg_{R/S}^\phi(A,B)$, then there are short exact sequences
\begin{align*}
0\rightarrow H^1_{R_\ast\otimes\bbT/B_\ast}(A_\ast;\pi_\ast\Omega^{n+1}F)&\rightarrow \pi_n(\CAlg_{R/S}(A,B),f)\\
&\rightarrow H^0_{R_\ast\otimes\bbT/B_\ast}(A_\ast;\pi_\ast\Omega^n F)\rightarrow 0
\end{align*}
for $n\geq 1$.
\item If $h=1$ and each of the rings in question is concentrated in even degrees, then $\CAlg_{R/S}^\phi(A,B)$ is connected. Moreover,
\[
\pi_{2n-\epsilon}\CAlg_{R/S}^{\phi}(A,B)\cong H^\epsilon_{R_\ast\otimes\bbT/B_\ast}(A_\ast;\pi_\ast\Omega^{2n}F)
\]
for $n\geq 1$ and $\epsilon\in\{0,1\}$.
\item If $h=2$ and each of the rings in question is concentrated in even degrees, then $\CAlg_{R/S}^\phi(A,B)$ is nonempty. Moreover, $\pi_0 \CAlg_{R/S}^\phi(A,B) \cong H^2_{R_\ast\otimes\bbT/B_\ast}(A_\ast;\pi_\ast\Omega^2 F)$, and if we choose $f\in\CAlg_{R/S}^\phi(A,B)$, then there are short exact sequences
\begin{align*}
0\rightarrow H^2_{R_\ast\otimes\bbT/B_\ast}(A_\ast;\pi_\ast\Omega^{2(n+1)}F)&\rightarrow\pi_{2n}(\CAlg_{R/S}(A,B),f)\\
&\rightarrow H^0_{R\ast\otimes\bbT/B_\ast}(A_\ast;\pi_\ast\Omega^{2n}F)\rightarrow 0
\end{align*}
and isomorphisms
\[
\pi_{2n-1}(\CAlg_{R/S}(A,B),f)\cong H^1_{R_\ast\otimes\bbT/B_\ast}(A_\ast;\pi_\ast\Omega^{2n}F)
\]
for $n\geq 1$.
\end{enumerate}
\end{theorem}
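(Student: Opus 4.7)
My plan is to apply the general mapping space obstruction theory of \cite[Theorem 5.3.1]{balderrama2021deformations} to the theory $\calP = \CAlg_E^{\loc,\free}$, and then invoke the equivalence $\Model_{\h\calP}\simeq\Ring_\bbT^{\Cpl(\frakm)}$ of \cref{lem:derivedcompleterefl} to identify the Quillen cohomology of a model of $\h\calP$ with the Quillen cohomology of its underlying $\bbT$-ring. This gives the filtration, fiber sequences, and fringed spectral sequence of parts (1) and (2) directly, with obstruction groups in the form stated.

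For the specializations (3)--(5), the main input is the cohomological dimension bound $H^n_{R_\ast\otimes\bbT/B_\ast}(A_\ast;-)=0$ for $n>h$, provided by \cref{lem:compkoszul}; this rests in turn on Rezk's Koszul bound $H^n(\Delta)=0$ for $n>h$ from part (12) of \cref{thm:eops}, together with the fact that the relevant completion preserves projectivity in the smooth case. At height $h=1$, this instantly kills every obstruction $H^{n+1}(A_\ast;\pi_\ast\Omega^n F)$ for $n\geq 1$, so $\CAlg^\phi_{R/S}(A,B)$ is nonempty. In the spectral sequence only the terms on the two diagonals $p-q\in\{0,1\}$ can be nonzero, and since the differential bidegree $(r,-1)$ increases $p-q$ by $r+1\geq 2$, every differential targets a vanishing group. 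The spectral sequence therefore collapses at $E_1$, producing the short exact sequences of part (3).

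For parts (4) and (5) the additional ingredient is a parity vanishing coming from the even-periodic structure of $\bbT$ developed in \cref{ssec:evenperiodic}. When $A_\ast$ and $B_\ast$ are concentrated in even degrees, the cotangent complex $\bbL\widehat{\Omega}_{A_\ast|R_\ast}$ is a $B_\ast\otimes\Delta$-module concentrated in even degrees, while $\pi_\ast\Omega^p F$ sits in degrees of parity $p\bmod 2$. Using the $\bbZ/(2)$-decomposition of $\LMod_{B_\ast\otimes\Delta}$ implicit in \cref{prop:epmorita}---where, after unwinding the Morita equivalence $\LMod_\Delta\simeq\LMod_\Gamma$, the action of $\Delta$ respects parity via the ungraded cobialgebroid $\Gamma$---the degree-preserving Ext groups $H^n_{R_\ast\otimes\bbT/B_\ast}(A_\ast;\pi_\ast\Omega^p F)$ vanish whenever $p$ is odd. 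Combined with the height vanishing, only $E_1^{p,q}$ with $p$ even and $p-q\leq h$ survive, and a direct check of the differential bidegree as in part (3) shows that every $d_r$ either has a target with $p-q>h$ or with odd first coordinate. The $E_1$-page thus equals the $E_\infty$-page, and reading off the surviving terms gives the isomorphisms and short exact sequences of (4) and (5). In particular, the vanishing of $E_\infty^{p,0}$ for $p\geq 1$ at $h=1$ yields connectedness, while at $h=2$ the single surviving term $E_\infty^{2,0}=H^2(A_\ast;\pi_\ast\Omega^2 F)$ describes $\pi_0$.

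The principal obstacle I expect is the careful verification that the even-periodic structure really does induce the needed parity decomposition at the level of $B_\ast\otimes\Delta$-modules, and that this decomposition is compatible with the Koszul resolution used to compute Ext; everything else is a mechanical spectral-sequence indexing check together with an invocation of \cref{lem:compkoszul}.
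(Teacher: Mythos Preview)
Your approach is correct and matches the paper's own proof, which is extremely terse: the paper simply cites \cite[Theorem 5.3.1]{balderrama2021deformations} for the obstruction theory and \cref{lem:compkoszul} for the final statements. You have filled in the spectral-sequence collapse and parity-vanishing arguments that the paper leaves implicit, but the underlying strategy is identical.
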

\begin{proof}
The obstruction theory is an application of \cite[Theorem 5.3.1]{balderrama2021deformations}, just as in the proof of \cref{thm:pmaps}.
The final statements then follow using \cref{lem:compkoszul}.
\end{proof}

\begin{rmk}
Following \cref{rmk:ht1}, the preceding theorem also applies when $E$ is instead taken to be an arbitrary $K(1)$-local even-periodic $\bbE_\infty$ ring spectrum.
\tqed
\end{rmk}

Our main application of \cref{thm:emapss} is to the theory of $\bbE_\infty$ orientations. We first recall some history. Power operations for Lubin-Tate spectra were first studied by Ando \cite{ando1995isogenies} precisely in the context of producing highly structured complex orientations. In particular, there it is shown that the Honda formal group law refines to a unique $\bbH_\infty$ orientation of its associated Lubin-Tate spectrum. The characterization of $\bbH_\infty$ orientations is described in a more general setting in Ando-Hopkins-Strickland \cite{andohopkinsstrickland2004sigma}, which in addition transitions to explicitly considering $MUP$ orientations, where $MUP$ is the Thom spectrum of the tautological bundle over $\bbZ\times BU$. In brief, homotopy ring maps $MUP\rightarrow E$ correspond to coordinates on the formal group $\bbG$, and the conditions necessary for this coordinate to correspond to a map of $\bbH_\infty$ rings can be described algebraically; we will call these coordinates \textit{norm-coherent}, and will very briefly review the characterization in the proof of \cref{thm:orientations}. Work of Zhu \cite{zhu2020norm} extends the existence and uniqueness of norm-coherent coordinates to an arbitrary Lubin-Tate spectrum so long as $\bbF_p\subset\kappa$ is algebraic; in our language, this work says that the first map in
\[
\Ring_\bbT(E_0^\wedge MUP,E_0)\rightarrow \Ring_{E_0}(E_0^\wedge MUP,\kappa)\cong\Coord(\bbG_0)
\]
is an isomorphism. Recall that at height $h=1$, the category of even $\bbT$-rings is exactly the category of $\theta$-rings sliced under $E_0$. Here it is classical that $E_0 = W(\kappa)$ is in fact the cofree $\theta$-ring on the $E_0$-ring $\kappa$, and so the above isomorphism is immediate; moreover this requires only that $\kappa$ is perfect. An unpublished theorem of Rezk extends this to arbitrary heights, showing that $E_0$ is always the cofree $\bbT$-ring on the $E_0$-ring $\kappa$.

Given the preceding, we can safely say that $\bbH_\infty$ orientations are well-understood. By contrast, significantly less is known about $\bbE_\infty$ orientations. The exception to this is $\bbE_\infty$ orientations by $MU$ at height $h=1$; the case of $p$-adic $K$-theory has been studied by Walker \cite{walker2009orientations}, and the more general $K(1)$-local case by M\"ollers \cite{mollers2012k1}, using methods similar to those employed in \cite{andohopkinsrezk2006multiplicative}; in short, in the $K(1)$-local context, every $\bbH_\infty$ orientation refines uniquely to an $\bbE_\infty$ orientation. In Hopkins-Lawson \cite{hopkinslawson2018strictly}, a general obstruction theory for $\bbE_\infty$ orientations by $MU$ is constructed that recovers the known $h=1$ story. Even less is known about $\bbE_\infty$ orientations by $MUP$. The only work in this direction we are aware of is \cite{hahnyuan2019exotic}, which demonstrates their existence when $h=1$ and $\kappa = \bbF_2$. Our contribution to this story is the following.
\begin{theorem}\label{thm:orientations}
\hphantom{blank}
\begin{enumerate}
\item Let $R$ be a $K(1)$-local even-periodic $\bbE_\infty$ ring spectrum. Then every norm-coherent coordinate on the formal group associated to $R$ refines uniquely to an $\bbE_\infty$ orientation $MUP\rightarrow R$.
\item The multiplicative formal group law $x+y-xy$ refines uniquely to an $\bbE_\infty$ orientation $MUP\rightarrow KU$.
\item Let $E$ be a Lubin-Tate spectrum at height $h=2$. Then every norm-coherent coordinate on $\bbG$ refines to an $\bbE_\infty$ orientation $MUP\rightarrow E$.
\end{enumerate}
\end{theorem}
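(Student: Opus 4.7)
The plan is to apply the mapping-space obstruction theory of Theorem~\ref{thm:emapss} with $S = 0$, $A = L_{K(h)}(R \otimes MUP)$, and $B = R$ (with $R$ either $E$ or a $K(1)$-local even-periodic $\bbE_\infty$ ring spectrum, as appropriate). A norm-coherent coordinate will be interpreted, via Ando-Hopkins-Strickland \cite{andohopkinsstrickland2004sigma} and the work of Zhu-Rezk \cite{zhu2020norm}, as a $\bbT$-ring map $\phi\colon A_\ast \to B_\ast$; the obstruction theory then supplies an $\bbE_\infty$ lift.

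First I would verify the hypotheses of Theorem~\ref{thm:emapss}. Since $MUP_\ast = \bbZ[b_0^{\pm 1}, b_1, b_2, \ldots]$ is a localization of a polynomial ring concentrated in even degrees, $A_\ast = R_\ast^\wedge MUP$ is the $\frakm$-completion of a localization of a polynomial ring over $R_\ast$, hence smooth as an $\frakm$-complete alternating $R_\ast$-ring; moreover both $A_\ast$ and $B_\ast = R_\ast$ are concentrated in even degrees. A norm-coherent coordinate furnishes a $\bbT$-ring map $\phi_0 \colon R_0^\wedge MUP \to R_0$, and the even-periodic cobialgebroid formalism of Section~\ref{ssec:evenperiodic} promotes $\phi_0$ uniquely to the required $\bbT$-ring map $\phi \colon R_\ast^\wedge MUP \to R_\ast$.

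For part~(3), at height $h = 2$, Theorem~\ref{thm:emapss}(5) yields that $\CAlg_E^\phi(A, E)$ is nonempty; any point of this space is the desired $\bbE_\infty$ orientation $MUP \to E$ refining the chosen coordinate. For part~(1), at height $h = 1$, Theorem~\ref{thm:emapss}(4) yields that $\CAlg_R^\phi(A, R)$ is connected, giving both existence and uniqueness of the lift; the full generality of arbitrary $K(1)$-local even-periodic $R$ is allowed via the extension noted in Remark~\ref{rmk:ht1}. For part~(2), I would combine part~(1) applied to each $KU_p^\wedge$ (the multiplicative formal group law being classically norm-coherent at each prime) with the rational case, where $\bbE_\infty$ structures are forced by formality and the rational multiplicative law determines a unique $\bbE_\infty$ orientation $MUP_\bbQ \to KU_\bbQ$, and glue through the arithmetic fracture square
\[
KU \simeq KU_\bbQ \times_{(\prod_p KU_p^\wedge)_\bbQ} \prod_p KU_p^\wedge
\]
to obtain a unique integral $\bbE_\infty$ orientation $MUP \to KU$.

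The main obstacle is the identification of norm-coherent coordinates with $\bbT$-ring maps $R_0^\wedge MUP \to R_0$: translating between the $\bbH_\infty$ viewpoint of \cite{andohopkinsstrickland2004sigma} and the plethystic framework of this paper is largely conceptual but requires careful comparison of the norm constructions with the cobialgebroid coproduct on $\Gamma$. Once that translation is pinned down, the rest of the proof is a direct application of Theorem~\ref{thm:emapss}(4), (5), which are themselves powered by the vanishing $H^n(\Delta) = 0$ for $n > h$ of Theorem~\ref{thm:eops}(12).
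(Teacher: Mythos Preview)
Your proposal is correct and follows essentially the same approach as the paper: parts (1) and (3) are direct applications of \cref{thm:emapss}(4) and (5) respectively, and part (2) is obtained from (1) by arithmetic fracture. The paper differs only in that, rather than asserting the multiplicative formal group law is ``classically norm-coherent'' at each prime, it verifies this explicitly by computing that multiplication by $x+y-xy$ on $\bbZ_p[[x,y]]/(1-(1-y)^p)$ has determinant $1-(1-x)^p$; the paper also makes explicit the vanishing $\pi_1\CAlg(MUP_\bbQ,(\prod_p KU_p)_\bbQ)=0$ needed to glue uniquely across the fracture square.
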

\begin{proof}
Claims (1) and (3) are immediate consequences of \cref{thm:emapss}, as $E_0 MUP$ is smooth over $E_0$ (in fact $E_0MUP \cong E_0[t_0^{\pm 1},t_1,\ldots]$; the standard references are \cite{adams1974stable} and \cite[Example 2.51]{andohopkinsstrickland2001elliptic}). Claim (2) follows directly from (1), the arithmetic fracture square, and the fact that $x+y-xy$ is a norm-coherent coordinate of the multiplicative formal group at all primes; for completeness we give the details. First, arithmetic fracture gives a Cartesian square of the form
\begin{center}\begin{tikzcd}
\CAlg(MUP,KU)\ar[r]\ar[d]&\prod_p\CAlg_{KU_p}(KU_p\cotimes MUP,KU_p)\ar[d]\\
\CAlg(MUP_\bbQ,KU_\bbQ)\ar[r]&\CAlg(MUP_\bbQ,(\prod_p KU_p)_\bbQ)
\end{tikzcd}.\end{center}
As $MUP_\bbQ$ is free as a rational $\bbE_\infty$ ring, equivalent to the Eilenberg-MacLane spectrum of the graded Lazard ring, the coordinate $x+y-xy$ gives points of the bottom two spaces, and $\pi_1\CAlg(MUP_\bbQ,(\prod_p KU_p)_\bbQ) = 0$. So it is sufficient to verify that for each prime $p$, the homotopy orientation associated to the formal group law $x+y-xy$ refines uniquely to a map $KU_p\cotimes MUP\rightarrow KU_p$ of $KU_p$-algebras. By \cref{thm:emapss}, it is sufficient to verify that the coordinate associated to the formal group law $x+y-xy$ is norm-coherent at all primes.

The description of norm-coherent coordinates given in \cite[Section 4]{andohopkinsstrickland2004sigma}, in the case of a Lubin-Tate spectrum $E$, can be summarized as follows. A norm-coherent coordinate $x$ on $\bbG$ is a coordinate such that for every formal scheme $Y$, map $f\colon Y\rightarrow X$, and finite subgroup $K\subset f^\ast\bbG$, we have $N_\pi \mu^\ast f^\ast(x) = q^\ast\alpha^\ast(x)$, where $\pi\colon K\times_Y f^\ast \bbG\rightarrow f^\ast\bbG$ is the projection, $N_\pi$ is the associated norm map, $\mu\colon K\times_Y f^\ast\bbG\rightarrow f^\ast\bbG$ is the multiplication, and $\alpha\colon (f^\ast \bbG)/K\rightarrow\bbG$ identifies $(f^\ast\bbG)/K$ as a deformation of $\bbG_0$. To be precise, \cite{andohopkinsstrickland2004sigma} works in the context of level structures rather than finite subgroups, but the translation follows from the fact that $\coprod_{|A|=p^n}\Level(A,\bbG)\rightarrow\Sub^n_\bbG$ is surjective \cite[Theorem 12.4]{strickland1997finite}. In addition, it is sufficient to restrict to the case where $K$ is a subgroup of rank $p$.

When $h=1$, the kernel of formal multiplication by $p$ is the unique subgroup of rank $p$. The $p$-series associated to the formal group law $x+y-xy$ is given by $[p](x) = 1-(1-x)^p$, so the above condition translates to asking that multiplication by $x+y-xy$ on the free $\bbZ_p[[x]]$-module $\bbZ_p[[x,y]]/(1-(1-y)^p) = \bbZ\{1,y,\ldots,y^{p-1}\}$ has determinant $1-(1-x)^p$. This itself can be checked by direct calculation, proving the theorem.
\end{proof}

\begin{rmk}
Once an $\bbE_\infty$ orientation $MUP\rightarrow E$ is chosen, the theory of Thom spectra and orientations furnishes an equivalence $\CAlg(MUP,E)\simeq \Map_{\Sp}(ku,\operatorname{gl}_1(E))$. Using this, Senger has shown that the height $h=2$ orientations guaranteed to exist by \cref{thm:orientations}(3) are in fact unique \cite[Remark 3.6]{senger2022obstruction}. By \cref{thm:emapss}(5), this is equivalent to the assertion that $\Ext^2_\Delta(\widehat{Q}(E_0^\wedge MUP),\omega) = 0$.
\tqed
\end{rmk}

\begin{rmk}\label{rmk:ht1orspace}
At height $h=1$, in \cite[Corollary 3.13]{mollers2012k1} it is shown that the choice of an orientation $MU\rightarrow E$ gives a weak equivalence $\CAlg(MU,E)\simeq\Map(KU_p,E)$. This is reflected in the algebra of power operations by the following: there is an isomorphism
\[
\widehat{Q}(E_0^\wedge BU)\cong\Delta\cotimes E_0^\wedge KU_p.
\]
The inclusion $E_0^\wedge KU_p\rightarrow\widehat{Q}(E_0^\wedge BU)$ which extends by $\Delta$-linearity to this isomorphism may be obtained from the map $KU_p\rightarrow L_{K(1)}\Sigma^\infty_+ BU$, itself obtained by applying the Bousfield-Kuhn functor \cite{kuhn2008guide} to the unit $BU\rightarrow \Omega^\infty\Sigma^\infty_+\Omega^\infty BU$. This map is special to height $1$, and indeed $\widehat{Q}(E_0^\wedge BU)$ is no longer projective over $\Delta$ at higher heights (\cref{rmk:noproj}).
\tqed
\end{rmk}

We now give a few more examples illustrating \cref{thm:emapss}.

\begin{ex}
In Chatham-Hahn-Yuan \cite{chathamhahnyuan2019wilson}, an interesting family of $\bbE_\infty$ ring spectra $R_{h-1}$ at chromatic height $h$ are constructed, and left open is the question of whether there exists an $\bbE_\infty$ map $R_{h-1}\rightarrow E$, where $E$ is a Lubin-Tate spectrum of height $h$. Combining \cite[Theorem 7.6]{chathamhahnyuan2019wilson} with the preceding, we learn that there are $\bbE_\infty$ maps $R_1\rightarrow E$ whenever $E$ is a Lubin-Tate spectrum of height $2$ associated to a supersingular elliptic curve.
\tqed
\end{ex}

\begin{ex}\label{ex:gmku}
Given an arbitrary $\bbE_\infty$ ring spectrum $R$, we may define
\[
\bbA^1(R) = \CAlg(\Sigma^\infty_+ \bbN,R),\qquad \bbG_m(R) = \CAlg(\Sigma^\infty_+ \bbZ,R).
\]
We considered the case where $R$ is an $\bbE_\infty$ algebra over $\bbF_p$ in \cref{ex:affineline}. By construction, $\bbA^1(R)$ carries the structure of a strictly commutative monoid. The subspace $\bbG_m(R)\subset\bbA^1(R)$ is a collection of path components, and can be regarded as a $\bbZ$-module, i.e.\ deloops to a connective $H\bbZ$-module. In particular $\pi_n \bbG_m(R)$ is indepenent of the choice of basepoint for all $n\geq 1$; the same is not generally true for $\bbA^1(R)$, as seen in the following example.

We can describe $\bbA^1(KU)$ as a simple example illustrating the use of \cref{thm:emapss}. Write $\widehat{\bbZ} = \prod_p \bbZ_p$ for the profinite integers. Then $\pi_0 \bbA^1(KU) = \{-1,0,1\}$,
\[
\pi_n\bbG_m (KU) = \begin{cases}
\widehat{\bbZ},&n=1;\\
\bbZ,&n=2;\\
\widehat{\bbZ}/\bbZ,&n>2\text{ odd};
\end{cases}
\]
and there are short exact sequences
\[
0\rightarrow \widehat{\bbZ}/\bbZ\rightarrow \pi_{2n+1}(\bbA^1 (KU),0)\rightarrow  \prod_{p}\bbZ/(p^n)\rightarrow 0,
\]
which are necessarily split as $\widehat{\bbZ}/\bbZ$ is injective. Here, the above product ranges over primes $p$, and all unspecified groups are zero. 

This is computed as follows. Arithmetic fracture gives a Cartesian square
\begin{center}\begin{tikzcd}
\bbA^1 (KU)\ar[r]\ar[d]&\prod_p  \bbA^1 \left(KU_p \right)\ar[d]\\
\bbA^1\left( KU_\bbQ\right)\ar[r]&\bbA^1\left(\left(\prod_p  KU_p \right)_\bbQ\right)
\end{tikzcd}.\end{center}
If $R$ is a rational $\bbE_\infty$ ring, then $\bbA^1 (R) = \Omega^\infty R$, and this determines the bottom two spaces. The claimed structure of $\bbA^1 (KU)$ will follow easily by inspecting this square as soon as we understand $\bbA^1(KU_p)$. To that end we claim that $\pi_0 \bbA^1 (KU_p )\cong\bbF_p\subset \bbZ_p = \pi_0 KU_p $ is given by the image of the Teichm\"uller character, and that
\[
\pi_1 \bbG_m (KU_p ) \cong \bbZ_p \cong \pi_2 \bbG_m (KU_p ),\qquad \pi_{2n+1} (\bbA^1 (KU_p) ,0) \cong \bbZ/(p^n),
\]
all other groups being zero. 

One approach to computing this proceeds by showing that $\bbA^1 (KU_p )$ fits into a fiber sequence
\[
\bbA^1 \left(KU_p \right)\rightarrow \Omega^\infty KU_p \rightarrow\Omega^\infty KU_p ,
\]
with second map corresponding to the $KU_p $-cohomology operation $\theta^p$. As we wish to illustrate the use of \cref{thm:emapss}, we will proceed in a different way, although the two approaches are not really any different.

The goal is to compute $\pi_\ast\bbA^1(KU_p ) = \pi_\ast \CAlg_{KU_p }(KU_p \cotimes \Sigma^\infty_+ \bbN,KU_p )$. As $\theta$-rings,
\begin{gather*}
\pi_0 KU_p  \cong \bbZ_p,\qquad \psi(\lambda) = \lambda;\\
\pi_0 KU_p \otimes\Sigma^\infty_+ \bbN \cong \bbZ_p[t],\qquad \psi(t) = t^p;
\end{gather*}
so \cref{thm:emapss} gives an isomorphism
\[
\pi_0\bbA^1(KU_p ) \cong \Ring_\bbT(\bbZ_p[t],\bbZ_p) \cong \{\lambda\in\bbZ_p : \lambda^p = \lambda\}.
\]
This is the image of the Teichm\"uller character as claimed.

To get at $\pi_\ast \bbA^1(KU_p)$, given some element $\phi$ of the above, we must compute the groups
\[
H^\ast_{\bbT/\bbZ_p}(\bbZ_p[t],\pi_\ast \Omega^{2n} KU_p) \cong \Ext_{\Delta}^\ast(Q(\bbZ_p[t]),\omega^n)
\]
for $n\geq 1$. Here, $ \omega^n = \bbZ_p\{\beta^n\}$ has $\Gamma = \bbZ_p[\psi]$-module structure $\psi(\beta^n) = p^n \beta^n$, and thus $\Delta = \bbZ_p[\theta]$-module structure $\theta(\beta^n) = p^{n-1} \beta^n$.

Consider first the case where $\phi$ is in a path component corresponding to an element of $\bbF_p^\times$. These are all equivalent, so it is sufficient to consider the map
\[
\phi\colon \bbZ_p[t]\rightarrow\bbZ_p,\qquad \phi(t) = 1.
\]
With this augmentation, $Q(\bbZ_p[t]) = \bbZ_p\{s\}$ where $s$ is the class of $t-1$, and this has $\Gamma$-module structure $\psi(s) = ps$, and thus $\Delta$-module structure $\theta(s) = s$. The Koszul complex for $\Ext_\Delta(\bbZ_p\{s\},\omega^n)$ takes the form
\[
p^{n-1}-1\colon \bbZ_p\rightarrow\bbZ_p.
\]
As $p^{n-1}-1$ is a unit unless $n=1$, the only nonzero groups are
\[
\Ext^0_\Delta(\bbZ_p\{s\},\omega) = \bbZ_p = \Ext^1_\Delta(\bbZ_p\{s\},\omega).
\]
\cref{thm:emapss} then implies that $\pi_1 \bbG_m(KU_p ) = \bbZ_p  = \pi_2 \bbG_m(KU_p )$ as claimed. 

Next consider the map
\[
\phi\colon \bbZ_p[t]\rightarrow \bbZ_p,\qquad \phi(t) = 0.
\]
With this augmentation, $Q(\bbZ_p[t]) = \bbZ_p\{t\}$ with $\Delta$-module structure $\theta(t)=0$, and the Koszul complex for $\Ext_\Delta(\bbZ_p\{t\},\omega^n)$ takes the form
\[
p^{n-1}\colon \bbZ_p\rightarrow\bbZ_p.
\]
The resulting nonzero groups are
\[
\Ext^1_\Delta(\bbZ_p\{t\},\omega^n) = \bbZ/(p^{n-1}).
\]
\cref{thm:emapss} implies that $\pi_{2n+1} \bbA^1(KU_p ) = \bbZ/(p^n)$ as claimed.
\tqed
\end{ex}

\begin{ex}
  Let $G$ be a finite group. It is known that $G$ may be recovered from its symmetric monoidal category of complex representations, but not from its complex representation ring $RU(G)$, even when considered as a lambda ring; see \cite{meirszymik2021adams} for a discussion. Now suppose that $G$ is a finite $p$-group, and let $E$ be the Lubin-Tate spectrum associated to the formal multiplicative group over an algebraically closed field $\kappa$ of characteristic $p$; informally, $E = KU_p$, only with coefficients extended along $\bbZ_p\rightarrow W(\kappa)$. Then there is an equivalence
  \[
    BG\simeq\CAlg_E(E^{BG_+},E),
  \]
as can be see from the easily verified height $1$ case of \cite[Conjecture 5.4.14]{hopkinslurie2013ambidexterity}.
By the $p$-adic Atiyah-Segal completion theorem \cite[Proposition III.1.1]{atiyahtall1969group}, $E^1 BG = 0$ and $E^0 BG = W(\kappa)\otimes RU(G)$. \cref{thm:emapss} then gives a curious filtration of $BG$, interpolating between the data encoded by the $\theta$-ring $RU(G)$ and the group $G$ itself; to phrase it in terms of a fringed spectral sequence, this is
\[
E_1^{2p,q} = \Ext_{\bbZ_p[\theta]}^{2p-q}(\bbL Q(R(G));\omega^p)\Rightarrow \pi_q BG,\qquad d_{2r}^{2p,q}\colon E_{2r}^{2p,q}\rightarrow E_{2r}^{2(p+r),q-1},
\]
where $\omega^p = \pi_{2p} E$.
\tqed
\end{ex}

\begin{rmk}
Power operations also play a role in other computations with $E$-algebras. We illustrate this with an example. For $R,S\in\CAlg_E^{\loc}$, there is a K\"unneth spectral sequence
\[
E^1_{p,q} = \pi_{q-p}\left(R_\ast\cotimes_{E_\ast}^\bbL (\omega^{p/2}\otimes S_{\ast})\right)\Rightarrow \pi_q(R\cotimes_E S),\qquad d^r_{p,q} \colon E^r_{p,q}\rightarrow E^r_{p+r,q-1}.
\]
Observe that $E^1_{p,q}$ carries the structure of a $\Gamma$-module for $p \geq 0$. Write $Z^r_{p,q}$ and $B^r_{p,q}$ for the $r$-cycles and $r$-boundaries of this spectral sequence, so that
\[
0 = B^0_{p,q}\subset B^1_{p,q}\subset\cdots\subset Z^1_{p,q}\subset Z^0_{p,q} = E^1_{p,q},\qquad E^{r+1}_{p,q} = Z^r_{p,q}/B^r_{p,q}.
\]
It follows from the discussion in \cite[Section 5.4]{balderrama2021deformations} (especially \cite[Propositions 5.4.9, 5.4.11]{balderrama2021deformations}) that $Z^r_{p,q}\subset E^1_{p,q}$ is a sub $\Gamma$-module for all $p \geq 0$ and $B^r_{p,q}\subset Z^r_{p,q}$ is a sub $\Gamma$-module for all $p \geq r$, and that $d^r_{p,q}\colon Z^r_{p,q}\rightarrow E^r_{p+r,q-1}$ is a map of $\Gamma$-modules for all $p \geq 0$.
\tqed
\end{rmk}

\subsection{Topological Andr\'e-Quillen cohomology}\label{ssec:etaq}

We now consider the $K(h)$-local topological Andr\'e-Quillen homology and cohomology of $E$-algebras, where $E$ is a Lubin-Tate spectrum of height $h$ as in the preceding subsections. This is of particular interest due to work of Behrens-Rezk \cite{behrensrezk2017spectral} \cite{behrensrezk2012bousfield}, which constructs for a space $X$ a natural comparison map $\Phi_h X\rightarrow\TAQ_{\bbS_{K(h)}}(\bbS_{K(h)}^{X_+})$, showing it to be an isomorphism in some nice cases. Here, $\Phi_h$ is the $K(h)$-local Bousfield-Kuhn functor \cite{kuhn2008guide}. This gives rise to a comparison map $E\cotimes \Phi_h X\rightarrow\TAQ_E(E^{X_+})$, again an isomorphism in some nice cases. This gives an approach to computing $E_\ast \Phi_h X$, which in turn gives an approach to computing $\pi_\ast \Phi_h X$ by descent along $\bbS_{K(h)}\rightarrow E$.

We must introduce some notation. Given a $K(h)$-local $\bbE_\infty$ ring spectrum $R$, and $M\in\Mod_R^\loc$, write $\wTAQ^R(A;M) = L_{K(h)}\TAQ^R(A;M)$ for the $K(h)$-local Andr\'e-Quillen homology of $A\in\CAlg_R^{\loc,\aug}$ with coefficients in $M$. This construction can be characterized as the unique functor
\[
\wTAQ^R(\bs;M)\colon \CAlg_R^{\loc,\aug}\rightarrow\Mod_R^\loc
\]
which preserves geometric realizations and satisfies
\[
\wTAQ^R(\widehat{\bbP}_R N; M)= M\cotimes_R N
\]
for $N\in\Mod_R^\loc$. See \cite[Section 3]{kuhn2003localization} for further discussion; note that we write $\wTAQ^R(A;M)$ for what might there be written as $L_{K(h)}(M \wedge \operatorname{taq}^{K(h)}_R(A))$. In addition, write $\TAQ_R(A;M) = \Mod_R(\wTAQ^R(A,R),M)$. When $M = R$, we omit it from the notation.

On the algebraic side, given $R\in\Ring_\bbT^{\Cpl(\frakm),\heart}$ and $M\in\Mod_R^{\Cpl(\frakm),\heart}$, define
\[
\wAQ^{R}(\bs;M)\colon \Ring_{R\otimes\bbT}^{\Cpl(\frakm),\aug}\rightarrow\Mod_R^{\Cpl(\frakm)}
\]
to be the unique functor preserving geometric realizations and satisfying
\[
\wAQ^{R}(R\cotimes^\bbL_{E_\ast}\widehat{\bbT}(P)) = M\cotimes_{E_\ast}^\bbL P
\]
for $P\in\Mod_{E_\ast}^{\Cpl(\frakm),\free}$. In addition, set
\[
\AQ_{R}(A;M) = \EXT_R(\wAQ^{R}(A;R),M).
\]
Observe that
\[
\wAQ^{R}(A;M) \simeq M\cotimes_R^\bbL \epsilon_! \bbL\widehat{Q}_R(A)\simeq \ol{M}\cotimes_{R\otimes\Delta}^\bbL \bbL\widehat{Q}_R(A),
\]
where $\epsilon\colon R\otimes\Delta\rightarrow R$ is the augmentation and $\epsilon_!$ should be interpreted in the derived sense, and that
\[
\AQ_{R}(A;M) \simeq \calH_{R\otimes\bbT/R}(A;\ol{M}).
\]

The following theorem generalizes \cite[Proposition 4.7]{behrensrezk2012bousfield}.

\begin{theorem}\label{thm:eaqss}
Fix $R\in\CAlg_E^\loc$ and $M\in\Mod_R^\loc$, and choose $A\in\CAlg_R^{\loc,\aug}$. Then there is a conditionally convergent spectral sequence of signature
\[
E_1^{p,q} = \AQ_{R_\ast}^{p+q}(A_\ast;\omega^{-p/2}\otimes M_\ast)\Rightarrow \TAQ^q_R(A;M),\qquad d_r^{p,q}\colon E_r^{p,q}\rightarrow E_r^{p+r,q+1}.
\]
If $R_\ast$ is tame, then there is a spectral sequence of signature
\[
E^1_{p,q} = \wAQ_{p+q}^{R_\ast}(A_\ast;\omega^{p/2}\otimes M_\ast)\Rightarrow\wTAQ^R_q(A;M),\qquad d^r_{p,q}\colon E^r_{p,q}\rightarrow E^r_{p-r,q-1},
\]
which is convergent if each $\wAQ^{R_\ast}(A_\ast;\omega^{p/2}\otimes M_\ast)$ is truncated.
\end{theorem}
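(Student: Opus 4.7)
The plan is to deduce both spectral sequences from the obstruction-theoretic framework of \cref{thm:emapss} and from a Postnikov-type filtration on $A$, with the algebraic $\wAQ$-terms appearing once one translates between spectrum-level suspensions and the even-periodic twist by $\omega$ afforded by \cref{prop:epmorita}. In both cases the key identity is the definitional equivalence $\AQ_{R_\ast}(A_\ast;N)\simeq\calH_{R_\ast\otimes\bbT/R_\ast}(A_\ast;\ol{N})$ (and its homological counterpart via $\wAQ^{R_\ast}(A_\ast;N)\simeq\ol{N}\cotimes^\bbL_{R_\ast\otimes\Delta}\bbL\widehat{Q}_{R_\ast}(A_\ast)$), which reduces the problem to tracking shifts.

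For the cohomological spectral sequence, I would first identify $\TAQ^q_R(A;M)\cong\pi_0\Map_{R/\CAlg_R^\loc/R}(A,R\ltimes\Sigma^q M)$ and more generally realize the derivation spectrum as these mapping spaces for varying $q$. Applying \cref{thm:emapss} with $B=R\ltimes\Sigma^n M$, $S=R$, and $\phi$ the augmentation yields a conditionally convergent spectral sequence
\[
E_1^{p,q'}=H^{p-q'}_{R_\ast\otimes\bbT/R_\ast}(A_\ast;\pi_\ast\Omega^p\Sigma^n M)\Rightarrow\pi_{q'}\Map_{R/\CAlg_R^\loc/R}(A,R\ltimes\Sigma^n M),
\]
whose abutment is $\TAQ^{n-q'}_R(A;M)$. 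Translating $\pi_\ast\Omega^p\Sigma^n M\cong M_\ast[n-p]$ into $\omega^{(n-p)/2}\otimes M_\ast$ via \cref{prop:epmorita}, and reindexing $(p,q')\mapsto(p-n,n-q')$, assembles these spectral sequences into the announced one, with differential $d_r^{p,q}\colon E_r^{p,q}\to E_r^{p+r,q+1}$ matching the translation and conditional convergence inherited.

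For the homological spectral sequence, I would use a Postnikov tower $A\simeq\lim_p A^{[p]}$ in $\CAlg_R^\loc$ whose associated graded pieces are square-zero extensions by shifted Postnikov slices of $A_\ast$ as a $\widehat{\bbT}$-algebra; under the tameness hypothesis, such a tower exists by \cref{lem:derivedcompleterefl} together with the standard Postnikov decomposition for simplicial $\widehat{\bbT}$-algebras. Applying the geometric-realization-preserving functor $\wTAQ^R(\bs;M)$ produces a filtration of $\wTAQ^R(A;M)$. The square-zero nature of each layer reduces $\wTAQ^R$ of it to a completed tensor product involving $M$ and the slice, and under tameness the K\"unneth identification of $\pi_\ast(M\cotimes_R\bs)$ then identifies the $p$-th layer algebraically with $\wAQ^{R_\ast}(A_\ast;\omega^{p/2}\otimes M_\ast)$, the $\omega^{p/2}$ twist encoding the Postnikov shift via the even-periodic Morita equivalence. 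Convergence under the truncation hypothesis follows from the standard convergence criteria for filtered spectra once each layer is bounded.

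The main obstacle will be the careful bookkeeping of shifts between spectrum-level suspensions, Postnikov shifts, and the $\omega$-twist arising from the Morita equivalence $\LMod_\Gamma\simeq\LMod_\Delta$; in particular, one must verify that at each filtration stage the $\Delta$-module structure obtained algebraically matches precisely the $\omega^{\pm p/2}$-shifted coefficients prescribed by the theorem. A secondary technical point, especially for the homological spectral sequence, is justifying the K\"unneth identification of the $E^1$-page under tameness of $R_\ast$, and checking that the interaction of $\wTAQ^R(\bs;M)$ with the square-zero extensions in the Postnikov tower—controlled by the ideal—produces the claimed bounded layers in the truncated regime.
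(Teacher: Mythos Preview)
Your approach to the cohomological spectral sequence is essentially what the paper does: identify $\TAQ^q_R(A;M)$ with components of mapping spaces into square-zero extensions $R\ltimes\Sigma^n M$, apply the filtration of \cref{thm:emapss}, and patch over $n$. The bookkeeping with $\omega^{-p/2}$ is exactly as you describe.

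For the homological spectral sequence, however, your approach has a genuine gap. You propose to build a Postnikov tower $A\simeq\lim_p A^{[p]}$ in $\CAlg_R^{\loc}$, justified by \cref{lem:derivedcompleterefl}. But that result gives an equivalence $\Model_{\h\CAlg_E^{\loc,\free}}\simeq\Ring_\bbT^{\Cpl(\frakm)}$, not an equivalence between $\CAlg_R^{\loc}$ itself and any algebraic category; the Postnikov tower for simplicial $\widehat{\bbT}$-algebras lives in the wrong place to directly filter $A$ as a genuine $E$-algebra. Moreover, even granting a tower in $\CAlg_R^{\loc}$, the functor $\wTAQ^R(\bs;M)$ is characterized by preserving geometric realizations, not limits, so there is no evident mechanism for it to interact well with $\lim_p A^{[p]}$.

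The paper instead invokes \cite[Theorem 4.2.2]{balderrama2021deformations}, which is the general machine producing such spectral sequences from a left-derived functor between theories once one knows the relevant model categories are identified algebraically. Tameness of $R_\ast$ is used precisely to establish $\Model_{\h\CAlg_R^{\loc,\aug,\free}}\simeq\Ring_{R_\ast\otimes\bbT/R_\ast}^{\Cpl(\frakm)}$ and $\LMod_{\h\Mod_R^{\loc,\free}}\simeq\Mod_{R_\ast}^{\Cpl(\frakm)}$, after which that theorem applies directly. Concretely, the underlying construction is via a simplicial resolution of $A$ by free $\widehat{\bbP}_R$-algebras (which $\wTAQ^R$ does preserve), and the $E^1$-identification follows because both $\wTAQ^R$ and $\wAQ^{R_\ast}$ are explicitly computable on free objects. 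Your Postnikov-tower idea does not supply this resolution, and the identification of layers you sketch (``$\wTAQ^R$ of a square-zero extension reduces to a completed tensor product'') would in any case give something indexed by $\pi_p A_\ast$ rather than by $\wAQ^{R_\ast}_{p+q}(A_\ast;\omega^{p/2}\otimes M_\ast)$.
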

\begin{proof}
The first spectral sequence can be obtained by patching together the filtrations of $\CAlg_R^\aug(A;R\ltimes\Sigma^n M)$ for various $n$ given by \cref{thm:emapss}. For the second, tameness of $R$ guarantees that $\Model_{\h\CAlg_R^{\loc,\aug,\free}}\simeq\Ring_{R_\ast\otimes\bbT/R_\ast}^{\Cpl(\frakm)}$ and $\LMod_{\h\Mod_R^{\loc,\free}}\simeq\Mod_{R_\ast}^{\Cpl(\frakm)}$, and the spectral sequence can be obtained by applying \cite[Theorem 4.2.2]{balderrama2021deformations}.
\end{proof}

We expect it could be possible to remove the tameness assumption in \cref{thm:eaqss}, but we have no reason to do so. The most important case is $R = E = M$, and we will write $\nul = \ol{E}_\ast \in\LMod_\Delta$ and $\AQ_{E_\ast} = \AQ$.

\begin{rmk}\label{rmk:omegashift}
The action of $\Delta$ on $\omega^{p/2}\otimes \ol{M}_\ast$ as it appears in \cref{thm:eaqss} is through the augmentation on $\Delta$; the presence of $\omega^{p/2}$ only serves to shift degrees. In particular, once a trivialization of $\omega$ is chosen, $\omega^{p/2} \otimes \ol{M}_\ast = s^{-p}\ol{M}_\ast$ depends only on the congruence class of $p$ mod $2$. However, these powers of $\omega$ also serve to track additional structure that is present in examples of interest, such as actions by the Morava stabilizer group.
\tqed
\end{rmk}

\begin{ex}\label{rmk:noproj}
In \cref{rmk:ht1orspace}, we asserted that $\widehat{Q}(E_0^\wedge BU)$ is not projective over $\Delta$ at heights $h\geq 2$. Indeed, if it were then the spectral sequence of \cref{thm:eaqss} would collapse, and in particular $\wTAQ_E(E\cotimes\Sigma^\infty_+ BU)$ would be some nonzero $E$-module. But $\wTAQ_E(E\cotimes \Sigma^\infty_+ BU)\simeq E\cotimes KU_{\geq 1}\simeq 0$ as $L_{K(h)} KU_{\geq 1}\simeq 0$ for $h\geq 2$.
\tqed
\end{ex}

In \cite{rezk2013power} and \cite{zhu2018morava}, $E_\ast^\wedge \Phi_h S^{2k+1}$ is computed for $h\leq 2$; the computation proceeds by computing the cohomology groups $\AQ^n(E^\ast S^{2k+1};\omega^{m/2}\otimes\nul)$. In particular, it is shown in these $h\leq 2$ cases that this group vanishes for $n\neq h$. In general, let us say that $E$ satisfies the \textit{weak algebraicity condition} if $\AQ^n(E^\ast S^{2k+1};\omega^{m/2}\otimes \nul) = 0$ for $n\neq h$.

Work of Bousfield \cite{bousfield1998ktheory} \cite{bousfield2007adic} describes the $v_1$-periodic homotopy groups of nice spaces in terms of their $p$-adic $K$-theory. One obstruction to extending this to higher heights using $\TAQ$ is in determining when $\Phi_h X \simeq \TAQ_{\bbS_{K(h)}}(\bbS_{K(h)}^{X_+})$; we will not consider this issue here. Provided one takes this as known, Bousfield's description of $\pi_\ast\Phi_1 X$ for nice spaces $X$ at primes $p\geq 3$ can be reinterpreted as a description of $KU_{p,\ast}^\wedge\Phi_1 X$ that can be obtained from \cref{thm:eaqss}, combined with the standard fiber sequence $\Phi_1 X \rightarrow KU_p\cotimes \Phi_1 X \rightarrow KU_p\cotimes \Phi_1 X$. We view the following observation as the first part of a higher height analogue.

\begin{prop}\label{prop:taqalg}
Suppose $E$ satisfies the weak algebraicity condition defined above. Let $X$ be a simply connected space such that $H^\ast(X)$ is a finitely generated exterior algebra on odd-dimensional classes. Then 
\[
\TAQ_E^q(E^{X_+}) \cong\Ext^h_\Delta(Q(E^\ast X);\omega^{(q-h)/2}\otimes\nul).
\]
\end{prop}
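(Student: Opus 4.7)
The plan is to apply the conditionally convergent spectral sequence of \cref{thm:eaqss} with $R=M=E$ and $A=E^{X_+}$, namely
\[
E_1^{p,q}=\AQ^{p+q}(E^\ast X;\omega^{-p/2}\otimes\nul)\Rightarrow\TAQ^q_E(E^{X_+}),
\]
and to demonstrate that it is supported on the single antidiagonal $p+q=h$. I would first identify $E^\ast X$ as a free alternating $E_\ast$-algebra on odd-dimensional generators $x_1,\ldots,x_n$ of degrees $|x_i|=2k_i+1$, via a straightforward Atiyah-Hirzebruch argument: the AHSS $H^\ast(X;E_\ast)\Rightarrow E^\ast X$ collapses for bidegree reasons since $H^\ast X$ is exterior on odd classes and $E_\ast$ is concentrated in even degrees. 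Since $E^\ast X$ is then smooth as an alternating $E_\ast$-algebra and $\bbT$ is smooth (in fact free) over alternating $E_\ast$-rings by part~(6) of \cref{thm:eops}, the Grothendieck spectral sequence of \cref{thm:grothss} degenerates and yields
\[
\AQ^n(E^\ast X;M)\cong\Ext^n_\Delta(Q(E^\ast X),\ol{M}),
\]
with $Q(E^\ast X)$ an $E_\ast$-free $\Delta$-module of finite rank.

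It remains to prove that this $\Ext$ vanishes outside degree $h$ when $\ol{M}=\omega^{m/2}\otimes\nul$. The upper bound $\Ext^n_\Delta=0$ for $n>h$ is automatic from part~(12) of \cref{thm:eops}, which supplies a length $h$ projective Koszul resolution of any $E_0$-projective $\Delta$-module. For the lower bound $\Ext^n_\Delta=0$ for $n<h$, I would order the generators by nondecreasing degree, filter $Q(E^\ast X)$ by sub-$\Delta$-modules $F_i$ generated by $x_1,\ldots,x_i$, identify each subquotient $F_i/F_{i-1}$ with $Q(E^\ast S^{2k_i+1})$ as a $\Delta$-module via naturality of additive operations, invoke the weak algebraicity hypothesis to kill $\Ext^n_\Delta(F_i/F_{i-1},\omega^{m/2}\otimes\nul)$ for $n\neq h$, and propagate the vanishing through the filtration by the long exact sequence in $\Ext$. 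Once the collapse is in hand, only the bidegree $(p,q)=(h-q,q)$ survives on $E_\infty$, producing
\[
\TAQ^q_E(E^{X_+})\cong\Ext^h_\Delta(Q(E^\ast X);\omega^{(q-h)/2}\otimes\nul)
\]
as claimed.

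The main obstacle is the identification of the subquotients $F_i/F_{i-1}$ with $Q(E^\ast S^{2k_i+1})$ as $\Delta$-modules. The underlying $E_\ast$-module decomposition $Q(E^\ast X)\cong\bigoplus_i E_\ast\{x_i\}$ is immediate from the exterior structure, but verifying compatibility with the $\Delta$-action --- that every additive operation $\delta\in\Delta$ applied to $x_i$ lies in $F_{i-1}$ modulo a class matching the sphere model --- requires tracking the additive operations via naturality against the cohomology classes $x_i\in E^{2k_i+1}X$ viewed as $E$-module-spectrum maps, and requires extra care when several generators share a common degree. I expect this step to reduce to a rigidity statement for the $\bbT$-algebra structure on free alternating $E_\ast$-algebras with generators in odd degrees, but confirming such a rigidity is the crux of the argument.
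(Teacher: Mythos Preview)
Your overall strategy matches the paper's: run the spectral sequence of \cref{thm:eaqss}, identify $\AQ^n(E^\ast X;M)\cong\Ext^n_\Delta(Q(E^\ast X),\ol M)$ via smoothness, and show this $\Ext$ is concentrated in degree $h$ by filtering $Q(E^\ast X)$ with subquotients modelling odd spheres. The gap is that your filtration is oriented the wrong way. With the generators ordered by nondecreasing degree, the $E_\ast$-spans $E_\ast\{x_1,\ldots,x_i\}$ are generally \emph{not} closed under $\Delta$: power operations applied to a low-degree generator typically produce contributions from the higher-degree generators. This is visible explicitly in \cref{ex:su3}, where the coaction on $c_2\in Q(E^\ast SU(4))$ has nonzero $c_3$ and $c_4$ components, so the $\Delta$-submodule generated by $c_2$ is already everything. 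Consequently neither the identification $F_i/F_{i-1}\cong Q(E^\ast S^{2k_i+1})$ nor the hoped-for rigidity of the $\bbT$-structure on free alternating odd-generated $E_\ast$-algebras holds.

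The paper's fix is to reverse the filtration and obtain it geometrically rather than by rigidity. Choose a CW structure on $X$ for which the cofiberings $X_{\leq m-1}\to X_{\leq m}\to\bigvee S^m$ induce short exact sequences on $E^\ast$. Restriction $E^{X_+}\to E^{(X_{\leq m})_+}$ is a map in $\CAlg_E^\loc$, hence a map of $\bbT$-rings, so the kernel of $Q(E^\ast X)\to Q(E^\ast X_{\leq m})$ is automatically a sub-$\Delta$-module; this kernel is $E_\ast\{x_j:m_j>m\}$. The resulting filtration by $E_\ast\{x_i,\ldots,x_n\}$ has subquotients $\omega^{m_i/2}$, identified via the sphere at the top of the cofibering, and then the weak algebraicity condition forces the collapse exactly as you outlined. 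Your attempt to use the classes $x_i$ as $E$-module maps cannot substitute for this, since $E$-module maps need not respect power operations; it is the $E$-algebra maps coming from the skeletal inclusions that carry the argument.
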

\begin{proof}
Write $H^\ast X\simeq \Lambda(t_1,\ldots,t_n)$ with $|t_i| = m_i$. The Atiyah-Hirzebruch spectral sequence collapses to give $E^\ast X\simeq \Lambda_{E_\ast}(x_1,\ldots,x_n)$. More precisely, there is a cell structure on $X$ with the following property. Write $X_{\leq n}$ for the $n$-skeleton of $X$. Then the cofibering $X_{\leq n-1}\rightarrow X_{\leq n}\rightarrow \bigvee S^n$ induces a short exact sequence on $H^\ast$ and $E^\ast$, and the restriction of $x_i$ to $X_{\leq m_i}$ is the image of a generator of some $E^\ast S^{m_i}$. Now $\bbL Q(E^\ast X) = Q(E^\ast X) = E_\ast\{x_1,\ldots,x_n\}$ as $E_\ast$-modules, each $E_\ast\{x_i,\ldots,x_n\}\subset Q(E^\ast X)$ is a sub-$\Delta$-module, and $E_\ast\{x_i,x_{i+1},\ldots,x_n\}/E_\ast\{x_{i+1},\ldots,x_n\}\cong\omega^{m_i/2}$. This gives a finite filtration of the $\Delta$-module $Q(E^\ast X)$ with filtration quotients given by various $\omega^{m_i/2}$, and the associated spectral sequence for $\Ext_\Delta^\ast(Q(E^\ast X);\omega^{p/2}\otimes\nul)$ collapses by the weak algebraicity condition, implying that it is concentrated in degree $h$. This implies that the spectral sequence of \cref{thm:eaqss} collapses on a single line into the claimed isomorphism.
\end{proof}

\begin{ex}\label{ex:su3}
At heights $h\leq 2$, at least for small primes, the algebraic input to \cref{thm:emapss} and \cref{thm:eaqss} is computationally accessible. As our examples so far have been of a general nature, let us illustrate this with an explicit computation of $\TAQ^\ast_E(E^{SU(4)_+})$ for $E$ the Lubin-Tate spectrum of height $2$ considered in \cref{ex:ltcurve}. There is nothing special to $SU(4)$; the method of computation may be applied more generally, and we have just chosen the simplest interesting example beyond the case of an odd-dimensional sphere already computed by Zhu \cite{zhu2018morava}. 

Consider $SU(n)$ in general. This space satisfies the conditions of \cref{prop:taqalg}, so to compute $\TAQ^\ast_E(E^{SU(n)_+})$ it is sufficient to compute $\Ext^2_\Delta(Q(E^\ast SU(n)),\omega^{1/2}\otimes\nul)$. This is carried out in two steps; first one must understand the $\Delta$-module $Q(E^\ast SU(n))$, and then one must carry out the $\Ext$ calculation. The first step might be regarded as homotopical input to the calculation, in the sense that it is not an instance of the general algebra of $E$-power operations. The second step is then purely algebraic. Following our discussion in \cref{ex:ltcurve}, the image of $Q(E^\ast SU(n))$ under the Morita equivalence $\omega^{-1/2}\otimes\bs\colon \LMod_\Delta\rightarrow\LMod_\Gamma$ is determined by the $\Gamma$-module structure of $Q(E^\ast SU(n))$. Thus the first step reduces to understanding $Q(E^\ast SU(n))$ as a $\Gamma$-module, and the second step transforms into computing $\Ext^2_\Gamma(\omega^{-1/2}\otimes Q(E^\ast SU(n)),\nul)$.

There is a suspension map $\Sigma\Sigma^\infty SU(n)\rightarrow\Sigma^\infty BSU(n)$, and this induces an isomorphism $Q(E^\ast SU(n))\cong Q(E^\ast BSU(n))$ of $\Gamma$-modules. As $E^0 BSU(n) = E_0[[c_2,c_3,\ldots]]/(c_{n+1},\ldots)$, and as $E^0 BSU = E_0[[c_2,c_3,\ldots]]$ sits inside $ E^0 BU=E_0[[c_1,c_2,\ldots]]$ in the obvious way, one therefore reduces to computing the action of $\Gamma$ on $E^0 BU$ modulo indecomposables.

Write $E^0 BU(1) = E_0[[u]]$. The summation maps $BU(1)^{\times m}\rightarrow BU(m)\rightarrow BU$ induce maps $E^0 BU \rightarrow E_0[[u_1,\ldots,u_m]]$ sending $c_j$ to the $j$'th elementary symmetric polynomial in $u_1,\ldots,u_m$, and in the limiting case this identifies $E^0 BU$ as the ring of symmetric power series in the variables $u_i$.  Recall from \cref{ex:ltring} the coalgebraic interpretation of $\Gamma$-modules. As mentioned at the end of \cref{ssec:epow}, Schumann \cite{schumann2014k2} has computed the coaction $P$ on $E^0 BU(1)$ in closed form to be $P(u) = \frac{u^2-du}{1+d^2 u}$. Given a symmetric function $s$ and function $f$, write $s\wr f$ for the symmetric function $(s\wr f)(u_1,\ldots) = s(f(u_1),\ldots)$. Then putting everything together, the coaction on $E^0 BU$ is the map
\[
P'\colon E_0[[c_1,c_2,\ldots]]\rightarrow E_0[[c_1,c_2,\ldots]][d]/(d^3=ad+2),\qquad P'(c_m) = c_m\wr P.
\]

By Newton's identities, if $P(u)^m = \sum_j \alpha_j u^j$ with $\alpha_j\in E_0[d]/(d^3=ad+2)$, then $P'$ reduces modulo indecomposables to $P'(c_m) \equiv \frac{(-1)^m}{m} \sum_j (-1)^j j\alpha_jc_j$.
Finally, the coaction on $\omega^{-1/2}\otimes Q(E^\ast SU(n)) = E_0\{c_2,\ldots,c_n\}$ is given by $P'' = \frac{-1}{d}P'$.

Specialize now to $n=4$. Write $M = \omega^{-1/2}\otimes Q(E^\ast SU(4)) = E_0\{c_2,c_3,c_4\}$. As
\begin{align*}
P(u)^2 &= d^2 u^2 - 2(ad^2+3d)u^3 + (1+4d^3+3d^6)u^4+O(u^5)\\
P(u)^3 &= -d^3 u^3 + 3(d^2+d^5)u^4+ O(u^5)\\
P(u)^4 &= d^4 u^4 + O(u^5),
\end{align*}
it follows that
\begin{align*}
P'(c_2)&=d^2c_2+3(d+d^4)c_3+2(1+4d^3+3d^6)c_4 \\
P'(c_3)&=-d^3c_3-4(d^2+d^5)c_4 \\
P'(c_4) &= d^4 c_4.
\end{align*}
Using $\frac{2}{-d}=a-d^2$, we divide by $-d$ to obtain
\begin{align*}
P''(c_2) &= -dc_2 - 3(1+d^3)c_3 - (d^2-a)(1+4d^3+3d^6)c_4 \\
P''(c_3) &= d^2 c_3 + 4(d+d^4)c_4 \\
P''(c_4) &= -d^3 c_4.
\end{align*}
Somewhat abusively, write $M^\vee = \{c_2,c_3,c_4\}$, where $c_n$ is dual to $c_n$. Then the adjoint map $P^\vee\colon M^\vee\rightarrow M^\vee\tins{}{t}\Gamma[1]^\vee$ is given by
\begin{align*}
P^\vee(c_2) &= -c_2d \\
P^\vee(c_3) &=  c_3d^2  -3c_2(1+d^3)\\
P^\vee(c_4) &= -c_4d^3  + 4c_3(d+d^4) - c_2(d^2-a)(1+4d^3+3d^6).
\end{align*}
By \cref{thm:homogkoszuldifferential} and \cref{ex:l2koszul}, the Koszul differential $\delta\colon K^1(M;\nul)\rightarrow K^2_\Gamma(M;\nul)$ is the composite
\begin{center}\begin{tikzcd}
{M^\vee\otimes_{t}\Gamma[1]^\vee}\ar[r,"{P^\vee\otimes\Gamma[1]^\vee}"]\ar[d,"\delta",dashed]&{M^\vee\otimes_{t}\Gamma[1]^\vee\tins{s}{t}\Gamma[1]^\vee}\ar[d,"M^\vee\otimes f"]\\
{M^\vee\otimes_{t}\Gamma[1]^\vee/s(E_0)}&{M^\vee\otimes_{t}\Gamma[1]^\vee}\ar[l]
\end{tikzcd}.\end{center}
For example, as $P^\vee(c_2) = -c_2d$, it follows that $(P^\vee\otimes\Gamma[1]^\vee)(c_2d^2) = -c_2d'd^2$; this is sent to $-c_2(a-d^2)d^2 = 2c_2d$ under $M^\vee\otimes f$, and thus $\delta(c_2d^2) = 2  c_2 d$. 

In general, we compute $\delta$ to be given by
\begingroup\allowdisplaybreaks
\begin{align*}
\delta(c_2) &= c_2d^2 \\
\delta(c_2d) &= 0\\
\delta(c_2d^2) &= c_2(2d) \\
\delta(c_3) &= c_3(2d-ad^2) + c_2(-6ad+3a^2d^2) \\
\delta(c_3 d) &= c_3(2d^2) + c_2(9d - 6ad^2) \\
\delta(c_3 d^2) & = c_2(9d^2)\\
\delta(c_4) &= c_4(-2a d + a^2 d^2) + c_3(8a^2d + (12 - 4 a^3)d^2) \\
&+ c_2((-66  - 40 a^2  + 88 a^3  + 6 a^5  - 6 a^6 )d 
+ (113 a  + 32 a^3  - 56 a^4  - 3 a^6  + 3 a^7 )d^2) \\
\delta(c_4 d) &= c_4(4 d - 2 a d^2) + c_3(-16 a d + 8 a^2 d^2) \\
&+ c_2((33 a  - 128 a^2  - 12 a^4  + 12 a^5)d 
+ (-66  - 40 a^2  + 88 a^3  + 6 a^5  - 6 a^6 )d^2) \\
\delta(c_4d^2) &= c_4(4 d^2) + c_3(24 d - 16 a d^2) \\ 
&+ c_2((160 a  + 24 a^3  - 24 a^4 )d 
+ (33 a  - 128 a^2  - 12 a^4  + 12 a^5 )d^2).
\end{align*}\endgroup
Here one can observe the manner in which $K_\Gamma(M;\nul)$ is built from $K_\Gamma(\omega^n;\nul)$ for $1\leq n \leq 3$; compare  \cite[Example 4]{zhu2018morava}. In the end, $\TAQ^0(E^{SU(4)_+})=0$, and if we write $w = c_3 d$, $x = c_3 d^2$, $y = c_4 d$, $z = c_4 d^2$, then $\TAQ^1(E^{SU(4)_+})$ is isomorphic to $E_0\{w,x,y,z\}$ modulo
\begin{align*}
4x &= 0 \\
2w &= ax \\
4z &= 0 \\
2ay &= a^2 z \\
4y &= 2a(x+z).
\end{align*}
By \cite[Proposition 8.8]{behrensrezk2017spectral}, this also describes $E_\ast^\wedge\Phi_2 SU(4)$.
\tqed
\end{ex}

\begingroup
\raggedright
\bibliography{refs}
\bibliographystyle{alpha}
\endgroup
\end{document}